\DeclareMathAlphabet{\mathcal}{OMS}{cmsy}{m}{n}
\theoremstyle{plain}
\newtheorem{theorem}{Theorem}
\newtheorem{lemma}[theorem]{Lemma}
\newtheorem{corollary}[theorem]{Corollary}
\theoremstyle{definition}
\newtheorem*{remark}{Remark}
\newtheorem*{remarks}{Remarks}
\newtheorem*{example}{Example}
\definecolor{greytext}{gray}{0.5}
\definecolor{crimsonred}{RGB}{132,22,23}
\definecolor{darkblue}{RGB}{72,61,139}
\titleformat*{\section}{\singlespacing\raggedright\bfseries\Large}
\titleformat*{\subsection}{\singlespacing\raggedright\bfseries\large}
\titleformat*{\subsubsection}{\singlespacing\raggedright\bfseries}
\titleformat*{\paragraph}{\singlespacing\raggedright\itshape}
\setlist[enumerate]{label*={\normalfont(\Alph*)},ref=(\Alph*)}
\newcommand{\nocitations}{\relax}
\renewcommand*{\backrefalt}[4]{%
\textcolor{greytext}{\ifcase #1%
\nocitations%
\or
\(\rightarrow\) page #2%
\else
\(\rightarrow\) pages #2%
\fi}}
\DeclareUrlCommand\DOI{}
\DeclareMathOperator{\minkdim}{\dim_{\mathbf{M}}}
\DeclareMathOperator{\hausdim}{\dim_{\mathbf{H}}}
\DeclareMathOperator{\lowminkdim}{\dim_{\underline{\mathbf{M}}}}
\DeclareMathOperator{\upminkdim}{\dim_{\overline{\mathbf{M}}}}
\DeclareMathOperator{\fordim}{\dim_{\mathbf{F}}}
\DeclareMathOperator{\lmbdim}{\dim_{\underline{\mathbf{MM}}}}
\DeclareMathOperator{\umbdim}{\dim_{\overline{\mathbf{MM}}}}
\DeclareMathOperator{\RR}{\mathbf{R}}
\DeclareMathOperator{\ZZ}{\mathbf{Z}}
\DeclareMathOperator{\QQ}{\mathbf{Q}}
\DeclareMathOperator{\EE}{\mathbf{E}}
\DeclareMathOperator{\PP}{\mathbf{P}}
\DeclareMathOperator{\prob}{\Prob}
\DeclareMathOperator{\Prob}{\mathbf{P}}
\DeclareMathOperator{\expect}{\Expect}
\DeclareMathOperator{\Expect}{\mathbf{E}}
\DeclareMathOperator{\AAA}{\mathbf{A}}
\DeclareMathOperator{\B}{\mathcal{B}}
\DeclareMathOperator{\C}{\mathcal{C}}
\DeclareMathOperator{\DQ}{\mathcal{Q}}
\DeclareMathOperator{\DD}{\mathcal{D}}
\DeclareMathOperator{\DR}{\mathcal{R}}
\DeclareMathOperator{\Config}{\mathcal{C}}
\DeclareMathOperator{\diam}{\text{diam}}
\DeclareMathOperator{\divides}{\mid}
\DeclareMathOperator{\setcolon}{\colon}
\title{Cartesian Products Avoiding Patterns}
\author{Jacob Denson}
\institution{The University of British Columbia}
\begin{document}


\maketitle

\makecommitteepage


\chapter{Abstract}


The pattern avoidance problem seeks to construct a set with large fractal dimension that avoids a prescribed pattern, such as three term arithmetic progressions, or more general patterns such as finding a set whose Cartesian product avoids the zero set of a given function. Previous work on the subject has considered patterns described by polynomials, or functions satisfying certain regularity conditions. We provide an exposition of some results in this setting, as well as considering new strategies to avoid `rough patterns'. There are several problems that fit into the framework of rough pattern avoidance. For instance, we prove that for any set $X$ with lower Minkowski dimension $s$, there exists a set $Y$ with Hausdorff dimension $1-s$ such that for any rational numbers $a_1, \dots, a_N$, $a_1Y + \dots + a_NY$ is disjoint from $X$, or intersects solely at the origin. As a second application, we construct subsets of Lipschitz curves with dimension $1/2$ not containing the vertices of any isosceles triangle.
\cleardoublepage




\chapter{Lay Summary}



Geometers are often interested in constructing shapes satisfying certain properties. For instance, given three points, can one find a circle connecting them? Most questions of this type involving shapes like circles or polygons have been answered. But many open questions remain about more modern families of shapes. Here, we focus on \emph{fractals}, a class of shapes whose most well known representatives include the Koch snowflake and the Sierpinski triangle. Fractals often occur in applications such as small scale physics and computer graphics.

This thesis focuses on constructing \emph{large} fractals which avoid the existence of certain configurations. For example, can one construct a large fractal so that one cannot form an equilateral triangle from three points contained on the fractal? We begin with an exposition of some previous results of this type which have been achieved in the literature, and then provide new construction techniques utilizing a novel random approach.
\cleardoublepage


\chapter{Preface}

This thesis gives an exposition by the author, of the pattern avoidance problem and the geometric measure theory required to understand the pattern avoidance problem in the non-discrete setting. In Chapter \ref{ch:RoughSets} and \ref{ch:Applications}, the author presents details of joint work with his supervisors Dr. Joshua Zahl and Dr. Malabika Pramanik. The results of these sections have been accepted for publication in the Springer series \emph{Harmonic Analysis and Applications}. As is the norm in mathematical research, all researchers are assumed to have contributed equally to these results. But to list concrete contributions, the author of this thesis reviewed the background literature detailed in the bibliography to this paper, and came up with the main problem statement behind Theorem \ref{mainTheorem}. In Chapter \ref{ch:Conclusions}, the author presents details on partially completed results emerging from discussions with Dr. Joshua Zahl and Dr. Malabika Pramanik, which he hopes can be refined and published in the near future.
\cleardoublepage

\tableofcontents
\cleardoublepage	





\textspacing		


\chapter{Acknowledgments}

I am indepted to my advisors, Dr. Malabika Pramanik and Dr. Josh Zahl, for their key insights in the past two years of research. Their advice will help me conduct research for years to come. Without their tough scrutiny of my writing style over the past year, this thesis would be exponentially less legible.

I would also like to give credit to Dr. Zachary Friggstad. Our long discussions have changed the way I think about mathematics, and I look forward to many more. His influence is felt throughout the new techniques developed in this thesis.

Thanks to my family, for their support throughout my education. I would especially like to thank my grandfather, Ted McClung. Without your encouragement in my early years of undergraduate education, it is unlikely I would have found my passion for higher mathematics.

Finally, I'd like to thank the UBC mathematics department, and greater student community, for keeping me grounded during many stressful moments over the past two years.



\mainmatter

\acresetall	


\chapter{Introduction}
\label{ch:Introduction}

In this thesis, we study a simple family of questions:
\begin{center}
	{\it How large can Euclidean sets be not containing geometric patterns?}
\end{center}
Consider two examples:
\begin{itemize}
    \item How large can a set $X \subset \RR^d$ be that contains no three collinear points?

    \item What is the maximal size of a set $X \subset \RR^d$ not containing the vertices of an isosceles triangle?
\end{itemize}
Aside from pure geometric interest, these problems provide useful settings to test methods of ergodic theory, additive combinatorics and harmonic analysis.

Sets which avoid the patterns we consider are highly irregular, in many ways behaving like a fractal set. Our understanding of their structure requires techniques from geometric measure theory. In particular, we use various \emph{fractal dimensions} to measure the size of a pattern avoiding set.




At present, many fundamental questions about geometric structure and its relation to fractal dimension remain unsolved. One might expect sets with sufficiently large fractal dimension must contain a given pattern. Evidence is provided by Theorem 6.8 of \cite{Matilla}, which shows that any set $X \subset \RR^d$ with Hausdorff dimension exceeding one must contain three collinear points. But our expectation is not always true. For instance, Theorem 2.3 of \cite{Maga} constructs a set $X \subset \RR^2$ with full Hausdorff dimension such that no four points in $X$ form the vertices of a parallelogram. Thus the conjecture that sufficiently large sets must contain a given pattern depends on the particular patterns involved. For most geometric configurations, it remains unknown at what threshold patterns are guaranteed, or whether such a threshold exists at all.


Our goal in this thesis is to derive new methods for constructing sets with large dimension avoiding patterns. In particular, we expand on a number of general {\it pattern dissection methods} which have proven useful in the area, originally developed by Keleti but also studied notably by Fraser, Math\'{e}, and Pramanik. In Chapter \ref{ch:RelatedWork}, we give an exposition of some of their methods, after we establish some background in Chapter \ref{ch:Background}. Chapter \ref{ch:RoughSets} provides our main contribution, avoiding new classes of patterns using random dissection methods. This enables us to expand the utility of pattern dissection methods from regular families of patterns to a family of {\it fractal avoidance problems}, that previous methods were completely unavailable to address. Such problems include finding large sets $X \subset \RR^d$ such that $X + X$ avoids a set $Y$, where $Y$ has a fixed fractal dimension, which we discuss in Chapter \ref{ch:Applications}. Chapter \ref{ch:Conclusions} describes various improvements to the results of Chapter \ref{ch:RoughSets} we hope to develop in the future.


\chapter{Background}
\label{ch:Background}

In this chapter we discuss the required background to understand the techniques of the pattern avoidance problem. The majority of the background in geometric measure theory can be found in other resources, e.g. in \cite{Falconer}, \cite{MattilaGeomMeasure}, or \cite{TaoHausdorff}, though not in the context of the pattern avoidance problem.

\section{Configuration Avoidance}

Our main focus in this thesis is the \emph{pattern avoidance problem}. In this section, we formalize the notion of a pattern, and what it means to avoid it. Given a set $\mathbf{A}$, we let
\[ \Config^n(\AAA) = \{ (a_1, \dots, a_n) \in \AAA^n: a_i \neq a_j\ \text{if $i \neq j$} \}. \]
and
\[ \Config(\AAA) = \bigcup_{n = 1}^\infty \Config^n(\AAA). \]
We call $\Config(\AAA)$ the \emph{configuration space} of $\mathbf{A}$.

\begin{example}[Non-Colinearity]
	We say a set $X \subset \RR^d$ \emph{avoids colinear points} if no three points $x_1,x_2,x_3 \in X$ lie on a common line in $\RR^d$. Define
	\[ \C = \left\{ (x, x + av, x + 2av) \in \C^3(\RR^d) : a \in \RR - \{ 0 \}, v \in \RR^d - \{ 0 \} \right\}. \]
	Then $X$ avoids colinear points if and only if $\C^3(X)$ is disjoint from $\C$.
\end{example}

\begin{example}[Isosceles Triangle Configuration]
	We say a set $X \subset \RR^2$ \emph{avoids isosceles triangles} if no three points $x_1,x_2,x_3 \in X$ form the vertices of a non-degenerate isosceles triangle. Define
	\[ \C = \left\{ (x_1, x_2, x_3) \in \Config^3(\RR^2) : |x_1-x_2| = |x_1-x_3| \right\}. \]
	A set $X$ avoids isosceles triangles if and only if $\C^3(X)$ is disjoint from $\C$. 
\end{example}

\begin{example}[Linear Independence Configuration]
	Let $V$ be a vector space over a field $K$. We set
	\[ \C = \bigcup_{n = 1}^\infty \left\{ (x_1, \dots, x_n) \in \Config^n(V): \left\{ \begin{array}{c}
			\text{there is $a_1, \dots, a_n \in K$ such}\\
			\text{that $a_1x_1 + \dots + a_nx_n = 0$}
		\end{array} \right\} \right\}. \]
	A set $X \subset V$ is linearly independent in $V$ if and only if $\C(X)$ is disjoint from $\C$.
\end{example}

\begin{remark}
	In this thesis, we will be most interested in the linear independence configuration where $K = \QQ$, and $V = \RR$. Results in this setting are discussed in both Chapter \ref{ch:Applications} and \ref{ch:Conclusions}.
\end{remark}

\begin{example}[Sum Set Configuration]
	Let $G$ be an abelian group, and fix $Y \subset G$. Set
	\[ \C^1 = \{ g \in \Config^1(G): g + g \in Y \} \quad \text{and} \quad \C^2 = \{ (g_1,g_2) \in \Config^2(G): g_1 + g_2 \in Y \}. \]
	Define $\C = \C^1 \cup \C^2$. Then $(X + X) \cap Y = \emptyset$ if and only if $\C(X)$ is disjoint from $\C$.
\end{example}

All the configurations we discuss in this thesis can be specified in terms of subsets of $\Config(\AAA)$. Thus we formally define a \emph{configuration} on $\AAA$ to be a subset of $\Config(\AAA)$. In particular, if $n > 0$, we say a configuration $\C$ is an \emph{$n$ point configuration} if it is a subset of $\Config^n(\AAA)$. For a fixed configuration $\C$ on $\AAA$, we say a set $X \subset \AAA$ \emph{avoids} $\C$ if $\Config(X)$ is disjoint from $\C$. The \emph{pattern avoidance problem} asks to find sets $X$ of maximal size avoiding a fixed configuration $\C$. Often, the configuration $\C$ describes algebraic or geometric structure, and the pattern avoidance problem asks to find the maximal size of a set before it is guaranteed to have such structure.

	%
	%
	%
	%

Depending on the structure of the ambient space $\AAA$ and the configuration $\C$, there are various ways of measuring the size of sets $X \subset \AAA$ for the purpose of the pattern avoidance problem. If $\AAA$ is finite, for instance, a natural choice is the cardinality of $X$. But our goal is to study pattern avoidance where $\AAA = \RR^d$. In certain cases, one can use the Lebesgue measure to determine the size of a pattern avoiding set. But this really only works for `discrete' configurations on $\RR^d$, as the next theorem shows, under the often true assumption that $\C$ is \emph{translation invariant}, i.e. that if $(a_1, \dots, a_n) \in \C$ and $b \in \RR^d$, $(a_1 + b, \dots, a_n + b) \in \C$.

\begin{theorem}
	Let $\C$ be a $n$-point configuration on $\RR^d$. Suppose
	\begin{enumerate}
		\item \label{translationinvariance} $\C$ is translation invariant.
		\item \label{nonDiscreteConfig} For any $\varepsilon > 0$, there is $(a_1, \dots, a_n) \in \C$ with $\diam \{ a_1, \dots, a_n \} \leq \varepsilon$.
	\end{enumerate}
	Then no set with positive Lebesgue measure avoids $\C$.
\end{theorem}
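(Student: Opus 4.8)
The plan is to argue by contradiction using the Lebesgue density theorem. Suppose $X \subseteq \RR^d$ has positive Lebesgue measure but avoids $\C$. Almost every point of $X$ is a point of density $1$, so fix such a point $x_0$. Then for every $\eta > 0$ there is a radius $r > 0$ such that the Lebesgue measure of $B(x_0, r) \setminus X$ satisfies $|B(x_0,r) \setminus X| < \eta\,|B(x_0,r)|$; I would take $\eta = 1/(2n)$ and freeze the corresponding $r$.

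The idea is then to plant an arbitrarily small copy of a configuration from $\C$ inside the ball $B(x_0,r)$, which is almost entirely filled by $X$. By hypothesis \ref{nonDiscreteConfig} choose $(a_1,\dots,a_n) \in \C$ with $\diam\{a_1,\dots,a_n\} \le \varepsilon$ for a small $\varepsilon$ to be pinned down. Translation invariance (hypothesis \ref{translationinvariance}) lets me replace this tuple by $(0,\, a_2 - a_1,\, \dots,\, a_n - a_1) \in \C$, so I may assume $a_1 = 0$ and $|a_i| \le \varepsilon$ for every $i$; applying translation invariance once more, $(a_1 + b,\, \dots,\, a_n + b) \in \C$ for every $b \in \RR^d$. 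Its coordinates are automatically pairwise distinct, since a tuple in $\Config^n$ has distinct entries, so this tuple lies in $\Config^n(X)$ as soon as every $a_i + b$ belongs to $X$. Hence it suffices to find a single such $b$: the resulting point of $\Config^n(X) \cap \C$ contradicts the assumption that $X$ avoids $\C$.

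To produce $b$, restrict attention to $b \in B(x_0, r - \varepsilon)$ (taking $\varepsilon < r$), so that $a_i + b \in B(x_0, r)$ for each $i$ because $|a_i| \le \varepsilon$. For each fixed index $i$, the \emph{bad} set $\{\, b \in B(x_0, r-\varepsilon) : a_i + b \notin X \,\}$ is a translate of a subset of $B(x_0,r) \setminus X$, hence has measure $< \eta\,|B(x_0,r)| = |B(x_0,r)|/(2n)$. A union bound over the $n$ indices shows the set of bad $b$ has measure $< |B(x_0,r)|/2$; choosing $\varepsilon$ small enough that $|B(x_0, r - \varepsilon)| > |B(x_0,r)|/2$, for instance $\varepsilon < r\bigl(1 - 2^{-1/d}\bigr)$, leaves a positive-measure set of good $b$, in particular a nonempty one, which is all we need.

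The only delicate point is the bookkeeping with radii: one must ensure that after translating by $b$ all the configuration points remain inside the ball $B(x_0,r)$ on which the density estimate holds, which is exactly what the passage from $B(x_0,r)$ to $B(x_0, r-\varepsilon)$ and the smallness of $\varepsilon$ relative to $r$ accomplish. Beyond that, the argument is just the density estimate combined with a union bound, and it uses both hypotheses essentially: \ref{nonDiscreteConfig} to obtain configurations small enough to fit inside the high-density ball, and \ref{translationinvariance} to slide a copy of such a configuration onto $X$.
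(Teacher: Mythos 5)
Your argument is correct and follows essentially the same route as the paper's proof: a Lebesgue density point, a small-diameter configuration translated into the high-density region, and a union bound over the $n$ coordinates to find a good translate $b$. The only cosmetic differences are that you work with balls and shrink the radius to $r-\varepsilon$ (with the explicit threshold $\varepsilon < r(1-2^{-1/d})$), whereas the paper works with cubes centered at the density point and requires $\diam\{a_1,\dots,a_n\}\leq l(Q_0)/2$ so that the shifted cubes still contain the density point.
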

\begin{proof}
	Let $X \subset \RR^d$ have positive Lebesgue measure. The Lebesgue density theorem implies that there exists a point $x \in X$ such that
	\begin{equation} \label{densityApplication} \lim_{l(Q) \to 0} \frac{|X \cap Q|}{|Q|} = 1, \end{equation}
	where $Q$ ranges over all axis-oriented cubes in $\RR^d$ with $x \in Q$, and $l(Q)$ denotes the sidelength of $Q$. Fix $\varepsilon > 0$, to be specified later, and choose $r$ small enough that $|X \cap Q| \geq (1 - \varepsilon) |Q|$ for any cube $Q$ with $x \in Q$ and $l(Q) \leq r$. Now let $Q_0$ denote a cube centered at $x$ with $l(Q_0) \leq r$. Applying Property \ref{nonDiscreteConfig}, we find $C = (a_1, \dots, a_n) \in \C$ such that
	\begin{equation} \label{equation690346024} \diam \{ a_1, \dots, a_n \} \leq l(Q_0)/2. \end{equation}
	For each $p \in Q_0$, let $C(p) = (a_1(p), \dots, a_n(p))$, where $a_i(p) = p + (a_i - a_1)$. A union bound shows
	\begin{equation} \label{equation548} \left| \{ p \in Q_0 : C(p) \not \in \C(X) \} \right| \leq \sum_{i = 1}^n \left| \{ p \in Q_0 : a_i(p) \not \in X \} \right|.
	\end{equation}
	We have $a_i(p) \not \in X$ precisely when $p + (a_i - a_1) \not \in X$, so
	\begin{equation} \label{equation1243462}
		|\{ p \in Q_0 : a_i(p) \not \in X \}| = |(Q_0 + (a_i - a_1)) \cap X^c|.
	\end{equation}
	Note $Q_0 + (a_i - a_1)$ is a cube with the same sidelength as $Q_0$. Equation \eqref{equation690346024} implies $|a_i - a_1| \leq l(Q_0)/2$, so $x \in Q_0 + (a_i - a_1)$. Thus \eqref{densityApplication} shows
	\begin{equation} \label{equation543} |Q_0 + (a_i - a_1)) \cap X^c| \leq \varepsilon |Q_0|. \end{equation}
	Combining \eqref{equation548}, \eqref{equation1243462}, and \eqref{equation543}, we find
	\[ \left| \{ p \in Q_0 : C(p) \not \in \C(X) \} \right| \leq \varepsilon n |Q_0|. \]
	Provided $\varepsilon n < 1$, this means there is $p \in Q_0$ with $C(p) \in \C(X)$. Property \ref{translationinvariance} implies $C(p) \in \C$, so $X$ does not avoid $\C$.
\end{proof}


Since no set of positive Lebesgue measure can avoid non-discrete, translation invariant configurations, we cannot use the Lebesgue measure to quantify the size of pattern avoiding sets in this setting. Geometric measure theory provides us with various quantities that are able to distinguish between the size of sets of measure zero. These are the \emph{fractal dimensions} of a set. In all configuration avoidance problems in this thesis, we use a fractal dimension to measure the size of configuration avoiding sets.

There are many variants of fractal dimension. Here we choose to focus on Minkowski dimension, Hausdorff dimension, and Fourier dimension. These quantities assign the same dimension to any smooth manifold with non-vanishing curvature, but can differ for rougher sets. Minkowski dimension measures relative density at a single scale, whereas Hausdorff dimension measures relative density at countably many scales. Fourier dimension is a refinement of Hausdorff dimension which places structural constraints on the set in the `frequency domain'.

\section{Minkowski Dimension}

Given $l > 0$, and a bounded set $E \subset \RR^d$, we let $N(l,E)$ denote the \emph{covering number} of $E$, i.e. the minimum number of sidelength $l$ cubes required to cover $E$. We define the \emph{lower} and \emph{upper} Minkowski dimension as
\[ \lowminkdim(E) = \liminf_{l \to 0} \left[ \frac{\log(N(l,E))}{\log(1/l)} \right] \quad\text{and}\quad \upminkdim(E) = \limsup_{l \to 0} \left[ \frac{\log(N(l,E))}{\log(1/l)} \right]. \]
If $\upminkdim(E) = \lowminkdim(E)$, then we refer to this common quantity as the \emph{Minkowski dimension} of $E$, denoted $\minkdim(E)$. Thus $\lowminkdim(E) < s$ if there exists a sequence of lengths $\{ l_k \}$ converging to zero such that for each $k$, $E$ is covered by fewer than $(1/l_k)^s$ sidelength $l_k$ cubes, and $\upminkdim(E) < s$ if $E$ is covered by fewer than $(1/l)^s$ sidelength $l$ cubes for \emph{any} suitably small $l$.

\begin{remark}
	Any cube with sidelength $r$ is covered by $O_d(1)$ balls of radius $r$. Conversely, any ball of radius $r$ is covered by $O_d(1)$ cubes of sidelength $r$. Thus for any $r > 0$, if we temporarily define $N_B(r,E)$ to be the optimal number of radius $r$ \emph{balls} it takes to cover $E$, then $N(r,E) \sim_d N_B(r,E)$. As $r \to 0$, this means
	\[ \frac{\log(N(r,E))}{\log(1/r)} = \frac{\log(N_B(r,E))}{\log(1/r)} + o(1). \]
	In particular, $\lowminkdim(E) < s$ if and only if there exists a sequence of lengths $\{ r_k \}$ such that $E$ is covered by $(1/r_k)^s$ radius $r_k$ \emph{balls}, and $\upminkdim(E) < s$ if and only if $E$ is covered by $(1/r)^s$ radius $r$ \emph{balls}, for any suitably small $r > 0$.
\end{remark}

It is often easy to upper bound the Minkowski dimension of a set, simply by providing a cover of the set and counting the number of cubes that cover it. Let us now consider an example. We say a set $S \subset \RR^d$ is an $s$ dimensional \emph{Lipschitz manifold} if there exists a family of bounded, open subsets $\{ U_\alpha \}$ of $\RR^s$, together with a family of bi-Lipschitz maps $\{ f_\alpha: U_\alpha \to S \}$, such that the sets $\{ f_\alpha(U_\alpha) \}$ form a relatively open cover of $S$. Every $C^1$ manifold in $\RR^d$ is a Lipschitz manifold. This example proves useful in Chapter \ref{ch:RoughSets}.

\begin{theorem} \label{ManifoldDimensionThm}
	Let $S \subset \RR^d$ be a Lipschitz manifold of dimension $s$. Then for any compact set $K \subset S$, $\upminkdim(K) \leq s$.
\end{theorem}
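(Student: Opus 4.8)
The plan is to reduce to a local, coordinatewise estimate using the definition of a Lipschitz manifold and the compactness of $K$, and then count cubes in the parameter space. First I would use the relatively open cover $\{ f_\alpha(U_\alpha) \}$ of $S$: since $K \subset S$ is compact, finitely many of the sets $f_\alpha(U_\alpha)$ suffice to cover $K$, so it is enough to bound $\upminkdim(K \cap f_\alpha(U_\alpha))$ for each of these finitely many indices, because Minkowski dimension is monotone and stable under finite unions (the covering number of a finite union is at most the sum of the covering numbers, and this does not change the $\limsup$). Thus fix one bi-Lipschitz chart $f = f_\alpha \colon U \to S$ with $U \subset \RR^s$ bounded and open, and set $K' = f^{-1}(K \cap f(U)) \subset U$.

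Next I would pull the covering problem back to $\RR^s$. Since $f$ is bi-Lipschitz, there is a constant $L \geq 1$ with $|f(u) - f(v)| \leq L|u-v|$ for all $u,v \in U$. For a length $l > 0$, cover the bounded set $K'$ by roughly $O_s\bigl((\diam(K')/l)^s\bigr) = O_{s}\bigl((1/l)^s\bigr)$ cubes in $\RR^s$ of sidelength $l$ (this is the trivial volumetric bound on the covering number of a bounded subset of $\RR^s$, using the remark that cubes and balls cover each other up to dimensional constants). The image under $f$ of each such cube has diameter at most $L\sqrt{s}\, l$, hence is contained in a single ball of radius $L\sqrt{s}\,l$ in $\RR^d$, which in turn is covered by $O_d(1)$ cubes of sidelength $L\sqrt s\, l$. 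Rescaling, this shows $N(c\,l, K) \leq O_{d,s}(1)\cdot (1/l)^{s}$ for a constant $c = c(L,s,d)$, and after reindexing the scale, $N(r,K) = O_{d,s,L}\bigl((1/r)^{s}\bigr)$ for all small $r > 0$.

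Finally I would feed this into the definition: $\upminkdim(K) = \limsup_{r \to 0} \log N(r,K)/\log(1/r) \leq \limsup_{r\to 0} \bigl[ s + \log(O_{d,s,L}(1))/\log(1/r)\bigr] = s$, since the additive constant contributes $o(1)$ as $r \to 0$. Strictly speaking one should take $K' $ inside a fixed bounded subset of $U$ on which the bi-Lipschitz constant is controlled; since $K \cap f(U)$ need not be compact I would instead, at the outset, refine the finite subcover so that $K$ is covered by finitely many sets $f_\alpha(V_\alpha)$ with $\overline{V_\alpha}$ a compact subset of $U_\alpha$, which is possible because $K$ is compact and each $f_\alpha(U_\alpha)$ is relatively open. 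I expect the only real subtlety to be this bookkeeping of passing from the abstract open cover to a finite cover by charts with uniformly controlled Lipschitz constants; the counting step itself is the routine volumetric estimate in $\RR^s$.
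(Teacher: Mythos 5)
Your proposal is correct and follows essentially the same route as the paper's proof: reduce to finitely many bi-Lipschitz charts by compactness, use the trivial volumetric covering bound in the $s$-dimensional parameter domain, and push the cover forward using the Lipschitz constant to get $N(r,K) \lesssim (1/r)^s$. The extra refinement to sets $f_\alpha(V_\alpha)$ with $\overline{V_\alpha} \subset U_\alpha$ is harmless but unnecessary, since the Lipschitz bound already holds on all of the bounded set $U_\alpha$.
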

\begin{proof}
	Since $K$ is compact, we can find finitely many bi-Lipschitz maps $f_1, \dots, f_N$ such that the family $\{ f_i(U_i) : 1 \leq i \leq N \}$ covers $K$. Then there is a constant $C > 0$ such that for each $i$, if $x,y \in U_i$,
	\[ |f_i(x) - f_i(y)| \leq C \cdot |x-y|. \]
	Since each $U_i$ is bounded, for any $r > 0$, we can find a family of balls $B_{i,1}, \dots, B_{i,M_i}$ of radius $(r/C)$ covering $U_i$, such that the centers $x_{i,1}, \dots, x_{i,M_i}$ of the balls also lie in $U_i$, and $M_i \lesssim (C/r)^s$. But then the balls of radius $r$ with centers lying in
	\[ \{ f_i(x_{i,j}) : 1 \leq i \leq N, 1 \leq j \leq M_i \} \]
	cover $K$, and there are $\sum_{i = 1}^N M_i \lesssim (N/C^s) r^{-s}$ such balls. Since $C$ and $N$ are independent of $r$, and $r > 0$ was arbitrary, this shows $\upminkdim(K) \leq s$.
\end{proof}

\section{Hausdorff Dimension}

For $E \subset \RR^d$ and $\delta > 0$, we define the \emph{Hausdorff content}
\[ H_\delta^s(E) = \inf \left\{ \sum_{k = 1}^\infty l(Q_k)^s : E \subset \bigcup_{k = 1}^\infty Q_k, l(Q_k) \leq \delta \right\}. \]
The \emph{$s$-dimensional Hausdorff measure} of $E$ is
\[ H^s(E) = \lim_{\delta \to 0} H_\delta^s(E) = \sup \left\{ H^s_\delta(E) : \delta > 0 \right\}. \]
It is easy to see $H^s$ is an exterior measure on $\RR^d$, and $H^s(E \cup F) = H^s(E) + H^s(F)$ if the Hausdorff distance $d(E,F)$ between $E$ and $F$ is positive. So $H^s$ is actually a metric exterior measure, and the Caratheodory extension theorem shows all Borel sets are measurable with respect to $H^s$. Sometimes, it is convenient to use the exterior measure
\[ H^s_\infty(E) = \inf \left\{ \sum_{k = 1}^\infty l(Q_k)^s : E \subset \bigcup_{k = 1}^\infty Q_k \right\}. \]
The majority of Borel sets which occur in practice fail to be measurable with respect to $H^s_\infty$, but the exterior measure $H^s_\infty$ has the useful property that $H^s_\infty(E) = 0$ if and only if $H^s(E) = 0$.

\begin{lemma} \label{HausdorffBoundary}
	Consider $t < s$, and $E \subset \RR^d$.
	\begin{enumerate}
		\item[(i)] If $H^t(E) < \infty$, then $H^s(E) = 0$.
		\item[(ii)] If $H^s(E) \neq 0$, then $H^t(E) = \infty$.
	\end{enumerate}
\end{lemma}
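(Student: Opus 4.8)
The plan is to exploit the elementary inequality relating the $s$- and $t$-dimensional Hausdorff contents at a fixed scale $\delta$. The key point is that for any cube $Q$ with $l(Q) \leq \delta$ and any $t < s$, we have $l(Q)^s = l(Q)^{s-t}\, l(Q)^t \leq \delta^{s-t}\, l(Q)^t$, since $s - t > 0$ makes $x \mapsto x^{s-t}$ increasing on $[0,\delta]$.

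First I would fix $\delta > 0$ and an arbitrary countable cover $\{Q_k\}$ of $E$ by cubes with $l(Q_k) \leq \delta$. Summing the pointwise inequality above over $k$ gives $\sum_k l(Q_k)^s \leq \delta^{s-t} \sum_k l(Q_k)^t$. Taking the infimum over all such covers on both sides yields $H^s_\delta(E) \leq \delta^{s-t}\, H^t_\delta(E)$. Since $H^t_\delta(E) \leq H^t(E)$ by definition, we obtain $H^s_\delta(E) \leq \delta^{s-t}\, H^t(E)$ whenever $H^t(E) < \infty$.

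Next I would let $\delta \to 0$. Because $s - t > 0$, the factor $\delta^{s-t} \to 0$, and since $H^t(E)$ is a finite constant, the right-hand side tends to $0$. Hence $H^s(E) = \lim_{\delta \to 0} H^s_\delta(E) = 0$, proving (i). For (ii), I would simply observe that it is the contrapositive of (i): if $H^t(E) < \infty$ were to hold while $H^s(E) \neq 0$, that would contradict (i), so $H^s(E) \neq 0$ forces $H^t(E) = \infty$.

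There is no real obstacle here; the only thing to be careful about is the degenerate case where some $l(Q_k) = 0$ (a point), for which $l(Q_k)^s = 0 = l(Q_k)^t$ and the inequality holds trivially, and the case $H^t(E) = 0$, where the bound $H^s_\delta(E) \leq \delta^{s-t} \cdot 0 = 0$ immediately gives the conclusion. One should also note the argument is insensitive to whether we use $H^t_\delta$ or $H^t$ on the right, which is what makes the limit clean.
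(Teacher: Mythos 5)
Your proposal is correct and follows essentially the same argument as the paper: bound $l(Q_k)^s \leq \delta^{s-t} l(Q_k)^t$ on a fine cover, deduce $H^s_\delta(E) \lesssim \delta^{s-t} H^t(E)$, let $\delta \to 0$, and obtain (ii) as the contrapositive. Your version of taking the infimum over covers to get $H^s_\delta(E) \leq \delta^{s-t} H^t_\delta(E)$ is, if anything, a slightly cleaner way to phrase the same step.
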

\begin{proof}
	Suppose that $H^t(E) = A < \infty$. Then for any $\delta > 0$, there is a cover of $E$ by a collection of intervals $\{ Q_k \}$, such that $l(Q_k) \leq \delta$ for each $k$, and
	\[ \sum_{k = 1}^\infty l(Q_k)^t \leq A < \infty. \]
	But then
	\[ H^s_\delta(E) \leq \sum_{k = 1}^\infty l(Q_k)^s \leq \sum_{k = 1}^\infty l(Q_k)^{s-t} l(Q_k)^t \leq \delta^{s-t} A. \]
	As $\delta \to 0$, we conclude $H^s(E) = 0$, proving \emph{(i)}. And \emph{(ii)} is just the contrapositive of (i), and therefore immediately follows.
\end{proof}

\begin{corollary} \label{corollaryhausdorffzero}
	If $s > d$, $H^s = 0$.
\end{corollary}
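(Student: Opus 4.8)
The plan is to reduce the statement to a single bounded set and then invoke countable subadditivity. First I would establish that the closed unit cube $[0,1]^d$ has finite $d$-dimensional Hausdorff measure. For each $n \geq 1$, partition $[0,1]^d$ into $n^d$ axis-parallel subcubes $Q_1, \dots, Q_{n^d}$ of sidelength $1/n$; these cover $[0,1]^d$ and satisfy $\sum_k l(Q_k)^d = n^d \cdot (1/n)^d = 1$. Given $\delta > 0$, choosing $n$ with $1/n \leq \delta$ shows $H^d_\delta([0,1]^d) \leq 1$, and letting $\delta \to 0$ gives $H^d([0,1]^d) \leq 1 < \infty$.

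Next, since $d < s$, Lemma \ref{HausdorffBoundary}(i) applied with $t = d$ immediately yields $H^s([0,1]^d) = 0$. The definition of $H^s$ is manifestly translation invariant (it is built from covers by cubes), so in fact $H^s(z + [0,1]^d) = 0$ for every $z \in \mathbf{Z}^d$ by the same reasoning.

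Finally, I would write $\mathbf{R}^d = \bigcup_{z \in \mathbf{Z}^d} (z + [0,1]^d)$ as a countable union and use that $H^s$ is an exterior measure, hence countably subadditive, to conclude $H^s(\mathbf{R}^d) \leq \sum_{z \in \mathbf{Z}^d} H^s(z + [0,1]^d) = 0$. Since $H^s(E) \leq H^s(\mathbf{R}^d)$ for every $E \subset \mathbf{R}^d$ by monotonicity, this gives $H^s \equiv 0$.

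I do not anticipate a genuine obstacle here: the only points requiring a word of justification are the countable subadditivity of $H^s$ (already noted in the text, as $H^s$ is an exterior measure) and translation invariance of the cover-based definition. Everything else is the elementary covering computation above; indeed one could even bypass the lemma by directly estimating $H^s_\delta$ of a cube of side $L$ via $N^d (L/N)^s = L^s N^{d-s} \to 0$, but routing through Lemma \ref{HausdorffBoundary} keeps this a clean corollary.
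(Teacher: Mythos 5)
Your proof is correct, and its skeleton is the same as the paper's: show $H^d$ is finite on a bounded cube, apply Lemma \ref{HausdorffBoundary} to get $H^s$ of that cube equal to zero, and then exhaust $\RR^d$ by countably many such pieces. The one real difference is in the first step: the paper quotes the fact that $H^d$ coincides with Lebesgue measure to write $H^d[-N,N]^d = (2N)^d$, whereas you prove the only thing actually needed, namely $H^d([0,1]^d) \leq 1$, by the explicit partition into $n^d$ subcubes of sidelength $1/n$. That makes your argument more self-contained, since the identification of $H^d$ with Lebesgue measure (in particular its lower bound) is a genuinely harder fact than the corollary itself; the paper's route is shorter on the page but leans on that unproved assertion. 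Your exhaustion step, via $\ZZ^d$-translates together with translation invariance and countable subadditivity of the exterior measure, is also slightly more careful than the paper's appeal to ``countable additivity'' as $N \to \infty$, and both are fine. No gaps.
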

\begin{proof}
	The measure $H^d$ is just the Lebesgue measure on $\RR^d$, so
	\[ H^d[-N,N]^d = (2N)^d. \]
	If $s > d$, Lemma \ref{HausdorffBoundary} implies $H^s[-N,N]^d = 0$. By countable additivity, taking $N \to \infty$ shows $H^s(\RR^d) = 0$. Since $H^s$ is a positive measure, $H^s(E) = 0$ for all $E$.
\end{proof}

Given any Borel set $E$, Corollary \ref{corollaryhausdorffzero}, combined with Lemma \ref{HausdorffBoundary}, implies there is a unique value $s_0 \in [0,d]$ such that $H^s(E) = 0$ for $s > s_0$, and $H^s(E) = \infty$ for $0 \leq s < s_0$. We refer to $s_0$ as the \emph{Hausdorff dimension} of $E$, denoted $\hausdim(E)$.

\begin{theorem}
	For any bounded set $E$, $\hausdim(E) \leq \lowminkdim(E) \leq \upminkdim(E)$.
\end{theorem}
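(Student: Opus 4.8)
The plan is to prove the chain of inequalities by establishing each link separately. The second inequality, $\lowminkdim(E) \leq \upminkdim(E)$, is immediate from the definitions, since $\liminf \leq \limsup$ for any sequence — here the sequences of values $\log(N(l,E))/\log(1/l)$ as $l \to 0$. So the real content is the first inequality, $\hausdim(E) \leq \lowminkdim(E)$.

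For that, I would argue as follows. Fix any $s > \lowminkdim(E)$; it suffices to show $\hausdim(E) \leq s$, i.e. that $H^s(E) = 0$ (recall from the discussion after Corollary \ref{corollaryhausdorffzero} that $\hausdim(E)$ is the threshold below which $H^s(E) = \infty$ and above which $H^s(E) = 0$, so $H^s(E)=0$ forces $\hausdim(E) \le s$). Since $s > \lowminkdim(E)$, by definition of the $\liminf$ there is a sequence of lengths $l_k \to 0$ with $N(l_k, E) \leq (1/l_k)^s$ for all large $k$ — more precisely, $\log N(l_k,E)/\log(1/l_k) < s$ along a subsequence realizing the liminf, which gives $N(l_k, E) < (1/l_k)^s = l_k^{-s}$. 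For each such $k$, take an optimal cover of $E$ by cubes $Q_{k,1}, \dots, Q_{k,N(l_k,E)}$ of sidelength $l_k$. Then, for any $\delta > 0$, choosing $k$ large enough that $l_k \leq \delta$, this cover witnesses
\[ H^s_\delta(E) \leq \sum_{j=1}^{N(l_k,E)} l(Q_{k,j})^s = N(l_k, E) \cdot l_k^s < l_k^{-s} \cdot l_k^s = 1. \]
Wait — this only bounds $H^s_\delta(E)$ by $1$, not by something tending to $0$. To fix this, I would instead fix $s'$ strictly between $\lowminkdim(E)$ and $s$, run the same argument with $s'$ in place of $s$ in the covering bound, and then compare exponents: $H^s_\delta(E) \leq \sum_j l_k^s = l_k^{s-s'} \sum_j l_k^{s'} < l_k^{s-s'} \cdot 1 \to 0$ as $k \to \infty$ (since $l_k \to 0$ and $s - s' > 0$). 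Hence $H^s(E) = \lim_{\delta \to 0} H^s_\delta(E) = 0$. Since $s > \lowminkdim(E)$ was arbitrary, $\hausdim(E) \leq \lowminkdim(E)$.

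The main subtlety — the step I'd be most careful about — is exactly the one flagged above: a naive application of the covering estimate at a single exponent only gives a bounded Hausdorff content, not a vanishing one, so one must insert an auxiliary exponent $s'$ between $\lowminkdim(E)$ and $s$ to extract the extra factor $l_k^{s-s'}$ that forces the content to $0$. Everything else is routine: the passage from the definition of $\liminf$ to an explicit sequence $l_k$ with the desired covering bound, and the observation that cubes of sidelength $\le \delta$ are admissible in the infimum defining $H^s_\delta$. Boundedness of $E$ is used only to ensure $N(l,E)$ is finite so that the covers exist. (One could alternatively phrase the argument using $H^s_\infty$ and the fact that $H^s_\infty(E) = 0 \iff H^s(E) = 0$, but the direct estimate on $H^s_\delta$ is cleaner.)
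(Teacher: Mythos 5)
Your proof is correct and takes essentially the same route as the paper's: cover $E$ by the optimal $N(l_k,E)$ cubes along a sequence of scales realizing the liminf and bound the Hausdorff content. The only cosmetic difference is that the paper stops at the bound $H^s(E) \leq 1$ and then invokes Lemma \ref{HausdorffBoundary} to pass to the dimension, whereas you perform that exponent-comparison inline via the auxiliary exponent $s'$, obtaining $H^s(E)=0$ directly.
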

\begin{proof}
	Given $l > 0$, we have a simple bound $H^s_l(E) \leq N(l,E) \cdot l^s$. If $\lowminkdim(E) < s$, then there exists a sequence $\{ l_k \}$ with $l_k \to 0$, and $N(l_k,E) \leq (1/l_k)^s$. We conclude that
	\[ H^s(E) = \lim_{k \to \infty} H^s_{l_k}(E) \leq \lim_{k \to \infty} N(l_k,E) \cdot l_k^s \leq 1. \]
	Thus Lemma \ref{HausdorffBoundary} implies $\hausdim(E) \leq s$. Taking infima over all $s > \lowminkdim(E)$ shows $\hausdim(E) \leq \lowminkdim(E)$.
\end{proof}

\begin{remark}
	If $\hausdim(E) < d$, then $|E| = H^d(E) = 0$. Thus any set with fractal dimension less than $d$ (either Hausdorff of Minkowski) must have measure zero. This means we can use the dimension as a way of distinguishing between sets of measure zero, which is precisely what we need to study the configuration avoidance problem for non-discrete configurations.
\end{remark}

The fact that Hausdorff dimension is defined over multiple scales simultaneously makes it more stable under analytical operations. In particular, for any family of at most countably many sets $\{ E_k \}$,
\[ \hausdim \left\{ \bigcup E_k \right\} = \sup \left\{ \hausdim(E_k) \right\}. \]
This need not be true for the Minkowski dimension; a single point has Minkowski dimension zero, but $\mathbf{Q} \cap [0,1]$, which is a countable union of points, has Minkowski dimension one. An easy way to make Minkowski dimension countably stable is to define the \emph{modified Minkowski dimensions}
\begin{align*}
	\lmbdim(E) &= \inf \left\{ s : E \subset \bigcup_{i = 1}^\infty E_i, \lowminkdim(E_i) \leq s\ \text{for each $i$} \right\}
\end{align*}
and
\begin{align*}
	\umbdim(E) &= \inf \left\{ s : E \subset \bigcup_{i = 1}^\infty E_i, \upminkdim(E_i) \leq s\ \text{for each $i$} \right\}.
\end{align*}
This notion of dimension, in a disguised form, appears in Chapter \ref{ch:RoughSets}.

\section{Dyadic Scales} \label{sec:Dyadics}

It is now useful to introduce the dyadic notation we utilize throughout this thesis. At the cost of some techniques which can be used by exploiting the full continuous structure of $\RR^d$, applying dyadic techniques often allows us to elegantly discretize problems in Euclidean space.

Fix an integer $N$. The classic family of dyadic cubes with \emph{branching factor} $N$ is given by setting, for each integer $k \geq 0$,
\[ \DD_k^d = \left\{ \prod_{i = 1}^d \left[ \frac{n_i}{N^k}, \frac{n_i + 1}{N^k} \right] : n \in \ZZ^d \right\}, \]
and then setting $\DD^d = \bigcup_{k \geq 0} \DD_k^d$. Elements of $\DD^d$ are known as \emph{dyadic cubes}, and elements of $\DD_k^d$ are known as \emph{dyadic cubes of generation $k$}. The most important properties of the dyadic cubes is that for each $k$, $\DD_k^d$ is a cover of $\RR^d$ by cubes of sidelength $1/N^k$, and for any two cubes $Q_1,Q_2 \in \DD^d$, either their interiors are disjoint, or one cube is nested in the other.
\begin{itemize}
	\item For each cube $Q_1 \in \DD_{k+1}^d$, there is a unique cube $Q_2 \in \DD_k^d$ such that $Q_1 \subset Q_2$. We refer to $Q_2$ as the \emph{parent} of $Q_1$, and $Q_1$ as a \emph{child} of $Q_2$. Each cube in $\DD$ has exactly $N^d$ children. For $Q \in \DD_{k+1}^d$, we let $Q^* \in \DD_k^d$ denote its parent.

	\item We say a set $E \subset \RR^d$ is \emph{$\DD_k$ discretized} if it is a union of cubes in $\DD_k^d$. If $E$ is $\DD_k$ discretized, we define
	\[ \DD_k(E) = \{ Q \in \DQ_k^d : Q \subset E \}. \]
	Then $E = \bigcup \DD_k(E)$.

	\item Given $k \geq 0$, and $E \subset \RR^d$, we let
	\[ E(1/N^k) = \bigcup \{ Q \in \DD_k^d : Q \cap E \neq \emptyset \}. \]
	Then $E(1/N^k)$ is the smallest $\DD_k$ discretized set containing $E$ in its interior. Our choice of notation invites thinking of $E(1/N^k)$ as a discretized version of the classic $1/N^k$ thickening
	\[ \{ x \in \RR^d : d(x,E) < 1/N^k \}. \]
	We have no need for the standard notion of thickening in this thesis, so there is no notational conflict.
\end{itemize}
Since any cube is covered by at most $O_d(1)$ cubes in $\DD$ of comparable sidelength, from the perspective of geometric measure theory, working with dyadic cubes is normally equivalent to working with the class of all cubes.

Our main purpose with working with dyadic cubes is to construct \emph{fractal-type sets}. By this, we mean defining sets $X$ as the intersection of a nested family of sets $\{ X_k \}$, where each $X_k$ is $\DD_k$ discretized, and each successive set $X_{k+1}$ is obtained from $X_k$ by application of a simple, recursive procedure. Such a construction satisfies the following three properties of Falconer's definition of a fractal, as detailed in the introduction to \cite{Falconer}:
\begin{itemize}
	\item[(i)] $X$ has detail at arbitrarily small scales.
	\item[(ii)] $X$ is too irregular to be described in traditional geometric language.
	\item[(v)] $X$ is defined recursively.
\end{itemize}
This justifies the term `fractal' when used to refer to these sets.

\begin{example}
	Let us construct the middle thirds Cantor set $C$ as a fractal-type set. We form $C$ from the family of dyadic cubes with branching factor $N = 3$. We initially set $C_0 = [0,1]$. Then, given the $\DD_k$ discretized set $C_k$, we consider each $I \in \DD_k^1(C_k)$, and let $\DD_{k+1}^1(I) = \{ I_1, I_2, I_3 \}$, where $I_1, I_2, I_3$ are given in increasing order with respect to their appearance in $I$. We set
	\[ C_{k+1} = \bigcup \{ I_1 \cup I_3 : I \in \DD_k^1(C_k) \}. \]
	Then $C = \bigcap_{k \geq 0} C_k$ is the Cantor set.
\end{example}

Unfortunately, a \emph{constant} branching factor is not sufficient to describe the fractal type constructions we discuss in this thesis. Thus, we introduce a more general family of cubes, which, abusing terminology, we also refer to as dyadic. Instead of a single branching factor $N$, we fix a sequence of positive integers $\{ N_k : k \geq 1 \}$, with $N_k \geq 2$ for all $k$, which gives the branching factor at each stage of the class of cubes we define.
\begin{itemize}
	\item For each $k \geq 0$, we define 
	\[ \DQ_k^d = \left\{ \prod_{i = 1}^d \left[ \frac{m_i}{N_1 \dots N_k}, \frac{m_i + 1}{N_1 \dots N_k} \right] : m \in \ZZ^d \right\}. \]
	These are the \emph{dyadic cubes of generation $k$}. We let $\DQ^d = \bigcup_{k \geq 0} \DQ_k^d$. Note that any two cubes $\DQ^d$ are either nested within one another, or their interiors are disjoint.

	\item We set $l_k = (N_1 \dots N_k)^{-1}$. Then $l_k$ is the sidelength of the cubes in $\DQ_k^d$.

	\item Given $Q \in \DQ_{k+1}^d$, we let $Q^* \in \DQ_k^d$ denote the \emph{parent cube} of $Q$, i.e. the unique dyadic cube of generation $k$ such that $Q \subset Q^*$.

	\item We say a set $E \subset \RR^d$ is \emph{$\DQ_k$ discretized} if it is a union of cubes in $\DQ_k^d$. In this case, we let
	\[ \DQ_k(E) = \{ Q \in \DQ_k^d: Q \subset E \} \]
	denote the family of cubes whose union is $E$.

	\item For $E \subset \RR^d$ and $k \geq 0$, we let $E(l_k) = \{ Q \in \DQ_k^d : Q \cap E = \emptyset \}$.
\end{itemize}
%
%
Sometimes, our recursive constructions need a family of `intermediary' cubes that lie between the scales $\DQ_k^d$ and $\DQ_{k+1}^d$. In this case, we consider a supplementary sequence $\{ M_k : k \geq 1 \}$ with $M_k \divides N_k$ for each $k$.
\begin{itemize}
	\item For $k \geq 1$, we define
	\[ \DR_k^d = \left\{ \prod_{i = 1}^d \left[ \frac{m_i}{N_1 \dots N_{k-1} M_k} , \frac{m_i + 1}{N_1 \dots N_{k-1} M_k} \right] : m \in \mathbf{Z}^d \right\}. \]

	\item We set $r_k = (N_1 \dots N_{k-1} M_k)^{-1}$. Then $r_k$ is the sidelength of a cube in $\DR_k^d$.

	\item The notions of being $\DR_k^d$ discretized, the collection of cubes $\DR_k^d(E)$, and the sets $E(r_k)$, are defined as should be expected.
\end{itemize}
The cubes in $\DR_k^d$ are coarser than those in $\DQ_k^d$, but finer than those in $\DQ_{k-1}^d$.

In this thesis, when we say we apply a \emph{single-scale construction}, we utilize the cubes $\DQ^d$, for an appropriate choice of parameters $\{ N_k \}$, as well as the notation given in this section. When we apply a \emph{multi-scale construction}, we utilize both the cubes in $\DQ^d$ and $\DR^d$, for an appropriate choice of parameters $\{ N_k \}$ and $\{ M_k \}$, and use the notation introduced above.

\begin{remark}
	We note that there is some notational conflict between the cubes $\DD^d$ and the cubes $\DQ^d$, but since we never use both families simultaneously in a single argument, it should be clear which notation we are using.
\end{remark}

\section{Frostman Measures}


	%
	%

It is often easy to upper bound Hausdorff dimension, but non-trivial to \emph{lower bound} the Hausdorff dimension of a given set. A key technique to finding a lower bound is \emph{Frostman's lemma}, which says that a set has large Hausdorff dimension if and only if it supports a Borel measure obeying a decay law on small sets. We say a finite Borel measure $\mu$ is a \emph{Frostman measure} of dimension $s$ if it is non-zero, compactly supported, and there exists $C > 0$ such that for any cube $Q$, $\mu(Q) \leq C \cdot l(Q)^s$. The proof of Frostman's lemma will utilize a technique often useful, known as the \emph{mass distribution principle}. To prove the mass distribution principle, we apply weak convergence.

\begin{lemma} \label{weakstarcompleteness}
	Suppose $\{ \mu_i \}$ is a Cauchy sequence of non-negative, regular Borel measures on $\RR^d$, in the sense that for any $f \in C_c(\RR^d)$, the sequence
	\[ \left\{ \int f d\mu_i \right\} \]
	is Cauchy. Then there is a regular Borel measure $\mu$ such that $\mu_i \to \mu$ vaguely, in the sense that for any $f \in C_C(\RR^d)$,
	\[ \int f d\mu = \lim_{i \to \infty} \int f d\mu_i. \]
\end{lemma}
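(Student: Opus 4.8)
The plan is to reduce the statement to the Riesz representation theorem. First I would define a functional $\Lambda \colon C_c(\RR^d) \to \RR$ by setting $\Lambda(f) = \lim_{i \to \infty} \int f \, d\mu_i$; this limit exists because, by hypothesis, the sequence $\{ \int f \, d\mu_i \}$ is Cauchy in $\RR$, which is complete. Linearity of $\Lambda$ is immediate from linearity of the integral together with linearity of limits. Positivity is equally easy: if $f \geq 0$, then $\int f \, d\mu_i \geq 0$ for every $i$ since each $\mu_i$ is a non-negative measure, and hence $\Lambda(f) = \lim_i \int f \, d\mu_i \geq 0$.

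Next, I would record that $\Lambda$ is locally bounded, in the sense that for each compact $K \subset \RR^d$ there is a constant $C_K$ with $|\Lambda(f)| \leq C_K \|f\|_\infty$ whenever $\operatorname{supp}(f) \subset K$. Indeed, fix a single $g \in C_c(\RR^d)$ with $g \geq \mathbf{1}_K$; since a Cauchy sequence of reals is bounded, $C_K := \sup_i \int g \, d\mu_i < \infty$, and then $|\int f \, d\mu_i| \leq \|f\|_\infty \int g \, d\mu_i \leq C_K \|f\|_\infty$ for all $i$, so letting $i \to \infty$ gives the claim. This step is not strictly needed to invoke Riesz, but it is what guarantees the resulting measure is finite on compact sets, i.e. a genuine (Radon) regular Borel measure rather than something degenerate.

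Then I would apply the Riesz representation theorem for positive linear functionals on $C_c$ of a locally compact Hausdorff space, here $\RR^d$: there is a (unique) regular Borel measure $\mu$ on $\RR^d$ such that $\Lambda(f) = \int f \, d\mu$ for all $f \in C_c(\RR^d)$. Unwinding the definition of $\Lambda$, this says exactly that $\int f \, d\mu = \lim_{i \to \infty} \int f \, d\mu_i$ for every $f \in C_c(\RR^d)$, which is the asserted vague convergence $\mu_i \to \mu$.

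The genuine mathematical content is thus entirely packaged inside the Riesz representation theorem; everything else is bookkeeping, and I do not expect any serious obstacle. The one point worth care is explaining why one should \emph{not} attempt to define $\mu$ directly by $\mu(E) = \lim_i \mu_i(E)$: this pointwise limit need not exist for general Borel $E$, and even on the sets where it does exist one cannot readily verify countable additivity or regularity. Passing through the functional $\Lambda$ on $C_c(\RR^d)$ sidesteps these difficulties, which is precisely why the Cauchy hypothesis is phrased in terms of integration against compactly supported continuous functions.
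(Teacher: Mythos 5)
Your proof is correct, and it takes a genuinely different (and more direct) route than the one in the paper. The paper works locally: it fixes a compact set $K$, dominates $\mathbf{I}_K$ by a test function to get a uniform bound on $\mu_i(K)$, invokes the Banach--Alaoglu theorem to place the restrictions $\mu_i|_K$ inside a vaguely compact set of measures, uses completeness of compact subsets of a locally convex space to extract a vague limit $\mu^K$ supported on $K$, and finally checks consistency of the $\mu^K$ on overlaps so that they patch together into a single measure $\mu$. You instead define the limiting functional $\Lambda(f) = \lim_i \int f\, d\mu_i$ globally on $C_c(\RR^d)$ (which exists precisely because of the Cauchy hypothesis), verify positivity and linearity, and appeal to the Riesz representation theorem to produce $\mu$ in one stroke; your local-boundedness remark is exactly what guarantees $\mu$ is Radon. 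The Cauchy hypothesis is used more efficiently in your argument: the paper's compactness route only needs uniform boundedness of $\{\int \varphi\, d\mu_i\}$ to extract a subsequential limit, and then implicitly relies on the Cauchy property to get convergence of the full sequence, whereas you use Cauchyness directly to define $\Lambda$, so no extraction or patching is needed. The trade-off is that your argument leans on the full strength of the Riesz representation theorem for positive functionals on $C_c$ of a locally compact Hausdorff space, while the paper's argument leans on Banach--Alaoglu and duality (which itself rests on Riesz-type identifications of the dual space); in terms of ingredients they are comparable, but your version is shorter and avoids the overlap-consistency bookkeeping. Your closing caution about not attempting to define $\mu(E) = \lim_i \mu_i(E)$ setwise is well taken and consistent with the paper's remark that mass can leak to boundaries in the vague limit.
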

\begin{proof}
	Fix a compact set $K$. Then we can find a function $\varphi \in C_c(\RR^d)$ such that $\mathbf{I}_K \leq \varphi$. This means that
	\[ \mu_i(K) = \int \mathbf{I}_K d\mu_i \leq \int \varphi d\mu_i. \]
	Since $\{ \int \varphi d\mu_i \}$ is Cauchy, the values $\int \varphi d\mu_i$ are uniformly bounded in $i$. In particular, $\mu_i(K)$ is uniformly bounded in $i$. Applying the Banach Alaoglu theorem, we conclude that the collection of measures $\{ \mu_i|_K \}$ is contained in a compact subset of the space of non-negative Borel measures with respect to the vague topology. But every compact subset of a locally convex space is complete, and so we can therefore find a finite Borel measure $\mu^K$ supported on $K$ such that $\mu_i|_K \to \mu^K$ vaguely.

	If $f \in C_c(\RR^d)$ is supported on $K_1 \cap K_2$, for two compact sets $K_1$ and $K_2$, then
	\[ \int f d\mu^{K_1} = \lim_{i \to \infty} \int_{K_1} f d\mu_i = \lim_{i \to \infty} \int_{K_2} f d \mu_i = \int f d\mu^{K_2}. \]
	Thus we can define a measure $\mu$ such that for each $f \in C_c(\RR^d)$, if $f$ is supported on a compact set $K$, then
	\[ \int f d\mu = \int f d\mu^K. \]
	For any $f \in C_c(\RR^d)$, $f$ is supported on some compact set $K$, and then
	\[ \int f d\mu = \int f d\mu^K = \lim_{i \to \infty} \int_K f d\mu_i = \lim_{i \to \infty} \int f d\mu_i. \]
	Since $f$ was arbitrary, $\mu_i \to \mu$ in the vague topology.
\end{proof}

\begin{theorem}[Mass Distribution Principle] \label{massdistributionprinciplelem}
	Let $w: \DQ^d \to [0,\infty)$ be a function such that for any $Q_0 \in \DQ^d$,
	\begin{equation} \label{equation73234091} \sum_{Q^* = Q_0} w(Q) = w(Q_0). \end{equation}
	Then there exists a regular Borel measure $\mu$ supported on
	\[ \bigcap_{k = 1}^\infty \left[ \bigcup \{ Q \in \DQ_k^d : w(Q) > 0 \} \right], \]
	such that for each $Q \in \DQ^d$,
	\begin{equation} \label{massdissupperbound} \mu(Q^\circ) \leq w(Q) \leq \mu(Q), \end{equation}
	and for any set $E$ and $k \geq 0$, if $\mathcal{E}_k = \DQ_k(E(l_k))$, then
	\begin{equation} \label{massdisslowerbound} \mu(E) \leq \sum_{Q \in \mathcal{E}_k} w(Q). \end{equation}
\end{theorem}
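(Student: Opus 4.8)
Here is the plan I would follow.

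The plan is to realize $\mu$ as a vague limit of the obvious single-scale approximants and then read each assertion off the standard inequalities relating vague convergence to the measures of open and compact sets. For each $k \ge 0$ let $\mu_k$ be the measure that is absolutely continuous with respect to Lebesgue measure, with density $w(Q)/|Q|$ on the interior of each $Q \in \DQ_k^d$ (the value of the density on the Lebesgue-null union of cube boundaries is irrelevant). Iterating \eqref{equation73234091}, the integral of this density over the generation-$k$ subcubes of any generation-$0$ cube $Q_0$ sums to $w(Q_0)$, so the density is locally integrable and each $\mu_k$ is a regular ($\sigma$-finite Radon) Borel measure; the same computation gives $\mu_k(Q^\circ) = \mu_k(Q) = w(Q)$ for every $Q \in \DQ_j^d$ with $j \le k$, so $\mu_j$ and $\mu_k$ agree on all cubes of generation at most $\min(j,k)$.

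First I would verify that $\{\mu_k\}$ is Cauchy in the sense of Lemma \ref{weakstarcompleteness}. Fix $f \in C_c(\RR^d)$ with modulus of continuity $\omega_f$, and for a cube $Q$ write $f_Q = |Q|^{-1}\int_Q f$. Grouping generation-$k$ cubes ($k \ge i$) under their generation-$i$ ancestors and using $\sum_{Q' \subseteq Q} w(Q') = w(Q)$ gives
\[ \int f\,d\mu_k - \int f\,d\mu_i = \sum_{Q \in \DQ_i^d}\ \sum_{\substack{Q' \in \DQ_k^d\\ Q' \subseteq Q}} w(Q')\,(f_{Q'} - f_Q). \]
Since $f_{Q'}$ and $f_Q$ are averages of $f$ over subsets of $Q$, $|f_{Q'}-f_Q| \le \omega_f(\sqrt d\, l_i)$, and only cubes meeting $\operatorname{supp} f$ contribute; hence the right-hand side is at most $\omega_f(\sqrt d\, l_i)$ times $\sum w(Q_0)$ over the finitely many generation-$0$ cubes $Q_0$ meeting $\operatorname{supp} f$. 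As $l_i \to 0$ this tends to $0$, so the sequence is Cauchy, and Lemma \ref{weakstarcompleteness} produces a regular Borel measure $\mu$ with $\mu_k \to \mu$ vaguely.

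The remaining work is to extract the three conclusions from the inequalities $\mu(U) \le \liminf_k \mu_k(U)$ (for $U$ open) and $\mu(K) \ge \limsup_k \mu_k(K)$ (for $K$ compact), which follow by testing against Urysohn functions together with inner/outer regularity of $\mu$. Taking $U = Q^\circ$, $K = Q$ and recalling $\mu_k(Q^\circ) = \mu_k(Q) = w(Q)$ once $k$ exceeds the generation of $Q$ gives $\mu(Q^\circ) \le w(Q) \le \mu(Q)$, which is \eqref{massdissupperbound}. For the support claim, set $F_k = \bigcup\{Q \in \DQ_k^d : w(Q) > 0\}$; since $w(Q) \le w(Q^*)$ by \eqref{equation73234091}, the $F_k$ decrease and $\bigcap_m F_m$ is exactly the set in the theorem, while $U_k := \RR^d \setminus F_k$ is open. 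For $j \ge k$, any generation-$j$ cube with positive weight is contained, via its generation-$k$ ancestor, in $F_k$, so the density of $\mu_j$ vanishes a.e.\ on $U_k$ and $\mu_j(U_k) = 0$; hence $\mu(U_k) \le \liminf_j \mu_j(U_k) = 0$, and since the $U_k$ increase to $\RR^d \setminus \bigcap_m F_m$ we get $\mu(\RR^d \setminus \bigcap_m F_m) = 0$. Finally, for \eqref{massdisslowerbound}, note each $x \in E$ lies in the interior of the union of the (at most $2^d$) generation-$k$ cubes whose closure contains $x$, each of which meets $E$ and hence lies in $\mathcal{E}_k$; so $E \subseteq \operatorname{int}(E(l_k))$, where $E(l_k) := \bigcup_{Q \in \mathcal{E}_k} Q$ is $\DQ_k$-discretized. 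For $j \ge k$ every generation-$j$ cube is either contained in $E(l_k)$ or has interior disjoint from it, so $\mu_j(\operatorname{int}(E(l_k))) \le \sum_{Q \in \mathcal{E}_k} w(Q)$; combining with the open-set inequality and letting $j \to \infty$ yields $\mu(E) \le \mu(\operatorname{int}(E(l_k))) \le \sum_{Q \in \mathcal{E}_k} w(Q)$.

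I expect the main obstacle to be the bookkeeping around the boundaries of dyadic cubes. The limit measure $\mu$ may carry mass on those boundaries, so $\mu(Q) = w(Q)$ can fail; every estimate must therefore be phrased with open cubes (or interiors of discretized sets) for upper bounds and closed cubes for lower bounds, so that mass shared between adjacent cubes is never double-counted. Once that distinction is kept straight, and the routine facts about $\mu_k$, about vague limits, and about $E$ lying in the interior of its generation-$k$ discretization are in place, the rest is direct verification.
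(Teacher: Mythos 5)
Your proposal is correct and follows essentially the same route as the paper: the same piecewise-constant approximants $\mu_k$ with density $w(Q)/|Q|$, a uniform-continuity (modulus of continuity) argument to verify the Cauchy hypothesis of Lemma \ref{weakstarcompleteness}, the open/closed portmanteau inequalities for the vague limit to get \eqref{massdissupperbound}, and the inclusion $E \subset E(l_k)^\circ$ to get \eqref{massdisslowerbound}. The only difference is that you spell out the support claim explicitly via the open sets $\RR^d \setminus F_k$, a small but welcome addition the paper leaves implicit.
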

\begin{proof}
	For each $i$, let $\mu_i$ be a regular Borel measure such that for each $j \leq i$, and $Q \in \DQ_j^d$, $\mu_i(Q) = \mu_i(Q^\circ) = w(Q)$. One such choice is given, for each $\varphi \in C_c(\RR^d)$, by the equation
	\[ \int \varphi d\mu_i = \sum_{Q \in \DQ_i^d} \frac{w(Q)}{|Q|} \int_Q \varphi\; dx. \]
	We claim that $\{ \mu_i \}$ is a Cauchy sequence. Fix $\varphi \in C_c(\RR^d)$, and choose some $N$ such that $\varphi$ is supported on $[-N,N]^d$. Set $Q' = [-N,N]^d$, and for each $i$, set $\mathcal{Q}'_i = \DQ_i(Q')$. Since $\varphi$ is compactly supported, $\varphi$ is \emph{uniformly continuous}, so for each $\varepsilon > 0$, if $i$ is suitably large, there is a sequence of values $\{ a_Q : Q \in \DQ_i^d(Q') \}$ such that if $x \in Q$, $|\varphi(x) - a_Q| \leq \varepsilon$. But this means
	\[ \sum_{Q \in \mathcal{Q}'_i} (a_Q - \varepsilon) \mathbf{I}_Q \leq \varphi \leq \sum_{Q \in \mathcal{Q}'_i} (a_Q + \varepsilon) \mathbf{I}_Q. \]
	Thus for any $j \geq i$,
	\[ \int \varphi\; d\mu_j\leq \sum_{Q \in \mathcal{Q}'_i} (a_Q + \varepsilon) \mu_j(Q) = \sum_{Q \in \mathcal{Q}'_i} (a_Q + \varepsilon) w(Q) \]
	and
	\[ \int \varphi\; d\mu_j \geq \sum_{Q \in \mathcal{Q}'_i} (a_Q - \varepsilon) \mu_j(Q) = \sum_{Q \in \mathcal{Q}'_i} (a_Q - \varepsilon) w(Q). \]
	In particular, if $j,j' \geq i$,
	\[ \left| \int \varphi d\mu_j - \int \varphi d\mu_{j'} \right| \leq 2 \varepsilon \sum_{Q \in \mathcal{Q}'_i} w(Q). \]
	Repeated applications of \eqref{equation73234091} show that
	\[ \sum_{Q \in \mathcal{Q}'_i} w(Q) = \sum_{Q \in \mathcal{Q}'_0} w(Q), \]
	which is therefore bounded independently of $i$. Since $\varepsilon$ and $\varphi$ were arbitrary, this shows $\{ \mu_i \}$ is Cauchy.

	Applying Lemma \ref{weakstarcompleteness}, we find a regular Borel measure $\mu$ such that $\mu_i \to \mu$ vaguely. For any $Q \in \DQ^d$, since $Q$ is a closed set,
	\[ \mu(Q) \geq \limsup_{i \to \infty} \mu_i(Q) = w(Q), \]
	and since $Q^\circ$ is an open set,
	\[ \mu(Q^\circ) \leq \liminf_{i \to \infty} \mu_i(Q^\circ) = w(Q). \]
	This establishes \eqref{massdissupperbound}. Conversely, if $E$ is arbitrary, then $E \subset E(l_i)^\circ$ for any $i$, so if $\mathcal{E}_i = \DQ_i(E(l_i))$, then
	\[ \mu(E) \leq \liminf_{i \to \infty} \mu_i(E(l_i)^\circ) \leq \liminf_{i \to \infty} \mu_i(E(l_i)) = \sum_{Q \in \mathcal{E}_i} w(Q). \]
	This establishes \eqref{massdisslowerbound}.
\end{proof}

\begin{remark}
	The reason why we cannot necessarily find a function $\mu$ which \emph{precisely} extends $w$ is that in the weak limit, mass which the weight function $w$ assigns to one cube can `leak' into the mass of adjacent cubes when we take a weak limit. This is why we must use the weaker `extension bounds' \eqref{massdissupperbound} and \eqref{massdisslowerbound}. This doesn't cause us to `gain' or `lose' any mass in the weak limit, as \eqref{massdissupperbound} shows, so the result is still a `mass distribution' result. It just means that the mass specified by $w$ may be shared by adjacent cubes in the weak limit, if enough mass is pushed out to the boundary of these cubes. If, for each $Q' \in \DQ^d$,
	\[ \lim_{k \to \infty} \sum_{Q \in \mathcal{Q}'_k} w(Q) = 0, \]
	where $\mathcal{Q}'_k = \DQ_k(Q'(l_k)) - \DQ_k(Q')$, then \eqref{massdissupperbound} and \eqref{massdisslowerbound} together imply we actually have $\mu(Q) = w(Q)$ for all $Q \in \DQ$. Thus $\mu$ is a measure extending $w$. One such condition that guarantees this is that $w(Q) \lesssim l(Q)^s$ for some $s > d-1$. Another is that for any $k \geq 0$, and distinct $Q_0, Q_1 \in \DQ_k^d$ with $Q_0 \cap Q_1 \neq \emptyset$, either $w(Q_0) = 0$ or $w(Q_1) = 0$.
\end{remark}

	%
	%
	%
	%
	%
	%

\begin{theorem}[Frostman's Lemma]
	If $E$ is a Borel set, $H^s(E) > 0$ if and only if there exists an $s$ dimensional Frostman measure supported on $E$.
\end{theorem}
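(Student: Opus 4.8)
The plan is to handle the two implications separately; the reverse direction is a short consequence of countable subadditivity, while essentially all the work lies in producing a Frostman measure from the hypothesis $H^s(E) > 0$.

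\emph{Sufficiency.} Suppose $\mu$ is an $s$-dimensional Frostman measure supported on $E$, with constant $C$. Since $E$ is Borel and $\mu$ is nonzero with support contained in $E$, $\mu(E) = \|\mu\| > 0$. For any $\delta > 0$ and any cover $E \subseteq \bigcup_k Q_k$ by cubes of sidelength at most $\delta$, the Frostman bound and countable subadditivity give $\mu(E) \le \sum_k \mu(Q_k) \le C\sum_k l(Q_k)^s$, whence $H^s_\delta(E) \ge \mu(E)/C$. Letting $\delta \to 0$ gives $H^s(E) \ge \mu(E)/C > 0$.

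\emph{Necessity: reduction to compact sets.} I would first argue that it suffices to treat the case that $E$ is compact, because $H^s(E) > 0$ implies there is a compact $K \subseteq E$ with $H^s(K) > 0$ (equivalently $H^s_\infty(K) > 0$), and a Frostman measure supported on $K$ is then automatically supported on $E$. I expect this reduction to be the main obstacle: for general Borel sets it is not elementary, and rests either on the inner regularity of $H^s$ restricted to a Borel set of finite measure --- together with a separate argument, needed when $H^s(E) = \infty$, producing a subset of finite positive measure --- or equivalently on the fact that $H^s_\infty$ is a Choquet capacity, so that $H^s_\infty(E) = \sup\{H^s_\infty(K) : K \subseteq E \text{ compact}\}$.

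\emph{Necessity: the dyadic construction.} Assume now $E$ is compact with $c := H^s_\infty(E) > 0$; since $H^s(E) > 0$, Corollary \ref{corollaryhausdorffzero} forces $s \le d$, and after rescaling we may assume $E$ lies in a single dyadic cube $Q_0$, working with the cubes $\DQ^d$ of constant branching factor $2$. For each $m$ I build a measure $\mu_m$ by a fine-to-coarse renormalization: on each $Q \in \DQ_m^d$ meeting $E$, let $\mu_m$ restricted to $Q$ be the multiple of Lebesgue measure with total mass $l(Q)^s$, and then, for $n = m-1, m-2, \dots, 0$ in turn, rescale every generation-$n$ dyadic cube whose current mass exceeds $l(Q)^s$ down to exactly $l(Q)^s$. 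Because $s \le d$ and $\mu_m$ is uniform inside each generation-$m$ cube, one checks $\mu_m(Q) \le l(Q)^s$ for \emph{every} dyadic cube $Q$, hence $\mu_m(Q) \le C_d\, l(Q)^s$ for every cube. The crucial estimate is a lower bound on the total mass: for each $x \in E$, following the nested chain of dyadic cubes containing $x$ and tracking which renormalizations occurred shows that some cube $Q \ni x$ ends with $\mu_m(Q) = l(Q)^s$ exactly; taking the coarsest such cube over all $x$ yields a finite antichain $\mathcal{G}$ of dyadic cubes that covers $E$ and on each of which $\mu_m$ attains $l(Q)^s$, so
\[ c = H^s_\infty(E) \le \sum_{Q \in \mathcal{G}} l(Q)^s = \sum_{Q \in \mathcal{G}} \mu_m(Q) = \mu_m\Big(\bigcup\mathcal{G}\Big) \le \mu_m(\RR^d). \]
Finally I would pass to a limit: the masses $\mu_m(\RR^d) = \mu_m(Q_0) \le l(Q_0)^s$ are uniformly bounded, so by Lemma \ref{weakstarcompleteness} (or Banach--Alaoglu) some subsequence $\mu_{m_k}$ converges vaguely to a finite Borel measure $\mu$. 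Lower semicontinuity of mass on open cubes, followed by a routine limiting argument with slightly enlarged cubes, transfers the bound $\mu(Q) \le C_d\, l(Q)^s$ to $\mu$; testing against a bump function equal to $1$ on $Q_0$ gives $\mu(\RR^d) \ge c > 0$; and since each $\mu_{m_k}$ is supported on the union of generation-$m_k$ cubes meeting $E$, whose intersection over all scales equals $E$ by compactness, $\mu$ is supported on $E$. Thus $\mu$ is an $s$-dimensional Frostman measure on $E$. As an alternative that fits the machinery already developed, one may instead take a diagonal limit of the weights $Q \mapsto \mu_m(Q)$ to obtain a single weight $w$ on $\DQ^d$ satisfying the consistency relation \eqref{equation73234091}, with $w(Q) \le l(Q)^s$, $w$ supported on cubes meeting $E$, and $w(Q_0) \ge c$, and then feed $w$ into the Mass Distribution Principle (Theorem \ref{massdistributionprinciplelem}) to produce $\mu$ directly.
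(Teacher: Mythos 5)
Your proposal is correct, but the core construction is genuinely different from the paper's. The easy direction is the same in both (Frostman bound plus countable subadditivity of the covering sums), and both proofs handle only the compact case --- the paper simply says ``for simplicity,'' whereas you explicitly flag that the Borel-to-compact reduction rests on inner regularity/capacitability of $H^s_\infty$, which is if anything more honest about where the real difficulty lies. For the main implication, the paper works top-down: it sets $w^+(Q) = H^s_\infty(E \cap Q)$ on dyadic cubes, uses subadditivity of the content to split the mass $w(Q)$ of a parent among its children while keeping $w \leq w^+ \leq l(Q)^s$ and the consistency relation \eqref{equation73234091}, and then feeds $w$ into the Mass Distribution Principle (Theorem \ref{massdistributionprinciplelem}); the measure is automatically nonzero because $w([0,1]^d) = H^s_\infty(E) > 0$, so no lower-bound argument is needed. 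You instead run the classical bottom-up Frostman construction: uniform mass $l(Q)^s$ on each generation-$m$ cube meeting $E$, fine-to-coarse renormalization, the observation (using $s \leq d$ via Corollary \ref{corollaryhausdorffzero}) that the bound persists below generation $m$, a stopping-cube antichain to bound the total mass below by $H^s_\infty(E)$, and then a vague limit via Banach--Alaoglu or Lemma \ref{weakstarcompleteness}. The trade-off: the paper's route needs the content of every subcube but gets the mass lower bound for free and reuses the MDP machinery already built; your route needs only the content of $E$ itself, at the cost of the $s \leq d$ remark, the stopping-time argument, and a direct weak-limit step --- and your closing remark that the weights $Q \mapsto \mu_m(Q)$ could instead be diagonalized and fed into Theorem \ref{massdistributionprinciplelem} shows the two constructions can be made to meet at the end. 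Both are complete at the same level of rigor.
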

\begin{proof}
	Suppose that $\mu$ is $s$ dimensional and supported on $E$. If $H^s(F) = 0$, then for each $\varepsilon > 0$ there is a sequence of cubes $\{ Q_k \}$ whose union covers $F$, with $\sum_{k = 1}^\infty l(Q_k)^s \leq \varepsilon$. But then
	\[ \mu(F) \leq \sum_{k = 1}^\infty \mu(Q_k) \lesssim \sum_{k = 1}^\infty l(Q_k)^s \leq \varepsilon. \]
	Taking $\varepsilon \to 0$, we conclude $\mu(F) = 0$. Thus $\mu$ is absolutely continuous with respect to $H^s$. Since $\mu(E) > 0$, this means that $H^s(E) > 0$.

	To prove the converse, we will suppose for simplicity that $E$ is compact. We work dyadically with the classical family of dyadic cubes $\DD^d$, with branching factor $N = 2$.  By translating, we may assume that $H^s(E \cap [0,1]^d) > 0$, and so without loss of generality we may assume $E \subset [0,1]^d$. For each $Q \in \DD^d$, define $w^+(Q) = H^s_\infty(E \cap Q)$. Then
	\begin{equation} \label{equation7099014901209} w^+(Q) \leq l(Q)^s, \end{equation}
	and $w^+$ is subadditive. We now recursively define a function $w$ such that for any $Q \in \DQ^d$, \eqref{equation73234091} and
	\begin{equation} \label{equation104-240-640954} w(Q) \leq w^+(Q), \end{equation}
	are satisfied at each stage of the definition of $w$. We initially define $w$ by setting $w([0,1]^d) = w^+([0,1]^d)$. Given $Q \in \DD_k^d$, we enumerate its children as $Q_1, \dots, Q_M \in \DD_{k+1}^d$. We then consider any values $A_1, \dots, A_M \geq 0$ such that
  	\begin{equation} \label{equation3424209034}
  		A_1 + \dots + A_M = w(Q),
  	\end{equation}
  	and for each $k$,
  	\begin{equation} \label{equation12039123012}
  		A_k \leq w(Q_k).
  	\end{equation}
	This is feasible to do because $w^+(Q_1) + \dots + w^+(Q_M) \geq w^+(Q)$, and \eqref{equation104-240-640954} holds for the previous definition of $w$. We then define $w(Q_k) = A_k$ for each $k$. Equation \eqref{equation3424209034} implies \eqref{equation73234091} holds for this new set of definitions, and \eqref{equation12039123012} implies \eqref{equation104-240-640954} holds. Thus $w$ is a well defined function. Furthermore, \eqref{equation3424209034} implies \eqref{equation73234091} of Lemma \ref{massdistributionprinciplelem}, and so the mass distribution principle gives the existence of a measure $\mu$ supported on $E$, satisfying \eqref{massdisslowerbound} and \eqref{massdissupperbound}. In particular, \eqref{massdisslowerbound} implies $\mu$ is non-zero. For each $Q' \in \DD_k^d$, $\#[\DD_k^d(Q(l_k))] = 3^d = O_d(1)$, so if we define $\mathcal{Q}' = \DD_k^d(Q(l_k))$, then \eqref{massdissupperbound}, \eqref{equation7099014901209}, and \eqref{equation104-240-640954} imply
	\[ \mu(Q') \lesssim_d \max_{Q \in \mathcal{Q}'} w^+(Q) \leq 1/2^{ks} = l(Q)^s. \]
	where $Q'$ ranges over the cubes in $\DD_k^d(Q(l_k))$. Given any cube $Q$, we find $k$ with $1/2^{k-1} \leq l(Q) \leq 1/2^k$. Then $Q$ is covered by $O_d(1)$ dyadic cubes in $\DD_k^d$, and so $\mu(Q) \lesssim l(Q)^s$. Thus $\mu$ is a Frostman measure of dimension $s$.
\end{proof}

Given any finite Borel measure $\mu$, we let
\[ \hausdim(\mu) = \left\{ s : \mu\ \text{is a Frostman measure of dimension $s$} \right\}. \]
Frostman's lemma says that for any Borel set $E$, $\hausdim(E)$ is the supremum of $\hausdim(\mu)$, over all measures $\mu$ supported on a closed subset of $E$. We refer to $\hausdim(\mu)$ as the \emph{Frostman dimension} of the measure $\mu$.

\section{Fourier Dimension}

A popular technique in current research in geometric measure theory is exploiting Fourier analysis to obtain additional structural information about configurations in sets. A key insight to this technique is that the Frostman dimension of any finite Borel measure $\mu$ is equal to
\[ \sup \left\{ s > 0 : \int \frac{|\widehat{\mu}(\xi)|^2}{|\xi|^{d-s}} d\xi < \infty \right\}. \]
For brevity, we leave the proof to other sources, e.g. \cite[Section 3.5]{Matilla}. Note that if
\[ \int \frac{|\widehat{\mu}(\xi)|^2}{|\xi|^{d-s}}\; d\xi < \infty, \]
then there exists a constant $C$ such that for \emph{most} values $\xi \in \RR^d$,
\begin{equation} \label{fourierdimensioncondition}
    |\widehat{\mu}(\xi)| \leq C |\xi|^{-s/2}.
\end{equation}
We obtain a strengthening of the Frostman measure condition if we require \eqref{fourierdimensioncondition} to hold for \emph{all} values $\xi$. In particular, we define the \emph{Fourier dimension} of a finite Borel measure $\mu$ on $\RR^d$ as
\[ \fordim(\mu) = \sup \{ 0 < s \leq d : \sup_{\xi \in \RR^d} |\xi|^s |\widehat{\mu}(\xi)| \}. \]
If this is true, then for all $t < s$,
\[ \int \frac{|\widehat{\mu}(\xi)|^2}{|\xi|^{d-t}}\; d\xi < \infty \]
so $\mu$ has Frostman dimension $t$ for all $t < s$. Thus if we define the \emph{Fourier dimension} of a set $E$ as
\[ \fordim(E) = \sup \left\{ \fordim(\mu): \mu\ \text{is supported on $E$} \right\}, \]
then $\fordim(E) \leq \hausdim(E)$.

We view the Fourier dimension as a refinement of the Hausdorff dimension which gives greater structural control on the set in the `frequency domain'. Most classical examples of fractals, like the middle-thirds Cantor set, have Fourier dimension zero. Nonetheless, one principle in this setting is that the Fourier dimension of \emph{random} families of sets tend to almost surely have Fourier dimension equal to their Hausdorff dimension. Our main technique to construct configuration avoiding sets in Chapter \ref{ch:RoughSets} involves a random selection strategy, and so in Section 6.2, we attempt to utilize this random selection strategy to find sets with large Fourier dimension avoiding configurations.

\section{Dyadic Fractal Dimension}

It is often natural for us to establish results about fractal dimension `dyadically', working with the family of cubes $\DQ^d$ and branching factors $\{ N_k : k \geq 1 \}$. We begin with Minkowski dimension. For each $m$, let $N_{\DQ}(m,E)$ denote the minimal number of cubes in $\DQ_m^d$ required to cover $E$. This is often easy to calculate, up to a multiplicative constant, by greedily selecting cubes which intersect $E$.

\begin{lemma} \label{comparableCovers}
	For any set $E$,
	\[ N_{\DQ}(m,E) \sim_d \# \{ Q \in \DQ_m^d : Q \cap E \neq \emptyset \} \sim_d N(l_m,E). \]
\end{lemma}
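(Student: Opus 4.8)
The plan is to introduce the greedy quantity $G := \#\{Q \in \DQ_m^d : Q \cap E \neq \emptyset\}$ as a middleman and to prove the four inequalities $N_{\DQ}(m,E) \le G$, $N(l_m,E) \le G$, $G \lesssim_d N_{\DQ}(m,E)$, and $G \lesssim_d N(l_m,E)$. Chaining these gives both relations $N_{\DQ}(m,E) \sim_d G \sim_d N(l_m,E)$ at once, so the lemma follows.

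First I would dispatch the two ``$\le G$'' bounds simultaneously: the family $\{Q \in \DQ_m^d : Q \cap E \neq \emptyset\}$ is a cover of $E$ by cubes of sidelength $l_m$, and those cubes lie in $\DQ_m^d$, so this single cover witnesses both $N(l_m,E) \le G$ and $N_{\DQ}(m,E) \le G$. Next, for $G \lesssim_d N_{\DQ}(m,E)$, fix a minimal cover $\mathcal{C} \subseteq \DQ_m^d$ of $E$; if $Q \in \DQ_m^d$ meets $E$, pick $x \in Q \cap E$ and note $x$ lies in some $Q' \in \mathcal{C}$, so, since distinct generation-$m$ dyadic cubes have disjoint interiors, $Q$ must be one of the at most $3^d$ cubes of $\DQ_m^d$ sharing a point with $Q'$. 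Summing over $Q' \in \mathcal{C}$ yields $G \le 3^d \# \mathcal{C} = 3^d N_{\DQ}(m,E)$. The bound $G \lesssim_d N(l_m,E)$ is the same argument with a minimal cover $R_1,\dots,R_{N(l_m,E)}$ of $E$ by arbitrary sidelength-$l_m$ cubes in place of $\mathcal{C}$: each $Q \in \DQ_m^d$ meeting $E$ shares a point with some $R_i$, and a single sidelength-$l_m$ cube meets only $O_d(1)$ members of the grid $\DQ_m^d$ (at most three of the length-$l_m$ dyadic intervals in each coordinate), giving $G \lesssim_d N(l_m,E)$.

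The argument is essentially bookkeeping; the one point that genuinely needs care — and the reason the comparison constants are not simply $1$ — is that $E$ may touch a dyadic cube only along its boundary, so $\{Q \in \DQ_m^d : Q \cap E \neq \emptyset\}$ need not be an irredundant cover, and an arbitrary sidelength-$l_m$ cube may straddle the dyadic grid. This is precisely what the neighbour count (at most $3^d$ cubes of $\DQ_m^d$ meeting a given cube of sidelength $l_m$) absorbs, and it is the only obstacle in the proof.
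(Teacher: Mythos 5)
Your proposal is correct and follows essentially the same route as the paper: both compare the covering numbers to the greedy count $\#\{Q \in \DQ_m^d : Q \cap E \neq \emptyset\}$, using that this family is itself an admissible cover for the upper bounds and that any sidelength-$l_m$ cube meets at most $3^d$ cubes of $\DQ_m^d$ for the reverse bounds. The only difference is cosmetic bookkeeping (you prove four inequalities where the paper chains $N(l_m,E) \leq N_{\DQ}(m,E) \leq \#\mathcal{E} \leq 3^d N(l_m,E)$), so nothing further is needed.
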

\begin{proof}
	Let $\mathcal{E} = \{ Q \in \DQ_m^d : Q \cap E \neq \emptyset \}$. Then $N(l_m,E) \leq N_{\DQ}(m,E) \leq \#(E)$. Conversely, let $\{ Q_k \}$ be a minimal cover of $E$ by cubes. Then each cube $Q_k$ intersects at most $3^d$ cubes in $\DQ_k^d$, so $\#(\mathcal{E}) \leq 3^d \cdot N(l_m,E) \leq 3^d \cdot N_{\DQ}(m,E)$.
\end{proof}

Thus it is natural to ask whether it is true that for any set $E$,
\begin{equation} \label{definingSequence}
	\begin{aligned}
		\lowminkdim(E) &= \liminf_{k \to \infty} \frac{\log[N_{\DQ}(k,E)]}{\log[1/l_k]}\\
		\text{and}\\
		\upminkdim(E) &= \limsup_{k \to \infty} \frac{\log[N_{\DQ}(k,E)]}{\log[1/l_k]}.
	\end{aligned}
\end{equation}
The answer depends on the choice of $\{ N_k \}$. In particular, a sufficient condition (and as we see later, essentially necessary) is that
\begin{equation} \label{definingsequencegrowthrate}
	N_{k+1} \lesssim_\varepsilon (N_1 \dots N_k)^\varepsilon \quad \text{for any $\varepsilon > 0$}.
\end{equation}
We will see that this condition allows us to work dyadically in many scenarios when it comes to fractal dimension.

%
%

\begin{theorem} \label{definingsequenceminkowski}
	If \eqref{definingsequencegrowthrate} holds, then \eqref{definingSequence} holds.
\end{theorem}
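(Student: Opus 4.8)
The plan is to compare the two kinds of covering numbers, $N(l,E)$ over all scales $l\to 0$ and the dyadic covering numbers $N_{\DQ}(k,E)$ over only the discrete sequence of scales $l_k = (N_1\cdots N_k)^{-1}$. Since by Lemma \ref{comparableCovers} we have $N(l_k,E) \sim_d N_{\DQ}(k,E)$, the sequence on the right-hand side of \eqref{definingSequence} is exactly the subsequence of $\log N(l,E)/\log(1/l)$ obtained by restricting to $l = l_k$. Hence the $\liminf$ (resp.\ $\limsup$) over the subsequence is always $\geq$ (resp.\ $\leq$) the full $\liminf$ (resp.\ $\limsup$); the content of the theorem is the reverse inequality, i.e.\ that nothing is lost by only sampling the scales $l_k$. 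This is where hypothesis \eqref{definingsequencegrowthrate} enters.

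First I would fix an arbitrary small $l>0$ and locate the unique $k$ with $l_{k+1} \leq l < l_k$. Since $l_k/l_{k+1} = N_{k+1}$, a cube of sidelength $l_k$ is covered by at most $O_d(1)$ cubes of sidelength $l$ (as $l \geq l_{k+1}$, so $l_k/l \leq N_{k+1}$... wait, that's the wrong direction) — more precisely, $l < l_k$ means a sidelength-$l$ cube sits inside $O_d(1)$ sidelength-$l_k$ cubes, giving $N(l_k,E) \lesssim_d N(l,E)$; and $l \geq l_{k+1}$ means a sidelength-$l_k$ cube is covered by at most $(N_{k+1}+1)^d \lesssim_d N_{k+1}^d$ sidelength-$l_{k+1}$... again, let me be careful. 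The clean statements are: $N(l,E) \leq N(l_{k+1},E) \cdot O_d(1)$ is false in general; rather, since $l \geq l_{k+1}$, each sidelength-$l$ cube meets $O_d((l/l_{k+1})^d) = O_d(N_{k+1}^d)$ cubes of $\DQ_{k+1}^d$, which does not immediately help. The useful direction is: since $l < l_k$, $N(l_k, E) \lesssim_d N(l,E)$, and since $l \geq l_{k+1}$, $N(l,E) \lesssim_d N(l_{k+1},E)$. So we get
\[
N(l_k,E) \lesssim_d N(l,E) \lesssim_d N(l_{k+1},E).
\]
Taking logarithms and dividing by $\log(1/l)$, and using $\log(1/l_{k+1}) \leq \log(1/l) \leq \log(1/l_k) + \log N_{k+1}$ together with $\log(1/l) \geq \log(1/l_k) = \log(N_1\cdots N_k)$, the ratio $\log(1/l_{k+1})/\log(1/l)$ and $\log(1/l_k)/\log(1/l)$ both tend to $1$ precisely because \eqref{definingsequencegrowthrate} forces $\log N_{k+1} = o(\log(N_1\cdots N_k))$. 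Thus
\[
\frac{\log N(l,E)}{\log(1/l)} = \frac{\log N_{\DQ}(k,E)}{\log(1/l_k)} + o(1)
\quad\text{or}\quad
= \frac{\log N_{\DQ}(k+1,E)}{\log(1/l_{k+1})} + o(1),
\]
which sandwiches the full limsup/liminf between the dyadic ones and yields \eqref{definingSequence}.

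The main obstacle — really the only non-formal point — is verifying that \eqref{definingsequencegrowthrate} is exactly what makes the correction terms $o(1)$: one needs that $\log(1/l_k)$ and $\log(1/l_{k+1})$ are comparable as $k\to\infty$, equivalently that $\log N_{k+1}/\log(N_1\cdots N_k) \to 0$, which is immediate from \eqref{definingsequencegrowthrate} by taking $\varepsilon$ smaller than any fixed threshold. I would also handle the degenerate cases (e.g.\ $E$ bounded but the covering numbers growing, or $\log N_{\DQ}(k,E)$ eventually stuck) by noting all quantities involved are finite for bounded $E$ and the $o(1)$ estimate is uniform in the competing covering numbers since those are sandwiched; no genuine difficulty arises there.
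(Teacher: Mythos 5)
Your proposal is correct and follows essentially the same route as the paper's proof: bracket an arbitrary scale $l$ between consecutive dyadic scales $l_{k+1} \leq l \leq l_k$, use monotonicity of the covering number together with Lemma \ref{comparableCovers}, and observe that \eqref{definingsequencegrowthrate} is equivalent to $\log(1/l_{k+1})/\log(1/l_k) \to 1$, which makes the correction terms $o(1)$. The only difference is cosmetic: after the mid-paragraph hesitation you arrive at exactly the sandwich $N(l_k,E) \lesssim_d N(l,E) \lesssim_d N(l_{k+1},E)$ (in fact it holds with no constants, by monotonicity of $N(\cdot,E)$), which is what the paper uses.
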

\begin{proof}
	Fix a length $l$, and find $k$ with $l_{k+1} \leq l \leq l_k$. Applying Lemma \ref{comparableCovers} shows
	\[ N(l,E) \leq N(l_{k+1},E) \lesssim_d N_{\DQ}(k+1,E) \]
	and
	\[ N(l,E) \geq N(l_k,E) \gtrsim_d N_{\DQ}(k,E). \]
	Thus
	\[ \frac{\log[N(l,E)]}{\log[1/l]} \leq \left[ \frac{\log(1/l_{k+1})}{\log(1/l_k)} \right] \frac{\log[N_{\DQ}(k+1,E)]}{\log[1/l_{k+1}]} + O_d(1/k) \]
	and
	\[ \frac{\log[N(l,E)]}{\log[1/l]} \geq \left[ \frac{\log(1/l_k)}{\log(1/l_{k+1})} \right] \frac{\log[N_{\DQ}(k,E)]}{\log[1/l_k]} + O_d(1/k). \]
	Provided that
	\begin{equation} \label{equivalenceofscales}
		\frac{\log(1/l_{k+1})}{\log(1/l_k)} \to 1,
	\end{equation}
	the conclusion of the theorem is true. But \eqref{equivalenceofscales} is equivalent to the condition that
	\[ \frac{\log(N_{k+1})}{\log(N_1) + \dots + \log(N_k)} \to 0, \]
	and this is equivalent to \eqref{definingsequencegrowthrate}.
\end{proof}

Any constant branching factor satisfies \eqref{definingsequencegrowthrate} for the Minkowski dimension. In particular, we can work fairly freely with the classical dyadic cubes without any problems occurring. But more importantly for our work, we can let the sequence $\{ N_k \}$ increase rapidly.

\begin{theorem} \label{rapidBranching}
	If $N_k = 2^{\lfloor 2^{k \psi(k)} \rfloor}$, where $\psi(k)$ is any decreasing sequence of positive numbers tending to zero, such that
	\begin{equation} \label{definingsequencegrowthassumption} \psi(k) \geq \log_2(k)/k, \end{equation}
	then \eqref{definingsequencegrowthrate} holds.
\end{theorem}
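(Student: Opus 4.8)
The plan is to reduce to a bare estimate on the exponents, using the computation inside the proof of Theorem~\ref{definingsequenceminkowski}: the growth condition \eqref{definingsequencegrowthrate} holds if and only if
\[ \frac{\log N_{k+1}}{\log N_1 + \cdots + \log N_k} \longrightarrow 0 \qquad (k \to \infty). \]
So it suffices to verify this limit for $N_k = 2^{\lfloor 2^{k\psi(k)} \rfloor}$. Writing $a_k = k\psi(k)$, we have $\log_2 N_k = \lfloor 2^{a_k} \rfloor$, and the goal becomes $\lfloor 2^{a_{k+1}} \rfloor \big/ \sum_{j=1}^k \lfloor 2^{a_j} \rfloor \to 0$.

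First I would record two facts. From \eqref{definingsequencegrowthassumption}, $a_k = k\psi(k) \ge \log_2 k$, so $2^{a_k} \ge k$ and $\log_2 N_k \ge k$ (in particular $N_k \ge 2$, consistent with the standing hypothesis on branching factors). Second, because $\psi$ is non-increasing, $a_{k+1} - a_k = (k+1)\psi(k+1) - k\psi(k) \le (k+1)\psi(k) - k\psi(k) = \psi(k)$, and iterating gives $a_k - a_j \le \sum_{l=j}^{k-1} \psi(l)$ for $j < k$. The mechanism driving the proof is the clash between the divergence $a_k \to \infty$ and the vanishing of its increments: $2^{a_j}$ must stay comparable to $2^{a_k}$ over a long block of indices $j$ just below $k$, so the denominator dominates the single term $2^{a_{k+1}}$ in the numerator.

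To make this quantitative I would fix the block length $m = m(k) = \lfloor 1/\psi(\lfloor k/2 \rfloor) \rfloor$. Using $\psi(n) \ge \log_2(n)/n$ one gets $m \le \lfloor k/2 \rfloor / \log_2 \lfloor k/2 \rfloor = o(k)$, hence $k - m \ge \lfloor k/2 \rfloor$ for large $k$; and since $\psi(\lfloor k/2 \rfloor) \to 0$, also $m \to \infty$. For each $j$ in the block $[k-m, k]$, monotonicity of $\psi$ gives $a_k - a_j \le \sum_{l=k-m}^{k-1} \psi(l) \le m\, \psi(k-m) \le m\, \psi(\lfloor k/2 \rfloor) \le 1$, so $2^{a_j} \ge 2^{a_k}/2$; since $2^{a_k} \ge k$ the floors cost only a constant factor, so $\log_2 N_j \gtrsim \log_2 N_k$ uniformly over the block. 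Summing over the $m+1$ indices, $\sum_{j=1}^k \log_2 N_j \gtrsim m \log_2 N_k$. On the other hand, $\log_2 N_{k+1} = \lfloor 2^{a_{k+1}} \rfloor \le 2^{a_k} 2^{\psi(k)} \lesssim \log_2 N_k$. Dividing, $\log_2 N_{k+1} \big/ \sum_{j\le k} \log_2 N_j \lesssim 1/m \to 0$, which is exactly \eqref{definingsequencegrowthrate}.

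The one place needing care is the choice of $m$ in the third step: it must diverge (which uses $\psi(\lfloor k/2 \rfloor) \to 0$) while remaining $o(k)$ so that the block $[k-m,k]$ lies in the regime where $\psi$ is already small — and this last requirement is precisely where the hypothesis $\psi(k) \ge \log_2(k)/k$ is used. Tracking the $\pm 1$ rounding errors from the floor functions is the only other bit of bookkeeping; otherwise the argument is routine estimation.
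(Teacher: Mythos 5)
Your argument is correct, and it shares the paper's starting point — both reduce \eqref{definingsequencegrowthrate} to the statement $\log N_{k+1}/(\log N_1+\cdots+\log N_k)\to 0$, exactly as asserted at the end of the proof of Theorem \ref{definingsequenceminkowski} — but the way you lower-bound the denominator is different in execution. The paper uses monotonicity of $\psi$ to replace every exponent $j\psi(j)$ by $j\psi(k)$ and then sums the resulting geometric series with ratio $2^{\psi(k)}$, obtaining the bound $\lesssim 2^{\psi(k)}-1\to 0$ for the ratio; the hypothesis \eqref{definingsequencegrowthassumption} enters there to guarantee $2^{\psi(1)}+\cdots+2^{k\psi(k)}\ge k$, which absorbs the $O(k)$ errors coming from the floors. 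You instead isolate a block of $m\asymp 1/\psi(\lfloor k/2\rfloor)$ indices just below $k$, use the increment bound $a_{k+1}-a_k\le\psi(k)$ to show each term in the block is comparable to $\log N_k$ (and $\log N_{k+1}\lesssim\log N_k$ as well), and conclude the ratio is $\lesssim 1/m\to 0$; here \eqref{definingsequencegrowthassumption} is used to force $m=o(k)$ so the block stays in the range where $\psi\le\psi(\lfloor k/2\rfloor)$, and to make the floor corrections harmless via $2^{a_k}\ge k$. Quantitatively the two bounds are of the same order ($2^{\psi(k)}-1\approx\psi(k)\ln 2$ versus $1/m\approx\psi(\lfloor k/2\rfloor)$), so neither buys extra strength; your version avoids summing a series explicitly at the cost of the block-length bookkeeping, while the paper's computation is shorter once the geometric-series identity is invoked. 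All the delicate points you flag (divergence of $m$, $m=o(k)$, rounding from the floors) are handled correctly, so the proposal stands as a complete alternative proof.
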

\begin{proof}
	We note that $\log(N_k) = 2^{k \psi(k)} + O(1)$, and that \eqref{definingsequencegrowthassumption} implies that
	\[ 2^{\psi(1)} + \dots + 2^{k \psi(k)} \geq k. \]
	Putting these two facts together, we conclude that
	\begin{align*}
		\frac{\log(N_{k+1})}{\log(N_1) + \dots + \log(N_k)} &= \frac{2^{(k+1) \psi(k+1)} + O(1)}{2^{\psi(1)} + 2^{2 \psi(2)} + \dots + 2^{k \psi(k)} + O(k)}\\
		&\lesssim \frac{2^{(k+1) \psi(k+1)}}{2^{\psi(k)} + 2^{2 \psi(k)} + \dots + 2^{k \psi(k)}}\\
		&\lesssim \frac{2^{(k+1) \psi(k+1)}}{2^{(k+1) \psi(k)}} ( 2^{\psi(k)} - 1 )\\
		&\leq (2^{\psi(k)} - 1) \to 0.
	\end{align*}
	This is equivalent to \eqref{definingsequencegrowthrate}.
\end{proof}
We refer to any sequence $\{ l_k \}$ constructed by $\{ N_k \}$ satisfying \eqref{definingsequencegrowthassumption} for some function $\psi$ as a \emph{subhyperdyadic} sequence. If a sequence $\{ l_k \}$ is generated by a sequence $\{ N_k \}$ such that for some fixed $c > 0$,
\[ N_k = 2^{\lfloor 2^{ck} \rfloor}, \]
then the values $\{ l_k \}$ are referred to as \emph{hyperdyadic}. The next (counter) example shows that hyperdyadic sequences are essentially the `boundary' for sequences that can be used to measure the Minkowski dimension.

\begin{example}
	We consider a multi-scale dyadic construction, utilizing the two families $\DQ^d$ and $\DR^d$. Fix $0 \leq c < 1$, and define $N_k = 2^{\lfloor 2^{ck} \rfloor}$, and $M_k = 2^{\lfloor c 2^{ck} \rfloor}$. Then $M_k \divides N_k$ for each $k$. We recursively define a nested family of sets $\{ E_k \}$, with each $E_k$ a $\DQ_k^d$ discretized set, and set $E = \bigcap E_k$. We define $E_0 = [0,1]$. Then, given $E_k$, for each $Q \in \DQ_k(E_k)$, we select a \emph{single} cube $R_Q \in \DR_{k+1}(E_k)$, and define $E_{k+1} = \bigcup R_Q$. Then $\#(\DQ_0(E_0)) = 1$, and
	\[ \#(\DQ_{k+1}(E_{k+1})) = (N_{k+1}/M_{k+1}) \#(\DQ_k(E_k)), \]
	which we can simplify to read
	\begin{equation} \label{equation12623} \#(\DQ_k(E_k)) = \frac{N_1 \dots N_k}{M_1 \dots M_k}. \end{equation}
	Noting that $\log(N_i) = 2^{ci} + O(1)$, and $\log(M_i) = c2^{ci} + O(1)$, we conclude that
	\begin{align*}
		\frac{\log \#(\DQ_k(E_k))}{\log(1/l_k)} &= \frac{(1-c)(2^c + \dots + 2^{ck}) + O(k)}{(2^c + \dots + 2^{ck}) + O(k)} \to 1-c.
	\end{align*}
	On the other hand, for each $k$,
	\[ \#(\DR_{k+1}^d(E_k)) = \#(\DQ_k(E_k)) = \frac{N_1 \dots N_k}{M_1 \dots M_k}, \]
	and so
	\begin{align*}
		\frac{\log \#(\DR_{k+1}^d(E_k))}{\log(1/r_{k+1})} &= \frac{(1-c)(2^c + \dots + 2^{ck}) + O(k)}{(2^c + \dots + 2^{ck}) + c2^{c(k+1)} + O(k)} \\
		&= \frac{(1-c) \cdot 2^{c(k+1)} + O(k)}{(1 - c + c 2^c) \cdot 2^{c(k+1)} + O(k)}\\
		&\to \frac{1 - c}{1 - c + c2^c} < 1 - c.
	\end{align*}
	In particular,
	\[ \lowminkdim(E) \neq \liminf_{k \to \infty} \frac{\log \left[ N(l_k,E) \right]}{\log(1/l_k)}, \] 
	so measurements at hyperdyadic scales fail to establish general results about the Minkowski dimension.

\end{example}

%
%
%
%

We now move on to calculating Hausdorff dimension dyadically. The natural quantity to consider is the measure defined for any set $E$ as
\[ H^s_{\DQ}(E) = \lim_{m \to \infty} H^s_{\DQ,m}(E), \]
where
\[ H^s_{\DQ,m}(E) = \inf \left\{ \sum_k l(Q_k)^s : E \subset \bigcup_k^\infty Q_k,\ Q_k \in \bigcup_{i \geq m} \DQ_i^d\ \text{for each $k$} \right\}. \]
A similar argument to the standard Hausdorff measures shows there is a unique $s_0$ such that $H^s_{\DQ}(E) = \infty$ for $s < s_0$, and $H^s_{\DQ}(E) = 0$ for $s > s_0$. It is obvious that $H^s_{\DQ}(E) \geq H^s(E)$ for any set $E$, so we certainly have $s_0 \geq \hausdim(E)$. The next lemma guarantees that $s_0 = \hausdim(E)$, under the same conditions on the sequence $\{ N_k \}$ as found in Theorem \ref{definingsequenceminkowski}.

\begin{lemma} \label{lemma51464}
	If \eqref{definingsequencegrowthrate} holds, then for any $\varepsilon > 0$, $H^s_{\DQ}(E) \lesssim_{s,\varepsilon} H^{s-\varepsilon}(E)$.
\end{lemma}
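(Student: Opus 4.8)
The plan is to take a near-optimal (non-dyadic) cover of $E$ witnessing $H^{s-\varepsilon}(E)$ and replace each cube by a bounded-overhead family of dyadic cubes at a \emph{slightly finer} generation, chosen so that the growth hypothesis \eqref{definingsequencegrowthrate} absorbs the overhead. We may assume $0 < \varepsilon < s$; otherwise $s-\varepsilon \le 0$, the right-hand side is infinite whenever $E$ is infinite, and when $E$ is finite both sides vanish, so the inequality is trivial. Also, if $H^{s-\varepsilon}(E) = \infty$ there is nothing to prove, so assume it is finite.

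Fix $m \ge 0$ and $\eta > 0$, and choose a cover $E \subset \bigcup_k Q_k$ by cubes with $l(Q_k) \le l_m$ and $\sum_k l(Q_k)^{s-\varepsilon} \le H^{s-\varepsilon}_{l_m}(E) + \eta \le H^{s-\varepsilon}(E) + \eta$. Write $\ell_k = l(Q_k)$ and let $j_k = \max\{\, j : l_j \ge \ell_k \,\}$, which is well defined because $\{l_j\}$ decreases to $0$ and $\ell_k \le l_m$ puts $m$ in the set; thus $l_{j_k+1} < \ell_k \le l_{j_k}$ and $j_k + 1 \ge m+1 \ge m$. Replace $Q_k$ by the family of all cubes in $\DQ_{j_k+1}^d$ meeting $Q_k$: since $\DQ_{j_k+1}^d$ tiles $\RR^d$ this covers $Q_k$, every such cube has generation $\ge m$, and there are at most $O_d\big((\ell_k/l_{j_k+1})^d\big) = O_d(N_{j_k+1}^d)$ of them because $\ell_k/l_{j_k+1} \le l_{j_k}/l_{j_k+1} = N_{j_k+1}$. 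Carrying this out for every $k$ produces a countable cover of $E$ admissible in the definition of $H^s_{\DQ,m}(E)$.

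The heart of the matter is estimating the resulting sum of $s$-th powers. For each $k$,
\[
\sum_{\substack{Q \in \DQ_{j_k+1}^d \\ Q \cap Q_k \neq \emptyset}} l(Q)^s
\;\lesssim_d\; N_{j_k+1}^d\, l_{j_k+1}^s
\;=\; N_{j_k+1}^d\, l_{j_k+1}^{\varepsilon}\, l_{j_k+1}^{s-\varepsilon}
\;\le\; \bigl(N_{j_k+1}^d\, l_{j_k+1}^{\varepsilon}\bigr)\, \ell_k^{s-\varepsilon},
\]
where the last step uses $l_{j_k+1} < \ell_k$ and $s-\varepsilon > 0$. Since $l_{j_k+1}^{\varepsilon} = (N_1 \cdots N_{j_k+1})^{-\varepsilon}$, the bracketed factor equals $N_{j_k+1}^{d-\varepsilon}(N_1\cdots N_{j_k})^{-\varepsilon}$; applying \eqref{definingsequencegrowthrate} with exponent $\varepsilon/d$ gives a constant $C$ (independent of $k$, hence of $m$) with $N_{j_k+1}^d \le C (N_1 \cdots N_{j_k})^{\varepsilon}$, so $N_{j_k+1}^d l_{j_k+1}^{\varepsilon} \le N_{j_k+1}^d (N_1\cdots N_{j_k})^{-\varepsilon} \le C$. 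Summing over $k$ yields $H^s_{\DQ,m}(E) \lesssim_{d,\varepsilon} \sum_k \ell_k^{s-\varepsilon} \le H^{s-\varepsilon}(E) + \eta$. Letting $\eta \to 0$, and then taking the supremum over $m$ (noting $H^s_{\DQ}(E) = \lim_{m} H^s_{\DQ,m}(E) = \sup_m H^s_{\DQ,m}(E)$ since $H^s_{\DQ,m}(E)$ is nondecreasing in $m$), gives $H^s_{\DQ}(E) \lesssim_{s,\varepsilon} H^{s-\varepsilon}(E)$.

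The one genuinely delicate choice is the replacement generation: replacing $Q_k$ at its own scale $j_k$ loses a factor of up to $N_{j_k+1}^s$ in sidelength, which \eqref{definingsequencegrowthrate} cannot control, whereas going much finer than $j_k+1$ would require too many cubes; generation $j_k+1$ is precisely the point where the cube count $N_{j_k+1}^d$ and the sidelength gain $l_{j_k+1}^{\varepsilon}$ combine — via \eqref{definingsequencegrowthrate} applied with the specific exponent $\varepsilon/d$ — into a bounded factor. Everything else (countability bookkeeping, the reduction to $\varepsilon < s$, and monotonicity of $H^s_{\DQ,m}$ in $m$) is routine.
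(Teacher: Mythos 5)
Your proof is correct, and it shares the paper's skeleton: take a (near-)optimal cover of $E$ by arbitrary cubes of sidelength at most $l_m$, replace each cube by dyadic cubes at a comparable generation, and let \eqref{definingsequencegrowthrate} absorb the conversion overhead before summing and letting $m \to \infty$. The one substantive difference is the direction of rounding. The paper rounds each $Q_k$ \emph{up} to the coarser generation $i_k$ with $l_{i_k+1} \leq l(Q_k) \leq l_{i_k}$, covering it by $O_d(1)$ dyadic cubes and paying in sidelength, the loss $(l_{i_k}/l_{i_k+1})^s = N_{i_k+1}^s$ being absorbed by \eqref{definingsequencegrowthrate} applied with exponent $\varepsilon/s$; you round \emph{down} to the finer generation $j_k+1$, paying instead in cube count $O_d(N_{j_k+1}^d)$, absorbed by \eqref{definingsequencegrowthrate} with exponent $\varepsilon/d$. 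Both are legitimate, and the constants are uniform in $k$ and $m$ in either case. The paper's variant is marginally cleaner in that it needs no case distinction: your step $l_{j_k+1}^{s-\varepsilon} \leq \ell_k^{s-\varepsilon}$ forces the reduction to $\varepsilon < s$, which you do handle (only note that for $\varepsilon = s$ and $E$ finite the right-hand side is $\#E$ rather than $0$, though the inequality is still trivial there since the left-hand side vanishes). Your remaining bookkeeping --- the admissibility check $j_k + 1 \geq m$ for $H^s_{\DQ,m}$, the countability of the new cover, and the monotonicity of $H^s_{\DQ,m}$ in $m$ --- is all in order.
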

\begin{proof}
	Fix $\varepsilon > 0$ and $m$. Let $E \subset \bigcup Q_k$, where $l(Q_k) \leq l_m$ for each $k$. Then for each $k$, we can find $i_k$ such that $l_{i_k+1} \leq l(Q_k) \leq l_{i_k}$. Then $Q_k$ is covered by $O_d(1)$ elements of $\DQ_{i_k}^d$, and
	\begin{equation} \label{equation824} H^s_{\DQ,m}(E) \lesssim_d \sum l_{i_k}^s \leq \sum (l_{i_k}/l_{i_k+1})^s l(Q_k)^s \leq \sum \left( l_{i_k}/l_{i_{k+1}} \right)^s l_{i_k}^\varepsilon l(Q_k)^{s - \varepsilon}. \end{equation}
	By assumption,
	\begin{equation} \label{equation992352}
		l_{i_k+1} = \frac{1}{N_1 \dots N_{i_k} N_{i_k+1}} \gtrsim_{s,\varepsilon} (N_1 \dots N_{i_k})^{1+ \varepsilon/s} = l_{i_k}^{1 + \varepsilon/s}.
	\end{equation}
	Putting \eqref{equation824} and \eqref{equation992352} together, we conclude that $H^s_{\DQ,m}(E) \lesssim_{d,s,\varepsilon} \sum l(Q_k)^{s-\varepsilon}$. Since $\{ Q_k \}$ was an arbitrary cover of $E$, we conclude $H^s_{\DQ,m}(E) \lesssim H^{s-\varepsilon}(E)$, and since $m$ was arbitrary, that $H^s_{\DQ}(E) \lesssim H^{s-\varepsilon}(E)$.
\end{proof}

Finally, we consider computing whether we can establish that a measure is a Frostman measure dyadically.

\begin{theorem} \label{easyCoverTheorem}
	If \eqref{definingsequencegrowthrate} holds, and if $\mu$ is a Borel measure such that $\mu(Q) \lesssim l(Q)^s$ for each $Q \in \DQ_k^d$, then $\mu$ is a Frostman measure of dimension $s - \varepsilon$ for each $\varepsilon > 0$.
\end{theorem}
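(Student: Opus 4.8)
The plan is to upgrade the dyadic decay estimate $\mu(Q)\lesssim l(Q)^s$ for $Q\in\DQ^d$ into a genuine Frostman bound $\mu(Q)\lesssim_\varepsilon l(Q)^{s-\varepsilon}$ valid for \emph{all} axis-parallel cubes, at the arbitrarily small cost of $\varepsilon$ in the exponent. We may assume $0<\varepsilon<s$, since a measure of dimension $s-\varepsilon\le 0$ carries no content. Recall that a Frostman measure is by definition non-zero, compactly supported, and finite; these properties of $\mu$ we take as given (they hold in every application of this theorem and in particular when $\mu$ arises from the mass distribution principle), so the whole task reduces to the decay estimate. Moreover, by finiteness it suffices to verify that estimate for cubes $Q$ with $l(Q)\le 1=l_0$: when $l(Q)>1$ we have $l(Q)^{s-\varepsilon}\ge 1$, so $\mu(Q)\le\mu(\RR^d)\le\mu(\RR^d)\,l(Q)^{s-\varepsilon}$.

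So fix a cube $Q$ with $0<l(Q)\le 1$. Since the sidelengths $l_k=(N_1\cdots N_k)^{-1}$ decrease from $l_0=1$ to $0$, there is a $k\ge 0$ with $l_{k+1}\le l(Q)\le l_k$. Because $l(Q)\le l_k$, each coordinate projection of $Q$ is an interval of length at most $l_k$, so it meets only $O_d(1)$ (at most $3^d$) dyadic intervals of generation $k$; hence $Q$ is covered by $O_d(1)$ cubes of $\DQ_k^d$. Applying the hypothesis to each of these cubes and summing gives
\[ \mu(Q)\ \le\ \sum_{\substack{Q'\in\DQ_k^d \\ Q'\cap Q\neq\emptyset}}\mu(Q')\ \lesssim_d\ l_k^s. \]

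It remains to pass from scale $l_k$ back to scale $l(Q)$ using \eqref{definingsequencegrowthrate}. Writing $l_k=N_{k+1}l_{k+1}\le N_{k+1}\,l(Q)$ gives $l_k^s\le N_{k+1}^{\,s}\,l(Q)^s$. Now invoke \eqref{definingsequencegrowthrate} with the \emph{rescaled} parameter $\varepsilon/s$ in place of $\varepsilon$: there is $C_\varepsilon$ with $N_{k+1}\le C_\varepsilon(N_1\cdots N_k)^{\varepsilon/s}$, hence $N_{k+1}^{\,s}\lesssim_\varepsilon(N_1\cdots N_k)^{\varepsilon}=l_k^{-\varepsilon}\le l(Q)^{-\varepsilon}$, the last inequality because $l(Q)\le l_k$. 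Chaining the three estimates yields $\mu(Q)\lesssim_{d,\varepsilon}N_{k+1}^{\,s}\,l(Q)^s\lesssim_{d,\varepsilon}l(Q)^{-\varepsilon}l(Q)^s=l(Q)^{s-\varepsilon}$, the desired Frostman bound. There is no serious obstacle; the only points requiring care are the exponent bookkeeping — in particular that the growth hypothesis must be applied with parameter $\varepsilon/s$ so that after raising to the $s$-th power one recovers exactly an $\varepsilon$-power of $N_1\cdots N_k$ — and the elementary observation that a cube of sidelength at most $l_k$ overlaps only $O_d(1)$ cubes of $\DQ_k^d$.
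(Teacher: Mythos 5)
Your proof is correct and follows essentially the same route as the paper: locate $k$ with $l_{k+1}\le l(Q)\le l_k$, cover $Q$ by $O_d(1)$ cubes of $\DQ_k^d$ to get $\mu(Q)\lesssim_d l_k^s$, and then use \eqref{definingsequencegrowthrate} with parameter $\varepsilon/s$ to trade the gap between $l_k$ and $l(Q)$ for an $\varepsilon$ loss in the exponent. The extra remarks about large cubes and the qualitative Frostman conditions are fine but not needed beyond what the paper implicitly assumes.
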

\begin{proof}
	Given a cube $Q$, find $k$ such that $l_{k+1} \leq l(Q) \leq l_k$. Then $Q$ is covered by $O_d(1)$ cubes in $\DQ^d_k$, which shows
	\[ \mu(Q) \lesssim_d l_k^s = [(l_k/l)^s l^\varepsilon ] l^{s - \varepsilon} \leq [l_k^{s + \varepsilon} / l_{k+1}^s] l^{s - \varepsilon} = \left[ \frac{N_{k+1}^s}{(N_1 \dots N_k)^\varepsilon} \right] l^{s-\varepsilon} \lesssim_\varepsilon l^{s-\varepsilon}. \qedhere \]
\end{proof}	

Let's recognize the utility of this approach from the perspective of a dyadic construction. Suppose we have a sequence of nested sets $\{ E_k \}$, where $E_k$ is a $\DQ_k$ discretized subset of $[0,1]^d$, and for each $Q_0 \in \DQ_k(E_k)$, there is at least one cube $Q \in \DQ_{k+1}(E_{k+1})$ with $Q^* = Q_0$. Then we can set $E = \bigcap E_k$ as a `limit' of the discretizations $E_k$. We can associate with this construction a finite measure $\mu$ supported on $E$. It is defined by the mass distribution principle with respect to a function $w: \DQ^d \to [0,\infty)$. We set $w([0,1]^d) = 1$, and for each $Q \in \DQ_{k+1}(E_{k+1})$, set
\[ w(Q) = \frac{w(Q^*)}{\# \{ Q' \in \DQ_{k+1}(E_{k+1}) : (Q')^* = Q^* \}}. \]
The mass distribution principle then gives a Borel measure $\mu$, which we refer to as the \emph{canonical measure} associated with this construction. For this measure, it is often easy to show from a combinatorial argument that $w(Q) \lesssim l(Q)^s$ if $Q \in \DQ^d$, which together with \eqref{massdissupperbound} also implies $\mu(Q) \lesssim l(Q)^s$ for $Q \in \DQ^d$. This makes Theorem \ref{easyCoverTheorem} useful.

\begin{remark}
	The dyadic construction showing that Minkowski dimension cannot be measured only at hyperdyadic scales also shows that a bound on a measure $\mu$ on hyperdyadic cubes does not imply the correct bound at all scales. It is easy to show from \eqref{equation12623} that the canonical measure $\mu$ for this example satisfies, for each $k$, each $Q \in \DQ_k(E_k)$, and each $\varepsilon > 0$,
	\[ \mu(Q) = \left( \#(\DQ_k(E_k)) \right)^{-1} \lesssim_\varepsilon l(Q)^{1-c-\varepsilon} \]
	for all $Q \in \DQ_k(E_k)$, yet we know that for the set constructed in that example,
	\[ \hausdim(E) \leq \lowminkdim(E) < 1 - c. \]
	Frostman's lemma implies we cannot possibly have $\mu(Q) \lesssim_\varepsilon l(Q)^{1-c-\varepsilon}$ for all $\varepsilon > 0$ and \emph{all} cubes $Q$.
\end{remark}


\section{Beyond Hyperdyadics}

If we use a faster increasing sequence of branching factors than that satisfying \eqref{definingsequencegrowthrate}, we must exploit some extra property of our construction, which is not always present in general sets. Here, we rely on a \emph{uniform mass distribution} between scales. Given the uniformity assumption, the lengths can decrease as fast as desired. We utilize the multi-scale set of dyadic cubes $\DQ^d$ and $\DR^d$ introduced in Section \ref{sec:Dyadics}.

\begin{lemma} \label{uniformMassFrostman}
	Let $\mu$ be a measure supported on a set $E$. Suppose that
    \begin{enumerate}
    	\item \label{discreteBound} For any $Q \in \DQ_k^d$, $\mu(Q) \lesssim l_k^s$.
    	\item \label{controlledScale} For each $R \in \DR_{k+1}^d$, $\#[\DQ_{k+1}^d(E(l_k) \cap R)] \lesssim 1$.
    	\item \label{uniformDist} For any $R \in \DR_{k+1}^d$ with parent cube $Q \in \DQ_k^d$, $\mu(R) \lesssim (1/M_{k+1})^d \cdot \mu(Q)$.
    \end{enumerate}
	Then $\mu$ is a Frostman measure of dimension $s$.
\end{lemma}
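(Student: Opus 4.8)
The plan is to verify the one substantive clause in the definition of an $s$-dimensional Frostman measure: that there is a constant $C$ with $\mu(Q) \le C\, l(Q)^s$ for every cube $Q$ (non-vanishing and compact support of $\mu$ are implicit in the setup; I also assume $0 \le s \le d$, automatic here, and that $E$ is bounded, which reduces everything to cubes of sidelength $l(Q) \le 1$ since for $l(Q) \ge 1$ one has $\mu(Q) \le \mu(\RR^d) \le \mu(\RR^d)\, l(Q)^s$). So fix a cube $Q$ with $l := l(Q) \le 1$ and pick the unique $k \ge 0$ with $l_{k+1} \le l \le l_k$. The idea is to interpose the scale $r_{k+1} = l_k/M_{k+1}$ supplied by the family $\DR_{k+1}^d$; since $M_{k+1} \divides N_{k+1}$, the three grids $\DQ_k^d \supseteq \DR_{k+1}^d \supseteq \DQ_{k+1}^d$ are nested and aligned, with $l_{k+1} \le r_{k+1} \le l_k$, and I would split according to whether $l$ lies in the \emph{coarse band} $[r_{k+1}, l_k]$ or the \emph{fine band} $[l_{k+1}, r_{k+1}]$.

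In the coarse band, $Q$ meets $O_d\!\big((l/r_{k+1})^d\big)$ cubes $R \in \DR_{k+1}^d$, so $\mu(Q) \le \sum_{R \cap Q \ne \emptyset} \mu(R)$. For each such $R$ with parent $R^* \in \DQ_k^d$, Hypothesis \ref{uniformDist} gives $\mu(R) \lesssim M_{k+1}^{-d}\mu(R^*)$ and Hypothesis \ref{discreteBound} gives $\mu(R^*) \lesssim l_k^s$; summing and using $r_{k+1} = l_k/M_{k+1}$ to absorb the two powers of $M_{k+1}$,
\[
\mu(Q) \;\lesssim_d\; \Big(\frac{l}{r_{k+1}}\Big)^{d} M_{k+1}^{-d}\, l_k^{s} \;=\; \Big(\frac{l}{l_k}\Big)^{d} l_k^{s} \;=\; l^{s}\Big(\frac{l}{l_k}\Big)^{d-s} \;\le\; l^{s},
\]
the last step because $l \le l_k$ and $d - s \ge 0$.

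In the fine band, $Q$ meets only $O_d(1)$ cubes $R \in \DR_{k+1}^d$. For each such $R$, Hypothesis \ref{controlledScale} tells us $E(l_k) \cap R$ is a union of $O_d(1)$ cubes of $\DQ_{k+1}^d$ (the intersection is $\DQ_{k+1}$-discretized by alignment of the grids), and since $\mu$ is carried by $E \subseteq E(l_k)$, Hypothesis \ref{discreteBound} bounds the $\mu$-mass of each of these cubes by $\lesssim l_{k+1}^s$; hence $\mu(R) \lesssim_d l_{k+1}^s$ and therefore $\mu(Q) \lesssim_d l_{k+1}^s \le l^s$, using $l \ge l_{k+1}$. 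Putting the two bands together yields $\mu(Q) \lesssim l(Q)^s$ with a constant depending only on $d$, $s$, and the constants implicit in Hypotheses \ref{discreteBound}--\ref{uniformDist}, which is exactly the Frostman bound.

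The only place I expect to need care is the coarse band: Hypotheses \ref{discreteBound} and \ref{uniformDist} must be used together, since neither alone survives the factor $(l/r_{k+1})^d$ counting the $\DR_{k+1}^d$-cubes that meet $Q$, and the point is that the $M_{k+1}^{-d}$ saving from the uniform redistribution exactly cancels this factor, leaving the harmless power $(l/l_k)^{d-s} \le 1$. The remaining ingredients — alignment of the three dyadic families, the reduction to small cubes, and the fine band — are routine.
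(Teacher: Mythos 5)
Your proof is correct and follows essentially the same route as the paper: split at the intermediate scale $r_{k+1}$, use Hypotheses \ref{discreteBound} and \ref{uniformDist} in the coarse band so that the $M_{k+1}^{-d}$ saving cancels the count of $\DR_{k+1}^d$-cubes, and use Hypotheses \ref{discreteBound} and \ref{controlledScale} in the fine band (the paper states its fine case as $l_k \le l \le r_k$, which is just your case with the index shifted). Your write-up is in fact slightly cleaner, since the paper's fine case miscites Hypothesis \ref{uniformDist} where \ref{controlledScale} is the one actually used.
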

\begin{proof}
	We establish the general bound $\mu(Q) \lesssim l(Q)^s$ for all cubes $Q$ by separating our analysis into two different cases:
	\begin{itemize}
		\item Suppose there is $k$ with $r_{k+1} \leq l(Q) \leq l_k$. Then $\#(\DR_{k+1}^d(Q(r_{k+1}))) \lesssim (l/r_{k+1})^d$. Properties \ref{discreteBound} and \ref{uniformDist} imply each of these cubes has measure at most $O( (r_{k+1}/l_k)^d l_k^s)$, so we obtain that
    	\[ \mu(Q) \lesssim (l/r_{k+1})^d (r_{k+1}/l_k)^d l_k^s = l^d / l_k^{d-s} \lesssim l^s. \]

    \item Suppose there exists $k$ with $l_k \leq l \leq r_k$. Then $\#[\DR_k^d(Q(r_k))] \lesssim 1$. Combining this with Property \ref{uniformDist} gives $\#[\DQ_k^d(Q(r_k) \cap E(l_k))] \lesssim 1$. This and Property \ref{discreteBound} then shows
    \[ \mu(Q) \lesssim l_k^s \leq l^s. \]
	\end{itemize}
	We have addressed all cases, so $\mu$ is a Frostman measure of dimension $s$.
\end{proof}

This theorem is commonly used here when dealing with multi-scale dyadic fractal constructions, whose character is summarized in the following Theorem.

\begin{theorem} \label{TheConstructionTheorem}
	let $E = \bigcap E_k$, where $\{ E_k \}$ is a nested family of subsets of $\RR^d$, such that $E_k$ is $\DQ_k$ discretized for each $k$. Suppose that 
	\begin{enumerate}
		\item \label{SingleSelection} For each $Q \in \DQ_k(E_k)$, there exists a set $\mathcal{R}_Q \subset \DR_{k+1}(Q)$ such that $\#(\mathcal{R}_Q) \geq (1/2) \cdot \#(\DR_{k+1}(Q))$, and
		\[ \# \left( \DQ_{k+1}(R \cap E_{k+1}) \right) = \begin{cases} 1 & : R \in \mathcal{R}_Q, \\ 0 & : R \not \in \mathcal{R}_Q. \end{cases} \]

		\item \label{RapidDecrease} For any $\varepsilon > 0$, $N_1 \dots N_k \lesssim_\varepsilon N_{k+1}^\varepsilon$.

		\item \label{ChangeofScales} There is $0 < s \leq 1$ such that for any $\varepsilon > 0$, $N_{k+1} \lesssim_\varepsilon M_{k+1}^{(1 + \varepsilon)/s}$.
	\end{enumerate}
	Then $E$ has Hausdorff dimension $sd$.
\end{theorem}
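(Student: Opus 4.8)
The plan is to prove $\hausdim(E)\ge sd$ and $\hausdim(E)\le sd$ separately. It is worth noting at the outset why we cannot simply invoke Theorem~\ref{easyCoverTheorem}: Property~\ref{RapidDecrease} demands $N_1\dots N_k\lesssim_\varepsilon N_{k+1}^\varepsilon$, the exact opposite of the hyperdyadic growth condition~\eqref{definingsequencegrowthrate}, so we are genuinely ``beyond hyperdyadics'' and Lemma~\ref{uniformMassFrostman} is the tool that replaces it. Normalizing so that $E_0$ is a single cube of $\DQ_0^d$, I would first record the basic counting identity: since $M_{k+1}\divides N_{k+1}$, every $Q\in\DQ_k^d$ contains exactly $M_{k+1}^d$ cubes of $\DR_{k+1}^d$, so by Property~\ref{SingleSelection} each $Q\in\DQ_k(E_k)$ spawns $\#(\mathcal{R}_Q)\in[\tfrac12 M_{k+1}^d,\,M_{k+1}^d]$ cubes of $\DQ_{k+1}(E_{k+1})$; iterating, $\#(\DQ_k(E_k))$ lies between $2^{-k}\prod_{j=1}^k M_j^d$ and $\prod_{j=1}^k M_j^d$.

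\textbf{Lower bound.} I would take $\mu$ to be the canonical measure of the construction: apply the mass distribution principle (Theorem~\ref{massdistributionprinciplelem}) to the weight $w$ with $w(E_0)=1$ and $w(Q)=w(Q^*)/\#(\mathcal{R}_{Q^*})$ when $Q\in\DQ_{k+1}(E_{k+1})$, $w(Q)=0$ otherwise. Relation~\eqref{equation73234091} holds because $Q^*$ has exactly $\#(\mathcal{R}_{Q^*})$ children in the construction, and since $w(Q)>0$ precisely when $Q\in\DQ_k(E_k)$ the measure $\mu$ is supported on $\bigcap_k E_k=E$. I then verify, for every $\varepsilon>0$, the three hypotheses of Lemma~\ref{uniformMassFrostman} with its exponent set to $sd-\varepsilon$, and let $\varepsilon\to0$. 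Condition~\ref{uniformDist} holds because a selected $R\in\mathcal{R}_Q$ carries its mass on a single surviving generation-$(k+1)$ cube of weight $w(Q)/\#(\mathcal{R}_Q)\le 2M_{k+1}^{-d}w(Q)$ while an unselected $R$ carries no mass, so $\mu(R)\lesssim M_{k+1}^{-d}\mu(Q)$. Condition~\ref{controlledScale} is exactly the assertion, immediate from Property~\ref{SingleSelection}, that $E$ meets each cube of $\DR_{k+1}^d$ in at most one cube of $\DQ_{k+1}^d$. The substantive point is condition~\ref{discreteBound}, $\mu(Q)\lesssim_\varepsilon l_k^{sd-\varepsilon}$ for $Q\in\DQ_k^d$: by~\eqref{massdisslowerbound} and the counting identity $\mu(Q)\lesssim_d\max_{Q'}w(Q')\lesssim 2^k\prod_{j\le k}M_j^{-d}$, while inverting Property~\ref{ChangeofScales} bounds $\prod_{j\le k}M_j^{-d}$ above by $c_\delta^{-k}(N_1\dots N_k)^{-sd/(1+\delta)}$ for each $\delta>0$; choosing $\delta$ small relative to $\varepsilon$, the surviving exponential factor $(2/c_\delta)^k$ is swamped by $N_1\dots N_k$ raised to the positive power $sd/(1+\delta)-(sd-\varepsilon)$, because Property~\ref{RapidDecrease} forces $N_1\dots N_k$ to grow faster than exponentially in $k$ (the finitely many small $k$ being absorbed into the constant). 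Lemma~\ref{uniformMassFrostman} then exhibits $\mu$ as an $(sd-\varepsilon)$-dimensional Frostman measure on $E$ for each $\varepsilon>0$, so $\hausdim(E)\ge sd$.

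\textbf{Upper bound.} Since $E\subset E_k$ and $E_k$ is a union of $\#(\DQ_k(E_k))$ cubes of sidelength $l_k$,
\[ \frac{\log N(l_k,E)}{\log(1/l_k)}\le\frac{\log\#(\DQ_k(E_k))}{\log(N_1\dots N_k)}\le\frac{d\sum_{j\le k}\log M_j}{\sum_{j\le k}\log N_j}. \]
Property~\ref{ChangeofScales}, together with $M_{k+1}\divides N_{k+1}$, forces $\log M_k/\log N_k\to s$, whence the ratio on the right tends to $s$ and the whole expression to $sd$. Thus $\lowminkdim(E)\le sd$, and since $\hausdim(E)\le\lowminkdim(E)$ the upper bound follows.

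\textbf{Main obstacle.} The principal difficulty is the multi-scale bookkeeping needed to make both estimates land exactly on $sd$. A set of this shape has genuinely oscillating Minkowski dimension --- close to $d$ near the scales $r_{k+1}$ and close to $sd$ near the scales $l_{k+1}$, exactly the phenomenon exhibited by the Example preceding Lemma~\ref{uniformMassFrostman} --- so a direct Frostman estimate at a single family of scales is hopeless, which is precisely why the lower bound must be routed through the uniform mass distribution Lemma~\ref{uniformMassFrostman} rather than Theorem~\ref{easyCoverTheorem}. Interlocked with this is the need to show that every lossy factor in the computation --- the $2^{\pm k}$ from the ``$\ge\tfrac12$'' in Property~\ref{SingleSelection}, the $O_d(1)$ boundary-overlap factors inherent in the mass distribution principle, and the $\varepsilon$-losses and constants $C_\varepsilon$ coming from Properties~\ref{RapidDecrease} and~\ref{ChangeofScales} --- is negligible against a fixed positive power of $N_1\dots N_k$, which is exactly the role played by the super-fast growth guaranteed by Property~\ref{RapidDecrease}.
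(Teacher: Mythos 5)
Your lower-bound argument is essentially the paper's own proof: you build the canonical measure via the mass distribution principle, note $w(R)\le 2M_{k+1}^{-d}w(Q)$ for the selected slabs, and verify the three hypotheses of Lemma~\ref{uniformMassFrostman} with exponent $sd-\varepsilon$, using the super-exponential growth forced by Property~\ref{RapidDecrease} to absorb the $2^{k}$-type and constant losses. The paper does exactly this, only with a cruder intermediate bound (it uses $w(Q^\ast)\le 1$ to get $w(Q)\le 2/M_{k+1}^d$ and then $1/M_{k+1}\lesssim_\varepsilon l_{k+1}^{s(1-2\varepsilon)}$, rather than your product over all $M_j$), and it glosses the boundary-leakage issue in verifying Property~\ref{uniformDist} at the same level of rigor as you do. So that half is correct and the same route.

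The upper bound is where there is a genuine gap in your write-up: the claim that Property~\ref{ChangeofScales} together with $M_k\divides N_k$ ``forces $\log M_k/\log N_k\to s$'' is not a valid deduction. Property~\ref{ChangeofScales} is one-sided --- it says $M_{k+1}\gtrsim_\varepsilon N_{k+1}^{s/(1+\varepsilon)}$ --- and divisibility only gives $M_k\le N_k$, so the hypotheses yield $\liminf_k \log M_k/\log N_k\ge s$ and nothing more. Indeed, taking $M_k=N_k$ (so $\DR_{k+1}=\DQ_{k+1}$) and keeping half of the children of each cube, spread evenly, satisfies Properties~\ref{SingleSelection}--\ref{ChangeofScales} for \emph{every} $s\le 1$ while producing a set of dimension $d$; the covering-number upper bound genuinely needs the reverse comparison $M_{k+1}\lesssim_\varepsilon N_{k+1}^{s(1+\varepsilon)}$. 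To be fair, the paper's own upper-bound display suffers from the same defect (its printed estimate $\#(\DR_{k+1}(E_k))\le (1/M_{k+1})^d\lesssim_\varepsilon l_{k+1}^{ds-\varepsilon}$ is garbled and implicitly invokes that reverse inequality, which does hold in every application of the theorem in the thesis), so this is as much a flaw of the theorem statement as of your reconstruction; but you should state the reverse inequality as an explicit hypothesis (or prove only $\hausdim(E)\ge sd$) rather than assert that the limit of $\log M_k/\log N_k$ exists and equals $s$.
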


\begin{remark}
	Note that Property \ref{RapidDecrease} is essentially the opposite of \eqref{definingsequencegrowthrate}.
\end{remark}

\begin{proof}
	Consider the function $w: \DQ^d \to [0,\infty)$ defined with respect to the sequence $\{ E_k \}$, which generates the canonical measure $\mu$ supported on $E$ using the mass distribution principle. For each $R \in \DR_{k+1}(E_k)$ with parent cube $Q \in \DQ_k(E_k)$, Property \ref{SingleSelection} shows
	\begin{equation} \label{equation6250234923} w(R) \leq (2/M_{k+1}^d) f(Q). \end{equation}
	Properties \ref{RapidDecrease} and \ref{ChangeofScales} together imply that for each $\varepsilon > 0$,
	\begin{equation} \label{equation363491043214} 1/M_{k+1} \lesssim_\varepsilon r_{k+1}^{1 - \varepsilon} \lesssim_\varepsilon l_{k+1}^{s(1  - 2 \varepsilon)}. \end{equation}
	If $Q \in \DQ_{k+1}(E_{k+1})$ has parent cubes $R \in \DR_{k+1}(E_k)$ and $Q^* \in \DQ_k(E_k)$, then by \eqref{equation6250234923}, \eqref{equation363491043214}, and the fact that $w(Q^*) \leq 1$,
	\begin{equation} \label{equation120492309562} w(Q) = w(R) \leq (2/M_{k+1}^d) w(Q^*) \leq (2/M_{k+1}^d) \lesssim_{\varepsilon} l_{k+1}^{ds(1 - 2\varepsilon)}. \end{equation}
	If $Q$ is a cube with length $l_k$, then $\#(\DQ_k^d(Q(l_k))) = O_d(1)$, which combined with \eqref{massdisslowerbound}, shows that
	\begin{equation} \label{equation69009230919} \mu(Q) \lesssim_\varepsilon l_{k+1}^{ds(1-2\varepsilon)}. \end{equation}
	Property \ref{controlledScale} of Lemma \ref{uniformMassFrostman} is implied by Property \ref{SingleSelection}.  Equation \eqref{equation69009230919} is a form of Property \ref{discreteBound} in Lemma \ref{uniformMassFrostman}. Together with \eqref{massdisslowerbound}, \eqref{equation6250234923} implies that the canonical measure $\mu$ satisfies Property \ref{uniformDist} of Lemma \ref{uniformMassFrostman}. Thus all assumptions of Lemma \ref{uniformMassFrostman} are satisfied, and so we conclude $\mu$ is a Frostman measure of dimension $ds(1 - 2\varepsilon)$ for each $\varepsilon > 0$. Taking $\varepsilon \to 0$, and applying Frostman's lemma, we conclude that the Hausdorff dimension of the support of $\mu$, which is a subset of $E$, is greater than or equal to $ds$. For each $k$, and $\varepsilon > 0$,
	\[ \#(\DQ_{k+1}(E_{k+1})) \leq \#(\DR_{k+1}(E_k)) \leq (1/M_{k+1})^d \lesssim_\varepsilon l_{k+1}^{ds - \varepsilon}. \]
	Taking $k \to \infty$, and then $\varepsilon \to 0$, we conclude that
	\[ \hausdim(E) \leq \lowminkdim(E) \leq ds. \]
	Since we already know $\hausdim(E) \geq ds$, this completes the proof.
\end{proof}

\chapter{Related Work}
\label{ch:RelatedWork}

Here, we discuss the main papers which influenced our results. In particular, the work of Keleti on translate avoiding sets, Fraser and Pramanik's work on sets avoiding smooth configurations, and Math\'{e}'s result on sets avoiding algebraic varieties. 

\section{Keleti: A Translate Avoiding Set}

In \cite{KeletiDimOneSet}, Keleti constructs a set $X \subset [0,1]$ with Hausdorff dimension such that for each $t \neq 0$, $X$ intersects $t + X$ in at most one place. The set $X$ is then said to \emph{avoid translates}. This paper contains the core idea behind the \emph{discretization} method adapted in Fraser and Pramanik's paper. We also adapt this technique in our paper, which makes the result of interest.

\begin{lemma}
    Let $X$ be a set. Then $X$ avoids translates if and only if there do not exists values $x_1 < x_2 \leq x_3 < x_4$ in $X$ with $x_2 - x_1 = x_4 - x_3$. In particular, a set $X$ avoids translates if and only if it avoids the four point configuration
    \[ \C = \{ (x_1, x_2, x_3, x_4): x_1 < x_2 \leq x_3 < x_4,\ x_2 - x_1 = x_4 - x_3 \}. \]
\end{lemma}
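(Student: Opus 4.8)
The plan is to establish the contrapositive form of the equivalence: $X$ \emph{fails} to avoid translates if and only if $X$ contains a quadruple $x_1 < x_2 \le x_3 < x_4$ with $x_2 - x_1 = x_4 - x_3$. The ``in particular'' clause then falls out by unwinding the definition of avoiding the configuration $\C$.

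For the direction starting from such a quadruple, I would set $t = x_2 - x_1 = x_4 - x_3$; since $x_1 < x_2$ we have $t > 0$, and both $x_2 = x_1 + t$ and $x_4 = x_3 + t$ lie in $X \cap (t + X)$. Because $x_2 \le x_3 < x_4$ these two points are distinct, so $X \cap (t + X)$ has at least two elements for a nonzero $t$, meaning $X$ does not avoid translates.

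For the reverse direction, I would start from a nonzero $t$ and two distinct points $y_1, y_2 \in X \cap (t + X)$. After possibly replacing $t$ by $-t$ (which corresponds to passing from $y_i$ to $y_i - t \in X \cap ((-t) + X)$) I may assume $t > 0$, and after relabelling I may assume $y_1 < y_2$. Then $a = y_1 - t$, $b = y_1$, $c = y_2 - t$, $d = y_2$ all lie in $X$, and from $t > 0$ and $y_1 < y_2$ I get $a < b$, $a < c$, $c < d$, $b < d$, together with $b - a = d - c = t$ and $c - a = d - b = y_2 - y_1$. If $b \le c$, the ordered quadruple $(a, b, c, d)$ works; if $b > c$, then $a < c < b < d$ and the ordered quadruple $(a, c, b, d)$ works, using $c - a = d - b$. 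Either way the required quadruple exists.

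I do not expect a genuine obstacle; the only subtlety is the degenerate case $b = c$ (equivalently $y_1 = y_2 - t$), where the four numbers collapse onto a three-term arithmetic progression $y_1 - t, y_1, y_1 + t$, which is precisely why the statement permits $x_2 = x_3$ rather than insisting on a strict inequality. The final ``in particular'' sentence is then immediate: $X$ avoids $\C$ exactly when no quadruple of points of $X$ belongs to $\C$, and we have just characterised that condition as $X$ avoiding translates.
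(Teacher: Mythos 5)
Your proposal is correct and follows essentially the same argument as the paper: given two distinct points of $X \cap (t+X)$ with $t>0$, you form the four points $y_1-t, y_1, y_2-t, y_2$ and split into the same two cases according to whether $y_1 \le y_2 - t$, while the converse direction is the same direct verification that $x_2$ and $x_4$ lie in $X \cap (t+X)$. Your handling of the reduction to $t>0$ and of the degenerate case $x_2 = x_3$ is slightly more explicit than the paper's, but the proof is the same in substance.
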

\begin{proof}

    Suppose $(t + X) \cap X$ contains two points $a < b$. Without loss of generality, we may assume that $t > 0$. If $a \leq b - t$, then the equation
    \[ a - (a - t) = t = b - (b - t) \]
    shows that the tuple $(a-t,a,b-t,b)$ lies in $\C$. We also have
    \[ (b - t) - (a - t) = b - a, \]
    so if $a - t < b - t \leq a < b$, then $(a-t,b-t,a,b) \in \C$. This covers all possible cases. Conversely, if there are $x_1 < x_2 \leq x_3 < x_4$ in $X$ with
    \[ x_2 - x_1 = t = x_4 - x_3, \]
    then $X + t$ contains $x_2 = x_1 + (x_2 - x_1)$ and $x_4 = x_3 + (x_4 - x_3)$.
\end{proof}


The basic, but fundamental idea of the interval dissection technique is to introduce memory into Cantor set constructions. Keleti constructs a nested family of discrete sets $\{ X_k \}$, with $X_k$ a $\DQ_k$ discretized set, and with $X = \bigcap X_k$. The sequence $\{ N_k \}$ will be specified later, but each $N_k$ will be a multiple of 10. We initialize $X_0 = [0,1]$. The novel feature of the argument is to incorporate a queuing procedure into the construction, i.e. the algorithm incorporates a list of intervals (known as a queue) that changes over the course of the algorithm, as we remove intervals from the front of the queue, and add intervals to the back of the queue. The queue initially just contains the interval $[0,1]$, and we let $X_0 = [0,1]$. To construct the sequence $\{ X_k \}$, Keleti iteratively performs the following procedure:
\begin{algorithm}[H]
    \begin{algorithmic}
        \caption{Construction of the Sets $\{ X_k \}$:}
        \State{Set $k = 0$.}
        \MRepeat
            \State{Take off an interval $I$ from the front of the queue.}

            \MForAll{\ $J \in \DQ_k(X_k)$:}
                \State{Order the intervals in $\DQ_{k+1}(J)$ as $J_1, \dots, J_N$.}

                \State{{\bf If} $J \subset I$, add all intervals $J_i$ to $X_{k+1}$ with $i \equiv 0$ modulo 10.}
                \State{{\bf Else} add all $J_i$ with $i \equiv 5$ modulo 10.}
            \EndForAll
            \State{Add all intervals in $\DQ_{k+1}^d$ to the end of the queue.}
            \State{Increase $k$ by 1.}
        \EndRepeat   
    \end{algorithmic}
\end{algorithm}

Each iteration of the algorithm produces a new set $X_k$, and so leaving the algorithm to repeat infinitely produces a sequence $\{ X_k \}$ whose intersection is $X$.

\begin{lemma}
    The set $X$ is translate avoiding.
\end{lemma}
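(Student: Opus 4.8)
The plan is to invoke the preceding characterization lemma and argue by contradiction: suppose $X$ is not translate avoiding, so there are $x_1 < x_2 \le x_3 < x_4$ in $X$ with $x_2 - x_1 = x_4 - x_3$. Since $x_1 < x_2$ and the dyadic cubes of the construction shrink to points, there is a last generation $g$ at which $x_1$ and $x_2$ lie in a common cube of the construction; let $J$ be the generation-$(g+1)$ cube containing $x_1$, so that $x_2 \notin J$. Because $x_1 \in J$ and $x_1 < x_2$, in fact $x_2 > \sup J$, hence also $x_3, x_4 > \sup J$, so none of $x_2, x_3, x_4$ lies in $J$.

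The next step is to single out the one iteration of the construction that ``returns to'' $J$. The queueing procedure only ever affects $[0,1]$, so at each generation it enqueues finitely many subintervals of $[0,1]$, and after processing all intervals of generation $\le j$ it moves on to generation $j+1$; hence every dyadic subinterval of $[0,1]$ is dequeued at some iteration. In particular there is an iteration $m \ge g+1$ at which the dequeued interval $I$ equals $J$, and at that iteration the algorithm passes from $X_m$ to $X_{m+1}$. For $i = 1,2,3,4$ let $Q_i \in \DQ_m(X_m)$ be the generation-$m$ cube containing $x_i$. Since $x_1 \in J$ and $J$ has generation $\le m$, we have $Q_1 \subseteq J = I$, so $Q_1$ keeps exactly its children of index $\equiv 0 \pmod{10}$; whereas for $i = 2,3,4$ the relation $x_i \notin J$ forces $Q_i \not\subseteq J$, so $Q_i$ keeps its children of index $\equiv 5 \pmod{10}$. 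Writing $P_i \in \DQ_{m+1}(X_{m+1})$ for the generation-$(m+1)$ cube containing $x_i$ and $a_i$ for the index of $P_i$ inside $Q_i$, this says $a_1 \equiv 0$ and $a_2 \equiv a_3 \equiv a_4 \equiv 5 \pmod{10}$.

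Finally I would convert $x_2 - x_1 = x_4 - x_3$ into a modular contradiction. Write $x_i = \inf P_i + f_i l_{m+1}$ with $f_i \in [0,1)$, and note $\inf P_i = \inf Q_i + a_i l_{m+1}$ while $\inf Q_i$ is an integer multiple of $l_m = N_{m+1} l_{m+1}$. Then
\[
  \frac{x_2 - x_1}{l_{m+1}} = S_{12} + (f_2 - f_1), \qquad \frac{x_4 - x_3}{l_{m+1}} = S_{34} + (f_4 - f_3),
\]
where $S_{12}, S_{34}$ are integers; using $10 \mid N_{m+1}$ to kill the contributions of $(\inf Q_2 - \inf Q_1)/l_{m+1}$ and $(\inf Q_4 - \inf Q_3)/l_{m+1}$, we get $S_{12} \equiv a_2 - a_1 \equiv 5$ and $S_{34} \equiv a_4 - a_3 \equiv 0 \pmod{10}$. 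Equating the two displays gives $S_{12} - S_{34} = (f_4 - f_3) - (f_2 - f_1) \in (-2, 2)$; but $S_{12} - S_{34}$ is an integer congruent to $5$ modulo $10$, and no such integer lies in $(-2, 2)$. This contradiction finishes the proof.

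The crux, and the only genuinely non-routine point, is locating the right iteration: the queueing mechanism deliberately breaks self-similarity of the $X_k$, so a naive scale-by-scale recursion does not close, and one must instead use the ``memory'' of the construction — the fact that $J$ is revisited exactly once — to arrange that $x_1$'s cube is subdivided using residue class $0$ while the cubes of $x_2, x_3, x_4$ are subdivided using residue class $5$; the factor $10 \mid N_k$ and the offset terms $f_i \in [0,1)$ are calibrated precisely so that this discrepancy cannot be absorbed. Everything else is bookkeeping, provided one fixes a half-open convention for ``the cube containing $x_i$'' and checks the harmless edge cases ($m = g+1$, so that $Q_1 = J$, and $x_2 = x_3$).
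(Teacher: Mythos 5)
Your proposal is correct and follows essentially the same route as the paper's proof: isolate a dyadic interval separating $x_1$ from $x_2,x_3,x_4$, wait for the queue to return it, and then exploit that $x_1$'s interval start lies in the residue class $0$ while those of $x_2,x_3,x_4$ lie in the class $5$ modulo $10\,l_{m+1}$, so the resulting integer discrepancy of at least $5\,l_{m+1}$ cannot be absorbed by the sub-interval offsets (the paper bounds the error by $4\,l_M$, you by $2\,l_{m+1}$). Your extra bookkeeping (the choice of the last common generation, $m\ge g+1$, the half-open convention) just makes explicit details the paper leaves implicit.
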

\begin{proof}
    If $X$ is not translate avoiding, there is $x_1 < x_2 \leq x_3 < x_4$ with $x_2 - x_1 = x_4 - x_3$. Since $l_k \to 0$, there is a suitably large integer $N$ such that $x_1$ is contained in an interval $I \in \DQ_N$ not containing $x_2,x_3$, or $x_4$. At stage $N$ of the algorithm, the interval $I$ is added to the end of the queue, and at a much later stage $M$, the interval $I$ is retrieved. Find the start points $x_1^\circ, x_2^\circ$, $x_3^\circ, x_4^\circ \in l_M \mathbf{Z}$ to the intervals in $\DQ_M$ containing $x_1$, $x_2$, $x_3$, and $x_4$. Then we can find $n$ and $m$ such that $x_4^\circ - x_3^\circ = (10n)l_M$, and $x_2^\circ - x_1^\circ = (10m + 5)l_M$. In particular, this means that $|(x_4^\circ - x_3^\circ) - (x_2^\circ - x_1^\circ)| \geq 5L_M$. But
    \begin{align*}
        |(x_4^\circ - x_3^\circ) - (x_2^\circ - x_1^\circ)| &= |[(x_4^\circ - x_3^\circ) - (x_2^\circ - x_1^\circ)] - [(x_4 - x_3) - (x_2 - x_1)]|\\
        &\leq |x_1^\circ - x_1| + \dots + |x_4^\circ - x_4| \leq 4 L_M
    \end{align*}
    which gives a contradiction.
\end{proof}

It is easy to see from the algorithm that
\[ \# (\DQ_k(X_k)) = (N_k/10) \cdot \#(\DQ_{k-1}(X_{k-1})). \]
Closing the recursive definition shows
\[ \#(\DQ_k(X_k)) = \frac{1}{10^k l_k}. \]
In particular, this means $|X_k| = 1/10^k$, so $|X|$ has Lebesgue measure zero regardless of how we choose the parameters $\{ N_k \}$. 

\begin{theorem}
    For some sequence $\{ N_k \}$, the set $X$ has full Hausdorff dimension.
\end{theorem}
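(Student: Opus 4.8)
Since $X \subset [0,1]$ we automatically have $\hausdim(X) \le 1$, so the whole content is the lower bound, and the natural route is Frostman's lemma applied to the canonical measure $\mu$ attached to the nested family $\{X_k\}$ via the mass distribution principle (Theorem~\ref{massdistributionprinciplelem}), with the Frostman bound checked dyadically through Theorem~\ref{easyCoverTheorem}. To run this I must (a) make $\{N_k\}$ obey \eqref{definingsequencegrowthrate}, and (b) prove a pointwise bound $\mu(Q) \lesssim_\varepsilon l(Q)^{1-\varepsilon}$ for every $Q \in \DQ_k^1$ and every $\varepsilon > 0$.

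For (b) I would first compute the canonical weight $w$. From the algorithm every $J \in \DQ_k(X_k)$ contributes exactly $N_{k+1}/10$ of its children to $X_{k+1}$, so for a selected child $J'$ one has $w(J') = w(J)\cdot(10/N_{k+1})$, and inductively $w(Q) = 10^k/(N_1\cdots N_k) = 10^k l_k$ for $Q \in \DQ_k(X_k)$, while $w(Q)=0$ otherwise. Applying \eqref{massdisslowerbound} with $E$ a single cube of $\DQ_k^1$ — whose $l_k$-neighbourhood meets only $O(1)$ cubes of $\DQ_k^1$ — gives $\mu(Q) \lesssim 10^k l_k$ for all $Q \in \DQ_k^1$. (Alternatively, the subintervals selected at any stage are never adjacent, so $\mu = w$ exactly by the remark following Theorem~\ref{massdistributionprinciplelem}, but this is not needed.) Hence (b) is equivalent to $10^k l_k \lesssim_\varepsilon l_k^{1-\varepsilon}$, i.e.\ $10^k \lesssim_\varepsilon (N_1\cdots N_k)^\varepsilon$ for every $\varepsilon > 0$, which holds precisely when $\tfrac1k\log(N_1\cdots N_k)\to\infty$.

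So the proof reduces to exhibiting one sequence $\{N_k\}$ of multiples of $10$ (as the construction demands) meeting both \eqref{definingsequencegrowthrate} and $\tfrac1k\log(N_1\cdots N_k)\to\infty$. These pull in opposite directions — one caps $N_{k+1}$ relative to the accumulated product, the other forces the product to outgrow $10^k$ — but any sequence of moderately superexponential partial products threads the needle; concretely $N_k = 10\cdot 2^k$ gives $\log(N_1\cdots N_k) = k\log 10 + \tfrac{k(k+1)}{2}\log 2$, so $\log N_{k+1} = o\bigl(\log(N_1\cdots N_k)\bigr)$, which is \eqref{definingsequencegrowthrate}, while $\tfrac1k\log(N_1\cdots N_k)\to\infty$. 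Fixing such a sequence, Theorem~\ref{easyCoverTheorem} (applied with $s = 1-\varepsilon$) shows $\mu$ is a Frostman measure of dimension $t$ for every $t < 1$; $\mu$ is nonzero, since $\mu([0,1]) \ge w([0,1]) = 1$ by \eqref{massdissupperbound}, and is supported on the closed set $X = \bigcap_k X_k$; so Frostman's lemma yields $\hausdim(X) \ge t$ for all $t<1$, hence $\hausdim(X) = 1$.

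The main obstacle is bookkeeping rather than conceptual: reconciling the two competing growth requirements on $\{N_k\}$, and tracking the ultimately harmless gap between the weight $w$ and the limiting measure $\mu$ in the mass distribution principle. Note that the translate-avoidance of $X$ — the substantive content of Keleti's construction — has already been established and plays no role here; the spreading pattern of the selected subintervals affects only that property, not the measure estimates.
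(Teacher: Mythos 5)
Your proof is correct, but it takes a genuinely different route from the paper's. The paper feeds Keleti's construction into the multi-scale machinery: it sets $N_k = 10 M_k$, observes that each $R \in \DR_{k+1}(X_k)$ contains exactly one selected child (Property \ref{SingleSelection}), and invokes Theorem \ref{TheConstructionTheorem} with $s=1$, which forces the \emph{opposite} growth regime to yours — Property \ref{RapidDecrease} demands $N_1 \cdots N_k \lesssim_\varepsilon N_{k+1}^\varepsilon$, realized by something like $N_k = 2^{2^{\lfloor k \log k \rfloor}}$; there the uniform spread of mass over the intermediate $\DR$-scales (Lemma \ref{uniformMassFrostman}) is what rescues the Frostman bound at the enormous range of scales between $l_{k+1}$ and $l_k$. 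You instead exploit the special feature of Keleti's discrete step that the retention fraction per level is the constant $1/10$, so the canonical weight is exactly $10^k l_k$ on level-$k$ cubes; then a subhyperdyadic choice obeying \eqref{definingsequencegrowthrate} with superexponential partial products (e.g.\ $N_k = 10\cdot 2^k$) lets Theorem \ref{easyCoverTheorem} convert the dyadic bound $\mu(Q)\lesssim 10^k l_k \lesssim_\varepsilon l_k^{1-\varepsilon}$ into a genuine Frostman bound, with no use of the $\DR$-scales or the single-selection structure at all. Your argument is more elementary for this particular theorem; the paper's choice is dictated by uniformity with the rest of the thesis, where the discrete lemmas (e.g.\ \eqref{rBound}, \eqref{equation903103513095}) force $N_{k+1}$ to be huge relative to $N_1\cdots N_k$ and the slow-branching option is unavailable. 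The checks you do perform (compatibility of the two growth conditions, nondegeneracy of $\mu$ via \eqref{massdissupperbound}, the $O(1)$ overcount in \eqref{massdisslowerbound}) are the right ones, and your observation that translate-avoidance is insensitive to the choice of $\{N_k\}$ beyond divisibility by $10$ is accurate.
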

\begin{proof}
    Set $N_k = 10 M_k$ for each $k \geq 1$, then one sees that for each $R \in \DR_k(X_k)$, $\#(\DQ_{k+1}(R \cap X_{k+1})) = 1$. Thus Property \ref{SingleSelection} of Theorem \ref{TheConstructionTheorem} is satisfied. Furthermore, Property \ref{ChangeofScales} of Theorem \ref{TheConstructionTheorem} is satisfied with $s = 1$. We conclude that if $N_1 \dots N_k \lesssim_\varepsilon N_{k+1}^\varepsilon$, then all assumptions of Theorem \ref{TheConstructionTheorem} is satisfied, and we conclude $X$ is a set with full Hausdorff dimension. This is true, for instance, if we set $N_k = 2^{2^{\lfloor k \log k \rfloor}}$.
\end{proof}

The most important feature of Keleti's argument is his reduction of a non-discrete configuration avoidance problem to a sequence of discrete avoidance problems on cubes. Let us summarize the result of Keleti's discrete argument in a lemma.

\begin{lemma} \label{KeletiDiscreteLemma}
    Let $T_1, T_2 \subset \RR$ be disjoint, $\DQ_k$ discretized sets. If $M_{k+1} = N_{k+1}/10$, then we can find $S_1 \subset T_1$ and $S_2 \subset T_2$ such that
    \begin{enumerate}
        \item[(i)] For each $k$, $S_k$ is a $\DQ_{k+1}$ discretized subset of $T_k$.
        \item[(ii)] If $x_1 \in S_1$ and $x_2,x_3,x_4 \in S_2$, then $x_2 - x_1 \neq x_4 - x_3$.
        \item[(iii)] For each $i$, and each cube $R \in \DR_{k+1}(S_i)$, $\#(\DQ_{k+1}(R \cap S_i)) = 1$.
    \end{enumerate}
\end{lemma}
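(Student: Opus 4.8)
The plan is to recognize that $S_1$ and $S_2$ are precisely what a single pass of Keleti's construction loop produces, with $T_1$ in the role of ``cubes contained in the dequeued interval'' and $T_2$ in the role of ``cubes disjoint from it''. Since $M_{k+1} = N_{k+1}/10$ is an integer we have $10 \mid N_{k+1}$; so for each $J \in \DQ_k(T_1)$ I would enumerate its children $J_1, \dots, J_{N_{k+1}} \in \DQ_{k+1}(J)$ in increasing order and put into $S_1$ exactly those $J_j$ with $j \equiv 0 \pmod{10}$, and for each $J \in \DQ_k(T_2)$ put into $S_2$ exactly those children $J_j$ with $j \equiv 5 \pmod{10}$. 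Because $N_{k+1} \geq 10$ each parent contributes at least one such child, and because $T_1, T_2$ are $\DQ_k$ discretized and disjoint, $S_1 \subset T_1$ and $S_2 \subset T_2$ are $\DQ_{k+1}$ discretized; this is (i).

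For (iii), I would use that $r_{k+1}/l_{k+1} = N_{k+1}/M_{k+1} = 10$, so every $R \in \DR_{k+1}$ lying inside a $\DQ_k$ cube $J$ is the union of ten consecutive children of $J$, say $J_{10m+1}, \dots, J_{10m+10}$ for some $0 \leq m < M_{k+1}$. Among these ten indices exactly one is $\equiv 0 \pmod{10}$ (namely $10m+10$) and exactly one is $\equiv 5 \pmod{10}$ (namely $10m+5$), so for either $i$, whenever $R$ meets $S_i$ it contains exactly one cube of $\DQ_{k+1}(S_i)$, i.e.\ $\#(\DQ_{k+1}(R \cap S_i)) = 1$ for each $R \in \DR_{k+1}(S_i)$.

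For (ii), write $l = l_{k+1}$; the key is a residue count on left endpoints. A point $x_i \in S_i$ lies in some $Q_i \in \DQ_{k+1}(S_i)$; let $a_i$ be its left endpoint, so $|x_i - a_i| \leq l$. The $\DQ_k$-parent of $Q_i$ has left endpoint a multiple of $l_k = N_{k+1} l$, hence of $10 l$, and $a_i$ equals that multiple plus $(j-1) l$ where $j$ is the index of $Q_i$ among the parent's children. Since $x_1 \in S_1$ forces $j \equiv 0 \pmod{10}$ we get $a_1 = q_1 l$ with $q_1 \equiv -1 \equiv 9 \pmod{10}$, and since $x_2, x_3, x_4 \in S_2$ force $j \equiv 5 \pmod{10}$ we get $a_i = q_i l$ with $q_i \equiv 4 \pmod{10}$ for $i = 2,3,4$. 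Hence $(a_2 - a_1) - (a_4 - a_3) = (q_2 - q_1 - q_4 + q_3) l$ with $q_2 - q_1 - q_4 + q_3 \equiv 4 - 9 - 4 + 4 \equiv 5 \pmod{10}$, so $|(a_2 - a_1) - (a_4 - a_3)| \geq 5 l$. On the other hand
\[ \left| \left[ (x_2 - x_1) - (x_4 - x_3) \right] - \left[ (a_2 - a_1) - (a_4 - a_3) \right] \right| \leq |x_1 - a_1| + |x_2 - a_2| + |x_3 - a_3| + |x_4 - a_4| \leq 4 l, \]
so $|(x_2 - x_1) - (x_4 - x_3)| \geq 5 l - 4 l = l > 0$; in particular $x_2 - x_1 \neq x_4 - x_3$, which is (ii).

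I do not anticipate a genuine obstacle: this lemma is essentially the isolation of one step of Keleti's algorithm as a stand-alone statement. The single point needing care is that the spacing gap $5 l_{k+1}$ manufactured by the mod-$10$ selection strictly dominates the total error $4 l_{k+1}$ incurred when the four real points are replaced by the left endpoints of their cubes — this is exactly why a residue of $5$ is used rather than something smaller, and is the one inequality I would take care to display. The index bookkeeping behind (iii) (that the ten $\DQ_{k+1}$ children of a $\DR_{k+1}$ cube are consecutive, and that $l_k$ is a multiple of $10 l_{k+1}$) relies on $M_{k+1} \mid N_{k+1}$ with $M_{k+1} = N_{k+1}/10$, and should be written out but is routine.
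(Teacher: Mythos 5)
Your proposal is correct and is essentially the paper's argument: the paper states this lemma as a summary of one step of Keleti's algorithm, whose selection rule (children $\equiv 0$ vs.\ $\equiv 5 \pmod{10}$) and whose endpoint-residue estimate (a gap of at least $5\,l_{k+1}$ beating a rounding error of at most $4\,l_{k+1}$) are exactly what you reconstruct, including the one-selected-child-per-ten count for property (iii).
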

Property (i) of Lemma \ref{KeletiDiscreteLemma} allows Keleti to apply his argument iteratively at each dyadic scale. Property (ii) implies Keleti obtains a configuration avoiding set by iterative the argument infinitely many times. And the reason why Keleti obtains a set with full Hausdorff dimension, relating back to Property \ref{ChangeofScales} in Theorem \ref{TheConstructionTheorem}, is because of Property (iii), which allows us to select $N_{k+1} \lesssim M_{k+1}$.

Of course, it is not possible to extend the discrete solution of the configuration argument to general configurations; this part of Keleti's method strongly depends on the arithmetic structure of the configuration. The iterative application of a discrete solution, however, can be applied in generality. Combined with Theorem \ref{TheConstructionTheorem}, this technique gives a powerful method to reduce configuration avoidance problems about Hausdorff dimension to discrete avoidance problems on cubes.

\section{Fraser/Pramanik: Smooth Configurations}

Inspired by Keleti's result, in \cite{MalabikaRob}, Pramanik and Fraser obtained a generalization of the queue method which allows one to find sets avoiding $n+1$ point configurations given by the zero sets of smooth functions, i.e.
\[ \C = \{ (x_1, \dots, x_n) \in \C^n[0,1]^d : f(x_0, \dots, x_n) = 0 \}, \]
under mild regularity conditions on the function $f: \C^n[0,1]^d \to [0,1]^m$.

\begin{theorem}[Pramanik and Fraser] \label{pramanikandfrasertheorem}
    Fix $m \leq d(n-1)$. Consider a countable family of $C^2$ functions $\{ f_k : [0,1]^{dn} \to [0,1]^m \}$ such that for each $k$, $Df_k$ has full rank at any $(x_1, \dots, x_n) \in \C^n[0,1]^d$ where $f_k(x_1, \dots, x_n) = 0$. Then there exists a set $X \subset \RR^d$ with Hausdorff dimension $m/(n-1)$ such that $X$ avoids the configuration
    \[ \C = \bigcup_k \{ (x_1, \dots, x_n) \in \C^n(\RR^d): f_k(x_1, \dots, x_n) = 0 \}. \]
\end{theorem}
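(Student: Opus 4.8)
The plan is to build $X$ as a multi-scale dyadic fractal of the sort governed by Theorem~\ref{TheConstructionTheorem}, with scale parameter $s = m/(d(n-1))$ — which lies in $(0,1]$ exactly because $1 \le m \le d(n-1)$ — so that the announced dimension $m/(n-1) = sd$ falls out once the hypotheses of that theorem are verified. The organizing device is Keleti's queue, generalized from translates to $n$-point configurations: a tuple $(x_1,\dots,x_n)$ witnessing $\C$ consists of $n$ distinct points, hence at all sufficiently fine scales its entries lie in $n$ pairwise non-adjacent dyadic cubes. So it suffices to process, one at a time, every \emph{task} $(\ell; Q^{(1)},\dots,Q^{(n)})$ formed from an index $\ell$ and an $n$-tuple of pairwise non-adjacent cubes of $\DQ^d$. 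We enumerate the tasks, build a nested family $\{X_j\}$ of $\DQ_j$-discretized sets with $X_0 = [0,1]^d$, and at stage $j$ — handling the $j$th task $(\ell; Q^{(1)},\dots,Q^{(n)})$ — pass from $X_j$ to $X_{j+1}$ by keeping, for each $Q\in\DQ_j(X_j)$, exactly one cube of $\DQ_{j+1}$ inside each $R$ in some $\mathcal R_Q\subseteq\DR_{j+1}(Q)$ with $\#\mathcal R_Q\ge\tfrac12\#\DR_{j+1}(Q)$, the choices made so that $f_\ell$ has no zero with one coordinate lying in each $X_{j+1}\cap Q^{(i)}$. Then $X=\bigcap_j X_j$, and the displayed requirement is precisely Property~\ref{SingleSelection} of Theorem~\ref{TheConstructionTheorem}.

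The heart of the argument is a discrete avoidance lemma, analogous to Lemma~\ref{KeletiDiscreteLemma}: if $f$ is one of the $f_\ell$ and $T_1,\dots,T_n$ are pairwise non-adjacent $\DQ_j$-discretized sets, then — once $j$ is sufficiently large relative to $f$ and to the separation of the $T_i$ — there are $\DQ_{j+1}$-discretized sets $S_i\subseteq T_i$ with (i) for each $Q\in\DQ_j(T_i)$, at least half of the cubes $R\in\DR_{j+1}(Q)$ contain exactly one cube of $\DQ_{j+1}(S_i)$ and the rest contain none, and (ii) $f(x_1,\dots,x_n)\ne 0$ whenever the $x_i$ are distinct with $x_i\in S_i$. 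Its proof is a quantitative implicit function theorem argument. Since $Df$ has full rank $m$ on each compact set of the form $\overline{\{f=0\}}\cap\{(x_1,\dots,x_n):|x_i-x_{i'}|\ge\delta\}$, such a set is covered by finitely many charts on which $\{f=0\}$ is the graph of a $C^1$ map expressing some $m$ of the $dn$ coordinates in terms of the other $dn-m$, with all derivative bounds uniform — this is where $f\in C^2$ enters. Hence, inside any box that is a product of cubes of $\DQ_j^d$, the zero set meets only $\lesssim_f(l_j/l_{j+1})^{dn-m}$ tuples of cubes of $\DQ_{j+1}^d$, and, crucially, once the $dn-m$ ``free'' coordinate sub-cubes of such a tuple are fixed, only $O_f(1)$ of the remaining $m$ are forbidden. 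One selects the free coordinate sub-cubes first (one per relevant $\DR_{j+1}$-cube, arbitrarily) and the rest greedily; the count closes — with fewer than half the $\DR_{j+1}$-cubes of any $T_i$ rendered unusable — exactly when $N_{j+1}\gtrsim M_{j+1}^{d(n-1)/m}$, that is when $s=m/(d(n-1))$, provided $j$ is large enough that the factor $N_{j_0+1}\cdots N_j$ accumulated since the task entered the queue at stage $j_0$ dominates the constants attached to $f$.

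To finish, choose $\{M_k\}$ increasing rapidly enough that $M_1\cdots M_k\lesssim_\varepsilon M_{k+1}^\varepsilon$ for all $\varepsilon>0$ (a double-exponential sequence, as in Keleti's construction, suffices) and that $M_{k+1}$ outgrows every constant attached to the tasks processed through stage $k$; set $N_{k+1}$ to be a multiple of $M_{k+1}$ of order $M_{k+1}^{d(n-1)/m}$. Then Property~\ref{RapidDecrease} of Theorem~\ref{TheConstructionTheorem} follows from the growth of $\{M_k\}$, Property~\ref{ChangeofScales} holds with $s=m/(d(n-1))$ because $N_{k+1}\lesssim M_{k+1}^{1/s}\lesssim_\varepsilon M_{k+1}^{(1+\varepsilon)/s}$, and Property~\ref{SingleSelection} is supplied at each stage by the discrete lemma (performing, at the finitely many stages where the lemma does not yet apply to the current task, the trivial refinement keeping one child per $\DR_{j+1}$-cube). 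Theorem~\ref{TheConstructionTheorem} then yields $\hausdim(X)=sd=m/(n-1)$. Finally $X$ avoids $\C$: were $x_1,\dots,x_n$ distinct points of $X$ with $f_\ell(x_1,\dots,x_n)=0$, we could pick $j$ so large that they lie in pairwise non-adjacent cubes $Q^{(1)},\dots,Q^{(n)}\in\DQ_j^d$; the task $(\ell;Q^{(1)},\dots,Q^{(n)})$ is processed at some later stage $j'$, after which no zero of $f_\ell$ survives with a coordinate in each $X_{j'}\cap Q^{(i)}$, contradicting $x_i\in X\subseteq X_{j'}$.

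The step I expect to be the main obstacle is the discrete avoidance lemma, and specifically realizing it with the sharp branching exponent $d(n-1)/m$ — equivalently getting dimension $m/(n-1)$ rather than the weaker $m/n$ that a crude ``destroy every bad tuple in one shot by a uniformly random selection'' argument would force. Attaining the sharp exponent requires exploiting not merely the codimension of $\{f=0\}$ but its local graph structure — so that fixing all but $m$ of the coordinates leaves only boundedly many forbidden sub-cubes — and a careful choice of which coordinates carry the selection freedom and in what order; one must also check that deferring each task to a late enough stage tames both the number of bad tuples and the $f$-dependent constants without eroding the dimension. The remaining ingredients — the queuing bookkeeping, the uniform implicit-function estimates from compactness, and the passage from Property~\ref{SingleSelection} to the dimension via Theorem~\ref{TheConstructionTheorem} — are routine given the machinery already developed.
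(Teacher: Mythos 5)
Your global architecture — the queue of tasks, a single-scale discrete avoidance lemma whose conclusion is exactly Property \ref{SingleSelection}, and the passage to dimension $m/(n-1)$ via Theorem \ref{TheConstructionTheorem} with $s=m/(d(n-1))$ — matches the paper, and the avoidance argument and parameter bookkeeping at the end are fine. The gap is in the proof you sketch for the discrete lemma, the step you yourself flagged. First, the rule ``select the $dn-m$ free coordinate sub-cubes first, one per relevant $\DR_{j+1}$-cube, then the remaining $m$ greedily'' is not a well-defined construction of the sets $S_i$: the free/dependent splitting coming from the implicit function theorem is a splitting of \emph{scalar} coordinates, it changes from chart to chart, and it need not respect the block structure of $(\RR^d)^n$, whereas your selection must be made block by block (one cube of $\DQ_{j+1}$ inside each chosen $R \in \DR_{j+1}(T_i)$); no single ordering serves all charts at once, and when $m>d$ the dependent coordinates necessarily meet at least two blocks. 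Second, and more seriously, the bound ``once the free sub-cubes are fixed, only $O_f(1)$ dependent sub-cubes are forbidden'' is per tuple of choices in the \emph{other} blocks. A single $R\in\DR_{j+1}(T_i)$ therefore faces about $\prod_{i'\neq i}\#\DR_{j+1}(T_{i'}) \sim ((N_1\cdots N_j)M_{j+1})^{d(n-1)}$ constraints, each deleting $O_f(1)$ of its mere $(N_{j+1}/M_{j+1})^d$ options; at your target exponent $N_{j+1}\sim M_{j+1}^{d(n-1)/m}$ (so, in the extreme case $m=d(n-1)$, $N_{j+1}/M_{j+1}\lesssim_\varepsilon M_{j+1}^{\varepsilon}$) a per-cube greedy count cannot close. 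To survive, one must exploit that a bad tuple is realized only when \emph{all} blocks simultaneously land on bad cubes — a coupled selection problem — and the proposal supplies no mechanism for it, so the sharp exponent $d(n-1)/m$ is not reached by the argument as written.

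For comparison, the paper's proof uses the graph structure only through the covering bound $\#\{Q\in\DQ_{k+1}^{dn}: Q\cap V(f)\neq\emptyset\}\lesssim_f l_{k+1}^{-(dn-m)}$, and handles the coupling by the slab/wafer reduction of Lemma \ref{Lemma315091513}: one block is selected at a time, and the bad set is transferred to the remaining $(n-1)d$ coordinates with cardinality inflated by $2(N_1\cdots N_k)^d(M_{k+1}/N_{k+1})^d$, which is harmless since $N_1\cdots N_k\lesssim_\varepsilon M_{k+1}^\varepsilon$; after $n-1$ such steps the one-block count of Lemma \ref{Lemma1209410535} finishes, producing condition \eqref{equation903103513095} with exactly the exponent $d(n-1)/m$. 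Note also that your stated motivation is a misdiagnosis: a one-shot uniformly random selection does attain $m/(n-1)$, not $m/n$, once it is followed by deletion of the $\lesssim M_{k+1}^d$ surviving bad tuples — that is precisely Lemma \ref{discretelemma}, and Remark 3 following Theorem \ref{mainTheorem} records that it recovers the present theorem. Substituting either the slab/wafer reduction or the random-selection-plus-deletion lemma for your discrete lemma would make the rest of your outline go through.
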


\begin{remark}
    For simplicity, we only prove the result for a single function, rather than a countable family of functions. The only major difference between the two approaches is the choice of scales we must choose later on in the argument, and a slight modification of the queuing argument.
\end{remark}

Just like Keleti, Pramanik and Fraser begin by solving a discrete variant of the configuration problem in Theorem \ref{pramanikandfrasertheorem}, which they can then iteratively apply at each scale. In the discrete setting, rather than making a linear shift in one of the variables, as in Keleti's approach, Pramanik and Fraser must utilize the smoothness properties of the function which defines the configuration to find large sets. Corollary \ref{PramanikFraserBuildingBlockLemma} gives the discrete result that Pramanik and Fraser utilize in a queuing construction, analogous to the queueing method of Keleti, to find a set $X$ satisfying the conclusions of Theorem \ref{pramanikandfrasertheorem}. Here, a multi-scale approach proves useful, so we utilize the family of cubes $\DQ$ and $\DR$, assuming the existence of two sequences of branching factors $\{ N_k \}$ and $\{ M_k \}$ which we will specify later on in the argument.

\begin{lemma} \label{Lemma315091513}
    Fix $n > 1$. Let $T \subset [0,1]^d$ and $T' \subset [0,1]^{(n-1)d}$ be $\DQ_k$ discretized sets. Let $B \subset T \times T'$ be $\DQ_{k+1}$ discretized. Then there exists a $\DQ_{k+1}$ discretized set $S \subset T$, and a $\DQ_{k+1}$ discretized set $B' \subset T'$, such that
    \begin{enumerate}
        \item \label{dimensionReductionProperty} $(S \times T') \cap B \subset S \times B'$.

        \item \label{bigProperty} For every $Q \in \DQ_k^d$, there exists $\mathcal{R}(Q) \subset \DR_{k+1}^d(Q)$, such that
        \[ \#(\mathcal{R}(Q)) \geq (1/2) \cdot \#(\DR_{k+1}^d(Q)), \]
        and for each $R \in \DR_{k+1}^d(Q)$,
        \[ \#(\DQ_{k+1}(R)) = \begin{cases} 1 &: R \in \mathcal{R}(Q), \\ 0 &: R \not \in \mathcal{R}(Q). \end{cases} \]

        \item \label{BBoundProperty} $\#(\DQ_{k+1}(B')) \leq 2 (N_1 \dots N_k)^d \left( M_{k+1}/N_{k+1} \right)^d \cdot \#(\DQ_{k+1}(B))$.
    \end{enumerate}
\end{lemma}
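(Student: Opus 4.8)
The plan is to obtain $S$ by a random selection and then force $B'$ to be the smallest set compatible with Property \ref{dimensionReductionProperty}, namely the ``shadow'' of $(S \times T') \cap B$ on the $T'$ factor. Since $T$ is $\DQ_k$ discretized, every $Q \in \DQ_k(T)$ is contained in $T$, hence so is every $R \in \DR_{k+1}^d(Q)$ and every cube of $\DQ_{k+1}^d$ inside such an $R$. For each $Q \in \DQ_k(T)$ and each $R \in \DR_{k+1}^d(Q)$, independently choose one cube $\sigma(R) \in \DQ_{k+1}^d$ with $\sigma(R) \subset R$, uniformly among the $(N_{k+1}/M_{k+1})^d$ such cubes, and set $S = \bigcup \sigma(R)$, the union over all these $R$. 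Then $S \subset T$ is $\DQ_{k+1}$ discretized, and Property \ref{bigProperty} holds with $\mathcal{R}(Q) = \DR_{k+1}^d(Q)$ for each $Q \in \DQ_k(T)$, since $\#(\DQ_{k+1}(R \cap S)) = 1$ for every $R \in \DR_{k+1}^d(Q)$ by construction. Now let
\[ B' = \bigcup \{ Q' \in \DQ_{k+1}^{(n-1)d} : Q \times Q' \subset B \text{ for some } Q \in \DQ_{k+1}(S) \}. \]
This is $\DQ_{k+1}$ discretized, and $B' \subset T'$ because $B \subset T \times T'$. Property \ref{dimensionReductionProperty} is then immediate: if $(x,y) \in (S \times T') \cap B$, the cube $Q \in \DQ_{k+1}^d$ with $x \in Q$ lies in $\DQ_{k+1}(S)$ while the cube $Q' \in \DQ_{k+1}^{(n-1)d}$ with $y \in Q'$ satisfies $Q \times Q' \subset B$, so $y \in Q' \subset B'$.

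It remains to arrange Property \ref{BBoundProperty}, and here I would run a first moment argument over the random choices. Fix $Q' \in \DQ_{k+1}^{(n-1)d}$ with $Q' \subset T'$. For $Q'$ to land in $B'$ we need $\sigma(R) \times Q' \subset B$ for some $R$, and for a given $R$ this happens with probability $(M_{k+1}/N_{k+1})^d \cdot \#\{ Q \in \DQ_{k+1}^d : Q \subset R,\ Q \times Q' \subset B \}$, since $\sigma(R)$ is uniform among the $(N_{k+1}/M_{k+1})^d$ cubes of $\DQ_{k+1}^d$ inside $R$. A union bound over $R$, then a sum over $Q'$, gives
\begin{align*}
	\Expect[\#(\DQ_{k+1}(B'))] &\leq \left( \frac{M_{k+1}}{N_{k+1}} \right)^d \sum_{Q'} \sum_{R} \#\{ Q \in \DQ_{k+1}^d : Q \subset R,\ Q \times Q' \subset B \} \\
	&= \left( \frac{M_{k+1}}{N_{k+1}} \right)^d \#(\DQ_{k+1}(B)),
\end{align*}
the last equality because each cube of $\DQ_{k+1}(B)$ is counted exactly once, for the unique $\DR_{k+1}^d$ cube $R$ containing its $T$-factor (which lies in $T$, hence in some cube of $\DQ_k(T)$, since $B \subset T \times T'$). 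By Markov's inequality some outcome of the random selection has $\#(\DQ_{k+1}(B')) \leq 2 (M_{k+1}/N_{k+1})^d \#(\DQ_{k+1}(B))$, which is even stronger than Property \ref{BBoundProperty} since $N_1 \cdots N_k \geq 1$; fixing such an outcome (the probability space is finite, so the favorable event is a non-empty finite set) completes the construction.

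I expect no real obstacle here: the lemma uses no hypothesis beyond the discretization structure of $T$, $T'$, and $B$, and the argument is the standard random-thinning scheme. The one place that needs care is the combinatorial bookkeeping in the double sum over $Q'$ and $R$ — one has to see that reorganizing it counts each ``bad'' cube of $B$ exactly once, which is precisely what produces the gain factor $(M_{k+1}/N_{k+1})^d$, the proportion of $\DQ_{k+1}^d$ cubes inside a $\DQ_k^d$ cube that survive in $S$. It is also worth noting that because $T$ is $\DQ_k$ discretized we are free to make the selection inside each $\DR_{k+1}^d$ cube independently, which is exactly what makes this averaging compatible with the one-cube-per-$\DR_{k+1}^d$-cube constraint of Property \ref{bigProperty}.
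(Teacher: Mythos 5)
Your proof is correct, but it proceeds by a genuinely different route from the paper's. The paper argues deterministically: for each $Q_0 \in \DQ_k(T)$ it calls a wafer $W[Q] = Q \times T'$ \emph{good} if it meets at most $(2/N_{k+1}^d)\,\#(\DQ_{k+1}(B))$ cubes of $B$, notes that at most half the wafers (hence at most half the slabs $R \times T'$) are bad, selects one good wafer inside each good slab, and then bounds $\#(\DQ_{k+1}(B'))$ by multiplying the per-wafer bound by the crude count $\#(\DQ_{k+1}(S)) \leq (N_1 \cdots N_k M_{k+1})^d$ --- which is exactly where the factor $(N_1 \cdots N_k)^d$ in Property \ref{BBoundProperty} comes from. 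You instead choose one cube uniformly at random in \emph{every} $\DR_{k+1}^d$ cube of $T$ and control only the total size of the shadow $B'$ by a first-moment computation plus Markov; the double-counting step you flag is indeed the crux, and it works because $B \subset T \times T'$ with $T$ being $\DQ_k$ discretized, so each cube of $\DQ_{k+1}(B)$ is charged to the unique $R \in \DR_{k+1}(T)$ containing its first factor. The trade-offs: the paper's argument gives pointwise control on each selected fiber (every chosen wafer is good), whereas yours gives only an aggregate bound; in exchange you keep all of $\DR_{k+1}^d(Q)$ in $\mathcal{R}(Q)$ rather than half, and you obtain the stronger estimate $\#(\DQ_{k+1}(B')) \leq 2 (M_{k+1}/N_{k+1})^d \#(\DQ_{k+1}(B))$ without the $(N_1 \cdots N_k)^d$ loss, which would slightly weaken the scale requirement \eqref{equation903103513095} needed downstream. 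Your method is in fact closer in spirit to the random selection of Lemma \ref{discretelemma} in Chapter \ref{ch:RoughSets} than to Fraser--Pramanik's original pigeonholing. (Both your proof and the paper's tacitly read the statement's ``$\#(\DQ_{k+1}(R))$'' as $\#(\DQ_{k+1}(R \cap S))$ and restrict Property \ref{bigProperty} to $Q \in \DQ_k(T)$, and both share the same harmless boundary-of-cube technicalities, so no gap there.)
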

\begin{proof}
    Fix $Q_0 \in \DQ_k(T)$. For each $R \in \DR_{k+1}(Q_0)$, define a \emph{slab} $S[R] = R \times T'$, and for each $Q \in \DQ_{k+1}(Q_0)$, define a \emph{wafer} $W[Q] = Q \times T'$. We say a wafer $W[Q]$ is \emph{good} if
    \begin{equation} \label{equation10291095429062}
        \#(\DQ_{k+1}(W[Q] \cap B)) \leq (2/N_{k+1}^d) \cdot \#(\DQ_{k+1}(B)).
    \end{equation}
    Then at most $N_{k+1}^d/2$ wafers are bad. We call a slab \emph{good} if it contains a wafer which is good. Since a slab is the union of $(N_{k+1}/M_{k+1})^d$ wafers, at most $M_{k+1}^d /2 = (1/2) \cdot \#(\DR_{k+1}(Q_0))$ slabs are bad. Thus if we set
    \[ \mathcal{R}(Q_0) = \{ R \in \DR_{k+1}(Q_0) : S[R]\ \text{is good} \}, \]
    then
    \begin{equation} \label{equation24016590369046}
        \#(\mathcal{R}(Q_0)) \geq (1/2) \cdot \#(\DR_{k+1}(Q_0)).
    \end{equation}
    For each $R \in \mathcal{R}(Q_0)$, we pick $Q_R \in \DQ_{k+1}(R)$ such that $W[Q_R]$ is good, and define
    \[ S = \bigcup \{ Q_R : R \in \mathcal{R}(Q_0) \}. \]
    Equation \eqref{equation24016590369046} implies $S$ satisfies Property \ref{bigProperty}.

    Let $B'$ be the union of all cubes $Q' \in \DQ_{k+1}(T')$ such that there is $Q \in \DQ_{k+1}(S)$ with $Q \times Q' \in \DQ_{k+1}(B)$. By definition, Property \ref{dimensionReductionProperty} is then satisfied. For each $Q \in \DQ_{k+1}(S)$, $W[Q]$ is good, so \eqref{equation10291095429062} implies
    \[ \# \{ Q' : Q \times Q' \in \DQ_{k+1}(B) \} \leq (2/N_{k+1}^d) \cdot \#(\DQ_{k+1}(B)). \]
    But $\#(\DQ_{k+1}(S)) \leq \#(\DR_{k+1}(T)) \leq (1/r_{k+1})^d = (N_1 \dots N_k)^d M_{k+1}^d$, so
    \begin{align*}
        \#(\DQ_{k+1}(B')) &\leq \#(\DQ_{k+1}(S))[(2/N_{k+1}^d) \cdot \#(\DQ_{k+1}(B))]\\
        &\leq 2(N_1 \dots N_k)^d (M_{k+1}/N_{k+1})^d \#(\DQ_{k+1}(B)),
    \end{align*}
    which establishes Property \ref{BBoundProperty}.
\end{proof}

We apply the lemma recursively $n-1$ times to continually reduce the dimensionality of the avoidance problem we are considering. Eventually, we obtain the case where $n = 0$, and then avoiding the configuration is easy.

\begin{lemma} \label{Lemma1209410535}
    Fix $n > 1$. Let $T \subset [0,1]^d$ be $\DQ_k$ discretized, and let $B \subset T$ be $\DQ_{k+1}$ discretized. Suppose
    \[ \# \DQ_{k+1}(B) \leq \left[ C \cdot 2^{n-1} (N_1 \dots N_k)^{d(n-1)} (M_{k+1}/N_{k+1})^{d(n-1)} \right] (1/l_{k+1})^{dn - m}. \]
    and
    \begin{equation} \label{equation903103513095}
        N_{k+1} \geq \left[ C \cdot 2^n (N_1 \dots N_k)^{2dn} \right]^{1/m} M_{k+1}^{d(n-1)/m}.
    \end{equation}
    Then there exists a $\DQ_{k+1}$ discretized set $S \subset T$ such that
    \begin{enumerate}
        \item $S \cap B = \emptyset$.
        \item \label{badsetproperty5} For each $Q_0 \in \DQ_k(T)$, there is $\mathcal{R}(Q_0) \subset \DR_{k+1}(Q_0)$ with
        \[ \#(\mathcal{R}(Q_0)) \geq (1/2) \#(\DR_{k+1}(Q_0)), \]
        such that for each $R \in \DR_{k+1}(Q_0)$,
        \[ \#(\mathcal{Q}_{k+1}(R \cap S)) = \begin{cases} 1 &: R \in \mathcal{R}(Q_0), \\ 0 &: R \not \in \mathcal{R}(Q_0). \end{cases} \]
    \end{enumerate}
\end{lemma}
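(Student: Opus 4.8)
The plan is to run the counting scheme from the proof of Lemma~\ref{Lemma315091513}, but with the crudest possible notion of a ``bad'' refinement cube: in this terminal case there is no projected bad set to propagate, so a cube $R \in \DR_{k+1}^d$ need only be discarded when it is \emph{entirely} filled by $B$.

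First I would record the combinatorics. For $Q_0 \in \DQ_k(T)$ one has $\#(\DR_{k+1}(Q_0)) = M_{k+1}^d$, each $R \in \DR_{k+1}(Q_0)$ contains exactly $(N_{k+1}/M_{k+1})^d$ cubes of $\DQ_{k+1}^d$, and distinct cubes of $\DR_{k+1}^d$ have disjoint families of $\DQ_{k+1}$-subcubes. Call $R$ \emph{bad} if $\DQ_{k+1}(R) \subset \DQ_{k+1}(B)$, and \emph{good} otherwise; for a good $R$ one may choose a cube $Q_R \in \DQ_{k+1}(R)$ with $Q_R \notin \DQ_{k+1}(B)$. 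Since each bad $R$ uses up $(N_{k+1}/M_{k+1})^d$ distinct cubes of $\DQ_{k+1}(B)$, the number of bad cubes $R$ over all of $T$ is at most $\#(\DQ_{k+1}(B)) \cdot (M_{k+1}/N_{k+1})^d$.

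The heart of the matter is the claim that the two hypotheses force $\#(\DQ_{k+1}(B)) \leq (1/2) N_{k+1}^d$. I would prove this by substituting $1/l_{k+1} = N_1 \dots N_k N_{k+1}$ into the assumed bound on $\#(\DQ_{k+1}(B))$ and collecting powers, which yields $\#(\DQ_{k+1}(B)) \leq C \cdot 2^{n-1}(N_1 \dots N_k)^{2dn-d-m} M_{k+1}^{d(n-1)} N_{k+1}^{d-m}$; after dividing through by $N_{k+1}^{d-m}$ the desired inequality becomes $N_{k+1}^m \geq C \cdot 2^n (N_1 \dots N_k)^{2dn-d-m} M_{k+1}^{d(n-1)}$, which is weaker than \eqref{equation903103513095} since $N_1 \dots N_k \geq 1$ and $2dn - d - m \leq 2dn$. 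Granting this, the number of bad cubes $R$ is at most $(1/2) M_{k+1}^d$ \emph{globally}, and hence at most $(1/2) M_{k+1}^d = (1/2)\#(\DR_{k+1}(Q_0))$ inside each individual $Q_0 \in \DQ_k(T)$.

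To finish I would assemble $S$: for each $Q_0 \in \DQ_k(T)$ let $\mathcal{R}(Q_0)$ be the collection of good cubes $R \in \DR_{k+1}(Q_0)$, so that $\#(\mathcal{R}(Q_0)) \geq (1/2)\#(\DR_{k+1}(Q_0))$ by the previous step, and let $S$ be the union of the cubes $Q_R$ over all such $Q_0$ and all $R \in \mathcal{R}(Q_0)$. Then $S$ is $\DQ_{k+1}$ discretized, $S \subset T$, and no cube of $\DQ_{k+1}(S)$ lies in $\DQ_{k+1}(B)$, which gives property (A). For property (B), nesting of the dyadic families guarantees that the single cube $Q_R$ attached to a good $R$ lies in no other refinement cube and that nothing was placed inside a bad $R$, so $\#(\DQ_{k+1}(R \cap S))$ equals $1$ when $R \in \mathcal{R}(Q_0)$ and $0$ otherwise. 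There is no conceptual obstacle here; the only step requiring care is the exponent arithmetic in the third paragraph, which is precisely why the lemma packages the quantitative bounds on $\#(\DQ_{k+1}(B))$ and on $N_{k+1}$ into its hypotheses.
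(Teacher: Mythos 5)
Your argument is correct, and it reaches the same endpoint as the paper by a slightly different counting route. The paper works per parent cube: for each $Q_0 \in \DQ_k(T)$ it calls $R \in \DR_{k+1}(Q_0)$ good when $\#(\DQ_{k+1}(R \cap B)) \leq (2/M_{k+1}^d)\,\#(\DQ_{k+1}(B))$, so a Markov-type pigeonhole inside $Q_0$ immediately yields $\#(\mathcal{R}(Q_0)) \geq \tfrac{1}{2}M_{k+1}^d$, and then \eqref{equation903103513095} is used to show that this threshold is strictly below $(N_{k+1}/M_{k+1})^d = \#(\DQ_{k+1}(R))$, so each good $R$ contains a $B$-free cube. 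You instead discard only those $R$ that are completely filled by $B$, bound their number globally by the packing count $\#(\DQ_{k+1}(B))\,(M_{k+1}/N_{k+1})^d$, and spend the hypotheses to get $\#(\DQ_{k+1}(B)) \leq \tfrac{1}{2} N_{k+1}^d$; your exponent bookkeeping ($\#(\DQ_{k+1}(B)) \leq C\,2^{n-1}(N_1\cdots N_k)^{2dn-d-m}M_{k+1}^{d(n-1)}N_{k+1}^{d-m}$, then comparison with \eqref{equation903103513095} via $(N_1\cdots N_k)^{2dn-d-m}\leq (N_1\cdots N_k)^{2dn}$) checks out. Both proofs use \eqref{equation903103513095} to the same effect, and under that hypothesis your family $\mathcal{R}(Q_0)$ actually contains the paper's, since a completely full $R$ certainly exceeds twice the average count; your discard criterion is cruder, but the global-to-local step (a global bound of $\tfrac{1}{2}M_{k+1}^d$ bad cubes trivially bounds the bad cubes inside each $Q_0$) is sound, so nothing is lost and the argument is, if anything, a little more economical. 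The only caveat, which you share with the paper rather than introduce, is that with closed cubes the conclusion $S \cap B = \emptyset$ should be read at the level of cube families, i.e.\ $\DQ_{k+1}(S) \cap \DQ_{k+1}(B) = \emptyset$, since a chosen $Q_R$ may still share a boundary face with a cube of $B$.
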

\begin{proof}
    For each $Q_0 \in \DQ_k(T)$, we set
    \[ \mathcal{R}(Q_0) = \{ R \in \DR_{k+1}(Q_0) : \#(\DQ_{k+1}(R \cap B)) \leq (2/M_{k+1}^d) \cdot \#(\DQ_{k+1}(B)) \}. \]
    Since $\DR_{k+1}(Q_0) = M_{k+1}^d$,
    \[ \#(\mathcal{R}(Q_0)) \geq \#(\DR_{k+1}(Q_0)) - (M_{k+1}^d/2) \geq (1/2) \cdot \#(\DR_{k+1}(T)). \]
    Now \eqref{equation903103513095} implies that for each $R \in \mathcal{R}(Q_0)$,
    \begin{align*}
        \#(\DQ_{k+1}(R \cap B)) &\leq (2/M_{k+1}^d) \cdot \#(\DQ_{k+1}(B))\\
        &\leq (2/M_{k+1}^d) \left(C \cdot 2^{n-1} (N_1 \dots N_k)^{2dn} (M_{k+1}/N_{k+1})^{d(n-1)} \right).\\
        &= \left[ 2^n C (N_1 \dots N_k)^{2dn} \right] \left( M_{k+1}^{d(n-2)} / N_{k+1}^{m-d} \right)\\
        &< (N_{k+1}/M_{k+1})^d\\
        &= \DQ_{k+1}(R).
    \end{align*}
    Thus for each $R \in \mathcal{R}(Q_0)$, we can find $Q_R \in \DQ_{k+1}(R)$ such that $Q_R \cap B = \emptyset$. And so if we set
    \[ S = \bigcup \{ Q_R : R \in \mathcal{R}(Q_0), Q_0 \in \DQ_k(T) \}, \]
    then (A) and (B) are satisfied.
\end{proof}

\begin{corollary} \label{PramanikFraserBuildingBlockLemma}
    Let $f: [0,1]^{dn} \to [0,1]^m$ be $C^2$, and have full rank at every point $(x_1, \dots, x_n) \in \C^n(\RR^d)$ such that $f(x_1, \dots, x_n) = 0$. Then there exists a universal constant $C$ depending only on $f$ such that, if \eqref{equation903103513095} is satisfied, then for any disjoint, $\DQ_k$ discretized sets $T_1, \dots, T_n \subset [0,1]^d$, we can find $\DQ_{k+1}$ discretized sets $S_1 \subset T_1, \dots, S_n \subset T_n$ such that
    \begin{enumerate}
        \item If $x_1 \in S_1, \dots, x_n \in S_n$, then $f(x_1, \dots, x_n) \neq 0$.
        \item For each $k$, and for each $Q_0 \in \DQ_k(T_k)$, there is $\mathcal{R}(Q_0) \subset \mathcal{R}_{k+1}(Q_0)$ with
        \[ \#(\mathcal{R}(Q_0)) \geq (1/2) \cdot \#(\mathcal{R}_{k+1}(Q_0)), \]
        and for each $R \in \DR_{k+1}(Q_0)$,
        \[ \#(\mathcal{Q}_{k+1}(R \cap S)) = \begin{cases} 1 &: R \in \mathcal{R}(Q_0), \\ 0 &: R \not \in \mathcal{R}(Q_0). \end{cases} \]
    \end{enumerate}
\end{corollary}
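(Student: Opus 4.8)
The plan is to peel the coordinates of the configuration off one at a time, reducing the avoidance problem to the zero-dimensional case handled by Lemma~\ref{Lemma1209410535}. First I record the geometric input. Set
\[ B_0 = \bigcup \{\, Q_1 \times \cdots \times Q_n : Q_i \in \DQ_{k+1}(T_i),\ (Q_1 \times \cdots \times Q_n) \cap f^{-1}(0) \neq \emptyset \,\}, \]
so that $f$ is nonvanishing on every generation-$(k+1)$ product cube not contained in $B_0$, while every point of $T_1 \times \cdots \times T_n$ at which $f$ vanishes lies in $B_0$. Since the $T_i$ are disjoint, $T_1 \times \cdots \times T_n$ is a compact subset of the open configuration space $\C^n([0,1]^d)$, so the full-rank hypothesis together with a compactness argument yields constants $c_0, \delta_0 > 0$ with $|Df| \geq c_0$ on the $\delta_0$-neighborhood $U$ of $f^{-1}(0) \cap (T_1 \times \cdots \times T_n)$. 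Once $l_{k+1}$ is small, every cube contributing to $B_0$ lies in $U$ and, as $f$ is Lipschitz, inside $\{ |f| \leq O_f(l_{k+1}) \}$; since $f$ is a submersion with $|Df| \geq c_0$ on $U$, this superlevel set has Lebesgue measure $O_f(l_{k+1}^m)$, and dividing by the volume $l_{k+1}^{dn}$ of one cube gives
\[ \#(\DQ_{k+1}(B_0)) \lesssim_f (1/l_{k+1})^{dn-m}. \]
Let $C$ be the implied constant; this is the $C$ of the statement. I expect this estimate to be the one nontrivial ingredient — it is the only place the $C^2$ and full-rank hypotheses enter, and the care needed is in extracting a constant depending on $f$ alone, i.e.\ in knowing that $|Df|$ is uniformly bounded below on a fixed-width neighborhood of the zero set within the ambient configuration space (in the queueing application the relevant product sets stay inside a fixed compact subset of $\C^n([0,1]^d)$, which legitimizes this uniformity).

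Next I iterate Lemma~\ref{Lemma315091513}. Writing $B_0 \subset T_1 \times (T_2 \times \cdots \times T_n)$ and applying that lemma with $T = T_1$, $T' = T_2 \times \cdots \times T_n$, $B = B_0$, I obtain $\DQ_{k+1}$-discretized sets $S_1 \subset T_1$ and $B_1 \subset T_2 \times \cdots \times T_n$ with the dimension-reduction inclusion $(S_1 \times T_2 \times \cdots \times T_n) \cap B_0 \subset S_1 \times B_1$ (Property~\ref{dimensionReductionProperty}), the single-selection property (Property~\ref{bigProperty}) for $S_1$, and $\#(\DQ_{k+1}(B_1)) \leq 2(N_1 \cdots N_k)^d (M_{k+1}/N_{k+1})^d \cdot \#(\DQ_{k+1}(B_0))$ (Property~\ref{BBoundProperty}). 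Repeating with $T = T_2$, $T' = T_3 \times \cdots \times T_n$, $B = B_1$, and continuing through $n-1$ applications (the lemma's parameter running over $n, n-1, \dots, 2$), I get $\DQ_{k+1}$-discretized sets $S_1 \subset T_1, \dots, S_{n-1} \subset T_{n-1}$, each with the single-selection property, and a $\DQ_{k+1}$-discretized set $B_{n-1} \subset T_n$ with
\[ \#(\DQ_{k+1}(B_{n-1})) \leq \left[ 2(N_1 \cdots N_k)^d (M_{k+1}/N_{k+1})^d \right]^{n-1} \#(\DQ_{k+1}(B_0)) \leq C \cdot 2^{n-1}(N_1 \cdots N_k)^{d(n-1)}(M_{k+1}/N_{k+1})^{d(n-1)}(1/l_{k+1})^{dn-m}. \]

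This is exactly the count hypothesis of Lemma~\ref{Lemma1209410535}; its remaining hypothesis, \eqref{equation903103513095}, we are assuming. That lemma then produces a $\DQ_{k+1}$-discretized set $S_n \subset T_n$ with $S_n \cap B_{n-1} = \emptyset$ and the single-selection property (Property~\ref{badsetproperty5}). It remains to verify the two conclusions for $S_1, \dots, S_n$. Conclusion (B) is immediate, since each $S_i$ was handed to us already carrying the single-selection property. For (A), suppose $x_i \in S_i$ for all $i$ and $f(x_1, \dots, x_n) = 0$. Then $(x_1, \dots, x_n) \in B_0$ by the defining property of $B_0$; since $x_1 \in S_1$ and $x_j \in T_j$ for $j \geq 2$, the inclusion $(S_1 \times T_2 \times \cdots \times T_n) \cap B_0 \subset S_1 \times B_1$ forces $(x_2, \dots, x_n) \in B_1$. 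Applying the successive dimension-reduction inclusions, $(x_j, \dots, x_n) \in B_{j-1}$ for every $j$, so finally $x_n \in B_{n-1}$, contradicting $x_n \in S_n$ and $S_n \cap B_{n-1} = \emptyset$. The remainder — the recursive bookkeeping matching constants across the $n-1$ applications of Lemma~\ref{Lemma315091513} with the hypotheses of Lemma~\ref{Lemma1209410535} — is routine.
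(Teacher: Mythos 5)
Your overall route is the same as the paper's: obtain a covering bound of the form $C\cdot(1/l_{k+1})^{dn-m}$ on the generation-$(k+1)$ cubes meeting the zero set, feed that set $B$ through Lemma \ref{Lemma315091513} iteratively $n-1$ times, finish with Lemma \ref{Lemma1209410535} using \eqref{equation903103513095}, and then chase the dimension-reduction inclusions to get conclusion (A), with (B) coming for free from the single-selection properties. That bookkeeping and the final verification are fine and match what the paper does (the paper leaves the chase implicit).

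The one genuine problem is in the covering bound itself, which is exactly the place where the statement's ``constant depending only on $f$'' must be produced. The paper gets it by bounding, once and for all, the number of dyadic cubes meeting the manifold $V(f)=\{x\in\C^n(\RR^d): f(x)=0\}$ via the implicit function theorem and the co-area formula --- a bound independent of $T_1,\dots,T_n$. Your derivation instead restricts to $T_1\times\cdots\times T_n$ and extracts $c_0,\delta_0$ by compactness of that product inside the open set $\C^n([0,1]^d)$; these constants depend on the mutual separation of the $T_i$, hence on $k$, so as written you only obtain a constant depending on the $T_i$, not a universal $C(f)$. The parenthetical repair --- that in the queueing application the product sets stay inside a fixed compact subset of $\C^n([0,1]^d)$ --- is false: the queue contains tuples of disjoint generation-$k$ cubes whose mutual distance is comparable to $l_k$, so the products crowd the diagonal as $k\to\infty$, and $|Df|$ has no uniform lower bound there (indeed $f$ typically vanishes identically on the diagonal in the intended applications). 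To prove the corollary as stated you should either argue as the paper does, bounding the cubes meeting $V(f)$ globally so the constant depends on $f$ alone, or else quantify how your constant degrades with the separation of the $T_i$ (e.g.\ polynomially in $1/l_k$) and check that such a loss can be absorbed into the choice of $N_{k+1}$ in \eqref{equation903103513095}; the short compactness argument by itself does not deliver the claim.
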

\begin{proof}
    Since $f$ is $C^2$ and has full rank on the set
    \[ V(f) = \{ (x_1, \dots, x_n) \in \C^n(\RR^d) : f(x_1, \dots, x_n) = 0 \}, \]
    the implicit function theorem implies $V(f)$ is a smooth manifold of dimension $nd - m$ in $\RR^{dn}$, and so the co-area formula implies the existence of a constant $C$ such that for each $k$,
    \[ \# \{ Q \in \DQ_k^{dn} : Q \cap V(f) \neq \emptyset \} \leq C/l_k^{dn-m}. \]
    To apply Lemma \ref{Lemma315091513} and \ref{Lemma1209410535}, we set
    \[ B = \# \{ Q \in \DQ_k^{dn} : Q \cap V(f) \neq \emptyset \}. \]
    Applying Lemma \ref{Lemma315091513} iteratively $n-1$ times, then finishing with an application of Lemma $\ref{Lemma1209410535}$ constructs the required sets $S_1, \dots, S_n$.
\end{proof}

Just like in Keleti's proof, Pramanik and Fraser's technique applies a discrete result, Corollary \ref{PramanikFraserBuildingBlockLemma}, iteratively at many scales, with the help of a queuing process, to obtain a high dimensional set avoiding the zeros of a function. We construct a nested family $\{ X_k : k \geq 0 \}$ of $\DQ_k$ discretized sets, converging to a set $X$, which we will show is translate avoiding. We initialize $X_0 = [0,1]$. Our queue shall consist of $n$ tuples of disjoint intervals $(T_1, \dots, T_n)$, all of the same length, which initially consists of all possible tuples of intervals in $\DQ_1^d([0,1]^d)$. To construct the sequence $\{ X_k \}$, we perform the following iterative procedure:
\begin{algorithm}[H]
    \begin{algorithmic}
        \caption{Construction of the Sets $\{ X_k \}$}
        \State{Set $k = 0$}
        \MRepeat
            \State{Take off an $n$ tuple $(T_1', \dots, T_n')$ from the front of the queue}
            \State{Set $T_i = T_i' \cap X_k$ for each $i$}
            \State{Apply Corollary \ref{PramanikFraserBuildingBlockLemma} to the sets $T_1, \dots, T_d$, obtaining $\DQ_{k+1}$ discretized sets $S_1, \dots, S_n$ satisfying Properties (A), (B), and (C) of that Lemma.}
            \State{Set $X_{k+1} = X_k - \bigcup_{i = 1}^n T_i - S_i$.}
            \State{Add all $n$ tuples of disjoint cubes $(T_1', \dots, T_n')$ in $\DQ_{k+1}^d(X_{k+1})$ to the back of the queue.}
            \State{Increase $k$ by 1.}
        \EndRepeat   
    \end{algorithmic}
\end{algorithm}

\begin{lemma}
    The set $X$ constructed by the procedure avoids the configuration
    \[ \C = \{ (x_1, \dots, x_n) \in \C^n(\RR^d) : f(x_1, \dots, x_n) = 0 \}. \]
\end{lemma}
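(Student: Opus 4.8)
The plan is to argue by contradiction, using the fact that the FIFO queue eventually processes every tuple of cubes it is handed. Suppose $X$ fails to avoid $\C$, so that there are distinct points $x_1,\dots,x_n \in X$ with $f(x_1,\dots,x_n) = 0$.

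\emph{Step 1: localize the $x_i$.} Since $l_k \to 0$ and the points $x_i$ are distinct, I choose $k$ large enough that $\sqrt{d}\, l_{k+1}$ is smaller than $\min_{i \ne j} |x_i - x_j|$, so that no two of the $x_i$ can lie in a common cube of $\DQ_{k+1}^d$. Because $x_i \in X \subseteq X_{k+1}$ and $X_{k+1}$ is a union of closed cubes of generation $k+1$, for each $i$ I may pick a cube $T_i' \in \DQ_{k+1}^d(X_{k+1})$ with $x_i \in T_i'$ (selecting, among the finitely many generation-$(k+1)$ cubes whose closure contains $x_i$, one that lies in $X_{k+1}$). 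Then $(T_1',\dots,T_n')$ is a tuple of pairwise distinct cubes in $\DQ_{k+1}^d(X_{k+1})$, and hence is appended to the back of the queue at the end of stage $k$ of the construction.

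\emph{Step 2: the queue is drained.} At each stage exactly one tuple is removed from the front of the queue and only finitely many tuples are appended to the back, so every tuple ever enqueued is dequeued after finitely many further stages. Let $M \ge k+1$ be the stage at which $(T_1',\dots,T_n')$ is processed. At that stage we set $T_i = T_i' \cap X_M$, apply Corollary \ref{PramanikFraserBuildingBlockLemma} to $T_1,\dots,T_n$ to obtain $\DQ_{M+1}$-discretized sets $S_i \subseteq T_i$, and update $X_{M+1} = X_M - \bigcup_{i=1}^n (T_i - S_i)$; since the $T_i$ are pairwise disjoint, this update amounts precisely to $X_{M+1} \cap T_i = S_i$ for every $i$.

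\emph{Step 3: derive the contradiction.} From $x_i \in X \subseteq X_M$ and $x_i \in T_i'$ we get $x_i \in T_i$, and from $x_i \in X \subseteq X_{M+1}$ we then get $x_i \in X_{M+1} \cap T_i = S_i$. Now Property (A) of Corollary \ref{PramanikFraserBuildingBlockLemma} forces $f(x_1,\dots,x_n) \ne 0$, contradicting the choice of the $x_i$. Hence $X$ avoids $\C$. The only real obstacles are bookkeeping ones: checking that the queue is genuinely drained so that the localizing tuple really is processed at some stage, and tracking how the recursively defined sets $X_k$ meet the cubes $T_i$; all of the geometry particular to $f$ has already been packaged into Corollary \ref{PramanikFraserBuildingBlockLemma}, so no further analysis of the zero set of $f$ is needed here.
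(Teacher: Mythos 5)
Your argument is essentially the paper's own proof: localize the distinct points in separate cubes at a fine generation, observe that the tuple is enqueued and, since the FIFO queue is drained, eventually processed, and then use $x_i \in X \subseteq X_{M+1}$ together with Property (A) of Corollary \ref{PramanikFraserBuildingBlockLemma} to conclude $f(x_1,\dots,x_n) \neq 0$. The only small slip is that the queue (and the corollary) requires the cubes $T_1',\dots,T_n'$ to be \emph{disjoint}, not merely distinct, so you should take $k$ large enough that, say, $2\sqrt{d}\,l_{k+1} < \min_{i \ne j}|x_i - x_j|$, which rules out adjacent cubes as well.
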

\begin{proof}
    Suppose $x_1, \dots, x_n \in X$ are distinct. Then at some stage $k$, $x_1, \dots, x_n$ lie in disjoint cubes $T_1', \dots, T_n' \in \DQ_k^d(X_k)$, for some large $k$. At this stage, $(T_1', \dots, T_n')$ is added to the back of the queue, and therefore, at some much later stage $N$, the tuple $(T_1', \dots, T_n')$ is taken off the front. Sets $S_1 \subset T_1', \dots, S_n \subset T_n'$ are constructed satisfying Property (A) of Corollary \ref{PramanikFraserBuildingBlockLemma}. Since $x_1, \dots, x_n \in X$, we must have $x_i \in S_i$ for each $i$, so $f(x_1, \dots, x_n) \neq 0$.
\end{proof}

What remains is to show that for some sequence of parameters $\{ N_k \}$ and $\{ M_k \}$ satisfying \eqref{equation903103513095}, we can apply Theorem \ref{TheConstructionTheorem}. The conclusion of Corollary \ref{PramanikFraserBuildingBlockLemma} shows Property \ref{SingleSelection} of Theorem \ref{TheConstructionTheorem} is always satisfied. If we set
\[ N_{k+1} = \left\lceil \left[ C \cdot 2^n (N_1 \dots N_k)^{2dn} \right]^{1/m} M_{k+1}^{d(n-1)/m} \right\rceil, \]
then \eqref{equation903103513095} holds, so we can apply Lemma \ref{pramanikandfrasertheorem} at each scale. Properties \ref{RapidDecrease} and \ref{ChangeofScales} of Theorem \ref{TheConstructionTheorem} are satisfied with $s = m/d(n-1)$ if $N_1 \dots N_k \lesssim_\varepsilon M_{k+1}^\varepsilon$ for each $\varepsilon > 0$. This is true, for instance, if we set $M_k = 2^{\lfloor 2^{k \log k} \rfloor}$, and Theorem \ref{TheConstructionTheorem} then shows the resultant set $X$ has Hausdorff dimension $m/(n-1)$.

\section{Math\'{e}: Polynomial Configurations}

Math\'{e}'s result \cite{Mathe} constructs sets avoiding low degree algebraic hypersurfaces.

\begin{theorem}[Math\'{e}] \label{mathemainresult}
    For each $k$, let $f_k: \RR^{n_k d} \to \RR$ be a rational coefficient polynomial with degree at most $m$. Then there exists a set $X \subset [0,1]^d$ with Hausdorff dimension $d/m$ which avoids the configuration
    \[ \C = \bigcup_k \{ (x_1, \dots, x_n) \in \C^{n_k}(\RR^d) : f_k(x_1, \dots, x_n) = 0 \}. \]
\end{theorem}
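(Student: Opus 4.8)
The plan is to reduce to the discrete building-block machinery already set up for the Pramanik--Fraser theorem, exploiting the fact that Mathé's configurations are defined by rational-coefficient polynomials of bounded degree $m$ rather than by $C^2$ functions of full rank. The key point where the rational and low-degree hypotheses enter is the discrete counting step: I would first show that for a rational-coefficient polynomial $f$ of degree at most $m$ that does not vanish identically, if $x_1,\dots,x_n$ lie in dyadic cubes of generation $k+1$ whose ``base points'' (lower-left corners, scaled by $l_{k+1}$) are integer vectors, then $f$ evaluated at those base points is a rational number whose denominator is controlled by a fixed power of $N_1\cdots N_{k+1}$. Consequently, if $f$ is nonzero at the base points, then $|f|\gtrsim (N_1\cdots N_{k+1})^{-Cm}$ there, and by the mean value / gradient bound, $f$ stays nonzero on the whole product of cubes provided the cubes are at a generation where $N_{k+1}$ has grown sufficiently. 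This replaces the use of the implicit function theorem and co-area formula in Corollary \ref{PramanikFraserBuildingBlockLemma}: instead of needing a smooth manifold of codimension $m$, I use that the ``bad set'' of cubes on which $f$ could still vanish is contained in those product-cubes where the base-point value of $f$ is forced to be zero, and a degree-$m$ polynomial vanishing on a structured grid is constrained by Schwartz--Zippel-type counting to at most a $d/m$-proportion of the scale.

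Concretely, the main steps are: (1) establish the arithmetic lemma above, quantifying the denominator and hence the lower bound for $|f|$ at dyadic base points when $f$ is rational of degree $\le m$; (2) combine with a gradient estimate on $[0,1]^{dn}$ (polynomials of bounded degree have bounded derivatives there) to show that the set of generation-$(k+1)$ product cubes meeting $\{f=0\}$ has cardinality $\lesssim (1/l_{k+1})^{dn-d}$ once $N_{k+1}$ is large enough relative to $N_1\cdots N_k$; (3) feed this cardinality bound into the recursive dimension-reduction of Lemma \ref{Lemma315091513} applied $n-1$ times, then close with Lemma \ref{Lemma1209410535}, exactly as in the Pramanik--Fraser argument, to obtain a discrete building block analogous to Corollary \ref{PramanikFraserBuildingBlockLemma} with $m$ playing the role of the codimension; (4) run the same queuing construction, enumerating over the countable family $\{f_k\}$ and over all $n_k$-tuples of disjoint dyadic cubes, to produce a nested sequence $\{X_k\}$ with $X=\bigcap X_k$ avoiding $\C$; (5) verify Properties \ref{SingleSelection}, \ref{RapidDecrease}, \ref{ChangeofScales} of Theorem \ref{TheConstructionTheorem} with $s = 1/m$ (so $sd = d/m$), choosing $M_k,N_k$ subhyperdyadic exactly as before with $N_{k+1}$ chosen to dominate the required power of $N_1\cdots N_k$, and apply Theorem \ref{TheConstructionTheorem} to conclude $\hausdim(X) = d/m$.

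The main obstacle I anticipate is step (2): getting the sharp count $\lesssim (1/l_{k+1})^{dn-d}$ (i.e., codimension exactly $d$, not $1$) for the cubes meeting the zero set. A single polynomial equation $f=0$ in $\RR^{dn}$ cuts out a hypersurface of codimension $1$, which would only give codimension $1$ and hence dimension $(dn-1)/(n-1)$, not $d/m$. The resolution --- and the genuinely new ingredient over Pramanik--Fraser --- is that one does not avoid the full hypersurface at once but rather works variable-by-variable: after fixing the cubes for $x_1,\dots,x_{n-1}$, the restriction $g(x_n)=f(x_1,\dots,x_{n-1},x_n)$ is a degree-$\le m$ polynomial in the single $\RR^d$-variable $x_n$, and the Schwartz--Zippel bound says its zero set meets at most an $m \cdot (\text{grid spacing})$-fraction, i.e.\ $\lesssim (1/l_{k+1})^{d-1}\cdot m \cdot (\text{something})$ --- no, rather, one uses that a nonzero degree-$m$ polynomial on $\RR^d$ restricted to a dyadic grid of side $l_{k+1}$ inside a unit cube has at most $O_m((1/l_{k+1})^{d-1})$ grid cells on which it is small, which after the $n-1$ reductions yields the needed $1/m$ exponent. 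Making this quantitative --- tracking how small $f$ must be allowed to be (controlled by the arithmetic lemma) against how many cells it can be that small on --- and checking it threads correctly through the iterated application of Lemma \ref{Lemma315091513} is where the real work lies; the rational-coefficient hypothesis is precisely what makes the ``how small'' side effective.
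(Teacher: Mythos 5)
There is a genuine gap, and it sits exactly where you suspected: step (2), together with the way you plan to wire it into steps (3)--(5). Any argument that only feeds a covering-number bound for $\{f=0\}$ into the Pramanik--Fraser reduction (Lemma \ref{Lemma315091513} iterated, then Lemma \ref{Lemma1209410535}) cannot see the degree $m$ at all: the zero set of a single nonzero polynomial on $\RR^{dn}$ has codimension $1$ regardless of its degree, and indeed your own variable-by-variable count gives at most $O_m\bigl((1/l_{k+1})^{dn-1}\bigr)$ bad cubes, not the claimed $O\bigl((1/l_{k+1})^{dn-d}\bigr)$ (which is false in general; take $f(x_1,\dots,x_n)=x_{1,1}-x_{2,1}$). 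Feeding a codimension-$c$ count into that machinery produces Hausdorff dimension $c/(n-1)$ --- necessarily $n$-dependent, whereas $d/m$ is not --- so even granting $c=d$ you would land at $d/(n-1)$, and with the true $c=1$ at $1/(n-1)$; the ``needed $1/m$ exponent'' never appears. The single-scale arithmetic lemma of step (1) is also quantitatively too weak to rescue this: at base points of the fine lattice $l_{k+1}\ZZ^d$ a nonzero value of $f$ is only bounded below by roughly $l_{k+1}^m$, while $f$ varies by about $l_{k+1}$ across a fine cube, so for $m\geq 2$ non-vanishing at the base point does not prevent $f$ from vanishing inside the cube.

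The ingredient the paper (following Math\'{e}) uses instead is a two-scale arithmetic \emph{selection}, not a count of cubes meeting $\{f=0\}$. After clearing denominators and first avoiding the zero sets of the partial derivatives (so one may assume $0<c_0\leq|\partial_1 f|\leq C_0$ on $T_1\times\cdots\times T_n$), one observes that $f$ maps the \emph{coarse} lattice $(r_{k+1}\ZZ)^{dn}$ into $r_{k+1}^m\ZZ$; by the mean value theorem one can pick a shift $\delta\in l_{k+1}\ZZ$ of size comparable to $r_{k+1}^m$ so that shifting only the first coordinate moves the values of $f$ at the shifted base points a distance $\gtrsim \varepsilon r_{k+1}^m$ away from $r_{k+1}^m\ZZ\ni 0$. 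The sets $S_i$ then keep exactly one cube of side $l_{k+1}$ at (a shift of) the corner of \emph{every} $R\in\DR_{k+1}(T_i)$, and the Lipschitz bound on $f$ keeps $f\neq 0$ on $S_1\times\cdots\times S_n$ provided $l_{k+1}\lesssim r_{k+1}^m$, i.e. $N_{k+1}\geq C\,(N_1\cdots N_k)^{m-1}M_{k+1}^m$. It is precisely this scale relation --- one retained cube per $r_{k+1}$-cube with $l_{k+1}\sim r_{k+1}^m$ --- that verifies Property \ref{ChangeofScales} of Theorem \ref{TheConstructionTheorem} with $s=1/m$ and hence gives $\hausdim(X)=d/m$ after the usual queuing over the countable family and over tuples of cubes. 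Your steps (4)--(5) would go through once such a discrete lemma is available, but steps (1)--(3) as written do not produce it.
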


Originally, Math\'{e}'s result does not explicitly use a discretization method analogous to Keleti and Pramanik and Fraser, but his proof strategy can be reconfigured to work in this setting. For the purpose of brevity, we do not carry out the complete argument, merely giving the discretization method below. By first trying to avoid the zero sets of the partial derivatives of the function $f$, one can reduce to the case where a partial derivative of $f$ is non-vanishing on the $\DQ_k$ discretized sets we start with. The key technique is that $f$ maps discrete lattices of points to a discrete, `one dimensional lattice' in $\RR^d$, and the degree of the polynomial gives us the difference in lengths between the two lattices. This idea was present in Keleti's work, and one can view Math\'{e}'s result as a generalization along these lines. We revisit this idea in Chapter \ref{ch:Conclusions}. As in Pramanik and Fraser's result, we utilize the multi-scale dyadic notations $\DQ$ and $\DR$, with an implicitly chosen sequence $\{ N_k \}$ and $\{ M_k \}$.

\begin{theorem}
    Let $f: [0,1]^{dn} \to \RR$ be a rational coefficient polynomial of degree $m$, and disjoint, $\DQ_k$ discretized sets $T_1, \dots, T_n \subset [0,1]^d$, such that $\inf |\partial_1 f| \neq 0$ on $T_1 \times \dots \times T_n$. Then there exists a constant $C$, depending only on $f$, such that if
    \begin{equation} \label{equation124426034990370935} N_{k+1} \geq C \cdot (N_1 \dots N_k)^{m-1} \cdot M_{k+1}^m, \end{equation}
    then there exists $\DQ_{k+1}$ discretized sets $S_1 \subset T_1, \dots, S_n \subset T_n$ such that
    \begin{enumerate}
        \item $f(x) \neq 0$ for $x \in S_1 \times \dots \times S_n$.
        \item For each $i$, and for each $R \in \DR_{k+1}^d(T_i)$, $\#(\DQ_{k+1}^d(R \cap S_i)) = 1$.
    \end{enumerate}
\end{theorem}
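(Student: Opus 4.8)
The plan is to mimic the structure of Lemma \ref{Lemma1209410535}: for each parent cube $Q_0 \in \DQ_k(T_1)$ we will select a large sub-collection $\mathcal{R}(Q_0) \subset \DR_{k+1}(Q_0)$ of ``good'' intermediate cubes, one surviving $\DQ_{k+1}$ cube inside each good $R$, and similarly pick \emph{one} $\DQ_{k+1}$ cube inside every $R \in \DR_{k+1}(T_i)$ for $i \geq 2$, in such a way that $f$ cannot vanish on the product of the chosen cubes. The non-vanishing of $\partial_1 f$ is what converts this into a counting problem: since $\inf|\partial_1 f| \gtrsim 1$ on $T_1 \times \dots \times T_n$ and $f$ has degree $m$, the image $f(x_1, b_2, \dots, b_n)$, as $x_1$ ranges over a $\DQ_{k+1}$-cube of $T_1$ with the other coordinates fixed at centres $b_i$ of $\DQ_{k+1}$ cubes, moves through an interval of length $\sim l_{k+1}$, while $|f|$ restricted to a full $\DQ_k$-block of $T_1$ has range $O((N_1\cdots N_k)^{m-1})$ — roughly, the derivative bounds from the polynomial's degree force $f$ to take values in a ``one-dimensional lattice'' whose spacing, compared to $l_{k+1}$, is governed by $(N_1\cdots N_k)^{m-1} M_{k+1}^m$. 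Hence one expects at most $O\big((N_1\cdots N_k)^{m-1} M_{k+1}^m\big)$ of the $N_{k+1}/M_{k+1}$ child cubes inside a given $R \in \DR_{k+1}(T_1)$ to be ``bad'' (i.e.\ to have $f=0$ somewhere on the relevant product), and condition \eqref{equation124426034990370935}, $N_{k+1} \geq C (N_1\cdots N_k)^{m-1} M_{k+1}^m$, guarantees a good child survives in every $R$.

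Concretely, I would first fix, for each $i \geq 2$, one cube $Q_i^{(R)} \in \DQ_{k+1}(R)$ for every $R \in \DR_{k+1}(T_i)$ (this is where Property (B) for $i\ge 2$ comes from — it holds automatically since we keep exactly one child per $R$), and call the collection of their centres the ``test points.'' Then for $i=1$: given $Q_0 \in \DQ_k(T_1)$ and $R \in \DR_{k+1}(Q_0)$, say a child $Q \in \DQ_{k+1}(R)$ is \emph{bad} if there exist test points $b_2, \dots, b_n$ (from the already-chosen cubes in $T_2, \dots, T_n$, and also accounting for the $O(l_{k+1})$ wiggle room inside those cubes) with $f = 0$ somewhere on $Q \times (\text{those cubes})$. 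Using $\inf|\partial_1 f| \gtrsim 1$, the set of $x_1 \in Q_0$ with $f(x_1, y_2, \dots, y_n)$ within $O(l_{k+1})$ of $0$, for a fixed choice of the other coordinates, is contained in $O(1)$ intervals of length $O(l_{k+1})$, hence meets $O(1)$ children $Q$; summing over the $\prod_{i\ge2}\#\DQ_{k+1}(T_i) \lesssim (N_1\cdots N_{k+1})^{(n-1)d}$... — more carefully, one localizes: within a single $R$, only test points lying in a bounded neighbourhood matter once one also uses that $f$'s gradient in the later variables is $O((N_1\cdots N_k)^{m-1})$, so the count of bad children per $R$ is $O\big((N_1\cdots N_k)^{m-1} M_{k+1}^m\big)$. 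Choose $C$ (depending only on the coefficients and degree of $f$, via the co-area / polynomial-lattice estimate) so that \eqref{equation124426034990370935} forces this to be $< N_{k+1}/M_{k+1} = \#\DQ_{k+1}(R)$; then each $R$ contains a surviving good child $Q_R$. Set $S_1 = \bigcup_{Q_0}\bigcup_R \{Q_R\}$ and $S_i = \bigcup_R \{Q_i^{(R)}\}$ for $i \ge 2$. Property (B) holds by construction, and (A) holds because any $(x_1, \dots, x_n) \in S_1 \times \dots \times S_n$ lies in a product of surviving cubes, on which $f \neq 0$ by the definition of ``good.''

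The main obstacle is making the bad-child count \emph{uniform} and independent of $k$: naively the number of $(n-1)$-tuples of test cubes in $T_2, \dots, T_n$ is enormous, so one cannot afford a union bound over all of them. The resolution — and the technical heart of the argument, exactly as in Math\'e's original proof — is the ``polynomial maps lattices to lattices'' principle: rescaling $T_1 \times \dots \times T_n$ by $1/l_{k+1}$ turns the relevant cubes into a subset of $\ZZ^{dn}$ of diameter $\lesssim 1/l_k = N_1\cdots N_k$, and a rational-coefficient polynomial of degree $m$ with bounded-height coefficients maps such integer points into a set of integers (after clearing denominators) of size $O\big((N_1\cdots N_k)^{m}\big)$ per unit cell, so the ``forbidden'' values of the $x_1$-coordinate, across \emph{all} test-point choices simultaneously, occupy only $O\big((N_1\cdots N_k)^{m-1} M_{k+1}^m\big)$ residue classes modulo $N_{k+1}/M_{k+1}$ — this is what lets the union bound over test points collapse. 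I would prove this counting estimate first (it is where the constant $C = C(f)$ is born and where the degree hypothesis and rationality are essential), then feed it into the selection scheme above; the non-vanishing hypothesis $\inf|\partial_1 f| \neq 0$ is used only to ensure each forbidden residue class kills $O(1)$ rather than many children.
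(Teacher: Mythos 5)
Your high-level ingredients are the right ones (the degree-$m$ integer/rational polynomial maps the $\DR$-anchor lattice $(r_{k+1}\ZZ)^{dn}$ into the value lattice $r_{k+1}^m\ZZ$; $\partial_1 f$ converts this into control along the first coordinate; \eqref{equation124426034990370935} is exactly $l_{k+1}\lesssim r_{k+1}^m$), but the step you yourself flag as the technical heart — replacing the union bound over test-point tuples by a count of ``bad children'' occupying few residue classes — is where the proposal breaks, and the paper does something different. The paper's proof is not a counting argument at all: it takes, for every $i\geq 2$, the \emph{corner} child of each $R\in\DR_{k+1}(T_i)$ (anchors in $(r_{k+1}\ZZ)^d$), and for $i=1$ the corner child shifted by a \emph{single} $\delta\in l_{k+1}\ZZ$ chosen, via the mean value theorem and $c_0\leq|\partial_1 f|\leq C_0$, in the window $[\varepsilon r_{k+1}^m/c_0,\ (1-\varepsilon)r_{k+1}^m/C_0]$ (nonempty and containing a multiple of $l_{k+1}$ precisely because of \eqref{equation124426034990370935}). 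Then for \emph{every} anchor tuple simultaneously, $f(\text{anchors})$ lies within a single gap of $r_{k+1}^m\ZZ$ at distance $\geq\varepsilon r_{k+1}^m$ from the lattice (in particular from $0$), and the Lipschitz bound $C_2 l_{k+1}\leq(\varepsilon/2)r_{k+1}^m$ handles the interiors of the $l_{k+1}$-cubes. No union bound, and no need to count anything.

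The gaps in your substitute are concrete. First, the numerology does not close: \eqref{equation124426034990370935} gives only $N_{k+1}/M_{k+1}\geq C(N_1\cdots N_k)^{m-1}M_{k+1}^{m-1}$ children along an $e_1$-fibre of $R$, which is smaller by a factor of $M_{k+1}$ than your claimed bad count $O\bigl((N_1\cdots N_k)^{m-1}M_{k+1}^m\bigr)$; indeed the number of $l_{k+1}$-steps inside one gap of $r_{k+1}^m\ZZ$ is $r_{k+1}^m/l_{k+1}=N_{k+1}/\bigl((N_1\cdots N_k)^{m-1}M_{k+1}^m\bigr)\geq C$, a \emph{constant}, so in general all but $O(1)$ children per fibre may be bad and a lossy count has no room. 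Second, the residue-class collapse is not available when $|\partial_1 f|$ is only pinned between $c_0$ and $C_0$: for a nonzero lattice value $qr_{k+1}^m$ of $f$ at the anchors, the bad shifts, once unioned over anchor tuples, fill an interval of length $\sim|q|r_{k+1}^m(1/c_0-1/C_0)$ rather than a single $O(l_{k+1})$-window, and summing over $|q|\lesssim r_{k+1}^{1-m}$ these smeared intervals can cover the entire fibre; this is exactly why the paper restricts $\delta$ below $(1-\varepsilon)r_{k+1}^m/C_0$ so that only $q=0$ is reachable. Third, fixing an \emph{arbitrary} child per $R$ for $i\geq 2$ destroys the structure you need: such anchors lie only in $(l_{k+1}\ZZ)^d$, and $f$ of those points lies in $l_{k+1}^m\ZZ$, whose spacing is far below $l_{k+1}$ and carries no information — the $i\geq2$ selections must be the $\DR$-corner cubes. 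Finally, for $d\geq2$ you cannot count over all $(N_{k+1}/M_{k+1})^d$ children of $R$: offsets in coordinates $2,\dots,d$ of the first block are not controlled by $\partial_1 f$ and can sweep $f$ across many cells of $r_{k+1}^m\ZZ$, so the selection must (as in the paper) move only along $e_1$ from the corner.
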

\begin{proof}
    Without loss of generality, by considering an appropriate integer multiple of $f$, we may assume $f$ has integer coefficients. Let $\mathbf{A} \subset (r_{k+1} \cdot \mathbf{Z})^d$. Since $f$ has degree $m$, $f(\mathbf{A}) \subset r_{k+1}^m \cdot \mathbf{Z}$. Suppose that $c_0 \leq |\partial_1 f| \leq C_0$ on $T_1 \times \dots \times T_n$. Then the mean value theorem guarantees that if $\delta \leq r_{k+1}$, then
    \[ c_0 \delta \leq |f(a + \delta e_1) - f(a)| \leq C_0 \delta. \]
    Pick $\varepsilon \in (0,1)$ small enough that $\varepsilon/c_0 < (1 - \varepsilon)/C_0$. If
    \begin{equation} \label{equation920340923}
        l_{k+1} \leq [(1-\varepsilon)/C_0 - \varepsilon/c_0] r_{k+1}^m
    \end{equation}
    we can find $\delta \in l_{k+1} \cdot \mathbf{Z}$ is such that
    \[ [\varepsilon/c_0] r_{k+1}^m \leq \delta \leq [(1 - \varepsilon)/C_0] r_{k+1}^m. \]
    Equation \eqref{equation920340923} is guaranteed by \eqref{equation124426034990370935} for a sufficiently large constant $C$. We then find $d(f(\mathbf{A} + \delta e_1), r_{k+1}^m \cdot \ZZ) \geq \varepsilon r_{k+1}^m$. For $i > 1$, define
    \[ S_i = \bigcup [a_1, a_1 + l_{k+1}] \times \dots \times [a_d, a_d + l_{k+1}], \]
    where $a = (a_1, \dots, a_d)$ ranges over all start points to intervals
    \[ [a_1,a_1 + r_{k+1}] \times \dots \times [a_d,a_d + r_{k+1}] \in \DR_{k+1}^d(T_i). \]
    Similarly, define
    \[ S_1 = \bigcup [a_1 + \delta e_1, a_1 + \delta e_1 + s] \times [a_2, a_2 + l_{k+1}] \times \dots \times [a_d, a_d + l_{k+1}], \]
    where $a$ ranges over all start points to intervals
    \[ [a_1,a_1 + r_{k+1}] \times \dots \times [a_d,a_d + r_{k+1}] \in \DR_{k+1}^d(T_1). \]
    %
    %
    Because $f$ is $C^1$, we can find $C_2$ such that if $x, y \in \RR^d$ satisfy $|x_i - y_i| \leq t$ for all $i$, then $|f(x) - f(y)| \leq C_2 t$. But then for a sufficiently large constant $C$, \eqref{equation124426034990370935} implies that
    \[ d(f(S_1 \times \dots \times S_n), r_{k+1}^m \cdot \ZZ) \geq \varepsilon r_{k+1}^m - C_2 l_{k+1} \geq (\varepsilon/2) r_{k+1}^m. \]
    In particular, this implies Property (A).
\end{proof}

\chapter{Avoiding Rough Sets}
\label{ch:RoughSets}

In the previous chapter, we saw that many authors have considered the pattern avoidance problem for configurations $\C$ which take the form of many general classes of smooth shapes; in Math\'{e}'s work, $\C$ can take the form of an algebraic variety of low degree, and in Pramanik and Fraser's work, $\C$ can take the form of a smooth manifold. In this chapter, we consider the pattern avoidance problem for an even more general class of `rough' patterns, that are the countable union of sets with controlled lower Minkowski dimension. The results of this chapter, and the applications obtained from these results detailed in the following chapter, have been accepted into the Springer series \emph{Harmonic Analysis and Applications}, in a paper entitled \emph{Large Sets Avoiding Rough Patterns}. A preprint \cite{roughSetsAvoidingPatterns} is also available on the ArXiv.
\begin{theorem}\label{mainTheorem}
	Let $s \geq d$, and suppose $\C \subset \C^n(\RR^d)$ is the countable union of precompact sets, each with lower Minkowski dimension at most $s$. Then there exists a set $X \subset [0,1]^d$ with Hausdorff dimension at least $(nd - s)/(n-1)$ avoiding $\C$.
\end{theorem}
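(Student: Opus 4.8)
The plan is to run the discretize-and-queue scheme behind Corollary~\ref{PramanikFraserBuildingBlockLemma} and the construction following it, with the zero set $V(f)$ of a smooth map replaced by the rough pattern $\C$. The only property of $V(f)$ used there was the covering estimate $\#\{Q\in\DQ_k^{dn}:Q\cap V(f)\ne\emptyset\}\le Cl_k^{-(dn-m)}$ from the coarea formula, and the hypothesis $\lowminkdim\le s$ supplies, along a suitable sequence of scales, the analogous estimate with $dn-m$ replaced by a quantity slightly exceeding $s$. First I would dispose of the degenerate case: if $s\ge dn$ then $(dn-s)/(n-1)\le 0$ and (as $n\ge 2$) a single point avoids $\C$, so assume $d\le s<dn$ and set $\sigma=(dn-s)/(d(n-1))$; the hypothesis $s\ge d$ is precisely what makes $\sigma\le 1$, which will be needed to invoke Theorem~\ref{TheConstructionTheorem}. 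Writing $\C=\bigcup_j\C_j$ with each $\C_j$ precompact of lower Minkowski dimension $\le s$, and replacing $\C_j$ by $\overline{\C_j}\cap[0,1]^{dn}$ (only configurations inside $[0,1]^{dn}$ are relevant to a set $X\subset[0,1]^d$), I may assume each $\C_j$ is compact in $[0,1]^{dn}$.

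Fix $\varepsilon_k\downarrow 0$. The discrete building block is this: given disjoint $\DQ_k$-discretized $T_1,\dots,T_n\subset[0,1]^d$, an index $j$, and branching factors chosen so that $l_{k+1}$ satisfies $N(l_{k+1},\C_j)\lesssim l_{k+1}^{-(s+\varepsilon_k)}$ (possible with $l_{k+1}$ arbitrarily small, since $\liminf_{l\to 0}\log N(l,\C_j)/\log(1/l)\le s$), one obtains $\DQ_{k+1}$-discretized $S_i\subset T_i$ such that $x_i\in S_i$ for all $i$ forces $(x_1,\dots,x_n)\notin\C_j$, and such that the single-surviving-child structure of Property~\ref{SingleSelection} of Theorem~\ref{TheConstructionTheorem} holds, provided $N_{k+1}$ is large enough relative to $N_1\cdots N_k$ and $M_{k+1}$. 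This is Lemmas~\ref{Lemma315091513} and~\ref{Lemma1209410535} applied verbatim to $B=\{Q\in\DQ_{k+1}^{dn}:Q\cap\C_j\ne\emptyset\}$, which by Lemma~\ref{comparableCovers} satisfies $\#(B)\lesssim_d l_{k+1}^{-(s+\varepsilon_k)}$: this is exactly the hypothesis of those lemmas with the exponent $dn-m$ replaced by $s+\varepsilon_k$, so iterating Lemma~\ref{Lemma315091513} $n-1$ times and finishing with Lemma~\ref{Lemma1209410535} produces $S_1,\dots,S_n$ as soon as \eqref{equation903103513095} holds with $m=dn-s-\varepsilon_k$.

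The construction of $X$ proceeds as in Keleti and Pramanik--Fraser, through a FIFO queue of pairs $(j,(T_1',\dots,T_n'))$ arranged so that every index is eventually paired with every $n$-tuple of disjoint dyadic cubes that appears. Starting from $X_0=[0,1]^d$, at step $k$ one pops $(j,(T_1',\dots,T_n'))$, sets $T_i=T_i'\cap X_k$, chooses $N_{k+1}$ and $M_{k+1}\mid N_{k+1}$ so that $l_{k+1}$ hits a good scale of $\C_j$, so that the building-block lower bound on $N_{k+1}$ holds, and so that $M_{k+1}$ has size roughly $N_{k+1}^\sigma$ (with a $k$-dependent correction in the exponent tending to $0$, which absorbs the $\varepsilon_k$ slack), applies the building block to get $S_1,\dots,S_n$, lets $X_{k+1}$ be the union of $\bigcup_iS_i$ with any refinement of $X_k\setminus\bigcup_iT_i$ retaining one $\DQ_{k+1}$-child per $\DR_{k+1}$-subcube, and enqueues all pairs $(j',(T_1'',\dots,T_n''))$ with $j'\le k+1$ and $T_i''\in\DQ_{k+1}(X_{k+1})$ disjoint. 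With $X=\bigcap_kX_k$: distinct $x_1,\dots,x_n\in X$ eventually lie in disjoint $\DQ_m$-cubes, which are processed against every $\C_j$; since the tuple is disjoint one has $X_{k+1}\cap T_i=S_i$, so $x_i\in X$ forces $x_i\in S_i$, whence $(x_1,\dots,x_n)\notin\C_j$ for all $j$, i.e.\ $X$ avoids $\C$. Property~\ref{SingleSelection} is built in; the choices make $M_{k+1}$, hence $N_{k+1}$, grow fast enough for Property~\ref{RapidDecrease}; and since the exponent correction vanishes, Property~\ref{ChangeofScales} holds with the \emph{fixed} exponent $\sigma$. Theorem~\ref{TheConstructionTheorem} then gives $\hausdim(X)=\sigma d=(dn-s)/(n-1)$.

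The main obstacle is reconciling, at each step, the scale constraints: the covering estimate for $\C_j$ is only available along a $j$-dependent, possibly sparse sequence of scales, and it must be met by an admissible dyadic scale $l_{k+1}=l_k/N_{k+1}$ while at the same time $N_{k+1}$ is large enough to run the building block, $M_{k+1}$ divides $N_{k+1}$ and has the size $\approx N_{k+1}^\sigma$ that Theorem~\ref{TheConstructionTheorem} wants, and the slack $\varepsilon_k$ stays compatible with the fixed target exponent $\sigma$. This is workable: the doubling bound $N(l',\C_j)\lesssim_d N(l,\C_j)$ for $l/2\le l'\le l$ shows good scales occur in bands of bounded multiplicative width, so one picks such a band far down, takes $N_{k+1}$ to be an appropriate multiple — lying in the associated factor-of-two window — of an integer of size $\approx N_{k+1}^\sigma$ (such multiples exist since that window contains $\gtrsim N_{k+1}^{1-\sigma}$ integers), and uses the freedom to push the band arbitrarily far down to satisfy the lower bounds; one then checks that $\varepsilon_k\to 0$ forces the building-block exponent $d(n-1)/(dn-s-\varepsilon_k)$ below $(1+\varepsilon)/\sigma$ for all large $k$, so Property~\ref{ChangeofScales} survives for the fixed $\sigma$. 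Carrying out this bookkeeping carefully is the technical heart of the argument.
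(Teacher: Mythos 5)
Your proposal is sound, but it reaches Theorem \ref{mainTheorem} by a genuinely different route than the paper. You recycle the deterministic Pramanik--Fraser machinery of Chapter \ref{ch:RelatedWork}: the slab/wafer dimension-reduction lemma (Lemma \ref{Lemma315091513}) iterated $n-1$ times and closed off by Lemma \ref{Lemma1209410535}, observing correctly that those lemmas use nothing about $V(f)$ beyond the cardinality bound on the discretized bad set, so they accept $B=\{Q\in\DQ_{k+1}^{dn}:Q\cap\C_j\neq\emptyset\}$ with exponent $s+\varepsilon_k$ in place of $dn-m$; you then manage the countable union and the ordering of coordinates with a Keleti-style FIFO queue of (index, ordered tuple) pairs, and you correctly flag and fix the two points that need care in that route --- refining $X_k$ outside the processed tuple so that Property \ref{SingleSelection} of Theorem \ref{TheConstructionTheorem} holds for \emph{every} parent cube, and enqueuing all ordered tuples so every permutation of a configuration is eventually treated. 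The paper's Chapter \ref{ch:RoughSets} proof instead dispenses with the queue entirely: a single probabilistic lemma (Lemma \ref{discretelemma}) selects one random $\DQ_{k+1}$-child per $\DR_{k+1}$-cube of one set $T$, shows in expectation that few \emph{strongly non-diagonal} cubes of $B$ survive, and deletes their first-coordinate projections, so all $n$ coordinates are handled symmetrically at once; the countable union and the distinctness of coordinates are then absorbed by the strong-cover construction (Lemma \ref{coveringLemma}) and the positive-distance-to-the-diagonal argument (Lemma \ref{stronglydiagonal}). The ingredient the two arguments share is exactly the one you identify as the technical heart: since lower Minkowski dimension only supplies good covering numbers along a sequence of scales, the branching factor $N_{k+1}$ is chosen so that $l_{k+1}$ hits a good scale for the piece currently being processed (each piece being revisited infinitely often), with $M_{k+1}$ then set to roughly $N_{k+1}^{(dn-s-\varepsilon_k)/d(n-1)}$ (powers of two make the divisibility trivial and are a bit cleaner than your factor-of-two-window argument), after which Properties \ref{RapidDecrease} and \ref{ChangeofScales} of Theorem \ref{TheConstructionTheorem} give the dimension $(nd-s)/(n-1)$, using $s\geq d$ exactly as you say to keep the exponent $\sigma\leq 1$. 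What your route buys is economy of new ideas --- it exhibits Theorem \ref{mainTheorem} as a corollary of the observation that the smooth-case proof is purely combinatorial in $B$; what the paper's route buys is the elimination of the queue and of any memory of earlier stages, which is the simplification it advertises and which also sets up the random-selection framework reused in Chapter \ref{ch:Conclusions}.
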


\begin{remarks}
	\
	\begin{enumerate}
		\item[1.] When $s < d$, avoiding the configuration $\C$ is trivial. If we define $\pi: \C^n(\RR^d) \to \RR^d$ by $\pi(x_1, \dots, x_n) = x_1$, then the set $X = [0,1]^d - \pi(\C)$ is full dimension and avoids $\C$. Note that obtaining a full dimensional set in the case $s = d$, however, is still interesting.

		\item[2.] Theorem \ref{mainTheorem} is trivial when $s = dn$, since we can set $X = \emptyset$. We will therefore assume that $s < dn$ in our proof of the theorem.

		\item[3.] Let $f: \RR^{dn} \to \RR^m$ be a $C^1$ map such that $f$ has full rank at any point $(x_1, \dots, x_n) \in \C^n(\RR^d)$ with $f(x_1, \dots, x_n) = 0$. If we set
		\[ \C = \{ x \in \C^n(\RR^d) : f(x) = 0 \}, \]
		Then $\C$ is a $C^1$ submanifold of $\C^n(\RR^d)$ of dimension $nd - m$. A submanifold of Euclidean space is $\sigma$ compact, so we can write $\C = \bigcup K_i$, where each $K_i$ is a compact set. Applying Theorem \ref{ManifoldDimensionThm} shows $\lowminkdim(K_i) \leq nd - m$ for each $i$, so we can apply Theorem \ref{mainTheorem} with $s = nd - m$ to yield a set in $\RR^d$ with Hausdorff dimension at least
		\[ \frac{nd - s}{n-1} = \frac{m}{n-1}. \]
		This recovers Theorem \ref{pramanikandfrasertheorem}, making Theorem \ref{mainTheorem} a generalization of Pramanik and Fraser's result.

		\item[4.] Since Theorem \ref{mainTheorem} does not require any regularity assumptions on the set $\C$, it can be applied in contexts that cannot be addressed using previous methods in the literature. Two such applications, new to the best of our knowledge, have been recorded in Chapter \ref{ch:Applications}; see Theorems \ref{sumset-application} and \ref{C1IsoscelesThm} there.
	\end{enumerate}
\end{remarks}

Like with the results considered in the last chapter, we construct the set $X$ in Theorem \ref{mainTheorem} by repeatedly applying a discrete avoidance result at the scales corresponding to cubes $\DQ^d$, constructing a Frostman measure with equal mass at intermediary scales, and applying Lemma \ref{uniformMassFrostman}. However, our method has several innovations that simplify the analysis of the resulting set $X = \bigcap X_k$ than from previous results. In particular, through a probabilistic selection process we are able to use a simplified queuing technique then that used in \cite{MalabikaRob} and \cite{KeletiDimOneSet}, that required storage of data from each step of the iterated construction to be retrieved at a much later stage of the construction process.


\section{Avoidance at Discrete Scales}\label{discretesection}

In this section we describe a method for avoiding a discretized version of $\C$ at a single scale. We apply this technique in Section \ref{discretizationsection} at many scales to construct a set $X$ avoiding $\C$ at all scales.
In the discrete setting, $\C$ is replaced by a union of cubes in $\DQ^{dn}_{k+1}$ denoted by $B$. We say a cube $Q = Q_1 \times \dots \times Q_n \in \DQ^{dn}_{k+1}$ is \emph{strongly non-diagonal} if the $n$ cubes $Q_1, \dots, Q_n$ are distinct. Given a $\DQ_k$ discretized set $T \subset \RR^d$, our goal is to construct a $\DQ_{k+1}$ discretized set $S \subset T$, such that $\DQ_{k+1}^{dn}(F^n)$ does not contain any strongly non-diagonal cubes of $\DQ_{k+1}^{dn}(B)$.


\begin{lemma} \label{discretelemma}
	Fix $k$, $s \in [1,dn)$, and $\varepsilon \in [0,(dn-s)/2)$. Let $T \subset \RR^d$ be a nonempty, $\DQ_k$ discretized set, and let $B \subset \RR^{dn}$ be a nonempty $\DQ_{k+1}$ discretized set such that
	\[ \#(\DQ_{k+1}(B)) \leq N_{k+1}^{s + \varepsilon}. \]
	Then there exists a constant $C(s,d,n) > 0$
	, depending only on $s$, $d$, and $n$, such that, provided
	\begin{equation} \label{rBound}
		N_{k+1} \geq C(s,d,n) \cdot M_{k+1}^{\frac{d(n-1)}{dn - s - \varepsilon}},
	\end{equation}
	then there is a $\DQ_{k+1}$ discretized set $S \subset T$ satisfying the following three properties:
	\begin{enumerate}
		\item\label{avoidanceItem} For any collection of $n$ distinct cubes $Q_1, \dots, Q_n \in \DQ_{k+1}(S)$,
		\[ Q_1 \times \dots \times Q_n \not \in \DQ_{k+1}(B). \]

		\item\label{nonConcentrationItem} For each $Q \in \DQ_k(T)$, there exists $\mathcal{R}_Q \subset \DR_{k+1}(Q)$ such that
		\[ \#(\mathcal{R}_Q) \geq \frac{\#(\DR_{k+1}(Q))}{2}, \]
		and if $R \in \DR_{k+1}(Q)$,
		\[ \#(\DQ_{k+1}(R \cap S)) = \begin{cases} 1 & : R \in \mathcal{R}_Q \\ 0 & : R \not \in \mathcal{R}_Q. \end{cases} \]
	\end{enumerate}
\end{lemma}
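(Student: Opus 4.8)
The plan is a one-step probabilistic selection, followed by a cheap deterministic pruning that destroys the few configurations which survive the selection. First I would discard the irrelevant part of $B$: let $B^{*}$ be the family of cubes $Q_1 \times \dots \times Q_n \in \DQ_{k+1}(B)$ whose factors $Q_1, \dots, Q_n \in \DQ_{k+1}^d$ are pairwise distinct (the strongly non-diagonal cubes), so that $\#(B^{*}) \le \#(\DQ_{k+1}(B)) \le N_{k+1}^{s+\varepsilon}$ and only elements of $B^{*}$ are relevant to item \ref{avoidanceItem}. Recall the scale hierarchy: since $M_{k+1} \divides N_{k+1}$, each $Q \in \DQ_k(T)$ is partitioned into $\#(\DR_{k+1}(Q)) = M_{k+1}^{d}$ cubes of $\DR_{k+1}$, and each of those is partitioned into $(N_{k+1}/M_{k+1})^{d}$ cubes of $\DQ_{k+1}$; thus $T$, being $\DQ_k$ discretized, is partitioned by $\DR_{k+1}(T)$.

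The construction: independently, for each $R \in \DR_{k+1}(T)$, pick a cube $Q_R \in \DQ_{k+1}(R)$ uniformly at random, and set $S' = \bigcup_{R} Q_R$. Then $S' \subset T$ is $\DQ_{k+1}$ discretized and contains exactly one $\DQ_{k+1}$ cube in each $R \in \DR_{k+1}(T)$; in particular the structural requirement of item \ref{nonConcentrationItem} already holds for $S'$ with $\mathcal{R}_Q = \DR_{k+1}(Q)$, and the only thing that can fail is item \ref{avoidanceItem}. Call an ordered tuple $(Q_1, \dots, Q_n) \in B^{*}$ \emph{bad} if $Q_i \in S'$ for every $i$, and write $\mathcal{B}$ for the set of bad tuples. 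I would bound $\mathbf{E}[\#\mathcal{B}]$ by linearity of expectation: for a fixed $\beta = (Q_1, \dots, Q_n) \in B^{*}$, if some factor lies outside $T$ or two factors share the same $\DR_{k+1}$-parent then $\beta$ is never bad (each $R$ holds only one cube of $S'$), while if all $n$ factors lie in $T$ and their $\DR_{k+1}$-parents are distinct then the corresponding selections are independent and $\mathbf{P}[\beta \text{ bad}] = (M_{k+1}/N_{k+1})^{dn}$. Hence
\[ \mathbf{E}[\#\mathcal{B}] \;\le\; N_{k+1}^{s+\varepsilon}\left(\frac{M_{k+1}}{N_{k+1}}\right)^{dn} \;=\; \frac{M_{k+1}^{\,dn}}{N_{k+1}^{\,dn-s-\varepsilon}}. \]
Feeding in the hypothesis \eqref{rBound} and using $dn - s - \varepsilon > (dn-s)/2 > 0$, this is at most $M_{k+1}^{d}/C(s,d,n)^{(dn-s)/2}$, which is $\le M_{k+1}^{d}/4$ once $C(s,d,n)$ is chosen large enough depending only on $s,d,n$. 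By Markov's inequality there is then a realization of $S'$ with $\#\mathcal{B} \le M_{k+1}^{d}/2$; fix it.

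Finally, for the pruning, delete from $S'$ the first factor $Q_1$ of every bad tuple $(Q_1, \dots, Q_n) \in \mathcal{B}$, and let $S$ be the resulting $\DQ_{k+1}$ discretized set, so $S \subset S' \subset T$. Item \ref{avoidanceItem} holds: distinct cubes $Q_1, \dots, Q_n \in \DQ_{k+1}(S)$ with $Q_1 \times \dots \times Q_n \in \DQ_{k+1}(B)$ would form a tuple in $B^{*}$ all of whose factors are still in $S'$, i.e.\ a bad tuple, so $Q_1$ would have been deleted — a contradiction. For item \ref{nonConcentrationItem}, fix $Q \in \DQ_k(T)$ and put $\mathcal{R}_Q = \{R \in \DR_{k+1}(Q) : R \cap S \neq \emptyset\}$; each $R \in \DR_{k+1}(Q)$ still contains $1$ or $0$ cubes of $S$ according as its $S'$-cube survived or was deleted, which is the required dichotomy, and the number of cubes of $\DR_{k+1}(Q)$ emptied by the pruning is at most the number of deleted cubes contained in $Q$, hence at most $\#\mathcal{B} \le M_{k+1}^{d}/2$, so $\#(\mathcal{R}_Q) \ge M_{k+1}^{d}/2 = \#(\DR_{k+1}(Q))/2$.

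The step I expect to require the most care is the observation that a \emph{global} bound on $\#\mathcal{B}$ — the total number of surviving configurations anywhere in $T$ — already controls the \emph{local} non-concentration demanded by item \ref{nonConcentrationItem}, simply because any single $Q \in \DQ_k(T)$ can see at most all of $\mathcal{B}$; this is exactly what lets the argument avoid a union bound over the cubes of $\DQ_k(T)$, of which there may be very many. The only other point needing attention is matching the exponent coming out of the expectation estimate to the exponent $d(n-1)/(dn-s-\varepsilon)$ appearing in \eqref{rBound}, which is the short computation displayed above; note that the admissible range $\varepsilon \in [0,(dn-s)/2)$ is precisely what keeps $dn-s-\varepsilon$ bounded away from $0$ so that a single constant $C(s,d,n)$ works for all such $\varepsilon$.
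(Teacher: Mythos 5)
Your proposal is correct and follows essentially the same argument as the paper: a uniform random choice of one $\DQ_{k+1}$ cube per $\DR_{k+1}$ cube, a first-moment bound showing the expected number of surviving strongly non-diagonal cubes of $B$ is at most a constant fraction of $M_{k+1}^d$, a realization with few survivors, and deletion of the first factor of each survivor, with the global survivor count controlling the local non-concentration in every $Q \in \DQ_k(T)$. The only differences are cosmetic (the paper prunes the random set before fixing a realization and uses the expectation directly rather than Markov's inequality).
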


\begin{proof}
	For each $R \in \DR_{k+1}(T)$, pick $Q_R$ uniformly at random from $\DQ_{k+1}(R)$; these choices are independent as $R$ ranges over $\DR_{k+1}(T)$. Define
	\[ A = \bigcup \left\{ Q_R \setcolon R \in \DR_{k+1}(T) \right\}, \]
	and
	\[ \mathcal{K}(A) = \{ K \in \DQ_{k+1}(B) \cap \DQ_{k+1}(A^n) \setcolon \text{$K$ strongly non-diagonal} \}. \]
	The sets $A$ and $\mathcal{K}(A)$ are random, in the sense that they depend on the random variables $\{ Q_R \}$. Define
	\begin{equation} \label{defnOfF}
		S(A) = \bigcup \Big[ \DQ_{k+1}(A) - \{ \pi(K) \setcolon K \in \mathcal{K}(A) \} \Big],
	\end{equation}
	where $\pi \colon \RR^{dn} \to \RR^d$ is the projection map $(x_1, \dots, x_n) \mapsto x_1$, for $x_i \in \RR^d$.

	Given any strongly non-diagonal cube $K = K_1 \times \cdots \times K_n \in \DQ_{k+1}(B)$, either $K \not \in \DQ_{k+1}(A^n)$, or $K \in \DQ_{k+1}(A^n)$. If the former occurs then $K \not \in \DQ_{k+1}(S(A))$ since $S(A) \subset A$, so $\DQ_{k+1}(S(A)^n) \subset \DQ_{k+1}(A^n)$. If the latter occurs then $K \in \mathcal{K}(A)$, and since $\pi(K) = K_1$, $K_1 \not \in \DQ_{k+1}(S(A))$. In either case, $K \not \in \DQ_{k+1}(S(A)^n)$, so $S(A)$ satisfies Property \ref{avoidanceItem}. By construction, we know that for each $R \in \DR_{k+1}(T)$,
	\begin{equation} \label{equation8423246093490} \#(\DQ_{k+1}(S(A) \cap R)) \leq \#(\DQ_{k+1}(A \cap R)) = 1. \end{equation}
	To conclude that $S(A)$ satisfies Property \ref{nonConcentrationItem}, it therefore suffices to show that
	\[ \# (\mathcal{K}(A)) \leq (1/2) \cdot M_{k+1}^d. \]
	We now show this is true with non-zero probability.

	For each cube $Q \in \DQ_{k+1}(T)$, there is a unique `parent' cube $R \in \DR_{k+1}(T)$ such that $Q \subset R$. Note that \eqref{rBound} implies, if $C(s,d,n) \geq 4d$, that $N_{k+1} \geq 4d \cdot M_{k+1}$. Since $Q_R$ is chosen uniformly from $\DQ_{k+1}(R)$, for any $Q \in \DQ_{k+1}(R)$,
	\[ \prob(Q \subset A) = \prob(Q_R = Q) = \left( \DQ_{k+1}(R) \right)^{-1} = (M_{k+1}/N_{k+1})^d. \]
	Suppose $K_1, \dots, K_n \in \DQ_{k+1}(T)$ are distinct cubes. If the cubes have distinct parents in $\DR_{k+1}^d$, we can apply the independence of the random cubes $\{ Q_R \}$ to conclude that
	\[ \prob(K_1, \dots, K_n \subset A) = (M_{k+1}/N_{k+1})^{dn}. \]
	If the cubes $K_1, \dots, K_n$ do not have distinct parents, \eqref{equation8423246093490} shows
	\[ \prob(K_1, \dots, K_n \subset A) = 0. \]
	In either case, we conclude that
	\begin{equation}\label{jointprob}
	\prob(K_1, \dots, K_n \subset A) \leq (M_{k+1}/N_{k+1})^{dn}.
	\end{equation}
	Let $K = K_1 \times \dots \times K_n$ be a strongly non-diagonal cube in $\DQ_{k+1}(B)$. We deduce from \eqref{jointprob} that
	\begin{equation}\label{probaKSubsetUn}
		\prob(K \subset A^n) = \prob(K_1, \dots, K_n \subset A) \leq (M_{k+1}/N_{k+1})^{dn}.
	\end{equation}
	If
	\[ C(s,d,n) \geq 4^{\frac{1}{dn - s}} \geq 2^{\frac{1}{dn - s - \varepsilon}}, \]
	by \eqref{probaKSubsetUn}, linearity of expectation, and \eqref{rBound}, we conclude
	\begin{align*}
		\expect(\#(\mathcal{K}(A))) &= \sum_{K \in \DQ_{k+1}(B)} \prob(K \subset A^n)\\
		&\leq \#(\DQ_{k+1}(B)) \cdot (M_{k+1}/N_{k+1})^{dn}\\
		&\leq M_{k+1}^{dn} / N_{k+1}^{dn - s - \varepsilon}\\
		&\leq (1/2) \cdot M_{k+1}^d.
	\end{align*}
	In particular, there exists at least one (non-random) set $A_0$ such that
	\begin{equation}\label{KU0Small}
		\#(\mathcal{K}(A_0)) \leq \expect(\# (\mathcal{K}(A))) \leq (1/2) \cdot M_{k+1}^d.
	\end{equation}
	In other words, $S(A_0) \subset A_0$ is obtained by removing at most $(1/2) \cdot M_{k+1}^d$ cubes in $\B^d_s$ from $A_0$. For each $Q \in \B_l^d(T)$, we know that $\# \DQ_{k+1}(Q \cap A_0) = M_{k+1}^d$. Combining this with \eqref{KU0Small}, we arrive at the estimate 
	\begin{align*}
		\# \DQ_{k+1}(Q \cap S(A_0)) &= \DQ_{k+1}(Q \cap A_0) - \# \{ \pi(K) \setcolon K \in \mathcal{K}(A_0), \pi(K) \in A_0 \}\\
		&\geq \DQ_{k+1}(Q \cap A_0) - \#(\mathcal{K}(A_0))\\
		&\geq M_{k+1}^d - (1/2) \cdot M_{k+1}^d \geq (1/2) \cdot M_{k+1}^d
	\end{align*}  
	Thus, $S(A_0)$ satisfies Property \ref{nonConcentrationItem}. Setting $S = S(A_0)$ completes the proof.
\end{proof}

\begin{remarks}
	\
	\begin{enumerate}
		\item[1.] While Lemma \ref{discretelemma} uses probabilistic arguments, the proof of the lemma is still constructive. In particular, one can find a suitable $S$ constructively by checking every possible choice of $A$ (there are finitely many) to find one particular choice $A_0$ which satisfies \eqref{KU0Small}, and then defining $S$ by \eqref{defnOfF}. Thus the set we obtain in Theorem \ref{mainTheorem} exists by purely constructive means.
		
		\item[2.] As with the proofs in Chapter \ref{ch:RelatedWork}, the fact that we can choose
		\[ N_{k+1} \sim M_{k+1}^{(1 - \varepsilon)/t} \]
		where $t = (dn - s)/d(n-1)$, implies we should be able to iteratively apply Lemma \ref{discretelemma} to obtain a set with Hausdorff dimension $(dn - s)/(n-1)$.
	\end{enumerate} 
\end{remarks}

\section{Fractal Discretization}\label{discretizationsection}

In this section we construct the set $X$ from Theorem \ref{mainTheorem} by applying Lemma \ref{discretelemma} at many scales. Let us start by fixing a strong cover of $\C$, as well as the branching factors $\{ N_k \}$, that we will work with in the sequel.

\begin{lemma}\label{coveringLemma}
	Let $\C \subset \C^n(\RR^d)$ be a countable union of bounded sets with Minkowski dimension at most $s$, and let $\epsilon_k \searrow 0$ with $\epsilon_k < (dn - s)/2$ for all $k$. Then there exists a choice of branching factors $\{ N_k : k \geq 1 \}$, and a sequence of sets $\{ B_k : k \geq 1 \}$, with $B_k \subset \RR^{dn}$ for all $k$, such that
	\begin{enumerate}
		\item\label{StrongCoverProperty} \emph{Strong Cover}: The interiors $\{ B_k^\circ \}$ of the sets $\{ B_k \}$ form a strong cover of $\C$.

		\item\label{DiscretenessProperty} \emph{Discreteness}: For all $k \geq 1$, $B_k$ is a $\DQ_k$ discretized subset of $\RR^{dn}$.

		\item\label{SparsityProperty} \emph{Sparsity}: For all $k \geq 1$, $\DQ_k(B_k) \leq N_k^{s+\epsilon_k}$.

		\item \label{RapidDecayProperty} \emph{Rapid Decay}: For all $\varepsilon > 0$ and $k \geq 1$, $N_k$ is a power of two such that
		\[ N_1 \dots N_{k-1} \lesssim_\varepsilon N_k^\varepsilon, \]
		and for all $k \geq 1$,
		\[ N_k \geq C(s,d,n). \]
	\end{enumerate}
\end{lemma}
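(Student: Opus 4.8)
The plan is to build the sets $B_k$ greedily, handling one precompact piece of $\C$ at a time while cycling through the pieces infinitely often, and to cover the piece handled at stage $k$ at a \emph{single} very fine dyadic scale drawn from the subsequence of scales that witnesses its lower Minkowski dimension. Write $\C = \bigcup_{i \geq 1} \C_i$ with each $\C_i$ precompact; replacing each $\C_i$ by its closure affects neither the union nor (up to dimensional constants) the covering numbers, so we may assume each $\C_i$ is compact with $\lowminkdim(\C_i) \leq s$. Fix a surjection $\iota \colon \ZZ_{>0} \to \ZZ_{>0}$ whose fibres $\iota^{-1}(j)$ are all infinite (the triangular enumeration $1,1,2,1,2,3,\dots$ works), so that each index $j$ is visited at infinitely many stages.

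Define $N_1, N_2, \dots$ and $B_1, B_2, \dots$ by recursion. Suppose $N_1, \dots, N_{k-1}$ have been fixed, all powers of two, so that $N_1 \cdots N_{k-1} = 2^{a}$ for an integer $a = a(k)$, and set $j = \iota(k)$. Applying Theorem \ref{definingsequenceminkowski} to the standard dyadic cubes of $\RR^{dn}$ with constant branching factor $2$ (which satisfy \eqref{definingsequencegrowthrate}), the number $\nu_b$ of generation-$b$ dyadic cubes of $\RR^{dn}$ meeting $\C_j$ satisfies $\liminf_{b \to \infty} (\log \nu_b)/(b \log 2) = \lowminkdim(\C_j) \leq s < s + \epsilon_k/2$, so there are arbitrarily large integers $b$ with $\nu_b \leq 2^{b(s + \epsilon_k/2)}$. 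I would choose one such $b = b_k$, large enough (which is possible since the admissible exponents are unbounded) that
\[
N_k \;:=\; 2^{\,b_k - a} \;=\; 2^{b_k}/(N_1 \cdots N_{k-1})
\]
— automatically a power of two, and $\geq 2$ once $b_k \geq a+1$ — satisfies the three lower bounds
\[
N_k \geq C(s,d,n), \qquad N_k \geq (N_1 \cdots N_{k-1})^{k}, \qquad N_k^{\epsilon_k/2} \geq 3^{dn}\,(N_1 \cdots N_{k-1})^{\,s + \epsilon_k/2}.
\]
With $N_k$ fixed, the generation-$b_k$ dyadic cubes are exactly the cubes of $\DQ_k^{dn}$ (since $l_k = 2^{-b_k}$); let $\mathcal{Q}_k = \{Q \in \DQ_k^{dn} \colon Q \cap \C_j \neq \emptyset\}$, so $\#\mathcal{Q}_k = \nu_{b_k} \leq 2^{b_k(s + \epsilon_k/2)} = (N_1 \cdots N_k)^{s+\epsilon_k/2}$, and define $B_k$ to be the union of every $Q \in \mathcal{Q}_k$ together with all cubes of $\DQ_k^{dn}$ adjacent to it.

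It then remains to verify the four properties. Discreteness is immediate. For the strong cover: $\C_j \subset \bigcup \mathcal{Q}_k$, and $B_k$ contains a one-cube-thick dyadic neighbourhood of $\bigcup \mathcal{Q}_k$, so $\C_j \subset B_k^{\circ}$; since $\iota(k) = j$ for infinitely many $k$, every $x \in \C = \bigcup_j \C_j$ lies in $B_k^{\circ}$ for infinitely many $k$. For sparsity: $\#\DQ_k(B_k) \leq 3^{dn}\#\mathcal{Q}_k \leq 3^{dn}(N_1 \cdots N_k)^{s+\epsilon_k/2} \leq N_k^{\epsilon_k/2}\cdot N_k^{s+\epsilon_k/2} = N_k^{s+\epsilon_k}$, using the third lower bound on $N_k$. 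For rapid decay: each $N_k$ is a power of two and is at least $C(s,d,n)$ by the first bound, while the second bound gives $N_1 \cdots N_{k-1} \leq N_k^{1/k}$, so for any $\varepsilon > 0$ we have $N_1 \cdots N_{k-1} \leq N_k^{\varepsilon}$ once $k \geq 1/\varepsilon$, and the finitely many smaller $k$ are absorbed into the implied constant, giving $N_1 \cdots N_{k-1} \lesssim_{\varepsilon} N_k^{\varepsilon}$.

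The one genuine difficulty — and the step I would treat most carefully — is that at each stage the exponent $b_k$ must simultaneously be a scale at which $\C_{\iota(k)}$ admits an economical dyadic cover \emph{and} be large enough that $2^{b_k}$ overwhelms $N_1 \cdots N_{k-1}$ (the latter forcing both sparsity and rapid decay). These two demands are reconcilable precisely because lower Minkowski dimension is defined by a $\liminf$: the admissible exponents, though possibly sparse, are unbounded, so one can always find an admissible exponent past any prescribed threshold. Everything else is routine arithmetic with powers of two, together with the standard comparison between covering numbers by cubes of a fixed side-length and by the dyadic cubes of that side-length (Lemma \ref{comparableCovers}).
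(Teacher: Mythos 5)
Your proposal is correct and follows essentially the same route as the paper's own proof: cycle through the countable pieces infinitely often, at stage $k$ exploit the $\liminf$ in the definition of lower Minkowski dimension to pick an arbitrarily fine dyadic scale with an economical cover of the current piece, let that scale define $N_k$ (a power of two far exceeding $N_1 \cdots N_{k-1}$, which yields sparsity and rapid decay), and take $B_k$ to be the resulting cube neighbourhood so that the piece lies in $B_k^\circ$, giving the strong cover. The only deviations are bookkeeping ones (adding adjacent cubes and absorbing the $3^{dn}$ factor, and using the threshold $(N_1\cdots N_{k-1})^{k}$ in place of $(N_1 \cdots N_{k-1})^{1/\epsilon_k}$), which do not change the argument.
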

\begin{proof}
	We can write $\C = \bigcup_{i = 1}^\infty Y_i$, with $\lowminkdim(Y_i) \leq s$ for each $i$. Let $\{ i_k \}$ be a sequence of integers that repeats each integer infinitely often. The branching factors $\{ N_k \}$ and sets $\{ B_k \}$ are defined inductively. Suppose that the lengths $N_1, \ldots, N_{k-1}$ have been chosen. Since $\lowminkdim(Y_{i_k}) < s + \varepsilon_k$, there are arbitrarily small lengths $l \leq l_{k-1}$ such that if $N_k = l_{k-1}/l$,
	\begin{equation} \label{coveringOfBdnlZk}
		\# (\DQ_k(Y_{i_k}(l))) \leq (1/l)^{s + (\varepsilon_k/2)}.
	\end{equation}
	In particular, we can choose $l$ small enough that $l \leq l_{k-1}^{2s/\varepsilon + 2}$, which together with \eqref{coveringOfBdnlZk} implies
	\begin{equation} \label{equation789891491}
		\# (\DQ_k(Y_{i_k})(l)) \leq N_k^{s + \varepsilon_k}.
	\end{equation}
	We can certainly also choose $l$ small enough that
	\[ N_k \geq \max(C(s,d,n),(N_1 \dots N_{k-1})^{1/\varepsilon_k}). \]
	We know that $l_{k-1}$ is a power of two, so we can ensure $N_k$ is a power of two. We then set $l_k = l$. With this choice, Property \ref{RapidDecayProperty} is satisfied. And if $B_k = \bigcup \DQ_k(Y_{i_k}(l_k))$, then this choice of $B_k$ clearly satisfies Property \ref{DiscretenessProperty}. And Property \ref{SparsityProperty} is precisely Equation \eqref{equation789891491}.

	It remains to verify that the sets $\{ B_k^\circ \}$ strongly cover $\C$. Fix a point $z \in \C$. Then there exists an index $i$ such that $z \in Y_i$, and there is a subsequence $k_1, k_2, \dots$ such that $i_{k_j} = i$ for each $j$. But then $z \in Y_i \subset B_{i_{k_j}}^\circ$, so $z$ is contained in each of the sets $B_{i_{k_j}}^\circ$, and thus $z \in \limsup B_i^\circ$. Thus Property \ref{StrongCoverProperty} is proved.
\end{proof}

To construct $X$, we consider a nested, decreasing family of sets $\{ X_k \}$, where each $X_k$ is an $l_k$ discretized subset of $\RR^d$. We then set $X = \bigcap X_k$. The goal is to choose $X_k$ such that $X_k^n$ does not contain any {\it strongly non diagonal} cubes in $B_k$.

\begin{lemma} \label{stronglydiagonal}
	For each $k$, let $B_k \subset \RR^{dn}$ be a $\DQ_k$ discretized set, such that the interiors $\{ B_k^\circ \}$ strongly cover $\C$. For each index $k$, let $X_k$ be a $\DQ_k$ discretized set such that $\DQ_k(X_k^n) \cap \DQ_k(B_k)$ contains no strongly non diagonal cubes. If $X = \bigcap X_k$, then $X$ avoids $B$.
\end{lemma}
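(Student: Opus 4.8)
The plan is to verify the definition of avoidance directly, i.e.\ to show $\Config^n(X) \cap \C = \emptyset$. Suppose for contradiction that $(x_1, \dots, x_n)$ is a tuple of distinct points of $X$ lying in $\C$, and set $\delta = \min_{i \ne j} |x_i - x_j| > 0$. Since $X = \bigcap_k X_k$, every $x_i$ lies in $X_k$ for all $k$.

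First I would locate a good scale. Because the interiors $\{ B_k^\circ \}$ strongly cover $\C$, the point $(x_1,\dots,x_n)$ lies in $B_k^\circ$ for infinitely many $k$; since $l_k \to 0$, I can fix such a $k$ that also satisfies $\sqrt{d}\, l_k < \delta$. For this $k$, since $x_i \in X_k$ and $X_k$ is $\DQ_k$ discretized, choose $Q_i \in \DQ_k(X_k)$ with $x_i \in Q_i$. If $Q_i = Q_j$ for some $i \ne j$, then $|x_i - x_j| \le \diam(Q_i) = \sqrt{d}\, l_k < \delta$, contradicting the definition of $\delta$; hence $Q_1, \dots, Q_n$ are pairwise distinct, so $K := Q_1 \times \dots \times Q_n$ is a strongly non-diagonal cube of $\DQ_k^{dn}$, and $K \subset X_k^n$ gives $K \in \DQ_k(X_k^n)$.

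It remains to show $K \in \DQ_k(B_k)$, which contradicts the hypothesis that $\DQ_k(X_k^n) \cap \DQ_k(B_k)$ contains no strongly non-diagonal cube. Here I would use the following elementary observation, which is the only point requiring genuine care (a point of $X$ may sit on the shared faces of several dyadic cubes): if $E$ is any $\DQ_k$ discretized set and $p \in E^\circ$, then every $K' \in \DQ_k^{dn}$ with $p \in K'$ satisfies $K' \subset E$. Indeed, $p \in K' = \overline{(K')^\circ}$, so every ball about $p$ meets $(K')^\circ$; choosing a ball $B(p,r) \subset E$ and a point $q \in B(p,r) \cap (K')^\circ$, write $q \in Q'$ for some $Q' \in \DQ_k(E)$; then $q \in (K')^\circ \cap Q'$, and since distinct generation-$k$ dyadic cubes have disjoint interiors this forces $Q' = K'$, so $K' = Q' \subset E$. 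Applying this with $E = B_k$ and $p = (x_1, \dots, x_n) \in B_k^\circ$, and using $p \in K$, we get $K \subset B_k$, i.e.\ $K \in \DQ_k(B_k)$, the desired contradiction. Hence no forbidden tuple exists and $X$ avoids $\C$.

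I expect the only real obstacle to be the boundary bookkeeping in the third paragraph: one must be careful that although a point of $X$ need not lie in a \emph{unique} generation-$k$ cube, membership of the full tuple in the open set $B_k^\circ$ forces every candidate product cube into $B_k$. Everything else — the choice of a sufficiently fine scale via the minimum separation $\delta$, and the identification of $K$ as an element of both $\DQ_k(X_k^n)$ and $\DQ_k(B_k)$ — is routine manipulation of the dyadic hierarchy.
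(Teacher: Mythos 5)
Your proof is correct and follows essentially the same route as the paper's: use the strong cover to place the distinct tuple in $B_k^\circ$ for a scale $k$ fine enough (relative to the minimum separation, where the paper uses the distance to the diagonal $\Delta$) that the containing product cube is strongly non-diagonal, then contradict the hypothesis that $\DQ_k(X_k^n) \cap \DQ_k(B_k)$ contains no such cube. Your extra care with the boundary bookkeeping (showing the product cube actually lies in $\DQ_k(B_k)$ even when points sit on shared faces) is a sound refinement of a step the paper passes over quickly, but it does not change the argument.
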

\begin{proof}
	Let $x = (x_1, \dots, x_n) \in \C$ be a point such that $x_1, \dots, x_n$ are distinct. Define
	\[ \Delta = \{ (y_1, \dots, y_n) \in \RR^{dn} \setcolon \text{there exists $i \neq j$ such that $y_i = y_j$} \}. \]
	Then $d(\Delta,x) > 0$, where $d$ is the Hausdorff distance between $\Delta$ and $x$. Since $\{ B_k \}$ strongly covers $\C$, there is a subsequence $\{ k_m \}$ such that $x \in B_{k_m}^\circ$ for every index $m$. Since $l_k$ converges to 0 and thus $l_{k_m}$ converges to $0$, if $m$ is sufficiently large then $\sqrt{dn} \cdot l_{k_m} < d(\Delta,x)$. Note that $\sqrt{dn} \cdot l_{k_m}$ is the diameter of a cube in $\DQ_{k_m}$. For such a choice of $m$, any cube $Q \in \DQ_{k_m}^d$ which contains $x$ is strongly non-diagonal. Thus $x \in B_{k_m}^\circ$. Since $X_{k_m}$ and $B_{k_m}$ share no cube which contains $x$, this implies $x \not \in X_{k_m}$. In particular, this means $x \not \in X^n$.
\end{proof}

All that remains is to apply the discrete lemma to choose the sequence $\{ X_k \}$. Given $\C$, apply Lemma \ref{coveringLemma} to choose a sequence $\{ N_k \}$ and a sequence $\{ B_k \}$. We recursively define the sequence $\{ X_k \}$. Set $X_0 = [0,1]^d$. Then, Property \ref{RapidDecayProperty} of Lemma \ref{coveringLemma} implies
\[ \#(\DQ_{k+1}(B_{k+1})) \leq N_{k+1}^{s + \varepsilon_k} \]
Let $M_{k+1}$ be the largest power of two smaller than
\begin{equation} \label{equation9249815935} \left( \frac{N_{k+1}}{C(s,d,n)} \right)^{\frac{dn - s -\varepsilon_k}{d(n-1)}} \end{equation}
Then $1 \leq M_{k+1} \leq N_{k+1}$, and \eqref{rBound} is satisfied. Set $B = B_{k+1}$, and $T = X_k$. Then we can apply Lemma \ref{discretelemma} to find a set $S$ satisfying all Properties of that Lemma, and set $X_{k+1} = S$.

Property \ref{avoidanceItem} of Lemma \ref{discretelemma} together with Lemma \ref{stronglydiagonal} implies that the set $X = \bigcap X_k$ avoids $\C$. Property \ref{nonConcentrationItem} of Lemma \ref{discretelemma} shows that the sequence $\{ X_k \}$ satisfies Property \ref{SingleSelection} of Theorem \ref{TheConstructionTheorem}. Property \ref{RapidDecayProperty} of Lemma \ref{coveringLemma} implies Property \ref{RapidDecrease} of Theorem \ref{TheConstructionTheorem} is satisfied. And by definition, i.e. the choice of $\{ M_k \}$ as given by \eqref{equation9249815935}, we know
\begin{equation} \label{equation1403463468957983} N_k \leq M_k^{\frac{d(n-1)}{dn - s - \varepsilon_k}} \lesssim_\varepsilon M_k^{\frac{1 + \varepsilon}{t}} \end{equation}
where $t = (nd - s)/d(n-1)$. \eqref{equation1403463468957983} is a form of Property \ref{ChangeofScales} of Theorem \ref{TheConstructionTheorem}. Thus all assumptions of Theorem \ref{TheConstructionTheorem} are satisfied, and so we find $X$ has Hausdorff dimension $(nd - s)/(n-1)$, completing the proof of Theorem \ref{mainTheorem}.

\chapter{Applications} \label{ch:Applications}

As discussed in the introduction, Theorem \ref{mainTheorem} generalizes Theorems 1.1 and 1.2 from \cite{MalabikaRob}. In this chapter, we present two applications of Theorem \ref{mainTheorem} in settings where previous methods do not yield any results.

\section{Sum Sets Avoiding Specified Sets}

\begin{theorem} \label{sumset-application} 
	Let $Y \subset \RR^d$ be a countable union of sets with lower Minkowski dimension at most $t$. Then there exists a set $X \subset \RR^d$ with Hausdorff dimension at least $d - t$ such that $X + X$ is disjoint from $Y$.
\end{theorem}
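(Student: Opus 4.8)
The plan is to realize ``$X+X$ disjoint from $Y$'' as the avoidance of a single $2$-point configuration and then invoke Theorem~\ref{mainTheorem} with $n=2$. First some harmless reductions: if $t \geq d$ then $d-t \leq 0$ and the conclusion is trivial (take $X=\emptyset$), so assume $t<d$. Write $Y=\bigcup_i Y_i$ with $\lowminkdim(Y_i)\leq t$; since replacing a set by its closure, and intersecting with balls, changes covering numbers only by dimension-dependent constants and hence leaves lower Minkowski dimension unchanged, we may assume each $Y_i$ is compact.

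Next, encode the problem. Let $\pi\colon \RR^{2d}\to\RR^d$ be $\pi(x_1,x_2)=x_1+x_2$, and for each $i,j\geq 1$ set
\[
\C_{i,j}=\Big(\pi^{-1}(Y_i)\cup(\tfrac12 Y_i\times\RR^d)\cup(\RR^d\times\tfrac12 Y_i)\Big)\cap\C^2(\RR^d)\cap\big([-j,j]^d\big)^2,
\]
and put $\C=\bigcup_{i,j}\C_{i,j}$, a countable union of bounded (hence precompact) subsets of $\C^2(\RR^d)$. The key point is the bound $\lowminkdim(\C_{i,j})\leq d+t$. This rests on the following covering estimate: for bounded $E\subset\RR^d$ with $\lowminkdim(E)\leq t$ and any $R>0$, the cylinder $\pi^{-1}(E)\cap([-R,R]^d)^2$ has lower Minkowski dimension at most $d+t$. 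Indeed, fix $\varepsilon>0$ and pick a sequence $l_k\to 0$ with $N(l_k,E)\leq(1/l_k)^{t+\varepsilon}$. Tile the $x_1$-cube $[-R,R]^d$ by $\lesssim_{d,R}(1/l_k)^d$ cubes of sidelength $l_k$; if $x_1$ lies in such a cube $P$ and $x_1+x_2\in E$, then $x_2$ lies in $Q-P$ for one of the $N(l_k,E)$ sidelength-$l_k$ cubes $Q$ covering $E$, and $Q-P$ is a box of sidelength $2l_k$, covered by $O_d(1)$ sidelength-$l_k$ cubes. Multiplying, $\pi^{-1}(E)\cap([-R,R]^d)^2$ is covered by $\lesssim_{d,R}(1/l_k)^{d+t+\varepsilon}$ cubes of sidelength $l_k$, so its lower Minkowski dimension is at most $d+t+\varepsilon$; letting $\varepsilon\to 0$ gives the claim. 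The remaining two pieces of $\C_{i,j}$ are, up to swapping coordinates, products $(\tfrac12 Y_i\cap[-j,j]^d)\times[-j,j]^d$, whose lower Minkowski dimension is at most $\lowminkdim(\tfrac12 Y_i)+\upminkdim([-j,j]^d)\leq t+d$ by scaling invariance of Minkowski dimension and the standard product inequality; and intersecting with $\C^2(\RR^d)$ only shrinks covering numbers.

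Now apply Theorem~\ref{mainTheorem} with $n=2$ and $s=d+t$ (note $d\leq s<2d$ since $0\leq t<d$): there is a set $X\subset[0,1]^d$ with $\hausdim(X)\geq(2d-s)/(2-1)=d-t$ which avoids $\C$. Since $\hausdim(X)=d-t>0$, $X$ is infinite; for any $x\in X$ choose $x'\in X\setminus\{x\}$, and then $(x,x')\notin\C$ forces $2x\notin Y$, while for distinct $x_1,x_2\in X$ the condition $(x_1,x_2)\notin\C$ forces $x_1+x_2\notin Y$. Hence $X+X$ is disjoint from $Y$, completing the proof. The step I expect to require the most care is the covering-number bound for the cylinder $\pi^{-1}(E)$: the whole argument hinges on the sum map raising Minkowski dimension by exactly $d$ over a bounded region, and one must work along the special sequence of scales $l_k$ furnished by the \emph{lower} Minkowski dimension hypothesis, since no all-scales covering bound on $E$ is available.
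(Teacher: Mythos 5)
Your proposal is correct and follows essentially the same route as the paper: encode the problem as avoidance of the two-point configuration $\{(x,y): x+y\in Y\}\cup\{(x,y): x\in Y/2 \text{ or } y\in Y/2\}$, verify it is a countable union of precompact sets of lower Minkowski dimension at most $d+t$, apply Theorem~\ref{mainTheorem} with $n=2$, $s=d+t$, and treat the diagonal case $x+x$ via the $Y/2$ piece using that $X$ has more than one point. The only difference is that you spell out the covering argument for the cylinder $\pi^{-1}(Y_i)$ and the precompactness truncation, details the paper's proof asserts without verification.
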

\begin{proof}
	Define $\C = \C_1 \cup \C_2$, where
	\[ \C_1 = \{ (x,y) \setcolon x + y \in Y \} \quad \text{and} \quad \C_2 = \{ (x,y) \setcolon y \in Y/2 \}. \]
	Since $Y$ is a countable union of sets with lower Minkowski dimension at most $t$, $\C$ is a countable union of sets with lower Minkowski dimension at most $d + t$. Applying Theorem \ref{mainTheorem} with $n = 2$ and $s = d + t$ produces a set $X \subset \RR^d$ with Hausdorff dimension $2d  - (d + t) = d - t$ such that $(x,y) \not \in \C$ for all $x,y \in X$ with $x \neq y$. We claim that $X+ X$ is disjoint from $Y$. To see this, first suppose $x, y \in X$, $x \neq y$. Since $X$ avoids $Z_1$, we conclude that $x + y \not \in Y$. Suppose now that $x = y \in X$. Since $X$ avoids $Z_2$, we deduce that $X \cap (Y/2) = \emptyset$, and thus for any $x \in X$, $x + x = 2x \not \in Y$. This completes the proof.
\end{proof}

\section{Subsets of Lipschitz curves avoiding isosceles triangles}

In \cite{MalabikaRob}, Fraser and the second author prove that there exists a set $S \subset [0,1]$ with dimension $\log_3 2$ such that for any simple $C^2$ curve $\gamma \colon [0,1] \to \RR^n$ with bounded non-vanishing curvature, $\gamma(S)$ does not contain the vertices of an isosceles triangle. Our method enables us to obtain a result that works for Lipschitz curves with small Lipschitz constants. The dimensional bound that we provide is slightly worse than \cite{MalabikaRob} ($1/2$ instead of $\log_3 2$), and the set we obtain only works for a single Lipschitz curve, not for many curves simultaneously.

\begin{theorem} \label{C1IsoscelesThm}
	Let $f \colon [0,1] \to \RR^{n-1}$ be Lipschitz with \[ \| f \|_{\text{Lip}}  := \sup \bigl\{|f(x) - f(y)|/|x-y| : x, y \in [0,1], x \ne y   \bigr\} < 1. \]  Then there is a set $X \subset [0,1]$ of Hausdorff dimension $1/2$ so that the set
	\[ \{(t,f(t)) \setcolon t\in X\} \]
	does not contain the vertices of an isosceles triangle.
\end{theorem}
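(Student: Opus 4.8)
The plan is to apply Theorem \ref{mainTheorem} with $d = 1$ and $n = 3$. The key is to encode "three points on the curve $\{(t,f(t)) : t \in [0,1]\}$ form an isosceles triangle" as a configuration $\C \subset \C^3(\RR)$ living in the parameter space $[0,1]$, and then to check that $\C$ is a countable union of precompact sets with lower Minkowski dimension at most $s$ for the right value of $s$. Three parameters $t_1, t_2, t_3$ give an isosceles triangle (with apex at the point indexed by $t_1$, say) precisely when
\[
|t_1 - t_2|^2 + |f(t_1) - f(t_2)|^2 = |t_1 - t_3|^2 + |f(t_1) - f(t_3)|^2.
\]
So I would set
\[
\C = \bigcup_{\sigma} \bigl\{ (t_1,t_2,t_3) \in \C^3([0,1]) : g_\sigma(t_1,t_2,t_3) = 0 \bigr\},
\]
where $\sigma$ ranges over the three choices of apex and $g_\sigma(t_1,t_2,t_3) = |t_{\sigma(1)}-t_{\sigma(2)}|^2 + |f(t_{\sigma(1)})-f(t_{\sigma(2)})|^2 - |t_{\sigma(1)}-t_{\sigma(3)}|^2 - |f(t_{\sigma(1)})-f(t_{\sigma(3)})|^2$. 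If $X$ avoids $\C$, then no three distinct points of $X$ give an isosceles triangle on the curve, which is what we want. (Degenerate/collinear isosceles triangles are automatically excluded because $\C^3$ only contains tuples of distinct coordinates, and distinct $t$'s give distinct curve points since the first coordinate already distinguishes them.)

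**Bounding the Minkowski dimension of $\C$.**

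With $d=1$, $n=3$, Theorem \ref{mainTheorem} yields a set of Hausdorff dimension $(3 - s)/2$, so to get dimension $1/2$ I need $s = 2$; that is, I must show each of the three pieces of $\C$ has lower Minkowski dimension at most $2$ (it lives in $\RR^3$, so this is a genuine codimension-one constraint). Fix the apex choice, say $\sigma = \mathrm{id}$, and consider $Z = \{(t_1,t_2,t_3) \in [0,1]^3 : \phi(t_1,t_2,t_3) = 0\}$ where $\phi(t_1,t_2,t_3) = |t_1-t_2|^2 + |f(t_1)-f(t_2)|^2 - |t_1-t_3|^2 - |f(t_1)-f(t_3)|^2$. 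This set is already precompact (a closed subset of $[0,1]^3$), so I only need the Minkowski bound. The idea is that for fixed $t_1, t_2$ the function $t_3 \mapsto |t_1 - t_3|^2 + |f(t_1)-f(t_3)|^2$ is, thanks to $\|f\|_{\mathrm{Lip}} < 1$, \emph{strictly increasing in $|t_1 - t_3|$} — more precisely, writing $u = t_3 - t_1$, the map $u \mapsto u^2 + |f(t_1+u) - f(t_1)|^2$ has the property that for $u < u'$ of the same sign, the increment is at least $(u')^2 - u^2$ minus a term controlled by $\|f\|_{\mathrm{Lip}}^2((u')^2 - u^2) \le \|f\|_{\mathrm{Lip}}^2 \cdot |u'^2-u^2|$, hence is bounded below by $(1 - \|f\|_{\mathrm{Lip}}^2)(u'^2 - u^2)$, which is positive. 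Consequently, for each fixed $(t_1,t_2)$ the equation $\phi = 0$ has a controlled solution set in $t_3$: on each of the two sides $t_3 > t_1$ and $t_3 < t_1$ the value $|t_1 - t_3|$ is pinned down, giving at most two values of $t_3$. So $Z$ is covered by the graphs of boundedly many functions $(t_1,t_2) \mapsto t_3$, each Lipschitz-controlled in a suitable sense; covering $[0,1]^2$ in the $(t_1,t_2)$-variables by $\sim \delta^{-2}$ squares of side $\delta$ and taking the $\delta$-neighborhood of each graph gives a cover of $Z$ by $O(\delta^{-2})$ cubes of side $\sim \delta$, whence $\upminkdim(Z) \le 2$.

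**Assembling the proof and the main obstacle.**

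Once the Minkowski bound is in hand, $\C$ is a finite (hence countable) union of precompact sets each of lower Minkowski dimension at most $2$, so Theorem \ref{mainTheorem} with $d = 1$, $n = 3$, $s = 2$ produces $X \subset [0,1]$ with $\hausdim(X) \ge (3\cdot 1 - 2)/(3-1) = 1/2$ avoiding $\C$; and since $\hausdim(X) \le 1$ trivially but we only claim $1/2$, we simply take any subset of dimension exactly $1/2$ (or note the construction delivers exactly this value). By the reduction above, $\{(t,f(t)) : t \in X\}$ contains no isosceles triangle, proving the theorem. The main obstacle is the Minkowski-dimension estimate for $Z$: I must handle the fact that the "distance along the curve" function $u \mapsto u^2 + |f(t_1+u)-f(t_1)|^2$ is only Lipschitz, not smooth, so I cannot invoke the implicit function theorem or the co-area formula as in Corollary \ref{PramanikFraserBuildingBlockLemma}. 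The hypothesis $\|f\|_{\mathrm{Lip}} < 1$ is exactly what rescues the argument — it forces strict monotonicity of $|t_1 - t_3|$ as a function of the squared distance, turning the level set into a bounded union of Lipschitz graphs over the $(t_1,t_2)$-plane. A secondary point to check carefully is that the excluded-diagonal condition in $\C^3$ really does kill all \emph{degenerate} isosceles configurations; since the parametrization $t \mapsto (t,f(t))$ is injective, distinct parameters give distinct vertices, so any isosceles triangle with vertices on the curve corresponds to a genuine tuple in $\C^3$.
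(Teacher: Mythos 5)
Your overall strategy matches the paper's: encode the isosceles condition as a three-point configuration in the parameter interval, split it into three pieces according to which vertex is the apex, show each piece has upper Minkowski dimension at most $2$, and invoke Theorem \ref{mainTheorem} with $d=1$, $n=3$, $s=2$ to get $\hausdim(X) \geq 1/2$ (passing to a subset for equality); your reduction is fine, since distinct parameters give distinct curve points. Where you genuinely diverge from the paper is in the mechanism for the Minkowski bound. The paper expresses the apex condition as membership of the apex point in the perpendicular bisector hyperplane $H_{p_1,p_2}$, covers the two base parameters by dyadic squares, and shows that over a pair of intervals at separation $m2^{-k}$ the apex parameter is confined to an interval of length $\lesssim 1/\bigl((m+1)(1-\|f\|_{\text{Lip}}^2)\bigr)$; summing over separations costs a factor $k$, giving $\lesssim k\,4^k$ cubes of side $2^{-k}$, still dimension $2$. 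You instead solve for a \emph{base} vertex $t_3$ given the apex $t_1$ and the other base vertex $t_2$, using strict monotonicity of $u \mapsto u^2 + |f(t_1+u)-f(t_1)|^2$ in $|u|$ (your inequality with the factor $1-\|f\|_{\text{Lip}}^2$ is correct). This sidesteps the degeneration at small separations that forces the paper's dyadic sum and, once completed, gives the cleaner count $O(\delta^{-2})$ with no logarithmic loss.

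The one real gap is the passage from ``at most two solutions $t_3$ for each fixed $(t_1,t_2)$'' to ``$Z$ is covered by $O(\delta^{-2})$ cubes of side $\delta$.'' Finiteness of fibers alone never gives a Minkowski bound (the graph of a wildly oscillating function has one-point fibers); what you need is that over each $\delta$-square in $(t_1,t_2)$ each solution branch oscillates by only $O(\delta)$, and your monotonicity inequality does not by itself deliver this, because its lower bound $(1-\|f\|_{\text{Lip}}^2)(u'+u)(u'-u)$ degenerates when $u,u'$ are small. The missing ingredient is the comparability $|t_1-t_3| \sim |t_1-t_2|$ on the solution set, which follows from $\|f\|_{\text{Lip}}<1$ via $|t_1-t_2| \leq |p_1-p_2| = |p_1-p_3| \leq \sqrt{2}\,|t_1-t_3|$ and the symmetric chain. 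With that in hand, perturbing $(t_1,t_2)$ within a $\delta$-square changes the target value $|p_1-p_2|^2$ by $O(\delta(|t_1-t_2|+\delta))$ and changes the function $u \mapsto u^2+|f(t_1+u)-f(t_1)|^2$ by $O(\delta|u|)$, and dividing by the factor $(u'+u) \gtrsim |t_1-t_2|$ in your monotonicity bound yields $|t_3-t_3'| \lesssim \delta/(1-\|f\|_{\text{Lip}}^2)$ on each branch (the case $|t_1-t_2| \lesssim \delta$ being trivial since then $|t_1-t_3|, |t_1-t_3'| \lesssim \delta$ too). So the Lipschitz-graph claim you asserted is true and your approach closes, but as written the crucial quantitative step is claimed rather than proved; you should either add the comparability-plus-perturbation argument above or fall back on the paper's separation-by-separation count.
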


\begin{corollary} \label{C1IsoscelesCor}
	Let $f\colon [0,1] \to \RR^{n-1}$ be $C^1$.  Then there is a set $X \subset [0,1]$ of Hausdorff dimension $1/2$ so that the set
	\[ \{(t,f(t)) \setcolon t\in X\} \]
	does not contain the vertices of an isosceles triangle.
\end{corollary}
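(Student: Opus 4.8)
The plan is to deduce Corollary \ref{C1IsoscelesCor} from Theorem \ref{C1IsoscelesThm} by a purely local argument: near a suitable point, a $C^1$ curve can be straightened by a rotation of $\RR^n$ into the graph of a map with arbitrarily small Lipschitz constant, and rotations, being isometries of $\RR^n$, preserve the property of containing the vertices of an isosceles triangle.

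First I would dispose of the trivial case $n = 1$ (then $f$ takes values in a point, the set $\{(t,f(t))\}$ lies on a line, and no three of its points are non-collinear, so any dimension-$1/2$ subset of $[0,1]$ works), and assume $n \geq 2$. Fix $t_0 = 1/2$ and put $v = f'(t_0) \in \RR^{n-1}$. Since $SO(n)$ acts transitively on the unit sphere of $\RR^n$, choose $R \in SO(n)$ with $R(1,v) = \sqrt{1+|v|^2}\, e_1$, where $e_1$ is the first standard basis vector, and set $\gamma(t) = R(t,f(t))$. This is a $C^1$ curve with $\gamma'(t_0) = R(1,f'(t_0)) = \sqrt{1+|v|^2}\, e_1$. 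Writing $\gamma = (\gamma_1, \bar\gamma)$ with $\gamma_1 \colon [0,1] \to \RR$ and $\bar\gamma$ valued in $\RR^{n-1}$, we get $\gamma_1'(t_0) = \sqrt{1+|v|^2} > 0$ and $\bar\gamma'(t_0) = 0$. By continuity $\gamma_1' > 0$ on some interval $[\alpha,\beta] \ni t_0$, so $\gamma_1$ is a $C^1$ diffeomorphism of $[\alpha,\beta]$ onto an interval $[s_-,s_+] \ni s_0 := \gamma_1(t_0)$, and the piece $\gamma([\alpha,\beta])$ is exactly the graph $\{(s,g(s)) : s \in [s_-,s_+]\}$ of the $C^1$ map $g := \bar\gamma \circ \gamma_1^{-1}$. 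Differentiating, $g'(s) = \bar\gamma'(\gamma_1^{-1}(s))/\gamma_1'(\gamma_1^{-1}(s))$, so $g'(s_0) = \bar\gamma'(t_0)/\gamma_1'(t_0) = 0$; hence there is $\rho > 0$ with $\sup_{[s_0-\rho,\,s_0+\rho]} |g'| < 1$. Let $[c,d] = [s_0-\rho, s_0+\rho] \cap [s_-,s_+]$, a nondegenerate subinterval, and $[\alpha',\beta'] := \gamma_1^{-1}([c,d]) \subseteq [\alpha,\beta]$.

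Next I would rescale the graph of $g|_{[c,d]}$ to the standard domain by the similarity $(x_1,y) \mapsto \tfrac{1}{d-c}\bigl((x_1,y) - (c, g(c))\bigr)$ of $\RR^n$, which sends $\{(s,g(s)) : s \in [c,d]\}$ to $\{(u,\psi(u)) : u \in [0,1]\}$ with $\psi(u) = \tfrac{1}{d-c}\bigl(g(c+(d-c)u) - g(c)\bigr)$. Since $\psi'(u) = g'(c+(d-c)u)$ we get $\|\psi\|_{\mathrm{Lip}} = \sup_{[c,d]}|g'| < 1$, so Theorem \ref{C1IsoscelesThm} applies to $\psi$ and produces $X_\psi \subset [0,1]$ with $\hausdim(X_\psi) = 1/2$ such that $\{(u,\psi(u)) : u \in X_\psi\}$ does not contain the vertices of an isosceles triangle. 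Setting $X := \gamma_1^{-1}\bigl(c + (d-c)X_\psi\bigr) \subseteq [\alpha',\beta'] \subseteq [0,1]$, the composite map $u \mapsto (u,\psi(u)) \mapsto (s,g(s)) \mapsto R^{-1}(s,g(s)) = (t,f(t))$ (with $s = c+(d-c)u$ and $t = \gamma_1^{-1}(s)$) is the restriction of a similarity of $\RR^n$, so $\{(t,f(t)) : t \in X\}$ does not contain the vertices of an isosceles triangle either; and since $u \mapsto c+(d-c)u$ is affine and $\gamma_1^{-1}$ is bi-Lipschitz on the compact interval $[c,d]$, we conclude $\hausdim(X) = \hausdim(X_\psi) = 1/2$, proving the corollary.

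The step that requires the most care — and the conceptual heart of the reduction — is the use of the rotation $R$. One cannot simply restrict to a short parameter interval $[a,b]$ and rescale to $[0,1]$, because $\|f|_{[a,b]}\|_{\mathrm{Lip}} = \sup_{[a,b]}|f'|$ need not drop below $1$ however small $[a,b]$ is (e.g.\ if $|f'|$ is bounded below by $2$), and, worse, rescaling the parameter alone is not a similarity of the ambient $\RR^n$, so it does not preserve isosceles triangles. The rotation resolves both difficulties simultaneously: it flattens the tangent direction so that the reparametrized graph satisfies $g'(s_0)=0$, forcing $\|g\|_{\mathrm{Lip}}<1$ on a small interval, and being an isometry it transports the conclusion of Theorem \ref{C1IsoscelesThm} back to the original curve. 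The remaining ingredients — transitivity of $SO(n)$ on the sphere, the facts that similarities preserve isosceles triangles and that $C^1$ diffeomorphisms of compact intervals are bi-Lipschitz — are routine.
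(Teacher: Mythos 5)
Your proposal is correct and follows essentially the same route as the paper: straighten the curve locally by a rotation (and translation/dilation) so it becomes the graph of a Lipschitz map with constant less than $1$, apply Theorem \ref{C1IsoscelesThm}, and transport the conclusion back using the fact that these similarities preserve isosceles triangles and that bi-Lipschitz maps preserve Hausdorff dimension. You have simply filled in the details (reparametrization via $\gamma_1^{-1}$, the rescaling to $[0,1]$, and the dimension bookkeeping) that the paper's short proof leaves implicit.
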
 
\begin{proof}[Proof of Corollary \ref{C1IsoscelesCor}]
	The graph of any $C^1$ function can be locally expressed, after possibly a translation and rotation, as the graph of a Lipschitz function with small Lipschitz constant. In particular, there exists an interval $I\subset[0,1]$ of positive length so that the graph of $f$ restricted to $I$, after being suitably translated and rotated, is the graph of a Lipschitz function $g\colon [0,1] \to \RR^{n-1}$ with Lipschitz constant at most $1/2$. Since isosceles triangles remain invariant under these transformations, the corollary is a consequence of Theorem \ref{C1IsoscelesThm}.  
\end{proof} 

\begin{proof}[Proof of Theorem \ref{C1IsoscelesThm}]
	Set
	\begin{equation} \label{def-Z}
		\C = \left\{ (x_1,x_2,x_3) \in \C^3[0,1] \setcolon \; \begin{array}{c}
			\text{The points $p_j = (x_j,f(x_j))$ form}\\
			\text{the vertices of an isosceles triangle}
		\end{array} \right\}.
	\end{equation} 
	In the next lemma, we show $\C$ has lower Minkowski dimension at most two. By Theorem \ref{mainTheorem}, there is a set $X \subset [0,1]$ of Hausdorff dimension $1/2$ so that $X$ avoids $\C$. This is precisely the statement that for each $x_1,x_2,x_3\in X$, the points $(x_1,f(x_1)),\ (x_2,f(x_2))$, and $(x_3,f(x_3))$ do not form the vertices of an isosceles triangle. 
\end{proof}

\begin{lemma}
	Let $f\colon [0,1] \to \RR^{n-1}$ be Lipschitz with $\| f \|_{\text{Lip}} < 1$. Then the set $\C$ given by \eqref{def-Z} satisfies $\upminkdim(\C) \leq 2$.
\end{lemma}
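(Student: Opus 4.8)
The plan is to reduce to showing that each of the three ``equal-pair-of-sides'' loci has upper Minkowski dimension at most $2$, and to bound the covering number of such a locus directly, exploiting a quantitative transversality that is forced by the hypothesis $\|f\|_{\text{Lip}} < 1$. Write $D(a,b) = |(a,f(a)) - (b,f(b))| = \bigl((a-b)^2 + |f(a)-f(b)|^2\bigr)^{1/2}$ for $a,b \in [0,1]$. A triple lies in $\C$ only if at least two of $D(x_1,x_2), D(x_1,x_3), D(x_2,x_3)$ coincide, so $\C \subseteq \C_1 \cup \C_2 \cup \C_3$ where $\C_1 = \{(x_1,x_2,x_3) \in [0,1]^3 \setcolon D(x_1,x_2) = D(x_1,x_3)\}$ and $\C_2, \C_3$ arise from $\C_1$ by permuting the coordinates. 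Since permutations of coordinates map axis-parallel cubes to axis-parallel cubes, $\upminkdim(\C_i) = \upminkdim(\C_1)$; and $\upminkdim$ is monotone under inclusion and stable under finite unions (both elementary from the definition of the covering number $N(l,\cdot)$). Hence it suffices to prove $\upminkdim(\C_1) \leq 2$.

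The key estimate is a bi-Lipschitz lower bound for $D(a,\cdot)$ on each side of $a$. I would show: $D$ is Lipschitz on $[0,1]^2$ with constant $L := (1+\|f\|_{\text{Lip}}^2)^{1/2}$ (immediate from the triangle inequality and the Lipschitz bound on $f$), and for each fixed $a$ the function $x \mapsto D(a,x)$ is strictly monotone on $[0,a]$ and on $[a,1]$ with
\[ |D(a,x) - D(a,x')| \;\geq\; c\,|x-x'|, \qquad c := \frac{1 - \|f\|_{\text{Lip}}^2}{(1+\|f\|_{\text{Lip}}^2)^{1/2}} > 0, \]
whenever $x,x'$ lie on the same side of $a$. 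To prove this, expand $D(a,x)^2 - D(a,x')^2 = \bigl[(x-a)^2 - (x'-a)^2\bigr] + \bigl[|f(x)-f(a)|^2 - |f(x')-f(a)|^2\bigr]$; bound the second bracket in absolute value by $|f(x)-f(x')|\cdot(|f(x)-f(a)| + |f(x')-f(a)|) \leq \|f\|_{\text{Lip}}^2 |x-x'|\,(|x-a|+|x'-a|)$ via Cauchy--Schwarz; observe the first bracket equals $(x-x')(|x-a|+|x'-a|)$ up to sign, with a definite sign because $x,x'$ are on the same side of $a$; combine to get $|D(a,x)^2 - D(a,x')^2| \geq (1-\|f\|_{\text{Lip}}^2)|x-x'|(|x-a|+|x'-a|)$; and divide by $D(a,x)+D(a,x') \leq L(|x-a|+|x'-a|)$.

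With these facts the covering count is routine. Fix $l \in (0,1)$, partition $[0,1]$ into $\lceil 1/l\rceil$ intervals of length $\leq l$, giving a grid of $O(l^{-3})$ cubes of side $\leq l$ on $[0,1]^3$. Fix the first two coordinate intervals $I,J$ and pick a reference point $(a_0,b_0) \in I \times J$. If $(x_1,x_2,x_3) \in \C_1$ with $x_1 \in I$, $x_2 \in J$, then $D(x_1,x_3) = D(x_1,x_2)$, and moving arguments within $I \times J$ and then moving the first argument to $a_0$ costs at most $3Ll$, so $|D(a_0,x_3) - D(a_0,b_0)| \leq 3Ll$. By the monotonicity estimate applied separately on $[0,a_0]$ and on $[a_0,1]$, such $x_3$ lie in a union of at most two intervals of length at most $6Ll/c$, hence in at most $12L/c + 2 = O_{\|f\|_{\text{Lip}}}(1)$ of the grid intervals. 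Therefore at most $O(l^{-2})$ grid cubes meet $\C_1$, so $N(l,\C_1) \lesssim l^{-2}$ (cf. Lemma \ref{comparableCovers}), which gives $\upminkdim(\C_1) \leq 2$ and completes the proof. The main obstacle is exactly the transversality estimate of the previous paragraph: because $f$ is merely Lipschitz one cannot differentiate, and the content is that the algebraic identity for $D(a,x)^2 - D(a,x')^2$, together with $\|f\|_{\text{Lip}} < 1$, still yields a clean bi-Lipschitz lower bound on each monotone branch; the rest is bookkeeping.
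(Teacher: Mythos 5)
Your proof is correct, and although it shares the paper's overall skeleton---cover $\C$ by three permuted copies of a single ``two equal sides'' locus, grid $[0,1]^3$ at scale $l$, fix two coordinates in grid intervals, and show the fiber of admissible third coordinates is small---the core estimate is genuinely different. The paper slices with the apex as the free variable: it fixes the base endpoints $y_1 \in I_1$, $y_2 \in I_2$ and shows the apex $(y_3,f(y_3))$ lies in a thickened perpendicular-bisector hyperplane, whose fiber in $y_3$ has length comparable to $1/(m+1)$ where $d(I_1,I_2)=m/2^k$; this degenerates as the base shrinks, so one sums over the separation $m$ and accepts a covering bound $\lesssim k4^k$, a logarithmic loss harmlessly absorbed into the $\varepsilon$ in the exponent. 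You instead slice with the apex fixed: for the locus $D(x_1,x_2)=D(x_1,x_3)$ you prove that $x \mapsto D(a,x)$ is bounded below in a bi-Lipschitz fashion on each side of $a$, with constant $(1-\|f\|_{\text{Lip}}^2)/(1+\|f\|_{\text{Lip}}^2)^{1/2}$ uniform in $a$, so each pair $(I,J)$ contributes only $O(1)$ admissible $x_3$-intervals regardless of how close the fixed points are, yielding the clean bound $N(l,\C_1)\lesssim l^{-2}$ with no logarithmic factor. Both arguments use $\|f\|_{\text{Lip}}<1$ in the same essential way (the vertical contribution cannot cancel the horizontal one), but your choice of which coordinate to solve for removes the degenerate regime, making the fiber estimate more elementary---no hyperplane geometry, just the identity for $D(a,x)^2-D(a,x')^2$---and slightly sharper than the paper's. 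Two trivial bookkeeping remarks: the bi-Lipschitz lower bound alone already gives the diameter bound on each side of $a_0$, so strict monotonicity need not be invoked separately, and the division by $D(a,x)+D(a,x')$ requires noting the degenerate case $x=x'=a$ is vacuous; neither affects correctness.
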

\begin{proof}
	First, notice that three points $p_1,p_2,p_3 \in \RR^n$ form an isosceles triangle, with $p_3$ as the apex, if and only if $p_3 \in H_{p_1,p_2}$, where
	\begin{equation} \label{def-H}  H_{p_1,p_2} = \left\{ x \in \RR^n \setcolon \left( x - \frac{p_1 + p_2}{2} \right) \cdot (p_2 - p_1) = 0 \right\}. \end{equation} 
	To prove $\C$ has Minkowski has dimension at most two, it suffices to show the set
	\[ W = \left\{ x \in [0,1]^3 \setcolon p_3 = (x_3,f(x_3)) \in H_{p_1, p_2} \right\} \]
	has upper Minkowski dimension at most 2. This is because $\C$ is covered by three copies of $W$, obtained by permuting coordinates. We work with the family of dyadic cubes $\DD^n$. To bound the upper Minkowski dimension of $W$, we prove the estimate
	\begin{equation}\label{boundOnWCoveringNumber}
		\# \bigl(\mathcal \DD_k(W(1/2^k)) \bigr) \leq C k 4^k \quad \text{ for all } k \geq 1,  
	\end{equation}  
	where $C$ is a constant independent of $k$. Then for any $\varepsilon > 0$, \eqref{boundOnWCoveringNumber} implies that for suitably large $k$,
	\[ \# \bigl(\mathcal \DD_k(W(1/2^k)) \bigr) \leq 2^{(2 + \varepsilon)k}. \]
	Since $\varepsilon$ was arbitrary, this shows $\upminkdim(W) \leq 2$.

	To establish \eqref{boundOnWCoveringNumber}, we write
	\begin{equation}\label{deltaCoveringWSum}
		\# \bigl(\mathcal \DD_k(W(1/2^k)) \bigr) = \sum_{m = 0}^{2^k}\ \sum_{\substack{I_1, I_2 \in \DD_k[0,1]\\d(I_1,I_2) = m/2^k}} \# \left( \DD_k(W(1/2^k) \cap (I_1 \times I_2 \times [0,1]) \right).
	\end{equation}
	Our next task is to bound each of the summands in \eqref{deltaCoveringWSum}.
	 Let $I_1, I_2 \in \DD_k[0,1]$, and let $m = 2^k \cdot d(I_1,I_2)$. Let $x_1$ be the midpoint of $I_1$, and $x_2$ the midpoint of $I_2$. Let $(y_1,y_2,y_3) \in W \cap (I_1 \times I_2 \times [0,1])$. Then it follows from \eqref{def-H} that 
	\[ \left( y_3 - \frac{y_1 + y_2}{2} \right) \cdot (y_2 - y_1) + \left( f(y_3) - \frac{f(y_2) + f(y_1)}{2} \right) \cdot (f(y_2) - f(y_1)) = 0. \]
	We know $|x_1 - y_1|, |x_2 - y_2| \leq 1/2^{k+1}$, so
	\begin{align} \label{xyDiff}
		&\left| \left( y_3 - \frac{y_1 + y_2}{2} \right) (y_2 - y_1) - \left( y_3 - \frac{x_1 + x_2}{2} \right) (x_2 - x_1) \right| \nonumber\\
		&\ \ \ \ \ \leq \frac{|y_1 - x_1| + |y_2 - x_2|}{2} |y_2 - y_1| + \Big( |y_1 - x_1| + |y_2 - x_2| \Big) \left| y_3 - \frac{x_1 + x_2}{2} \right|\\
		&\ \ \ \ \ \leq (1/2^{k+1}) \cdot 1 + (1/2^k) \cdot 1 \leq 3/2^{k+1}. \nonumber
	\end{align}
	Conversely, $|f(x_1) - f(y_1)|, |f(x_2) - f(y_2)| \leq 1/2^{k+1}$ because $\| f \|_{\text{Lip}} \leq 1$, and a similar calculation yields
	\begin{align} \label{fnDiff}
	\begin{split}
		&\Big| \left( f(y_3) - \frac{f(y_1) + f(y_2)}{2} \right) \cdot (f(y_2) - f(y_1))\\
		&\ \ \ \ \ - \left( f(y_3) - \frac{f(x_1) + f(x_2)}{2} \right) \cdot (f(x_2) - f(x_1)) \Big|\leq 3/2^{k+1}.
	\end{split}
	\end{align}
	Putting \eqref{xyDiff} and \eqref{fnDiff} together, we conclude that
	\begin{align} \label{hyperplanethick}
	\begin{split}
		&\Big| \left( y_3 - \frac{x_1 + x_2}{2} \right) (x_2 - x_1)\\
		&\ \ \ \ \ + \left( f(y_3) - \frac{f(x_2) + f(x_1)}{2} \right) \cdot (f(x_2) - f(x_1)) \Big| \leq 3/2^k.
	\end{split}
	\end{align}
	Since $|(x_2-x_1,f(x_2)-f(x_1))| \geq |x_2-x_1| \geq m/2^k$, we can interpret \eqref{hyperplanethick} as saying the point $(y_3, f(y_3))$ is contained in a $3/k$ thickening of the hyperplane $H_{(x_1,f(x_1)), (x_2,f(x_2))}$. Given another value $y' \in W \cap (I_1 \cap I_2 \cap [0,1])$, it satisfies a variant of the inequality \eqref{hyperplanethick}, and we can subtract the difference between the two inequalities to conclude
	\begin{equation} \label{diffinequality}
		\left| \left( y_3 - y_3' \right) (x_2 - x_1) + (f(y_3) - f(y_3')) \cdot (f(x_2) - f(x_1)) \right| \leq 6/2^k.
	\end{equation}
	The triangle difference inequality applied with \eqref{diffinequality} implies
	\begin{align} \label{yylowbound}
	\begin{split}
		(f(y_3) - f(y_3')) \cdot (f(x_2) - f(x_1)) &\geq |y_3 - y_3'||x_2-x_1| - 6/2^k\\ &= \frac{(m+1) \cdot |y_3 - y_3'| - 6}{2^k}.
	\end{split}
	\end{align}
	Conversely,
	\begin{align} \label{yyupbound}
	\begin{split}
		(f(y_3) - f(y_3')) \cdot (f(x_2) - f(x_1)) &\leq \| f \|_{\text{Lip}}^2 \cdot |y_3 - y_3'| |x_2 - x_1| \\ &=  \| f \|_{\text{Lip}}^2 \cdot (m+1)/2^k \cdot |y_3 - y_3'|.
	\end{split}
	\end{align}
	Combining \eqref{yylowbound} and \eqref{yyupbound} and rearranging, we see that
	\begin{equation}\label{y3minusY3Prime}
		|y_3 - y_3'| \leq \frac{6}{(m+1)(1 - \| f \|_{\text{Lip}}^2)}  \lesssim\frac{1}{m+1},
	\end{equation} 
	where the implicit constant depends only on $\| f \|_{\text{Lip}}$.  We conclude that
	\begin{equation}\label{coveringNumberBoundLargeK}
		\# \DQ_k(W(1/2^k) \cap (I_1 \times I_2 \times [0,1])) \lesssim \frac{2^k}{m+1},
	\end{equation} 
	which holds uniformly over any value of $m$.

	We are now ready to bound the sum from \eqref{deltaCoveringWSum}. Note that for each value of $m$, there are at most $2^{k+1}$ pairs $(I_1,I_2)$ with $d(I_1,I_2) = m/2^k$. Indeed, there are $2^k$ choices for $I_1$ and then at most two choices for $I_2$. Equation  \eqref{coveringNumberBoundLargeK} shows 
	\begin{align*}
		\# \DQ_k(W(1/2^k)) &= \sum_{m = 0}^{2^k} \sum_{\substack{I_1, I_2 \in \DQ_k[0,1]\\d(I_1,I_2) = m/2^k}} \# \DQ_k(W(1/2^k) \cap (I_1 \times I_2 \times [0,1]))\\
		&\lesssim 4^k \sum_{m = 0}^{2^k} \frac{1}{m+1} \lesssim k/4^k.
	\end{align*}
	In the above inequalities, the implicit constants depend on $\| f \|_{\text{Lip}},$ but they are independent of $k$. This establishes \eqref{boundOnWCoveringNumber} and completes the proof.
\end{proof}


\chapter{Future Work}
\label{ch:Conclusions}

To conclude this thesis, we sketch some ideas developing the theory of `rough sets avoiding patterns', which we introduced in Chapter \ref{ch:RoughSets}. Section 6.1 attempts to exploit additional geometric information about certain rough configurations to find sets with large Hausdorff dimension avoiding patterns, and Section 6.2 finds configuration avoiding sets supported a measure with large Fourier decay.

\section{Low Rank Avoidance}

One way we can extend the results of Chapter \ref{ch:RoughSets} is to utilize additional geometric structure of particular rough configurations $\C$ to obtain larger avoiding sets. Recall that in Chapter \ref{ch:RoughSets}, we studied the avoidance problem for configurations with low Minkowski dimension. This condition means precisely that these configurations are efficiently covered by cubes at all scales. The idea of this section is to study configurations which are efficiently covered by other families of geometric objects at all scales. Here, we study the simple setting where our set is efficiently covered by families of thickened hyperplanes or thickened lines. We note that a set $E$ is efficiently covered by a family of thickened parallel hyperplanes, at each scale of thickening, if and only, for a linear transformation $M$ with that hyperplane as a kernel, $M(E)$ has low Minkowski dimension.

\begin{theorem} \label{theorem9063909014901}
    Let $\C \subset \C(\RR)$ be the countable union of sets $\{ \C_i \}$ such that
    \begin{itemize}
        \item For each $i$, there exists $n_i$ such that $\C_i \subset \C^{n_i}(\RR)$.

        \item There exists an integer $m_i > 0$ and $s_i \in [0,m_i)$, together with a full-rank rational-coefficient linear transformation $M_i: \RR^{n_i} \to \RR^{m_i}$ such that $M_i(\C_i)$ is a bounded subset of $\RR^{m_i}$ with lower Minkowski dimension at most $s_i$.
    \end{itemize}
    Then there exists a set $X \subset [0,1]$ avoiding $\C$ with Hausdorff dimension at least
    \[ \inf_i \left( \frac{m_i - s_i}{m_i} \right). \]
\end{theorem}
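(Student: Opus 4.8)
The plan is to run the machinery used for Theorem~\ref{mainTheorem} — a nested dyadic construction $X=\bigcap_k X_k$ analysed through Theorem~\ref{TheConstructionTheorem} — but with a new discrete avoidance step that exploits the rationality of the maps $M_i$. Fix $i$ and abbreviate $M=M_i$, $n=n_i$, $m=m_i$, $s=s_i$; since $M$ is full-rank and onto $\RR^m$ we have $n\ge m$, which is the substantive case. As in Lemma~\ref{coveringLemma}, the countable family $\{\C_i\}$ is handled by one construction carrying a queue that eventually processes every pair consisting of an index $i$ and a tuple of cubes of the current generation, and the final dimension will be the infimum over $i$ of the dimension obtained for each $\C_i$, namely $\inf_i (m_i-s_i)/m_i$. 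Since $M(x)\in M(\C_i)$ whenever $x\in\C_i$, it suffices to guarantee $M(x)\notin M(\C_i)$ for all distinct $x_1,\dots,x_n\in X$; thus the forbidden object at scale $l_{k+1}$ is a $\DQ_{k+1}$-discretization $Y\subset\RR^{m}$ of $M(\C_i)$, and — choosing the scales as in Lemma~\ref{coveringLemma}, using $\lowminkdim M(\C_i)\le s$ — we may assume $\#\DQ_{k+1}(Y)\le N_{k+1}^{\,s+\varepsilon}$.

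The core is a discrete lemma in the spirit of Lemma~\ref{discretelemma}: given disjoint $\DQ_k$-discretized sets $T_1,\dots,T_n\subset[0,1]$ and a $\DQ_{k+1}$-discretized $Y\subset\RR^m$ with $\#\DQ_{k+1}(Y)\le N_{k+1}^{\,s+\varepsilon}$, and provided
\[
N_{k+1}\ \ge\ C(M,s,m)\cdot M_{k+1}^{\,m/(m-s-\varepsilon)},
\]
there are $\DQ_{k+1}$-discretized $S_j\subset T_j$ such that $M(S_1\times\cdots\times S_n)$ is disjoint from an $O(l_{k+1})$-neighbourhood of $Y$, and such that for each $j$ at least half of the $\DR_{k+1}$-cubes of $T_j$ contain exactly one $\DQ_{k+1}$-cube of $S_j$ while the rest contain none, i.e.\ Property~\ref{nonConcentrationItem} of Lemma~\ref{discretelemma}. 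The key is arithmetic. After clearing denominators, $M$ maps the lattice $l_{k+1}\ZZ^{n}$ into $\tfrac{l_{k+1}}{D}\ZZ^{m}$ for a fixed integer $D$, so the value of $M$ on any tuple of cube-centres lies in a single coset of $\tfrac{l_{k+1}}{D}\ZZ^{m}$, and among that coset the values lying within $O(l_{k+1})$ of $Y$ number at most $O_M(N_{k+1}^{\,s+\varepsilon})$. Choose a set $J$ of $m$ coordinates whose columns in $M$ are linearly independent. For the $n-m$ coordinates $j\notin J$, fix an arbitrary $\DQ_{k+1}$-subcube in each $\DR_{k+1}$-cube of $T_j$; this pins down the inessential part of $M(x)$ at no cost. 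For the coordinates $j\in J$, we are free to choose, in each $\DR_{k+1}$-cube $R\subset T_j$, the offset $t_R\in\{0,\dots,N_{k+1}/M_{k+1}-1\}$ of its selected $\DQ_{k+1}$-subcube; as these vary over a given tuple $(R_1,\dots,R_n)$, the value $M(x)$ sweeps through $\sim(N_{k+1}/M_{k+1})^{m}$ distinct positions of $\tfrac{l_{k+1}}{D}\ZZ^{m}$, spaced $\gtrsim l_{k+1}$ apart. The displayed inequality says precisely that $(N_{k+1}/M_{k+1})^{m}\gg N_{k+1}^{\,s+\varepsilon}$, so there is room to avoid the bad positions; an alteration then discards the few $\DR_{k+1}$-cubes through which avoidance fails, leaving the required $S_j$.

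Granting the discrete lemma, the remainder follows Theorem~\ref{mainTheorem} essentially verbatim: iterating the lemma along the queue produces $X=\bigcap_k X_k$ avoiding every $\C_i$ (the strongly-non-diagonal bookkeeping of Lemma~\ref{stronglydiagonal} carries over unchanged), and since the scale constraint is of the form $N_{k+1}\lesssim_\varepsilon M_{k+1}^{(1+\varepsilon)/t}$ with $t=(m-s)/m$, Properties~\ref{RapidDecrease} and~\ref{ChangeofScales} of Theorem~\ref{TheConstructionTheorem} hold with the parameter $s$ there equal to $t$, so $\hausdim(X)\ge\inf_i (m_i-s_i)/m_i$, and the matching bound $\lowminkdim(X)\le(m_i-s_i)/m_i$ comes from the cardinality count $\#\DQ_{k+1}(X_{k+1})\le\#\DR_{k+1}(X_k)$ exactly as in Theorem~\ref{TheConstructionTheorem}. (When $s_i<1$ the configuration $\C_i$ can be avoided trivially, as in the first remark following Theorem~\ref{mainTheorem}, so that case needs no separate treatment.)

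The hard part will be the discrete lemma — specifically, choosing the offsets $\{t_R : j\in J\}$ so as to dodge $Y$ simultaneously for \emph{all} tuples $(R_1,\dots,R_n)$ of $\DR_{k+1}$-cubes rather than one tuple at a time. A crude first-moment estimate over these tuples only recovers the weaker bound $(m_i-s_i)/(n_i-1)$ that Theorem~\ref{mainTheorem} already gives (viewing $\C_i$ as a set of lower Minkowski dimension $n_i-m_i+s_i$); the genuine gain forces one to use the lattice structure of $M$ in an essential way. For $m_i=1$ this is exactly Keleti's residue-class device, and for polynomially-defined targets it is the lattice-spacing argument underlying Math\'e's theorem; the work here is to carry those ideas out for a \emph{vector-valued} rational map against a target that is merely Minkowski-sparse rather than a single point, and to interleave the resulting selection with the queue (so every tuple is processed at a scale fine enough to separate its coordinates) while preserving the non-concentration property that Theorem~\ref{TheConstructionTheorem} requires.
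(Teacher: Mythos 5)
There is a genuine gap, and you have in fact flagged it yourself: the discrete lemma is left unproved, and the specific mechanism you sketch for it (an independent offset $t_R$ for every $\DR_{k+1}$-cube in the essential coordinates, followed by an alteration step discarding the cubes where avoidance fails) does not work — as you note, a first-moment/union bound over tuples of $\DR_{k+1}$-cubes only recovers the exponent $(m_i-s_i)/(n_i-1)$ already given by Theorem \ref{mainTheorem}. The paper's proof (Theorem \ref{theorem059891891829}) closes exactly this gap with two devices your sketch is missing. First, instead of per-cube offsets it chooses a \emph{single global} offset $X_j\in\{0,\dots,N_{k+1}/M_{k+1}-1\}$ for each essential coordinate, shared by every $\DR_{k+1}$-cube of $T_{i_j}$ (after normalizing so that $M(e_{i_j})=e_j$). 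Then the entire image $M(\mathcal{S}_1(X)\times\cdots\times\mathcal{S}_n(X))$ lies in one coset $\frac{\ZZ^m}{N_1\cdots N_k M_{k+1}}+\frac{X}{N_1\cdots N_{k+1}}$ of the \emph{coarse} lattice, and distinct $X$ give disjoint cosets; summing the bad incidences over all $(N_{k+1}/M_{k+1})^m$ choices of $X$ is therefore bounded by the total number of fine-lattice points near $B$, which is $O(N_{k+1}^{s+\varepsilon})$, and the gap condition $N_{k+1}\gtrsim M_{k+1}^{m/(m-s-\varepsilon)}$ produces by pigeonhole an offset $X_0$ with \emph{zero} bad incidences — simultaneous avoidance for all tuples at once, with no deletion step and no union bound over tuples. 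This is the vector-valued analogue of Keleti's residue trick that you anticipated but did not carry out.

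Second, your treatment of the inessential coordinates (``fix an arbitrary $\DQ_{k+1}$-subcube in each $\DR_{k+1}$-cube \dots at no cost'') would destroy the coset structure on which that pigeonhole rests: arbitrary startpoints only place the inessential contribution of $M$ on the fine lattice $\frac{l_{k+1}}{D}\ZZ^m$, and since $l_{k+1}\ZZ\subset\frac{l_{k+1}}{D}\ZZ$ the translates by different offsets $X$ would no longer be disjoint. The paper instead restricts, for each $i\notin\{i_1,\dots,i_m\}$, to those $\DR_{k+1}$-cubes whose startpoint integer $a(R)$ is divisible by the product of denominators of the coefficients $a_{ij}$, so the inessential contribution lands exactly on the coarse lattice; the price is that the non-concentration property holds with fraction $1/A(M)$ rather than $1/2$ (which Theorem \ref{TheConstructionTheorem} still tolerates), not ``at no cost'' as in your sketch. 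Your outer framework — the covering lemma producing $\#\DQ_{k+1}(Y)\le N_{k+1}^{s+\varepsilon}$ from $\lowminkdim M_i(\C_i)\le s_i$, the strong-cover/queue handling of countably many $\C_i$, and the dimension count via Theorem \ref{TheConstructionTheorem} with $t=(m-s)/m$ — matches the paper, but without the global-offset coset argument the central discrete step remains open.
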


\begin{remarks}
    \
    \begin{enumerate}
        \item[1.] A useful feature of this method is that the resulting set does not depend on the number of points in a configuration, i.e. we do not need to restrict ourselves to an $n$ point configuration for some fixed integer $n$. This is a feature only shared by Math\'{e}'s result, Theorem \ref{mathemainresult} in Section 3.3. We exploit this feature later on in this section to find large subsets avoiding a countable family of equations with arbitrarily many variables.

        \item[2.] It might be expected, based on the result of Theorem \ref{mainTheorem}, that one should be able to obtain a set $X \subset [0,1]$ avoiding $\C$ with Hausdorff dimension
        \[ \inf_i \left( \frac{m_i - s_i}{m_i - 1} \right), \]
        whenever $s_i \geq 1$ for all $i$. We plan to study whether Theorem \ref{theorem9063909014901} can be improved to give this bound in the near future.

        \item[3.] Compared to Theorem \ref{mainTheorem}, this result only applies in the one-dimensional configuration avoidance setting. We also plan to study higher dimensional analogues to this theorem, when $d > 1$.
    \end{enumerate}
\end{remarks}

For the purpose of brevity, here we only describe a solution to the discretized version of the problem. This can be fleshed out into a full proof of Theorem \ref{theorem9063909014901} by techniques analogous to those given in Chapters \ref{ch:RelatedWork} and \ref{ch:RoughSets}. Thus we discuss a single linear transformation $M: \RR^{dn} \to \RR^m$, and try to avoid a discretized version of a low dimensional set.

Before we describe the discretized result, let us simplify the problem slightly. Since our transformation $M$ has full rank, we may find indices
\[ i_1, \dots, i_m \in \{ 1, \dots, n \} \]
such that the transformation $M$ is invertible when restricted to the span of $\{ e_{i_1}, \dots, e_{i_m} \}$. By an affine change of coordinates in the range of $M$, which preserves the Minkowski dimension of any set, we may assume without loss of generality that $M(e_{i_j}) = e_j$ for each $1 \leq j \leq m$.

\begin{theorem} \label{theorem059891891829}
    Fix $s \in [0,m)$ and $\varepsilon \in [0, (m-s)/2)$. Let $T_1, \dots, T_n \subset [0,1]$ be disjoint, $\DQ_k$ discretized sets, and let $B \subset \RR^m$ be a $\DQ_{k+1}$ discretized set such that
    \begin{equation} \label{equation6091904232093}
        \#(\DQ_{k+1}(B)) \leq N_{k+1}^{s + \varepsilon}.
    \end{equation}
    Then there exists a constant $C(n,m,M) > 0$, and an integer constant $A(M) > 0$, such that if $A(M) \divides N_{k+1}$, and 
    \begin{equation} \label{equation19024u1298352389}
        N_{k+1} > C(n,m,M) \cdot M_{k+1}^{\frac{m}{m - (s + \varepsilon)}}.
    \end{equation}
    then there exists $\DQ_{k+1}$ discretized sets $S_1 \subset T_1$, \dots, $S_n \subset T_n$ such that
    \begin{enumerate}
        \item For any collection of $n$ distinct cubes $Q_1, \dots, Q_n \in \DQ_{k+1}(S_i)$,
        \[ Q_1 \times \dots \times Q_n \not \in \DQ_{k+1}(B). \]

        \item For each $i$, and for each $Q \in \DQ_k(T_i)$, there exists $\DR_Q \subset \DR_{k+1}(Q)$ such that
        \[ \#(\DR_Q) \geq \frac{\#(\DR_{k+1}(Q))}{A(M)}, \]
        and if $R \in \DR_{k+1}(Q)$,
        \[ \#(\DQ_{k+1}(R \cap S_i)) = \begin{cases} 1 &: R \in \DR_Q, \\ 0 &: R \not \in \DR_Q. \end{cases} \]
    \end{enumerate}
\end{theorem}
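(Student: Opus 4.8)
The plan is to give a deterministic argument modelled on the lattice-shift proof behind Math\'e's theorem (Theorem \ref{mathemainresult}), rather than on the random selection of Lemma \ref{discretelemma}; the point is that a rational linear map carries a lattice to a lattice with no loss of scale, whereas a random selection would here have to pay an unaffordable factor $(N_1\cdots N_k)^{n-m}$ coming from the fibres of $M$. First I would absorb a common denominator of $M$ into $A(M)$, so that $A(M)\cdot M$ has integer entries; enlarging $A(M)$ if necessary, the hypotheses $A(M)\divides N_{k+1}$ and $M_{k+1}\divides N_{k+1}$ then make the $M$-images of the corner lattices of the $\DR_{k+1}$- and $\DQ_{k+1}$-cubes, and of $B$, commensurable up to the integer $A(M)$. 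Recall the normalization $M e_{i_h}=e_h$ for $h=1,\dots,m$ set up above, with distinguished index set $I=\{i_1,\dots,i_m\}$. The two facts I will use are: (a) $M$ maps $(r_{k+1}\ZZ)^n$ into the lattice $(r_{k+1}/A(M))\ZZ^m$, whose spacing is $\gg l_{k+1}$ by \eqref{equation19024u1298352389}; and (b) $M$ is Lipschitz, so it sends a set of $\ell^\infty$-diameter $l_{k+1}$ to one of diameter at most $C_M l_{k+1}$.

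Next I would build the sets $S_j$. For $j\notin I$, let $S_j$ be the union over $R\in\DR_{k+1}(T_j)$ of the $\DQ_{k+1}$-cube at the lower corner of $R$. For a distinguished slot $i_h$ I will pick one shift $\delta_h\in l_{k+1}\ZZ\cap[0,r_{k+1}-l_{k+1}]$, the same for all of $T_{i_h}$, and let $S_{i_h}$ be the union over $R\in\DR_{k+1}(T_{i_h})$ of the $\delta_h$-translate of the lower-corner cube of $R$; since $\delta_h+l_{k+1}\le r_{k+1}$ this cube still lies in $R$. In every case each $R\in\DR_{k+1}(T_j)$ then contains exactly one cube of $S_j$, so taking $\DR_Q=\DR_{k+1}(Q)$ for every $Q\in\DQ_k(T_j)$ verifies Property (2), with room to spare since no $\DR_{k+1}$-cube is discarded. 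It remains to choose $\delta=(\delta_1,\dots,\delta_m)$ so that $M(S_1\times\cdots\times S_n)$ is disjoint from $B$, which is Property (1) — note that since the $T_j$ are disjoint, any product of cubes drawn one from each $S_j$ is automatically strongly non-diagonal.

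For the shift I argue by pigeonhole. Writing $G_j\subset r_{k+1}\ZZ$ for the lower corners of the $\DR_{k+1}$-cubes of $T_j$, every point of $S_1\times\cdots\times S_n$ lies within $\ell^\infty$-distance $l_{k+1}$ of the finite set $(\prod_j G_j)+\sum_h\delta_h e_{i_h}$, whose $M$-image is $M(\prod_j G_j)+(\delta_1,\dots,\delta_m)$; so by (b), $M(S_1\times\cdots\times S_n)$ lies in the $C_M l_{k+1}$-neighbourhood of $M(\prod_j G_j)+\delta$. By \eqref{equation6091904232093}, $B$ is a union of at most $N_{k+1}^{s+\varepsilon}$ cubes of side $l_{k+1}$, so its $C_M l_{k+1}$-neighbourhood meets at most $C_M' N_{k+1}^{s+\varepsilon}$ such cubes; and by (a), each fixed cube of side $l_{k+1}$ is hit by $M(\prod_j G_j)+\delta$ for only $O_M(1)$ choices of $\delta\in l_{k+1}\ZZ^m\cap[0,r_{k+1})^m$, since the lattice $M(\prod_j G_j)$ has spacing $\gg l_{k+1}$ and hence at most $O_M(1)$ of its points lie within $r_{k+1}$ of that cube. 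Thus there are at most $C_M'' N_{k+1}^{s+\varepsilon}$ ``bad'' $\delta$ (those for which $M(S_1\times\cdots\times S_n)$ would meet $B$), against at least $2^{-m}(N_{k+1}/M_{k+1})^m$ admissible ones. Inequality \eqref{equation19024u1298352389}, with $C(n,m,M)$ chosen large, is exactly the statement $(N_{k+1}/M_{k+1})^m>2^m C_M'' N_{k+1}^{s+\varepsilon}$, so a good $\delta$ exists; fixing such a $\delta$ yields Property (1) and completes the construction.

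The step I expect to be delicate is the lattice count in the third paragraph: one must keep honest track of the commensurability of the three lattices — the $M$-images of the $\DR_{k+1}$-corners, of the $\DQ_{k+1}$-grid of candidate shifts, and of $B$ — so that the bound ``$O_M(1)$ bad shifts per cube'' genuinely holds, which is precisely where the divisibility hypotheses and the separation $r_{k+1}/A(M)\gg l_{k+1}$ are used. Everything else — the reductions, the verification of Property (2), and the passage (via a queuing scheme and the argument behind Theorem \ref{TheConstructionTheorem}) from this single-scale lemma to the Hausdorff-dimension bound $\inf_i (m_i-s_i)/m_i$ of Theorem \ref{theorem9063909014901} — is routine along the lines already carried out in Chapters \ref{ch:RelatedWork} and \ref{ch:RoughSets}.
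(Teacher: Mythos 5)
Your proposal is correct, and at its core it is the same argument as the paper's: a deterministic, Math\'{e}-style shift of the distinguished coordinates over the $l_{k+1}$-grid inside each $\DR_{k+1}$-interval, with a good shift produced by pigeonholing against the at most $N_{k+1}^{s+\varepsilon}$ cubes of $B$, the decisive fact in both cases being that $M$ carries the lattice of $r_{k+1}$-corner points into $(r_{k+1}/A(M))\ZZ^m$. Where you differ is in the non-distinguished coordinates: the paper keeps only those $R \in \DR_{k+1}(T_i)$ whose startpoint index satisfies $\prod_j q_{ij} \divides a(R)$, so that the corner images land exactly in $r_{k+1}\ZZ^m$ and the shifted image families are pairwise disjoint subsets of $l_{k+1}\ZZ^m$; the pigeonhole is then a count of points of $l_{k+1}\ZZ^m$ near $B$, and the cost of this divisibility restriction is precisely the $1/A(M)$ factor in Property (2). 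You keep every $\DR_{k+1}$-interval and instead count bad shifts cube-by-cube; this is sound, because the image set lies in $(r_{k+1}/A(M))\ZZ^m$, which has only $O(A(M)^m) = O_M(1)$ points in any box of side $O(r_{k+1})$, each forbidding $O_M(1)$ shifts, so the extra multiplicity is absorbed into $C(n,m,M)$ and you even obtain Property (2) with no $1/A(M)$ loss. One correction at the step you flagged as delicate: the reason a fixed $l_{k+1}$-cube rules out only $O_M(1)$ shifts is not that the image lattice has spacing $\gg l_{k+1}$, but that its spacing $r_{k+1}/A(M)$ is comparable to the full range $r_{k+1}$ over which the shift varies; with that justification the count is honest, and \eqref{equation19024u1298352389} with $C(n,m,M)$ chosen large closes the pigeonhole exactly as in the paper.
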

\begin{proof}
    For each $i \not \in \{ i_1, \dots, i_m \}$, there are rational numbers $a_{ij} = p_{ij}/q_{ij} \in \mathbf{Q}$ such that $M(e_i) = \sum a_{ij} e_j$. Set $A(M) = \prod_{ij} q_{ij}$. For each interval $R \in \DR_{k+1}(T_i)$, we let
    \[ a(R) \in \{ 0, \dots, N_1 \dots N_k M_{k+1} - 1 \} \]
    be the unique integer such that
    \[ R = \left[ \frac{a(R)}{N_1 \dots N_k M_{k+1}}, \frac{a(R) + 1}{N_1 \dots N_k M_{k+1}} \right]. \]
    Let $X \in \{ 0, \dots, N_{k+1}/M_{k+1} - 1 \}^m$. For each $1 \leq j \leq m$, define
    \[ S_{i_j}(X) = \bigcup_{R \in \DR_{k+1}(T_{i_j})} \left[ \frac{a(R)}{N_1 \dots N_k M_{k+1}} + \frac{X_j}{N_1 \dots N_{k+1}}, \frac{a(R)}{N_1 \dots N_k M_{k+1}} + \frac{X_j + 1}{N_1 \dots N_{k+1}} \right]. \]
    For $i \not \in \{ i_1, \dots, i_m \}$, define
    \[ S_i(X) = \bigcup_{\substack{R \in \DR_{k+1}(T_i)\\ \prod q_{ij} \divides a(R)}} \left[ \frac{a(R)}{N_1 \dots N_k M_{k+1}}, \frac{a(R)}{N_1 \dots N_k M_{k+1}} + \frac{1}{N_1 \dots N_{k+1}} \right] \]
    For each $i$, we let $\mathcal{S}_i(X)$ denote the set of startpoints to intervals in $S_i$. Then
    \[ \mathcal{S}_{i_j}(X) \subset \frac{\ZZ}{N_1 \dots N_k M_{k+1}} + \frac{X_j}{N_1 \dots N_{k+1}} \]
    and for $i \not \in \{ i_1, \dots, i_m \}$,
    \[ \mathcal{S}_i(X) \subset \frac{\prod q_{ij} \ZZ}{N_1 \dots N_k M_{k+1}}. \]
    It therefore follows that if
    \[ \mathcal{A}(X) = M(\mathcal{S}_1(X) \times \dots \times \mathcal{S}_n(X)), \]
    then
    \[ \mathcal{A}(X) \subset \frac{\ZZ^m}{N_1 \dots N_k M_{k+1}} + \frac{X}{N_1 \dots N_{k+1}}. \]
    In particular, if $X \neq X'$, $\mathcal{A}(X)$ and $\mathcal{A}(X')$ are disjoint. Equation \eqref{equation19024u1298352389} implies there is a constant $C(n,m,M)$, such that
    \begin{equation} \label{equation69129319031209}
    \begin{split}
        \#& \left\{ n \in \mathbf{Z}^m : d \left( \frac{n}{N_1 \dots N_{k+1}}, B \right) \leq \frac{2}{\sqrt{d} \cdot \| M \|} \frac{1}{N_1 \dots N_{k+1}} \right\}\\
        &\ \ \ \ \ \ \ \ \ \ \ \ \ \ \ \ \ \ \ \ \ \ \ \ \ \ \ \ \ \ \ \ \ \ \leq C(n,m,M)^{m - (s + \varepsilon)} \cdot N_{k+1}^{s + \varepsilon}.
    \end{split}
    \end{equation}
    Applying the pigeonhole principle, \eqref{equation6091904232093}, and \eqref{equation69129319031209}, there exists some value $X_0$ such that
    \begin{align*}
        \# \left\{ n \in \mathbf{A}(X_0) : d(n,B) \leq \frac{2}{\sqrt{d} \cdot \| M \|} \frac{1}{N_1 \dots N_{k+1}} \right\} &\leq \frac{C(n,m,M)^{m - (s + \varepsilon)} \cdot N_{k+1}^{s + \varepsilon}}{(N_{k+1}/M_{k+1})^m}\\
        &\leq \frac{C(n,m,M)^{m - (s + \varepsilon)} \cdot M_{k+1}^m}{N_{k+1}^{m - (s + \varepsilon)}} < 1.
    \end{align*}
    In particular, this set is actually empty. But this means that the set
    \[ M(S_1(X_0) \times \dots \times S_n(X_0)) \]
    is disjoint from $B$. Taking $S_i = S_i(X_0)$ for each $i$ completes the proof.
\end{proof}

Before we move on, consider one application of Theorem \ref{theorem9063909014901}, which gives an extension of Theorem \ref{sumset-application} to arbitrarily large sums.

\begin{theorem}
    Let $Y \subset \RR$ be a countable union of bounded sets with lower Minkowski dimension at most $t$. Then there exists a set $X \subset \RR$ with Hausdorff dimension at least $1 - t$ such that for any integer $n > 0$, for any $a_1, \dots, a_n \in \QQ$, and for any $x_1, \dots, x_n \in X$,
    \[ (a_1X + \dots + a_n X) \cap Y \subset (0). \]
\end{theorem}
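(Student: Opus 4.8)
The plan is to obtain this as a corollary of Theorem \ref{theorem9063909014901}, in the spirit of the sum set application (Theorem \ref{sumset-application}), by encoding the condition $(a_1X + \dots + a_nX)\cap Y \subseteq (0)$ as the avoidance of a single countable configuration built from $Y$. First, I would dispose of the degenerate range: if $t \geq 1$ then $1-t \leq 0$, and the singleton $X = \{0\}$ already satisfies the conclusion since $a_1\cdot 0 + \dots + a_n\cdot 0 = 0 \in (0)$; so from now on assume $t < 1$, which is exactly the regime $s_i < m_i$ required by Theorem \ref{theorem9063909014901}.

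Next I would construct the configuration. Write $Y = \bigcup_j Y_j$ with each $Y_j$ bounded and $\lowminkdim(Y_j) \leq t$, and put $Y_j^* = Y_j \setminus \{0\}$, which is still bounded with $\lowminkdim(Y_j^*) \leq t$. For every integer $p \geq 1$, every $c = (c_1,\dots,c_p) \in \QQ^p \setminus \{0\}$, and every index $j$, set
\[ \C_{p,c,j} = \left\{ (y_1,\dots,y_p) \in \C^p(\RR) \setcolon c_1 y_1 + \dots + c_p y_p \in Y_j^* \right\}, \]
and let $\C = \bigcup_{p,c,j} \C_{p,c,j}$; this is a countable union because $\bigcup_p \QQ^p$ is countable and the index set of the $Y_j$ is countable. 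For each term, the functional $M_{p,c}(y_1,\dots,y_p) = c_1 y_1 + \dots + c_p y_p$ is a full-rank (since $c \neq 0$) rational-coefficient linear map $\RR^p \to \RR$, and $M_{p,c}(\C_{p,c,j}) \subseteq Y_j^*$ is bounded with lower Minkowski dimension at most $t$. Thus the hypotheses of Theorem \ref{theorem9063909014901} hold with $n_i = p$, $m_i = 1$, $s_i = t$ for every term, and the theorem produces a set $X \subset [0,1]$ avoiding $\C$ with $\hausdim(X) \geq (1-t)/1 = 1-t$. (Removing $0$ from the $Y_j$ is essential here: we must not forbid the relation $\sum c_k y_k = 0$, since $0$ is explicitly allowed in the sumset.)

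It then remains to check that this $X$ has the claimed property, and this coordinate-collapse step is the one piece of genuine (though routine) bookkeeping. Fix $n \geq 1$, $a_1,\dots,a_n \in \QQ$, and $x_1,\dots,x_n \in X$, and suppose $z := a_1 x_1 + \dots + a_n x_n$ lies in $Y$ with $z \neq 0$. Let $v_1,\dots,v_p$ be the distinct values among $x_1,\dots,x_n$, and for $1 \leq k \leq p$ put $c_k = \sum_{i\,:\,x_i = v_k} a_i \in \QQ$, so that $z = c_1 v_1 + \dots + c_p v_p$. Since $z \neq 0$ the vector $c = (c_1,\dots,c_p)$ is nonzero, and since $z \in Y \setminus \{0\}$ we have $z \in Y_j^*$ for some $j$; hence $(v_1,\dots,v_p)$ is a tuple of distinct points of $X$ lying in $\C_{p,c,j} \subseteq \C$, contradicting that $\Config(X)$ is disjoint from $\C$. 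Therefore $z = 0$, which is exactly the assertion $(a_1X + \dots + a_nX)\cap Y \subseteq (0)$.

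The main obstacle is not in this argument — the collapse of a repeated-variable $\QQ$-linear relation to a genuine configuration on \emph{distinct} points, with coefficients replaced by their partial sums over the coincidence classes, handles every ``diagonal'' case uniformly and replaces the ad hoc auxiliary configuration $\C_2$ from the proof of Theorem \ref{sumset-application}. Rather, the weight of the result sits entirely in Theorem \ref{theorem9063909014901}, for which the excerpt supplies only the single-scale statement (Theorem \ref{theorem059891891829}); a fully rigorous proof would still require fleshing that out into a multi-scale construction by the methods of Chapters \ref{ch:RelatedWork} and \ref{ch:RoughSets}, in particular verifying that the image-dimension bound $m_i - s_i = 1 - t$ survives the iteration via Theorem \ref{TheConstructionTheorem}.
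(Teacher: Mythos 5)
Your proposal is correct and follows essentially the same route as the paper: both encode the desired property as avoidance of the countable configuration family $\{(x_1,\dots,x_p) : c_1x_1+\dots+c_px_p \in Y_j\}$ over nonzero rational coefficient vectors and invoke Theorem \ref{theorem9063909014901} to get a set of dimension $1-t$. The only cosmetic differences are that you handle repeated points by collapsing coincident coordinates and summing their coefficients (the paper instead runs an induction on $n$, merging two equal variables at a time) and that you delete $0$ from the $Y_j$, which is harmless but not actually necessary, since keeping $0$ only makes the avoided configuration larger while its image still has lower Minkowski dimension at most $t$.
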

\begin{proof}
    Let $Y = \bigcup_{i = 1}^\infty Y_i$, where each $Y_i$ has lower Minkowski dimension at most $t$. For each $n$, $i$, and $a = (a_1, \dots, a_n) \in \QQ^n$ with $a \neq 0$, let
    \[ \C_{n,a,i} = \{ (x_1, \dots, x_n) \in \C^n : a_1x_1 + \dots + a_nx_n \in Y_i \}, \]
    and let $\C = \bigcup \C_{n,a,i}$. Let $T_{n,a}(x_1,\dots,x_n)$ be the linear map given by
    \[ T_{n,a}(x_1,\dots,x_n) = a_1x_1 + \dots + a_nx_n. \]
    Then $T_{n,a}$ is nonzero, and $T_{n,a}(\C_{n,i,a})$ how lower Minkowski dimension at most $t$. Applying Theorem \ref{theorem9063909014901}, we obtain a set $X \subset [0,1]$ avoiding $\C$ with Hausdorff dimension at least $1 - t$.

    We prove $X$ satisfies the conclusions of this theorem by induction on $n$. Consider the case $n = 1$, and fix $a \in \QQ$. If $a \neq 0$, then because $X$ avoids $\C_{n,a,i}$ for each $i$, if $x \in X$, $ax \not \in Y$, so $aX \cap Y = \emptyset$. If $a = 0$, then $aX = 0$, so $(aX) \cap Y \subset (0)$.

    In general, consider $a = (a_1, \dots, a_{n+1}) \in \QQ^{n+1}$. If $a \neq 0$, then because $X$ avoids $\C_{n,a,i}$ for each $i$, we know if $x_1, \dots, x_{n+1} \in X$ are distinct, then $a_1 x_1 + \dots + a_{n+1} x_{n+1} \not \in Y$. If the values $x_1, \dots, x_{n+1} \in X$ are not distinct, then by rearranging both the values $\{ x_i \}$ and $\{ a_i \}$, we may without loss of generality assume that $x_n = x_{n+1}$. Then
    \begin{align*}
        a_1 x_1 + \dots + a_{n+1} x_{n+1} &= a_1 x_1 + \dots + a_{n-1} x_{n-1} + (a_n + a_{n+1}) x_n\\
        &\subset (a_1 X + \dots + a_{n-1} X + (a_n + a_{n+1}) X).
    \end{align*}
    By induction,
    \[ (a_1 X + \dots + a_{n-1} X + (a_n + a_{n+1}) X) \cap Y \subset (0), \]
    so we conclude that either $a_1 x_1 + \dots + a_{n+1} x_{n+1} \not \in Y$, or $a_1x_1 + \dots + a_{n+1} x_{n+1} = 0$. The only remaining case we have not covered is if $a \in \QQ^{n+1}$ is equal to zero. But in this case,
    \[ (a_1 X + \dots + a_n X) = (0 + \dots + 0) = 0, \]
    and so it is trivial that $(a_1 X + \dots + a_n X) \cap Y \subset (0)$.
\end{proof}

\section{Fourier Dimension}

Recently, there has been much interest in determining whether sets with large Fourier dimension can avoid configurations. Results published recently in the literature include \cite{PramanikLaba} and \cite{Shmerkin}. In this Section, we attempt to modify the procedure of Theorem \ref{mainTheorem} to obtain a set with large Fourier dimension. We obtain such a result, though with a suboptimal dimension to what we expect from Theorem \ref{mainTheorem}, and only holds in the setting where $d = 1$. We are currently researching methods to resolve the deficiencies in this method.

\begin{theorem} \label{FourierTheorem}
    Suppose $\C$ is a configuration on $\RR$, formed from the countable union of bounded sets, each with lower Minkowski dimension at most $s$. Then there exists a set $X \subset [0,1]$ with Fourier dimension at least $(n - s)/n$ avoiding $\C$.
\end{theorem}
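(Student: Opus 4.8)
**The plan is to adapt the construction of Theorem \ref{mainTheorem} by injecting randomness into the selection of dyadic cubes, in the spirit of Schmerkin's argument.** Rather than choosing an arbitrary representative cube $Q_R$ from each $\DR_{k+1}(T)$-cube $R$ (as in the deterministic Lemma \ref{discretelemma}), I would choose these cubes uniformly at random and independently, and then show that with positive probability the random set $A$ simultaneously (i) still removes enough strongly non-diagonal cubes of $B$ to satisfy the avoidance property, and (ii) produces a measure whose Fourier coefficients are close to those of the previous stage. As in Schmerkin, the key point is that a \emph{random translation} of the surviving cube inside each $R$ induces enough cancellation in the Fourier transform: writing $\mu_k$ for the normalized Lebesgue measure on $X_k$ and decomposing $\mu_{k+1}-\mu_k = \sum_R (\nu_R - \eta_R)$ into independent mean-zero increments indexed by $R \in \DR_{k+1}(X_k)$, Hoeffding's inequality gives, for each frequency $|m| \lesssim N_1\cdots N_{k+1}$,
\[
\Prob\left( |\widehat{\mu_{k+1}}(m) - \widehat{\mu_k}(m)| \geq t \right) \leq 4\exp\left( \frac{-\#(\DR_{k+1}(X_k)) \, t^2}{C} \right),
\]
and a union bound over the relevant range of $m$, together with Borel–Cantelli across scales, forces $\widehat{\mu}(\xi) \lesssim |\xi|^{-(n-s)/2n + \varepsilon}$ almost surely.

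**The second step is to check that the randomness does not destroy the avoidance property.** Here the expectation computation from Lemma \ref{discretelemma} already does most of the work: $\expect(\#\mathcal{K}(A)) \leq \#(\DQ_{k+1}(B)) \cdot (M_{k+1}/N_{k+1})^{dn} \leq (1/2)M_{k+1}^d$ when $N_{k+1} \gtrsim M_{k+1}^{(n-s)/(n(n-s))}$-type bounds hold (in dimension $d=1$ the exponent $t = (n-s)/(n-1)$ is what governs the deterministic version, but to get Fourier dimension $(n-s)/n$ I expect to need the less aggressive choice $N_{k+1} \sim M_{k+1}^{n/(n-s)}$, giving the weaker Hausdorff-type exponent). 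By Markov's inequality, $\#\mathcal{K}(A) \leq M_{k+1}^d$ with probability at least $1/2$; so the event "good avoidance at stage $k+1$" and the event "Fourier increment small at stage $k+1$" each have probability bounded below, and choosing $N_k$ growing fast enough (as in Lemma \ref{coveringLemma}, Property \ref{RapidDecayProperty}) makes the bad-Fourier probabilities summable, so almost surely all but finitely many stages are simultaneously good. Then remove the bad cubes as in \eqref{defnOfF}, set $X_{k+1} = S(A)$, and let $X = \bigcap X_k$; Lemma \ref{stronglydiagonal} gives avoidance of $\C$.

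**The main obstacle** I anticipate is reconciling the two competing demands on the branching ratio $N_{k+1}/M_{k+1}$: the avoidance argument wants this ratio large (so that the expected number of surviving bad cubes is small), while the Fourier estimate wants the number of independent increments $\#(\DR_{k+1}(X_k)) \sim (M_1 \cdots M_{k+1})$ large relative to the frequency range one controls, which pushes $M_{k+1}$ up and hence the ratio down. Balancing these is exactly why the theorem claims Fourier dimension $(n-s)/n$ rather than the Hausdorff dimension $(n-s)/(n-1)$ from Theorem \ref{mainTheorem} — the Fourier argument cannot afford to take $N_{k+1}$ as small (relative to $M_{k+1}$) as the pure Hausdorff-dimension argument does. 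A secondary technical point is controlling $\widehat{\mu}(\xi)$ for \emph{all} real $\xi$, not just integers: one handles $|\xi| \in [N_1\cdots N_k,\, N_1\cdots N_{k+1}]$ using the stage-$k$ measure and the fact that $\mu_{k+1}$ is built from cubes of sidelength $l_{k+1}$, exploiting that the Fourier transform of an indicator of an interval of length $l_{k+1}$ decays once $|\xi| \gg 1/l_{k+1}$, so the tail frequencies are automatically controlled and only the "diagonal" range needs the probabilistic estimate. I would also need a Poisson-summation / periodization step to pass from the Fourier \emph{series} of the $[0,1]$-supported measure to its Fourier \emph{transform}, exactly as in the omitted Schmerkin section of Chapter \ref{ch:RelatedWork}.
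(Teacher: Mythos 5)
Your overall architecture --- choosing one $\DQ_{k+1}$-cube uniformly at random inside each $\DR_{k+1}$-cube, Hoeffding plus a union bound over integer frequencies, a Poisson-summation/periodization step to pass to all real $\xi$, and using the decay of the interval indicator to handle frequencies beyond $N_1\cdots N_{k+1}$ --- is the same as the paper's (Lemma \ref{discretefouriermeasures} and Lemma \ref{discreteFourierBuildingBlock}), and you even identify the correct scale relation $N_{k+1}\sim M_{k+1}^{n/(n-s)}$ as the source of the exponent $(n-s)/n$. The genuine gap is that you retain the deletion step of Lemma \ref{discretelemma}: you allow $\#\mathcal{K}(A)\lesssim M_{k+1}$ surviving strongly non-diagonal cubes with probability $1/2$ and then ``remove the bad cubes as in \eqref{defnOfF}''. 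That deterministic deletion is exactly what the Fourier estimate cannot absorb. Removing up to $\sim M_{k+1}$ intervals out of the $M_{k+1}\cdot\#\DQ_k(X_k)$ intervals of the random set perturbs the renormalized measure by total variation of order $1/\#\DQ_k(X_k)\gtrsim (N_1\cdots N_k)^{-1}$, and since the deleted intervals have length $l_{k+1}$ this perturbation does not decay until $|\xi|\gg N_1\cdots N_{k+1}$. At $|m|\sim N_1\cdots N_{k+1}$ it is an adversarial, non-mean-zero error that Hoeffding says nothing about, while the needed increment bound there is $\lesssim (N_1\cdots N_{k+1})^{-\frac{n-s}{2n}+2\varepsilon}$ --- astronomically smaller, because the construction takes $N_{k+1}\geq \exp\bigl(c\,N_1\cdots N_k\bigr)$. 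So with the deletion step your Fourier increment estimate fails at the top of the frequency range.

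The paper's proof resolves precisely this point, as the remark after Lemma \ref{discreteFourierBuildingBlock} explains: with the larger gap $M_{k+1}\leq N_{k+1}^{(n-s-2\varepsilon)/n}$ of \eqref{equation5523786128439}, the expected number of strongly non-diagonal cubes of $B$ surviving in $S^n$ is at most $M_{k+1}^n/N_{k+1}^{n-s-\varepsilon}\leq N_{k+1}^{-\varepsilon}<1/3$, so by Markov the random set already avoids $B$ outright with probability at least $2/3$ and no deletion is ever performed; this is then combined with the Hoeffding event by a union bound at each stage. (Positive probability per stage suffices, since each stage is a fresh finite experiment; Borel--Cantelli giving ``all but finitely many stages good'' would still leave the increments at the bad stages uncontrolled.) To repair your argument, discard the deletion step and instead push the expected count of bad cubes below one, which your own choice $N_{k+1}\sim M_{k+1}^{n/(n-s)}$ essentially permits once the $\varepsilon$-losses in the sparsity bound $\#\DQ_{k+1}(B)\leq N_{k+1}^{s+\varepsilon}$ are inserted.
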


We begin with a lemma which uses the Poisson summation theorem to restrict the analysis of the Fourier decay of the probability measures we study to the analysis of frequencies in $\ZZ$.

\begin{lemma} \label{discretefouriermeasures}
    Fix $s \in [0,d]$. Suppose $\mu$ is a compactly supported finite Borel measure on $\RR^d$. Then there exists a constant $A \geq 1$, depending only on the dimension of $d$ and the radius of the support of $\mu$, such that
    \[ \sup_{\xi \in \RR^d} |\xi|^{s/2} |\widehat{\mu}(\xi)| \leq 1 + A \left( \sup_{m \in \ZZ^d} |m|^{s/2} |\widehat{\mu}(m)| \right). \]
\end{lemma}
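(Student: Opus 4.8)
The plan is to pass from $\widehat\mu$ on all of $\RR^d$ to $\widehat\mu$ on the integer lattice $\ZZ^d$ by cutting off $\mu$ with a fixed Schwartz bump and applying Poisson summation, then controlling the error terms by the rapid decay of the bump's Fourier transform. Since $\mu$ is compactly supported, fix $R$ with $\operatorname{supp}\mu \subset [-R,R]^d$, and choose $\varphi \in C_c^\infty(\RR^d)$ with $\varphi \equiv 1$ on $[-R,R]^d$ and $\operatorname{supp}\varphi \subset [-R-1,R+1]^d$; then $\mu = \varphi\,\mu$, so $\widehat\mu = \widehat\varphi * \widehat\mu$. This is the device that lets us periodize: for any $\xi \in \RR^d$, write $\widehat\mu(\xi) = \sum_{m \in \ZZ^d} \widehat\varphi(\xi - m')\,\widehat\mu(m')$-type expressions. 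More precisely, I would apply the Poisson summation formula to the function $x \mapsto \varphi(x)e^{-2\pi i x\cdot\xi}\,d\mu(x)$-analogue — equivalently, expand the periodization — to get an identity expressing $\widehat\mu(\xi)$ as a rapidly convergent sum $\sum_{m\in\ZZ^d} c_m(\xi)\,\widehat\mu(\xi_0 + m)$ for a suitable lattice translate, where $\xi_0$ is the fractional part of $\xi$ and $|c_m(\xi)| = |\widehat\varphi(\text{something} - m)|$ decays faster than any polynomial in $|m|$.

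The key steps, in order: (1) Set up the cutoff $\varphi$ and record $\widehat\mu = \widehat\varphi * \widehat\mu$. (2) Use periodization/Poisson summation to write, for each $\xi$, $\widehat\mu(\xi)$ as an absolutely convergent series over $\ZZ^d$ of values of $\widehat\mu$ at integer points (possibly shifted, but one can arrange the shift to be integer by a further elementary reduction, or simply absorb a bounded shift into the constant), with coefficients bounded by $|\widehat\varphi|$ evaluated at lattice points. (3) Estimate $|\xi|^{s/2}|\widehat\mu(\xi)|$: split off the main term $m = 0$, which contributes at most $\sup_{m} |m|^{s/2}|\widehat\mu(m)|$ times $|\widehat\varphi(0)|$ plus a harmless additive constant (to absorb low frequencies $|\xi|\le 1$ where we just use $|\widehat\mu(\xi)| \le \mu(\RR^d) = \|\mu\|$, since $\widehat\mu(0)$ gives the $+1$), and bound the tail $\sum_{m\ne 0}$ using $|\widehat\varphi(y)| \lesssim_N (1+|y|)^{-N}$ with $N$ large, together with the trivial bound $|\widehat\mu| \le \|\mu\|$ and the elementary inequality $|\xi|^{s/2} \lesssim |m|^{s/2}(1+|\xi - m|)^{s/2}$. (4) Collect the constants: all of them depend only on $d$, $R$, and the fixed choice of $\varphi$ (hence only on $d$ and $R$), and normalize $\mu$ so the trivial bound is $\le 1$ or absorb $\|\mu\|$ into $A$; this yields the claimed inequality with $A = A(d,R)$.

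The main obstacle I expect is making the Poisson summation step clean and honest: $\mu$ is only a finite measure (not a Schwartz function), so one must justify the periodization identity carefully — either by first convolving $\mu$ with an approximate identity and passing to the limit, or by applying Poisson summation in the sense of distributions to $\varphi(\cdot)\,d\mu$, whose Fourier transform $\widehat\varphi * \widehat\mu$ is a genuine function (smooth, since $\widehat\varphi$ is Schwartz and $\mu$ finite). The rest is bookkeeping with Schwartz decay. A secondary subtlety is the ``shift'' issue — when $\xi$ is not in the fundamental domain one naturally gets $\widehat\mu$ evaluated at $\xi - \lfloor\xi\rfloor + m$ rather than at an integer — but since $|\xi - \lfloor\xi\rfloor| \le \sqrt d$ is bounded, one can either enlarge the bump's support so that these near-integer values are still controlled by nearby integer values of $\widehat\mu$ (using that $\widehat\mu$ is Lipschitz with constant $\le 2\pi R\|\mu\|$, as $\mu$ is compactly supported), or simply observe that a bounded frequency shift changes $|\xi|^{s/2}$ by a bounded factor and changes $\widehat\mu$ by an additive $O(\|\mu\|)$, both absorbed into $A$ and the additive $1$. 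I would present the argument via the Lipschitz-continuity route, as it keeps the constants transparent and avoids delicate distributional Poisson summation.
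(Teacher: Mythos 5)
Your overall skeleton --- smooth cutoff, Poisson summation, rapid decay of the bump's Fourier transform, then a near/far frequency splitting --- is the same as the paper's, but the step where the proof actually lives, your step (2), has a genuine gap. The exact sampling identity $\widehat\mu(\xi)=\sum_{m\in\ZZ^d}\widehat\mu(m)\,\widehat\psi(\xi-m)$ holds only when $\mu$ and the cutoff fit inside a single period cell of the lattice: one needs $\psi\equiv1$ on the support of $\mu$ and $\psi\equiv0$ on every nonzero integer translate of that support, so that $\mu=\psi\cdot(\Lambda*\mu)$ with $\Lambda=\sum_m\delta_m$ and the transform of the periodization is exactly $\sum_m\widehat\mu(m)\delta_m$. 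This is precisely why the paper first arranges the support inside $[1/3,2/3)^d$ and takes $\psi$ supported in $[0,1)^d$. With your $\varphi$ supported in $[-R-1,R+1]^d$, the $1$-periodization of $\varphi\,\mu$ overlaps itself: the terms coming from $\varphi(\cdot+n)$ with $0<|n|\lesssim R$ do not vanish, so no expansion of $\widehat\mu(\xi)$ in terms of the integer samples $\{\widehat\mu(m)\}_{m\in\ZZ^d}$ with coefficients $\widehat\varphi(\xi-m)$ comes out; the expansion over the shifted lattice $\xi_0+\ZZ^d$ that you fall back on is true but vacuous ($\xi$ itself lies on that translate) and never connects to integer samples.

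Your proposed repair of the shift --- Lipschitz continuity of $\widehat\mu$, or absorbing "an additive $O(\|\mu\|)$" into $A$ and the additive $1$ --- cannot work. What is needed is $|\widehat\mu(\xi_0+m)|\lesssim|m|^{-s/2}$, a quantity tending to zero, while Lipschitz continuity only yields $|\widehat\mu(\xi_0+m)|\le|\widehat\mu(m)|+2\pi R\sqrt{d}\,\|\mu\|$; after multiplying by $|m|^{s/2}$ the error is unbounded, and the additive $1$ in the conclusion sits outside the factor $|\xi|^{s/2}$, so it cannot absorb it. In fact no continuity argument can bridge this, because decay of $\widehat\mu$ on $\ZZ^d$ simply does not imply decay off the lattice once the support is wider than one period: in $d=1$, take $d\mu=\tfrac12\bigl(1+\cos(2\pi(k+\tfrac12)x)\bigr)\mathbf{I}_{[0,2]}(x)\,dx$, a probability measure with $\widehat\mu(m)=0$ for every nonzero integer $m$, yet $\widehat\mu(k+\tfrac12)=\tfrac12$ with $k$ arbitrarily large. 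So the reduction to a measure supported in one fundamental domain (as the paper does, before introducing $\Lambda$ and $\psi$) is not a convenience but the essential ingredient; once it is in place, the rest of your outline --- rapid decay of $\widehat\psi$, dyadic near/far splitting, and the trivial bound at low frequencies --- does track the paper's argument.
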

\begin{proof}
    Without loss of generality, we may assume that $\mu$ is supported on a compact subset of $[1/3,2/3)^d$, since every compactly supported measure is a finite sum of translates of measures of this form. Let
    \[ C = \sup_{m \in \ZZ^d} |m|^{s/2} |\widehat{\mu}(m)|, \]
    which we may assume, without loss of generality, to be finite. Consider the distribution $\Lambda = \sum_{m \in \mathbf{Z}^d} \delta_m$, where $\delta_m$ is the Dirac delta distribution at $m$. Then the Poisson summation formula says that the Fourier transform of $\Lambda$ is itself. If $\psi \in C_c(\RR^d)$ is a bump function supported on $[0,1)^d$, with $\psi(x) = 1$ for $x \in [1/3,2/3)^d$, then $\mu = \psi (\Lambda * \mu)$, so
    \begin{equation} \label{mubounded}
    \begin{split}
        |\widehat{\mu}(\xi)| &= \left| \left[ \widehat{\psi} * (\Lambda \widehat{\mu}) \right](\xi) \right|\\
        &= \left| \sum_{m \in \mathbf{Z}^d} \widehat{\mu}(m)(\widehat{\psi} * \delta_m)(\xi) \right|\\
        &= \left| \sum_{m \in \mathbf{Z}^d} \widehat{\mu}(m) \widehat{\psi}(\xi - m) \right|.
    \end{split}
    \end{equation}
    Since $\psi$ is smooth, we know that for all $\eta \in \RR^d$, $|\widehat{\psi}(\eta)| \lesssim 1/|\eta|^{d+1}$. If we perform a dyadic decomposition, we find
    \begin{equation}
        \label{calculation1}
    \begin{split}
        \sum_{1 \leq |m - \xi| \leq |\xi|/2} |\widehat{\mu}(m)| |\widehat{\psi}(\xi - m)| &\leq C \sum_{1 \leq |m - \xi| \leq |\xi|/2} |\xi|^{-s/2} |\widehat{\psi}(\xi - m)|\\
        &\lesssim C \sum_{k = 1}^{\log |\xi|} \sum_{\frac{|\xi|}{2^{k+1}} \leq |m - \xi| \leq \frac{|\xi|}{2^{k}}} |\xi|^{-s/2} \left( 2^k/|\xi| \right)^{d+1}\\
        &\lesssim C \sum_{k = 1}^{\log |\xi|} |\xi|^{-s/2} (2^k / |\xi| ) \lesssim C |\xi|^{-s/2}.
    \end{split}
    \end{equation}
    There are $O_d(1)$ points $m \in \mathbf{Z}^d$ with $|m - \xi| \leq 1$, so if $|\xi| \geq 2$,
    \begin{equation} \label{calculation2}
        \sum_{|m - \xi| \leq 1} |\widehat{\mu}(m)| |\widehat{\psi}(m - \xi)| \lesssim C |\xi|^{-s/2}.
    \end{equation}
    We can also perform another dyadic decomposition, using the fact that for all $\eta \in \RR^d$, $|\widehat{\psi}(\eta)| \lesssim 1/|\eta|^{2d}$, to find that
    \begin{equation} \label{calculation3}
    \begin{split}
        \sum_{|m - \xi| \geq |\xi|/2} |\widehat{\mu}(m)| |\widehat{\psi}(m - \xi)| &\lesssim \sum_{k = 0}^\infty \sum_{|\xi| 2^{k-1} \leq |m - \xi| \leq |\xi| 2^k} \frac{|\widehat{\mu}(m)|}{|\xi|^{2d} 2^{2dk}}\\
        &\lesssim C \sum_{k = 0}^\infty |\xi|^{-d} 2^{-dk} \lesssim C |\xi|^{-d}.
    \end{split}
    \end{equation}
    Combining \eqref{calculation1}, \eqref{calculation2}, and \eqref{calculation3} with \eqref{mubounded}, we conclude that there exists a constant $A \geq 1$ depending only on the dimension $d$ such that if $|\xi| \geq 2$,
    \begin{equation} \label{endequation53}
        |\widehat{\mu}(\xi)| \leq A \cdot C \cdot |\xi|^{-s/2}.
    \end{equation}
    Since $\| \widehat{\mu} \|_{L^\infty(\RR^d)} \leq 1$, \eqref{endequation53} actually holds for all $\xi \in \RR^d$, provided $C \geq 1$.
\end{proof}

Our goal now is to carefully modify the discrete selection strategy and discretized probability measures we use to obtain have sharp control over the Fourier transform of these measures at each scale of our construction. In particular, we aim to obtain a sequence of pairs $\{ (\mu_k, X_k) \}$, where $\{ X_k \}$ is a decreasing family of subsets of $\RR$ such that the set $X = \bigcap X_k$ is configuration avoiding, and $\mu_k$ is a probability measure supported on $X_k$ such that for each integer $k$ and $\varepsilon > 0$, there exists a constant $C_{k,\varepsilon} > 0$ such that
\begin{equation} \label{equation12901909}
    \sup_{m \in \ZZ^d} |m|^{\frac{n - s}{2} - \varepsilon} |\widehat{\mu_{k+1}}(m) - \widehat{\mu_k}(m)| \leq C_{k,\varepsilon},
\end{equation}
and for each $\varepsilon > 0$, $\sum_{k = 0}^\infty C_{k,\varepsilon} < \infty$. We will initialize this sequence by letting $X_0 = [0,1]$, and let $\mu_0$ be the Lebesgue measure restricted to $X_0$. It then follows that if $\mu$ is a weak limit of some subsequence of the measures $\mu_k$, then $\widehat{\mu}$ is the pointwise limit of some subsequence of $\widehat{\mu_k}$. And so for each $\varepsilon > 0$,
\begin{align*}
    \sup_{m \in \ZZ} |m|^{\frac{n-s}{2} - \varepsilon} |\widehat{\mu}(m)| &\leq \sup_{m \in \ZZ} |m|^{\frac{n-s}{2} - \varepsilon} |\widehat{\mu_0}(m)| + \sum_{k = 0}^\infty \sup_{m \in \ZZ} |m|^{\frac{n-s}{2} - \varepsilon} |\widehat{\mu_{k+1}}(m) - \widehat{\mu_k}(m)|\\
    &\lesssim 1 + \sum_{k = 0}^\infty C_{k,\varepsilon} < \infty.
\end{align*}
Combined with Lemma \ref{discretefouriermeasures}, this implies that $X$, upon which $\mu$ is supported, has the required Fourier dimension. A key strategy to obtaining bounds of the form \eqref{equation12901909}, in light of our random selection strategy, is to obtain high probability bounds using Hoeffding's inequality.

\begin{theorem}[Hoeffding's Inequality]
    Let $\{ X_i \}$ be an independent family of $N$ mean-zero random variables, and let $A > 0$ be a constant such that $|X_i| \leq A$ almost surely for each $i$. Then for each $t > 0$,
    \[ \PP \left( \left| \frac{1}{N} \sum_{i = 1}^N X_i \right| \geq t \right) \leq 2 \exp \left( (N/A^2) \cdot (- t^2) \right). \]
\end{theorem}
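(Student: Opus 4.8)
The plan is to run the classical exponential‑moment (Chernoff) argument. Write $S_N = X_1 + \dots + X_N$, so that the event in question is $\{ |S_N| \geq Nt \}$. First I would reduce to a one‑sided bound: since the family $\{ -X_i \}$ satisfies exactly the same hypotheses (mean zero, bounded by $A$), it suffices to prove an estimate of the form $\PP(S_N \geq Nt) \leq \exp(-cNt^2/A^2)$ and then apply it to $\{ X_i \}$ and to $\{ -X_i \}$ separately, picking up the factor of $2$ from a union bound over the two tails.

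For the one‑sided estimate, fix a parameter $\lambda > 0$. Applying Markov's inequality to the nonnegative random variable $e^{\lambda S_N}$ gives
\[ \PP(S_N \geq Nt) = \PP\bigl( e^{\lambda S_N} \geq e^{\lambda N t} \bigr) \leq e^{-\lambda N t}\, \EE\bigl[ e^{\lambda S_N} \bigr], \]
and independence of the $X_i$ factors the moment generating function as $\EE[ e^{\lambda S_N} ] = \prod_{i=1}^N \EE[ e^{\lambda X_i} ]$. The whole problem is thereby reduced to a bound on the moment generating function of a single bounded, mean‑zero random variable.

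That bound — Hoeffding's lemma — is the key step and the only nontrivial one: for a mean‑zero $X$ with $|X| \leq A$ a.s., one has $\EE[ e^{\lambda X} ] \leq e^{\lambda^2 A^2 / 2}$. I would prove it by writing $\psi(\lambda) = \log \EE[ e^{\lambda X} ]$, noting that $\psi$ is smooth and convex with $\psi(0) = 0$ and $\psi'(0) = \EE[X] = 0$, and identifying $\psi''(\lambda)$ with the variance of $X$ under the exponentially tilted law $d\mu_\lambda \propto e^{\lambda x}\, d\mu$; since $X$ still takes values in $[-A, A]$ under $\mu_\lambda$, that variance is at most $A^2$, so Taylor's theorem yields $\psi(\lambda) \leq \lambda^2 A^2 / 2$. (Equivalently, one can bound $e^{\lambda x}$ above on $[-A, A]$ by the chord joining its endpoint values and take expectations.)

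Combining the displays gives $\PP(S_N \geq Nt) \leq \exp\bigl( -\lambda N t + N\lambda^2 A^2/2 \bigr)$ for every $\lambda > 0$; optimizing the exponent in $\lambda$ (the minimum occurs at $\lambda = t/A^2$) produces a bound of the claimed exponential shape $\exp(-cN t^2 / A^2)$, and summing the two tails gives the factor of $2$. The main obstacle is Hoeffding's lemma; once it is available, the remainder is routine Chernoff bookkeeping. The only other point worth care is tracking the numerical constant $c$ in the exponent so that the final form agrees with the statement.
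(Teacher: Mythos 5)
The paper never proves this statement: Hoeffding's inequality is quoted there as a standard tool (it is only \emph{used}, in the proof of Lemma \ref{discreteFourierBuildingBlock}), so there is no in-paper argument to compare yours against. Your route --- symmetrize to a one-sided tail, Markov applied to $e^{\lambda S_N}$, factor the moment generating function by independence, invoke Hoeffding's lemma $\EE[e^{\lambda X}] \leq e^{\lambda^2 A^2/2}$, and optimize at $\lambda = t/A^2$ --- is exactly the standard and correct way to prove a bound of this shape.

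There is, however, one genuine issue, and it is precisely the point you wave off as ``tracking the numerical constant.'' Your argument gives the exponent $-\lambda N t + N\lambda^2 A^2/2$, whose minimum at $\lambda = t/A^2$ is $-Nt^2/(2A^2)$; so you prove $\PP\left( \left| \tfrac{1}{N}\sum X_i \right| \geq t \right) \leq 2\exp\left(-Nt^2/(2A^2)\right)$, not the stated $2\exp\left(-Nt^2/A^2\right)$. No amount of bookkeeping closes this gap, because the statement as printed is actually false: take $N = 1$, $X_1 = \pm A$ with probability $1/2$ each, and $t = A$; the left side equals $1$ while the right side is $2e^{-1} < 1$. (For the hypothesis $|X_i| \leq A$, i.e.\ range $2A$, the sharp Hoeffding exponent is $-2(Nt)^2/\bigl(N(2A)^2\bigr) = -Nt^2/(2A^2)$, and Hoeffding's lemma is tight for the symmetric two-point law, so $c = 1/2$ cannot be improved in this generality.) In other words, your proof establishes the correct version of the inequality, and the thesis statement carries a harmless factor-of-two slip in the exponent; propagating the correct constant through the application only changes the bound in \eqref{equation5551902402919120} from $2\exp\bigl(-\#(\DR_{k+1}(T))t^2/4\bigr)$ to $2\exp\bigl(-\#(\DR_{k+1}(T))t^2/8\bigr)$, which affects nothing downstream. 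One further small point if you want your write-up to cover the paper's actual use: the variables $A_R(m)$ there are complex-valued, so one applies the real inequality to the real and imaginary parts separately, at the cost of another absolute constant.
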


As with Theorem \ref{mainTheorem}, we perform a multi-scale analysis, using the notations introduced in Section \ref{sec:Dyadics}. Lemma \ref{discretefouriermeasures} implies that we only need control over integer-valued frequencies. The discretized measures $\{ \nu_k \}$ we select are, for each $k$, a sum of point mass distributions at the points $\ZZ/N_1 \dots N_k$. Therefore, $\widehat{\nu_k}$ will be $N_1 \dots N_k$ periodic, in the sense that for any $m \in (N_1 \dots N_k) \ZZ$ and $\xi \in \RR$, $\widehat{\nu_k}(\xi + m) = \widehat{\nu_k}(\xi)$. Since we are only concerned with integer valued frequencies, it will therefore suffice to control the Fourier transform of $\nu_k$ on frequencies lying in $\{ 1, \dots, N_1 \dots N_k \}$.

In the discrete lemma below, we rely on a variant of the proof strategy of Theorem 2.1 of \cite{Shmerkin}, but modified so that we can allow the branching factors $\{ N_k \}$ to increase arbitrarily fast. For each $\DQ_k$ discretized set $E \subset [0,1]$, we define a probability measure
\[ \nu_E = \frac{1}{\#(\DQ_k(E))} \sum_{Q \in \DQ_k(E)} \delta(a(Q)), \]
where for each $x \in \RR$, $\delta(x)$ is the Dirac delta measure at $x$, and for each $Q \in \DQ_k$, $a(Q)$ is the startpoint of the interval $Q$. Also, for each $k$, we define a probability measure
\[ \eta_k = \frac{1}{N_k} \sum_{i_1, \dots, i_d = 0}^{N_k - 1} \delta \left( \frac{i}{N_1 \dots N_k} \right).  \]
The purpose of introducing $\eta_k$ is so that, given a measure $\mu$ which is a sum of point mass distributions in $\ZZ/N_1 \dots N_k$, the probability measure $\mu * \eta_{k+1}$ is a sum of point mass distributions in $\ZZ/N_1 \dots N_{k+1}$, uniformly distributed at the scale $1/N_1 \dots N_{k+1}$.


\begin{lemma} \label{discreteFourierBuildingBlock}
    Fix $s \in [1,dn)$, and $\varepsilon \in [0,(n-s)/4)$. Let $T \subset \RR$ be a nonempty, $\DQ_k$ discretized set, and let $B \subset \RR^n$ be a nonempty $\DQ_{k+1}$ discretized set such that
    \begin{equation} \label{equation982589128942189}
    \begin{split}
        \#(\DQ_{k+1}(B)) \leq N_{k+1}^{s + \varepsilon}.
    \end{split}
    \end{equation}
    %
    Provided that
    \begin{equation} \label{equation5523786128439}
        M_{k+1} \leq N_{k+1}^{\frac{n-s-2\varepsilon}{n}} \leq 2 M_{k+1},
    \end{equation}
    %
    %
    \begin{equation} \label{equation189248914891}
        \quad N_{k+1} \geq 3^{1/\varepsilon},
    \end{equation}
    \begin{equation} \label{equation8941894189238912}
        N_{k+1} \geq \exp \left( \left( \frac{4n}{n-s} \right)^4 N_1 \dots N_k \right),
    \end{equation}
    and
    \begin{equation} \label{equation77871247817841278}
        N_{k+1} \geq (1/\varepsilon)^{1/\varepsilon},
    \end{equation}
    there exists a universal constant $A(n,s)$ and a $\DQ_{k+1}$ discretized set $S \subset T$, satisfying the following properties:
    \begin{enumerate}
        \item[(A)] For any collection of $n$ distinct cubes $Q_1, \dots, Q_n \in \DQ_{k+1}(S)$,
        \[ Q_1 \times \dots \times Q_n \not \in \DQ_{k+1}(B). \]

        \item[(B)] For any $m \in \ZZ$,
        \[ |\widehat{\nu_S}(m) - \widehat{\eta_{k+1}}(m) \widehat{\nu_T}(m)| \leq A(n,s) \cdot (N_1 \dots N_{k+1})^{-\frac{n - s}{2n} + 2\varepsilon}. \]
    \end{enumerate}
\end{lemma}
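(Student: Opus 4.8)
The plan is to run the probabilistic selection of Lemma \ref{discretelemma}, but to exploit the strengthened branching hypothesis \eqref{equation5523786128439} to force the ``bad'' set $\mathcal{K}(A)$ to be empty with positive probability, so that no cube ever has to be deleted and the output measure is literally the measure of the randomly selected set. First I would fix, for each $R \in \DR_{k+1}(T)$, a cube $Q_R$ chosen uniformly at random from $\DQ_{k+1}(R)$, independently over $R$, set $A = \bigcup_R Q_R$, and let $\mathcal{K}(A) = \{K \in \DQ_{k+1}(B) \cap \DQ_{k+1}(A^n) \setcolon K \text{ strongly non-diagonal}\}$, exactly as in Section \ref{discretesection}. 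As there, for a strongly non-diagonal $K = K_1 \times \cdots \times K_n$ one has $\PP(K \subset A^n) \le (M_{k+1}/N_{k+1})^n$ (the probability is $0$ when two of the $K_i$ share a parent in $\DR_{k+1}$, and is a product of independent events otherwise; this is \eqref{jointprob}), so \eqref{equation982589128942189} and linearity of expectation give $\expect(\#\mathcal{K}(A)) \le N_{k+1}^{s+\varepsilon}(M_{k+1}/N_{k+1})^n = M_{k+1}^n/N_{k+1}^{n-s-\varepsilon}$. Substituting $M_{k+1} \le N_{k+1}^{(n-s-2\varepsilon)/n}$ from \eqref{equation5523786128439} bounds this by $N_{k+1}^{-\varepsilon}$, and \eqref{equation189248914891} makes $N_{k+1}^{-\varepsilon} < 1/3$; Markov's inequality then gives $\PP(\#\mathcal{K}(A) \ge 1) < 1/3$.

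Next I would control the Fourier transform of $\nu_A$ conditionally on nothing. Writing $\widehat{\nu_A}(m) = \tfrac{1}{\#\DR_{k+1}(T)}\sum_R e^{-2\pi i m\, a(Q_R)}$, this is an average of independent complex random variables of modulus $1$; a short computation shows $\expect(\nu_A) = \nu_T * \eta_{k+1}$ (within each $R$ the random start point is uniform over the start points of $\DQ_{k+1}(R)$, and averaging over $R \in \DR_{k+1}(T)$ recovers the uniform measure on all $\DQ_{k+1}$ start points of $T$, which is exactly $\nu_T * \eta_{k+1}$), so $\expect(\widehat{\nu_A}(m)) = \widehat{\eta_{k+1}}(m)\,\widehat{\nu_T}(m)$. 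Applying Hoeffding's inequality to the real and imaginary parts, for each $m$ and $t>0$,
\[ \PP\bigl(\,|\widehat{\nu_A}(m) - \widehat{\eta_{k+1}}(m)\,\widehat{\nu_T}(m)| \ge t\,\bigr) \le 4\exp\bigl(-c\,M_{k+1}\,t^2\bigr), \]
for an absolute constant $c>0$, using $\#\DR_{k+1}(T) \ge M_{k+1}$. Since $\nu_A$ and $\eta_{k+1}$ are supported in $(N_1\cdots N_{k+1})^{-1}\ZZ$, the map $m \mapsto \widehat{\nu_A}(m) - \widehat{\eta_{k+1}}(m)\widehat{\nu_T}(m)$ is $(N_1\cdots N_{k+1})$-periodic and vanishes whenever $N_1\cdots N_{k+1} \mid m$; hence it suffices to union-bound over $m \in \{1,\dots,N_1\cdots N_{k+1}\}$, giving $\PP(\exists m \setcolon |\cdots| \ge t) \le 4\,N_1\cdots N_{k+1}\exp(-c\,M_{k+1}\,t^2)$.

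It then remains to pick $A(n,s)$ so that with $t = t_{k+1} := A(n,s)\,(N_1\cdots N_{k+1})^{-(n-s)/(2n)+2\varepsilon}$ the last bound is $<1/3$, i.e. $c\,M_{k+1}\,t_{k+1}^2 > \log(12\,N_1\cdots N_{k+1})$. Here I would use \eqref{equation8941894189238912} to bound $N_1\cdots N_k$ by a small multiple of $\log N_{k+1}$ — which simultaneously makes $\log(N_1\cdots N_{k+1}) \lesssim \log N_{k+1}$ and keeps the extra factor $(N_1\cdots N_k)^{(n-s)/(2n)-2\varepsilon}$ hidden inside $t_{k+1}$ harmless — together with $M_{k+1} \ge \tfrac12 N_{k+1}^{(n-s-2\varepsilon)/n}$ from \eqref{equation5523786128439}; after these substitutions the inequality reduces, up to a constant, to $N_{k+1}^{\varepsilon(2 - 1/n)} \gtrsim \log N_{k+1}$, which \eqref{equation77871247817841278} ($N_{k+1} \ge (1/\varepsilon)^{1/\varepsilon}$, together with $\log y \le (e\varepsilon')^{-1}y^{\varepsilon'}$) arranges with an implied constant depending only on $n$. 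This last bookkeeping is the delicate part: the whole purpose of the several hypotheses \eqref{equation982589128942189}--\eqref{equation77871247817841278} is to let the Hoeffding deviation $t_{k+1}$ shrink at exactly the claimed rate $-(n-s)/(2n)+2\varepsilon$ while still killing the $N_1\cdots N_{k+1}$ frequencies and the $\sqrt{\log N_{k+1}}$ loss, all with a constant $A(n,s)$ that does not depend on $\varepsilon$ or $k$. Finally, since $\PP(\#\mathcal{K}(A)\ge 1) < 1/3$ and $\PP(\text{the Fourier bound fails for some }m) < 1/3$, some realization $A_0$ satisfies both $\mathcal{K}(A_0)=\emptyset$ and the Fourier bound for every $m$; setting $S = A_0$, the emptiness of $\mathcal{K}(A_0)$ is precisely Property (A) (no strongly non-diagonal cube of $B$ lies in $\DQ_{k+1}(S^n)$), while $\nu_S = \nu_{A_0}$ and periodicity give Property (B), completing the proof.
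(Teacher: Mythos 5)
Your proposal is correct and follows essentially the same route as the paper: select one cube $Q_R$ uniformly and independently from each $R \in \DR_{k+1}(T)$, use the first-moment/Markov bound with \eqref{equation982589128942189}, \eqref{equation5523786128439}, \eqref{equation189248914891} to make the probability of any strongly non-diagonal cube of $B$ appearing in $S^n$ at most $1/3$ (so no deletion step is needed), and use Hoeffding plus the $N_1\cdots N_{k+1}$-periodicity of $\widehat{\nu_S}-\widehat{\eta_{k+1}}\widehat{\nu_T}$ with a union bound over one period, with \eqref{equation8941894189238912} and \eqref{equation77871247817841278} absorbing the $\log$ and $N_1\cdots N_k$ losses, before combining the two failure events. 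The only difference is organizational — you plug the target threshold $t_{k+1}$ directly into Hoeffding, while the paper uses the intermediate threshold $(N_1\cdots N_k M_{k+1})^{-1/2}\log M_{k+1}$ and then bounds it by $A(n,s)(N_1\cdots N_{k+1})^{-\frac{n-s}{2n}+2\varepsilon}$ — which amounts to the same bookkeeping.
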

\begin{proof}
    For each $R \in \DR_{k+1}(T)$, let $Q_R$ be randomly selected from $\DQ_{k+1}(R)$, such that the collection $\{ Q_R \}$ forms an independent family of random variables. Then, set $S = \bigcup \{ Q_R: R \in \DR_{k+1}(T) \}$. We then have
    \begin{equation} \label{equation6900921094190290}
        \#(\DQ_{k+1}(S)) = \#(\DR_{k+1}(T)) = M_{k+1} \cdot \DQ_k(T).
    \end{equation}
    Without loss of generality, removing cubes from $B$ if necessary, we may assume that for every cube $Q_1 \times \dots \times Q_n \in \DQ_{k+1}(B)$, the values $Q_1, \dots, Q_n$ occur in distinct intervals in $\DR_{k+1}(T)$. In particular, given any such cube, just as in Lemma \ref{discretelemma}, we have
    \begin{equation} \label{equation12043910293120909}
        \mathbf{P}(Q_1 \times \dots Q_n \in \DQ_{k+1}(S^n)) = (M_{k+1}/N_{k+1})^n.
    \end{equation}
    Thus \eqref{equation982589128942189}, \eqref{equation5523786128439}, and \eqref{equation12043910293120909} imply
    \begin{equation} \label{equation999992482}
        \mathbf{E} \left[ \#(\DQ_{k+1}(B) \cap \DQ_{k+1}(S^n)) \right] \leq M_{k+1}^n/N_{k+1}^{n - (s + \varepsilon)} \leq 1/N_{k+1}^\varepsilon.
    \end{equation}
    Markov's inequality, together with \eqref{equation189248914891} and \eqref{equation999992482}, imply
    \begin{equation} \label{fourierdim2}
    \begin{split}
        \mathbf{P}(\DQ_{k+1}(B) \cap \DQ_{k+1}(S^n) \neq \emptyset) &= \mathbf{P}(\# (\DQ_{k+1}(B) \cap \DQ_{k+1}(S^n)) \geq 1)\\
        &\leq 1/N_{k+1}^\varepsilon \leq 1/3.
    \end{split}
    \end{equation}
    Thus $\DQ_{k+1}(S^n)$ is disjoint from $\DQ_{k+1}(B)$ with high probability.

    Now we analyze the Fourier transform of the measures $\nu_S$. For each cube $R \in \DR_{k+1}(T)$, and for each $m \in \ZZ$, let
    \[ A_R(m) = e^{\frac{-2 \pi i m \cdot a(Q_R)}{N_1 \dots N_{k+1}}} - \frac{1}{N_{k+1}} \sum_{l = 0}^{N_{k+1} - 1} e^{\frac{-2 \pi i m \cdot [N_{k+1} a(Q) + l]}{N_1 \dots N_{k+1}}}. \]
    Then $\EE[A_R(m)] = 0$, $|A_R(m)| \leq 2$ for each $m$, and
    \[ \widehat{\nu_S}(m) - \widehat{\eta_{k+1}}(m) \widehat{\nu_T}(m) = \frac{1}{\#(\DR_{k+1}(T))} \sum_{R \in \DR_{k+1}(T)} A_R(m). \]
    Fix a particular value of $m$. Since the random variables $\{ A_R(m) : R \in \DR_{k+1}(T) \}$ are bounded and independent from one another, we can apply Hoeffding's inequality with \eqref{equation6900921094190290} to conclude that for each $t > 0$,
    \begin{equation} \label{equation5551902402919120}
    \begin{split}
        \PP \left( |\widehat{\nu_S}(m) - \widehat{\eta_{k+1}}(m) \widehat{\nu_T}(m)| \geq t \right) &\leq 2 \exp \left( \frac{- \#(\DR_{k+1}(T)) t^2}{4} \right)\\
        &= 2 \exp \left( \frac{- \#(\DQ_k(T)) M_{k+1} t^2}{4} \right).
    \end{split}
    \end{equation}
    The function $\widehat{\nu_S} - \widehat{\eta_{k+1}} \widehat{\nu_T}$ is $N_1 \dots N_{k+1}$ periodic. Thus, to uniformly bound this function, we need only bound the function over $N_1 \dots N_{k+1}$ values of $m$. Applying a union bound with \eqref{equation5551902402919120}, we conclude
    \begin{equation} \label{equation6662410242191209}
        \PP \left( \| \widehat{\nu_S} - \widehat{\eta_{k+1}} \widehat{\nu_T} \|_{L^\infty(\ZZ)} \geq t \right) \leq 2 N_1 \dots N_{k+1} \exp \left( \frac{- \#(\DQ_k(T)) M_{k+1} t^2}{4} \right).
    \end{equation}
    In particular, \eqref{equation5523786128439}, applied to \eqref{equation6662410242191209}, shows
    \begin{align*}
        \PP & \left( \| \widehat{\nu_S} - \widehat{\eta_{k+1}} \widehat{\nu_T} \|_{L^\infty(\ZZ)} \geq (N_1 \dots N_k M_{k+1})^{-1/2} \log(M_{k+1}) \right)\\
        &\ \ \ \ \leq 2N_1 \dots N_{k+1} \exp \left( - \frac{\#(\DQ_k(T)) \log(M_{k+1})^2}{4 N_1 \dots N_k} \right)\\
        &\ \ \ \ = 2 N_1 \dots N_k \exp \left( \log(N_{k+1}) - \frac{\log(M_{k+1})^2}{4 N_1 \dots N_k} \right)\\
        &\ \ \ \ \leq 2 N_1 \dots N_k \exp \left( \log(N_{k+1}) - \left[ \left( \frac{n - s}{4n} \right) \log(N_{k+1}) - \log(2) \right]^2 \frac{1}{N_1 \dots N_k} \right).
    \end{align*}
    %
    %
    Thus \eqref{equation8941894189238912} implies
    \begin{equation} \label{equation90120931902390190}
        \PP \left( \| \widehat{\nu_S} - \widehat{\eta_{k+1}} \widehat{\nu_T} \|_{L^\infty(\ZZ)} \geq (N_1 \dots N_k M_{k+1})^{-1/2} \log(M_{k+1}) \right) \leq 1/3.
    \end{equation}
    Taking a union bound over \eqref{fourierdim2} and \eqref{equation90120931902390190}, we conclude that there is a non-zero probability that the set $S$ satisfies Property (A), and
    \[ \| \widehat{\nu_S} - \widehat{\eta_{k+1}} \widehat{\nu_T} \|_{L^\infty(\ZZ)} \leq (N_1 \dots N_k M_{k+1})^{-1/2} \log(M_{k+1}). \]
    Since \eqref{equation77871247817841278} holds,
    %
    \begin{align*}
        (N_1 \dots N_k M_{k+1})^{-1/2} \log(M_{k+1}) &\lesssim_{n,s} \log(N_{k+1}) (N_1 \dots N_{k+1})^{- \frac{n-s-2\varepsilon}{2n}}\\
        &\leq (N_1 \dots N_{k+1})^{- \frac{n-s}{2n} + 2\varepsilon}.
    \end{align*}
    Thus the set $S$ also satisfies Property (B) with an appropriately chosen constant $A(n,s)$.
\end{proof}

\begin{remark}
    Comparing the method of this lemma to that of Lemma \ref{discretelemma}, note that here we never needed to perform a non-random `deletion step' after the formation of our random set $S$. This is because \eqref{equation5523786128439} gives a larger gap between $N_k$ and $M_k$ than the gap provided by the analogous inequality \eqref{rBound} in Lemma \ref{discretelemma}, which results in the Fourier dimension bound in Theorem \ref{FourierTheorem} being smaller than the Hausdorff dimension bound in Theorem \ref{mainTheorem}. While performing non-random deletions of $O(M_{k+1}^d)$ intervals results in a negligible change to a Frostman-type bound like those dealt with in Chapter \ref{ch:RoughSets} once normalized. However, this many non-random deletions can cause a drastic shift in the Fourier transform of the associated measure on the set if these deletions occur non pseudorandomly. Improving Theorem \ref{FourierTheorem} thus requires a more subtle analysis of the deletions we must perform at each step.
\end{remark}

The construction of the set $X$ follows essentially the construction of the configuration avoiding set in Chapter \ref{ch:RoughSets}. We choose a decreasing sequence of parameters $\{ \varepsilon_k \}$ such that $\varepsilon_k < (n-s)/4$ for each $k$, as well as parameters $\{ N_k \}$ such that
\[ N_k \geq 3^{1/\varepsilon_k}, \]
\[ N_k \geq \exp \left( \left( \frac{4n}{n-s} \right)^4 N_1 \dots N_{k-1} \right), \]
\[ N_k \geq (1/\varepsilon_k)^{1/\varepsilon_k}, \]
\begin{equation} \label{equation13895891489132}
    N_k \geq (N_1 \dots N_{k-1})^{2/\varepsilon_k}.
\end{equation}
The choice of $N_k$ is also chosen sufficiently large that we can find a $\DQ_k$ discretized set $B_k$ such that
\[ \#(\DQ_k(B_k)) \leq (N_1 \dots N_k)^{s + \varepsilon_k/2} \leq N_k^{s + \varepsilon_k}, \]
and such that the collection $\{ B_k \}$ forms a strong cover of the configuration $\C$. We then choose a sequence $\{ M_k \}$  such that for each $k$,
\[ M_k \leq N_k^{\frac{n-s-2\varepsilon_k}{n}} \leq 2 M_{k+1}. \]
We also assume each $N_k$ and $M_k$ is a power of two, which means automatically that $M_k \divides N_k$ for each $k$.

Just as was done in Chapter \ref{ch:RoughSets}, this choice of parameters enables us to find a nested family of sets $\{ X_k \}$, obtained by setting $X_0 = [0,1]$, and letting $X_{k+1}$ be obtained from $X_k$ by applying Lemma \ref{discreteFourierBuildingBlock} with $\varepsilon = \varepsilon_{k+1}$, $T = X_k$, and $B = B_{k+1}$. We set $X = \bigcap X_k$. Since Property (A) of Lemma \eqref{discreteFourierBuildingBlock} is true at each step of the process, this is sufficient to guarantee that $X$ avoids the configuration $\C$. The remainder of this section is devoted to showing that Property (B) of Lemma \ref{discreteFourierBuildingBlock} is sufficient to obtain the Fourier dimension bound on $X$ guaranteed by Theorem \ref{FourierTheorem}.

Let $\nu_k = \nu_{X_k}$ for each $k$. Property (B) of Lemma \ref{discreteFourierBuildingBlock} implies that for each $k$,
\begin{equation} \label{equation77770123091293120}
    \left\| \widehat{\nu_{k+1}} - \widehat{\eta_{k+1}} \widehat{\nu_k} \right\|_{L^\infty(\ZZ)} \leq A(n,s) \cdot (N_1 \dots N_{k+1})^{- \frac{n-s}{2n} + 2\varepsilon_{k+1}}.
\end{equation}
We shall form a sequence of measures $\{ \mu_k \}$ by convolving the measures $\{ \nu_k \}$ with an appropriate family of mollifiers, which will be sufficient to obtain the required asymptotic bound.

\begin{lemma}
    There exists a sequence of probability measures $\{ \mu_k \}$, with $\mu_k$ supported on $X_k$ for each $k$, such that for each $\varepsilon > 0$,
    \[ \sup_{k > 0} \sup_{m \in \ZZ} |m|^{\frac{n-s}{2n} - \varepsilon} |\widehat{\mu_k}(m)| < \infty. \]
\end{lemma}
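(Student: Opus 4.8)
The sequence $\{\nu_k\}$ consists of discrete (atomic) measures, so the $\widehat{\nu_k}$ do not decay; the point of convolving with mollifiers is to produce genuine decay at frequencies beyond the $k$-th scale while only perturbing the low frequencies negligibly. First I would fix, for each $k$, a smooth bump $\phi_k$ supported in a ball of radius comparable to $l_k = (N_1\cdots N_k)^{-1}$ with $\int\phi_k = 1$, rescaled from a single fixed Schwartz bump $\phi$; then $\widehat{\phi_k}(\xi) = \widehat{\phi}(\xi/(N_1\cdots N_k))$ decays faster than any polynomial for $|\xi|\gtrsim N_1\cdots N_k$, and $|\widehat{\phi_k}(\xi)-1|\lesssim |\xi|/(N_1\cdots N_k)$ for all $\xi$. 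Define $\mu_k = \nu_k * \phi_k$. Since $\nu_k$ is supported on the $l_k$-discretized set $X_k$ and $\phi_k$ has support radius $\ll l_k$, a harmless adjustment (shrink the support of $\phi$, or note $X_k$ is a union of closed cubes of sidelength $l_k$ and thicken slightly, or simply pass to $X$'s closure) ensures $\mu_k$ is supported on $X_k$ — this is the one mild technicality to state carefully but it costs nothing.

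\medskip

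The core estimate is to bound $|\widehat{\mu_k}(m)| = |\widehat{\nu_k}(m)||\widehat{\phi_k}(m)|$ for integers $m$. I would split into two regimes. For $|m| \gtrsim N_1\cdots N_k$, the rapid decay of $\widehat{\phi_k}$ beats any polynomial factor, using $\|\widehat{\nu_k}\|_\infty \le 1$; so the target bound $|m|^{\frac{n-s}{2n}-\varepsilon}|\widehat{\mu_k}(m)| < \infty$ holds trivially there, uniformly in $k$ provided $N_1\cdots N_k$ grows (which it does, by \eqref{equation13895891489132}). For $|m| \lesssim N_1\cdots N_k$, I use $|\widehat{\phi_k}(m)| \le 1$ and must bound $|\widehat{\nu_k}(m)|$. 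Here I telescope: $\widehat{\nu_k} = \widehat{\nu_0} + \sum_{j=0}^{k-1}(\widehat{\nu_{j+1}} - \widehat{\eta_{j+1}}\widehat{\nu_j}) + \sum_{j=0}^{k-1}(\widehat{\eta_{j+1}}-1)\widehat{\nu_j}$, but the cleaner route is to write $\widehat{\nu_k}(m) = \widehat{\nu_0}(m)\prod_{j=1}^{k}\widehat{\eta_j}(m) + (\text{errors})$, where each error term is $\widehat{\nu_{j+1}}(m) - \widehat{\eta_{j+1}}(m)\widehat{\nu_j}(m)$ propagated forward by further multiplication by $\prod \widehat{\eta}$ factors (each of modulus $\le 1$). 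By \eqref{equation77770123091293120} each such error is $\le A(n,s)(N_1\cdots N_{j+1})^{-\frac{n-s}{2n}+2\varepsilon_{j+1}}$ in sup-norm.

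\medskip

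The remaining point is that the accumulated errors sum to something comparable to $|m|^{-(\frac{n-s}{2n}-\varepsilon)}$ up to a constant, uniformly in $k$. Fix $m$ with $|m|\le N_1\cdots N_k$, and let $k_0$ be the least index with $N_1\cdots N_{k_0} \ge |m|$. For $j+1 \le k_0$ the error term $(N_1\cdots N_{j+1})^{-\frac{n-s}{2n}+2\varepsilon_{j+1}}$ is (for $j+1$ close to $k_0$) of size roughly $|m|^{-\frac{n-s}{2n}+2\varepsilon_{j+1}}$ or smaller, and the rapid growth \eqref{equation13895891489132} makes $\{N_1\cdots N_{j+1}\}$ increase so fast that these terms form a lacunary-type sequence dominated by their last term; their sum is thus $\lesssim |m|^{-\frac{n-s}{2n}+\varepsilon}$ once $\varepsilon_{k_0}$ is small enough, i.e. once $k_0$ (hence $|m|$) is large — and for the finitely many small $|m|$ one absorbs everything into the final constant. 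For $j+1 > k_0$ one must also use that $\prod_{i>k_0}\widehat{\eta_i}(m)$ — or rather the contribution of later error terms at frequency $m$ — is controlled: here I would observe that $\widehat{\eta_i}(m)$ for $i$ large is a normalized geometric-sum (Dirichlet kernel) factor, $\widehat{\eta_i}(m) = \frac{1}{N_i}\sum_{\ell=0}^{N_i-1} e^{-2\pi i m\ell/(N_1\cdots N_i)}$, which has modulus $\le \min(1, C/(|m| l_{i-1}))$ roughly, giving extra decay that kills the tail $j+1 > k_0$. Also the main term $|\widehat{\nu_0}(m)\prod_{j=1}^k\widehat{\eta_j}(m)|$ is $\le |\widehat{\eta_{k_0}}(m)|\lesssim 1/(|m| l_{k_0-1}) \lesssim$ (by choice of $k_0$ and fast growth) $|m|^{-\frac{n-s}{2n}+\varepsilon}$, comfortably. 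Collecting: $\sup_k \sup_m |m|^{\frac{n-s}{2n}-\varepsilon}|\widehat{\mu_k}(m)| < \infty$.

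\medskip

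\textbf{Main obstacle.} The delicate point is the uniformity in $k$ of the low-frequency bound: one must check that introducing further scales $j > k_0$ beyond the one that "resolves" $m$ does not degrade $|\widehat{\nu_k}(m)|$. This is where the $\eta_j$'s earn their keep — the measures $\nu_k$ are built so that passing from scale $j$ to $j+1$ is (up to the Hoeffding error \eqref{equation77770123091293120}) convolution by $\eta_{j+1}$, and $\widehat{\eta_{j+1}}(m)$ is small precisely when $|m|$ exceeds the old scale $N_1\cdots N_j$, i.e. for all the frequencies we care about once $j\ge k_0$. Making this quantitative, and checking the error sum is genuinely dominated by its near-$k_0$ terms using \eqref{equation13895891489132}, is the real content; everything else is bookkeeping with $\|\widehat{\nu_k}\|_\infty\le 1$ and the rapid decay of $\widehat{\phi}$.
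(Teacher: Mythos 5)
There is a genuine gap, and it lies in the choice of mollifier together with the ``trivial'' high-frequency regime. Since $\nu_k$ is purely atomic on the lattice $l_k\ZZ$ with $l_k=(N_1\cdots N_k)^{-1}$, its transform is $N_1\cdots N_k$-periodic and in particular $\widehat{\nu_k}(jN_1\cdots N_k)=1$ for every nonzero integer $j$. At these frequencies your mollifier does not help: a smooth bump of support radius comparable to (or, as you propose, much smaller than) $l_k$ has $\widehat{\phi_k}(m)=\widehat{\phi}(c\,m\,l_k)$, which at $m=N_1\cdots N_k$ is a fixed constant (close to $1$ if the support is shrunk), so $|m|^{\frac{n-s}{2n}-\varepsilon}|\widehat{\mu_k}(m)|\gtrsim (N_1\cdots N_k)^{\frac{n-s}{2n}-\varepsilon}$, which is unbounded in $k$; the Schwartz decay of $\widehat{\phi}$ only takes effect once $|m|l_k$ is large and can never beat the weight when $|m|$ is comparable to $N_1\cdots N_k$, where $\widehat{\nu_k}$ gives no help by periodicity. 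So the claim that the regime $|m|\gtrsim N_1\cdots N_k$ is handled ``trivially, uniformly in $k$'' is false, and with your $\mu_k$ the lemma's conclusion actually fails. This is precisely why the paper takes the width-$l_k$ box $\psi_k=(N_1\cdots N_k)\,\mathbf{I}_{[0,l_k]}$: one has $|\widehat{\psi_k}(m)|\lesssim\min(1,N_1\cdots N_k/|m|)$ with exact vanishing at the nonzero multiples of $N_1\cdots N_k$, and, crucially, the exact identity $\psi_k=\psi_{k+1}*\eta_{k+1}$, which makes $\widehat{\mu_{k+1}}(m)-\widehat{\mu_k}(m)=\widehat{\psi_{k+1}}(m)\bigl(\widehat{\nu_{k+1}}(m)-\widehat{\eta_{k+1}}(m)\widehat{\nu_k}(m)\bigr)$; each increment is then bounded by Property (B) of Lemma \ref{discreteFourierBuildingBlock} together with the $\min$ bound, and summing over $k$ finishes.

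Your low-frequency telescoping has a hole at the same frequencies. The asserted bound $|\widehat{\eta_i}(m)|\lesssim\min(1,C/(|m|l_{i-1}))$ is false: $\widehat{\eta_i}(m)=1$ whenever $N_1\cdots N_i$ divides $m$, and the partial products $\prod_{i>j}\widehat{\eta_i}(m)$ are Dirichlet-kernel factors that are large whenever $m\,l_i$ is near an integer. Consequently the early error terms, of size $(N_1\cdots N_{j+1})^{-\frac{n-s}{2n}+2\varepsilon_{j+1}}$ by \eqref{equation77770123091293120} --- which do not decay in $|m|$ and are the largest terms of your pre-$k_0$ sum, not the last ones --- are not automatically damped by the trailing $\eta$-product, and the ``main term'' $\prod_{j\le k}\widehat{\eta_j}(m)$ is exactly the indicator that $N_1\cdots N_k$ divides $m$ (so it equals $1$ at $m=N_1\cdots N_k$, not $\lesssim 1/(|m|l_{k_0-1})$). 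The paper's route of telescoping the already-mollified measures never has to estimate these $\eta$-products at all; if you wish to keep your decomposition you must both replace the mollifier by one whose transform decays like $N_1\cdots N_k/|m|$ (e.g.\ the box, or a $\phi$ with $\widehat{\phi}$ vanishing at nonzero integers) and carry out a genuine Dirichlet-kernel analysis on the exceptional set of frequencies, which is substantially more work than the bookkeeping you describe.
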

\begin{proof}
    For each $k$, let
    \[ \psi_k(x) = (N_1 \dots N_k) \cdot \mathbf{I}_{\left[ 0, \frac{1}{N_1 \dots N_k} \right]}. \]
    Then it is easy to calculate that
    \begin{equation} \label{equation901418294891481792}
        |\widehat{\psi_k}(m)| \lesssim \min \left( 1, \frac{N_1 \dots N_k}{|m|} \right).
    \end{equation}
    Note that the measures $\mu_k = \nu_k * \psi_k$ are still supported on $X_k$, and
    \[ \widehat{\mu_k}(\xi) = \widehat{\nu_k}(\xi) \widehat{\psi_k}(\xi). \]
    Also note that $\psi_k = \psi_{k+1} * \eta_{k+1}$. If $\varepsilon > 0$, then we can apply \eqref{equation77770123091293120} with \eqref{equation901418294891481792} to conclude
    \begin{equation} \label{equation6892489214781278}
    \begin{split}
        &|\widehat{\mu_{k+1}}(m) - \widehat{\mu_k}(m)|\\
        &\ \ \ \ = |\widehat{\psi_{k+1}}(m)| |\widehat{\nu_k}(m) - \widehat{\eta_{k+1}}(m) \widehat{\nu_k}(m)|\\
        &\ \ \ \ \lesssim \min \left( 1, \frac{N_1 \dots N_{k+1}}{|m|} \right) (N_1 \dots N_{k+1})^{-\frac{n-s}{2n} + 2\varepsilon_{k+1}}.\\
        &\ \ \ \ = \min \left( \frac{|m|^{\frac{n-s}{2n} - \varepsilon}}{(N_1 \dots N_{k+1})^{\frac{n-s}{2n} - 2\varepsilon_{k+1}}}, \frac{(N_1 \dots N_k)^{1 - \frac{n-s}{2n} + 2\varepsilon_{k+1}}}{|m|^{1 - \frac{n-s}{2n} + \varepsilon}} \right) |m|^{- \frac{n-s}{2n} + \varepsilon}.
    \end{split}
    \end{equation}
    The minima is maximized when $|m| = N_1 \dots N_{k+1}$, which gives
    \[ \min \left( \frac{|m|^{\frac{n-s}{2n} - \varepsilon}}{(N_1 \dots N_{k+1})^{\frac{n-s}{2n} - 2\varepsilon_{k+1}}}, \frac{(N_1 \dots N_k)^{1 - \frac{n-s}{2n} + 2\varepsilon_{k+1}}}{|m|^{1 - \frac{n-s}{2n} + \varepsilon}} \right) \leq (N_1 \dots N_{k+1})^{2\varepsilon_{k+1} - \varepsilon}. \]
    Thus, for all $k$, for all $m \in \ZZ$, and for all $\varepsilon > 0$,
    \begin{equation} \label{equation11020404120}
    \begin{split}
        |\widehat{\mu_{k+1}}(m) - \widehat{\mu_k}(m)| \lesssim \frac{(N_1 \dots N_{k+1})^{2\varepsilon_{k+1} - \varepsilon}}{|m|^{\frac{n-s}{2n} - \varepsilon}}.
    \end{split}
    \end{equation}
    For each $k$, let
    \[ A_k = \sup_{m \in \ZZ} |\widehat{\mu_k}(m)| |m|^{\frac{n-s}{2n} - \varepsilon}. \]
    Then \eqref{equation11020404120} implies that
    \[ A_{k+1} = A_k + O \left( (N_1 \dots N_{k+1})^{2\varepsilon_{k+1} - \varepsilon} \right). \]
    Thus for all $k > 0$,
    \[ A_k = O \left( \sum_{k = 1}^\infty (N_1 \dots N_k)^{2\varepsilon_{k+1} - \varepsilon} \right). \]
    Provided the sum on the right hand side converges for each $\varepsilon > 0$, this gives a uniform bound of $A_k$ in $k$ for each $\varepsilon > 0$, completing the proof. But for suitably large $k$, depending on $\varepsilon$, it is eventually true that $\varepsilon_{k+1} \leq \varepsilon/8$, and so
    \begin{align*}
        A_k &= O_\varepsilon(1) + \sum_{k = 1}^\infty (N_1 \dots N_k)^{-\varepsilon/4} = O_\varepsilon(1) + \sum_{k = 1}^\infty 2^{-k\varepsilon/4} = O_\varepsilon(1). \qedhere
    \end{align*}
\end{proof}

Just as for the sequence of measures in Theorem \ref{massdistributionprinciplelem}, the sequence $\{ \mu_k \}$ is a Cauchy sequence of probability measures, and therefore converges weakly to some measure $\mu$. Because for each $k$, $\mu_k$ is supported on $X_k$, $\mu$ is supported on $\bigcap X_k = X$. Furthermore, the Fourier transform of each $\mu_k$ converges pointwise to the Fourier transform of $\mu$. Thus we find that for each $\varepsilon > 0$,
\[ \sup_{m \in \ZZ} |m|^{\frac{n-s}{2n} - \varepsilon} |\widehat{\mu}(m)| \leq \sup_{k > 0} \sup_{m \in \ZZ} |m|^{\frac{n-s}{2n} - \varepsilon} |\widehat{\mu_k}(m)| < \infty. \]
Combined with Lemma \ref{discretefouriermeasures}, this implies $X$ has Fourier dimension $(nd - s)/n$.


\begin{singlespace}
\raggedright
\bibliographystyle{abbrvnat}
\bibliography{biblio}
\end{singlespace}


\nocite{*}
\backmatter

\end{document}